\documentclass[a4paper,11pt,reqno]{amsart}

\usepackage{amssymb}
\usepackage{latexsym}
\usepackage{amsmath}
\usepackage{mathrsfs}
\usepackage{euscript}
\usepackage{amsthm}
\usepackage{upgreek}
\usepackage{cite}
\usepackage{amscd}
\usepackage{xcolor}
\usepackage{tikz}
\usepackage{calc}

\usepackage{diagbox}

\usepackage{comment}

\usepackage{color}
\definecolor{ceruleanblue}{rgb}
{0.16, 0.32, 0.75}

\newcommand{\scal}[2]{\langle #1,#2\rangle}

\newcommand{\rr}[1]{\mathbf R^{#1}}
\newcommand{\zz}[1]{\mathbf Z^{#1}}

\newcommand{\nn}[1]{\mathbf N^{#1}}

\newcommand{\nm}[2]{\Vert #1\Vert _{#2}}
\newcommand{\NM}[2]{\left \Vert #1\right \Vert _{#2}}
\newcommand{\nmm}[1]{\Vert #1\Vert }

\newcommand{\op}{\operatorname{Op}}

\newcommand{\sets}[2]{\{ \, #1\, ;\, #2\, \} }

\newcommand{\ep}{\varepsilon}
\newcommand{\fy}{\varphi}

\newcommand{\cdo}{\, \cdot \, }

\newcommand{\supp}{\operatorname{supp}}

\newcommand{\esssup}{\operatorname{ess\, sup}}

\newcommand{\tp}{\operatorname{Tp}}
\newcommand{\vrum}{\vspace{0.1cm}}

\newcommand{\wpr}{{\text{\footnotesize $\#$}}}

\newcommand{\setm}{\setminus 0}

\newcommand{\GL}{\mathbf{M}}

\newcommand{\sfW}{\mathsf{W}}

\newcommand{\maclD}{\mathcal D}
\newcommand{\maclE}{\mathcal E}

\newcommand{\maclI}{\mathcal I}

\newcommand{\maclL}{\mathcal L}
\newcommand{\maclM}{\mathcal M}

\newcommand{\maclS}{\mathcal S}

\newcommand{\mascB}{\mathscr B}

\newcommand{\mascE}{\mathscr E}
\newcommand{\mascF}{\mathscr F}

\newcommand{\mascP}{\mathscr P}

\newcommand{\mascS }{\mathscr S}
\newcommand{\essup}{\operatorname{ess\, sup}}

\newcommand{\fka}{\mathfrak a}
\newcommand{\fkb}{\mathfrak b}

\numberwithin{equation}{section}

\newtheorem{thm}{Theorem}
\numberwithin{thm}{section}

\newcommand{\rubrik}{}
\newtheorem{prop}[thm]{Proposition}
\newtheorem{cor}[thm]{Corollary}
\newtheorem{lemma}[thm]{Lemma}

\theoremstyle{definition}

\newtheorem{defn}[thm]{Definition}
\newtheorem{example}[thm]{Example}

\theoremstyle{remark}

\newtheorem{rem}[thm]{Remark}              

\definecolor{darkred}{rgb}{0.8,0,0}

\author[E. Nabizadeh Morsalfard]{Elmira Nabizadeh Morsalfard}

\address{Department of Mathematics,
Linn{\ae}us University,
V{\"a}xj{\"o}, Sweden}

\email{elmira.nabizadehmorsalfard@lnu.se}

\author[C. Pfeuffer]{Christine Pfeuffer}

\address{Department of Mathematics, Martin-Luther-Universit\"at Halle-Wittenberg, Halle (Saale),  Germany}

\email{christine.pfeuffer@mathematik.uni-halle.de}

\author[N. Teofanov]{Nenad Teofanov}

\address{Department of Mathematics and Informatics,
University of Novi Sad, Novi Sad, Serbia}

\email{nenad.teofanov@dmi.uns.ac.rs}

\author[J. Toft]{Joachim Toft}

\address{Department of Mathematics,
Linn{\ae}us University,
V{\"a}xj{\"o}, Sweden}

\email{joachim.toft@lnu.se}

\title[Compactness
for operators on modulation spaces]
{Compactness
for pseudo-differential and
Toeplitz operators on modulation spaces}

\keywords{}

\subjclass[2020]{Primary: Secondary:} 

\begin{document}

\begin{abstract}
We deduce various norm equivalences, 
and convolution estimates for
the modulation space
$M^{\sharp ,q}_{(\omega )}$
consisting of all $f\in
M^{\infty ,q}_{(\omega )}$
such that $|V_\phi f \cdot \omega |$
satisfies a mild vanishing condition
at infinity. We prove that
$M^{\sharp ,q}_{(\omega )}$ is the
completion of the Gelfand-Shilov space
$\Sigma _1$ under
the $M^{\infty ,q}_{(\omega )}$ norm.
We use these results to deduce
compactness for $\Psi$DO $\op (\fka )$,
with $\fka \in M^{\sharp ,q}_{(\omega )}$,
$0<q\le 1$, when acting on a broad family
of modulation spaces.
\end{abstract}

\maketitle

\par

\section{Introduction}\label{sec0}

\par

In the paper we analyze
subspaces of modulation spaces
of the form
$M^{\infty ,q}_{(\omega )}(\rr d)$,
where the short-time Fourier
transforms of involved functions and
distributions should obey certain mild
vanishing properties at infinity.
(See \cite{Ho1}
or Section \ref{sec1} for notations.)
For polynomial moderate weights we show that
these subspaces, denoted by
$M^{\sharp ,q}_{(\omega )}(\rr d)$,
are exactly the completions of 
the Schwartz space with respect
to the $M^{\infty ,q}_{(\omega )}$ 
norm. For
general weights, we show that the same
holds true after the Schwartz spaces are
replaced by suitable Fourier invariant
Gelfand-Shilov spaces, which are densely
embedded in the Schwartz spaces.


\par

In the second part of the paper, we
apply our results to deduce compactness
for pseudo-differential operators
with symbols in $M^{\sharp ,q}_{(\omega )}$
spaces, when $q\in (0,1]$. 
These investigations also include 
compactness investigations for Toeplitz
operators with symbols in
$M^{\sharp ,\infty}_{(\omega )}$ spaces.

\medspace


In the family of modulation spaces,
the so-called Sj{\"o}strand classes
is of peculiar interest, especially
in pseudo-differential calculus and
Gabor frame theory. For any $q\in (0,1]$
and suitable weight function $\omega$ on
the phase space $\rr {2d}$, the
corresponding Sj{\"o}strand class
$M^{\infty ,q}_{(\omega )}(\rr d)$ consists
of all functions or (ultra-)distributions
$f$ on $\rr d$ such that 
\begin{equation}
\label{Eq:DefSjosClassIntro}
\nm f{M^{\infty ,q}_{(\omega )}}
\equiv
\NM {\sup _{x\in \rr d}
|V_\phi f(x,\cdo )\omega (x,\cdo)|}
{L^q (\rr d )} <\infty .
\end{equation}
Here $V_\phi f$ is the short-time Fourier
transform of $f$, with respect to the (fixed)
window function $\phi$.

\par

In pseudo-differential calculus, 
it is often convenient to use Sjöstrand
classes and some of their subspaces as
symbol classes for the pseudo-differential
operators.
Firstly, such operators possess
convenient mapping properties, especially
when acting on (other) modulation spaces.
(See \cite{} for classical
but restrictive results, or
\cite{} for recent and general
results on this.) We remark that these mapping
properties immediately give the classical
Calderon-Vailancourt's theorem. That is, 
if $a$ is smooth
and bounded together with its
derivatives, then the
pseudo-differential operator $\op (\fka )$ is
continuous on $L^2$. (See \cite{}.)


\par

Secondly, for suitable restrictions on
the weight function $\omega$, corresponding
family of pseudo-differential operators
are inverse closed on $L^2$, which can
also be expressed that they form a so-called
Wiener algebra. For example, if $\omega =1$ and
$a\in M^{\infty ,q}(\rr {2d})$ satisfies that
$\op (\fka )$ is invertible on $L^2(\rr d)$,
then its inverse $\op (\fka )^{-1}$ is equal to
$\op (\fkb )$, for some $a\in M^{\infty ,q}(\rr {2d})$
(see e.{\,}g. \cite{GrPfTo}).

\par

Thirdly, any Gabor frame operator in
Gabor analysis is a pseudo-differential
operator with symbols in Sj{\"o}strand
classes. Some tracks on
this property can be found
already in the analysis in \cite{Stroh},
and an explicit verification of
this in unweighted case, is given in
\cite{GrPfTo}. Because of all these
roles and properties, several attentions
are paid to Sj{\"o}strand classes.

\par

There are useful modulation space which stays in the
shadow in the limelight of Sj{\"o}strand classes.
In the paper we focus on one type of them, 
where, in additional to the conditions in
\eqref{Eq:DefSjosClassIntro}, one also imposes
the condition
\begin{equation}
\label{Eq:DefSjosSubClassIntro}
\lim _{R\to \infty}
\NM {\sup _{|x|>R}
|V_\phi f(x,\cdo )\omega (x,\cdo)|}
{L^q (\rr d )} =0.
\end{equation}
For convenience we let
$M^{\sharp ,q}_{(\omega )}(\rr d)$
be the set of all $f$ which satisfy
these conditions. That is,
$$
M^{\sharp ,q}_{(\omega )}(\rr d)
\equiv
\sets {f\in M^{\infty ,q}_{(\omega )}(\rr d)}
{f\ \text{satisfies \eqref{Eq:DefSjosSubClassIntro}}}.
$$

\par

We perform detailed analysis of
$M^{\sharp ,q}_{(\omega )}$ spaces. In the
first part we show that \eqref{Eq:DefSjosSubClassIntro}
is independent of any choice of $\phi$ in
$M^r_{(v)}(\rr d)\setminus 0$, when
$r\le \min (1,q)$ and $v$ moderates $\omega$. At
the same time we show that we may replace the
$L^q$-norm in \eqref{Eq:DefSjosSubClassIntro}
with a broad class of so-called Wiener norms,
which enclose the $L^q$-norm.

\par

Thereafter we apply our results to show
compactness for pseudo-differential
and Toeplitz operators, when their
symbols belong to
$M^{\sharp ,q}_{(\omega )}$ and
$M^{\sharp ,\infty }_{(\omega )}$ spaces,
respectively. We let such operators
act on a broad class of modulation
spaces, $M(\omega _j,\mascB )$,
parameterized by suitable weight
functions $\omega _j$ and
quasi-Banach function spaces $\mascB$.
By choosing $\mascB$ as the mixed
norm space $L^{p,q}$ of Lebesgue types,
$M(\omega _j,\mascB )$ becomes
the standard modulation space
$M^{p,q}_{(\omega )}$. Hence our
results apply to standard modulation
spaces.

\par

In order to be more specific, first
suppose that these symbols belong to the 
superclasses
$M^{\infty ,q}_{(\omega )}$ and
$M^{\infty ,\infty }_{(\omega )}$
of $M^{\sharp ,q}_{(\omega )}$ and
$M^{\sharp ,\infty }_{(\omega )}$.
Then it is shown in \cite{ToPfTe}
that the mappings
\begin{alignat}{2}
\op (\fka ) &: M(\omega _1,\mascB)
\to M(\omega _2,\mascB),&
\qquad \fka &\in M^{\infty ,q}_{(\omega )}
(\rr {2d})
\intertext{and}
\tp _{\phi _1,\phi _2}(\fkb ) &:
M(\omega _1,\mascB)
\to M(\omega _2,\mascB),&
\qquad \fkb &\in M^{\infty ,\infty}
_{(\omega )}(\rr {2d})
\end{alignat}
are continuous, provided the conditions
on the involved weight functions
$\omega$, $\omega _1$, $\omega _2$,
and the window functions $\phi _1$
and $\phi _2$ for the Toeplitz operator
$\tp _{\phi _1,\phi _2}(\fkb )$,
are suitable. In Section \ref{sec5}
we show that these mappings are
compact when
$\fka in M^{\sharp ,q}_{(\omega )}$ and
$\fkb \in M^{\sharp ,\infty }_{(\omega )}$.

\par



\par

\section{Preliminaries}\label{sec1}

\par

\subsection{Weight functions}
\label{subsec1.1}

\par

For non-negative functions $g_1$ and $g_2$,
we write $g_1 \lesssim g_2$ when there is
a constant $C>0$ such that
$g_1(x ) \le C \cdot  g_2(x)$
holds uniformly for all $x$
in the intersection of the domains of
$g_1$ and $g_2$. We also
write $g_1\asymp g_2$
when $g_1\lesssim g_2 \lesssim g_1$.

\par

A \emph{weight} or \emph{weight function} on $\rr d$ is a
positive function $\omega
\in  L^\infty _{loc}(\rr d)$ such that $1/\omega \in  L^\infty _{loc}(\rr d)$.
If there is a  weight $v$ on $\rr d$ and a
constant $C\ge 1$ such that
\begin{equation}\label{moderate}
\omega (x+y) \le C\omega (x)v(y),\qquad x,y\in \rr d,
\end{equation}
then the weight $\omega$ is called
\emph{moderate}, or \emph{$v$-moderate}.
By \eqref{moderate}
we have
\begin{equation}\label{moderateconseq}
C^{-1}v(-x)^{-1}\le \omega (x)\le C v(x),\quad x\in \rr d,
\end{equation}
for some $C\ge 1$.
We let $\mascP _E(\rr d)$ be the set of all moderate weights on $\rr d$.
We say that the weight $\omega$ on $\rr d$ is polynomially moderate
if
\begin{equation}
\tag*{(\ref{moderate})$'$}
\omega (x+y) \le C\omega (x)(1+|y|)^r,\qquad x,y\in \rr d,
\end{equation}
for some constants $C,r>0$. The set of polynomially moderate weights
on $\rr d$ is denoted by $\mascP (\rr d)$. Evidently,
\begin{equation}
\label{Eq:ModerateClassesComp}
\mascP (\rr d) \subseteq \mascP _E(\rr d).
\end{equation}
We also consider various classes of exponential
type moderate weights.
In fact, suppose that $s>0$. Then the weight
$\omega$ on $\rr d$ is called $s$-moderate
if there are constants $C,r>0$ such that
\begin{equation}
\label{Eq:ExpMod}
\omega (x+y) \le C\omega (x)e^{r|y|^{\frac 1s}},
\qquad x,y\in \rr d,
\end{equation}
If for every $r>0$, there is a constant $C=C_r>0$
such that \eqref{Eq:ExpMod} is fulfilled, then
$\omega$ is called $(0,s)$-moderate. 
The sets of $s$-moderate and $(0,s)$-moderate
weights on $\rr d$ are denoted by $\mascP _s(\rr d)$
and $\mascP _{0,s}(\rr d)$, respectively.

\par

We say that a  weight $v$ is
\emph{submultiplicative} if
\begin{equation}\label{Eq:Submultiplicative}
v(x+y) \le v(x)v(y)
\quad \text{and}\quad
v(-x)=v(x),\qquad x,y\in \rr d.
\end{equation}

\par

If $v$ is positive and locally bounded and
satisfies the  inequality in 
\eqref{Eq:Submultiplicative},
then $v(x)\lesssim e^{r|x|}$ for some $r>0$,
cf. \cite{Gro2007}.

\par

We observe that given a $v$-moderate weight 
$\omega$, one can find a continuous $v$-
moderate weight $\omega_0$ such that $\omega 
\asymp \omega_0$ (see e.g.
\cite{Toft10}). In addition, a moderate 
weight $\omega$
is also moderated by a submultiplicative
weight, cf. e.{\,}g. \cite{FeiGro1,Toft10}.
Therefore, if $\omega \in \mascP _E(\rr d)$, 
then
\begin{equation}\label{Eq:weight0}
\omega (x+y) \lesssim \omega (x) e^{r|y|},
\quad x,y\in \rr d,
\end{equation}
for some $r>0$. 
In particular, \eqref{moderateconseq} shows that
for any $\omega \in \mascP_E(\rr d)$,
there is a constant $r>0$ such that
\begin{equation}\label{Eq:weight1}
e^{-r|x|}\lesssim \omega (x)
\lesssim e^{r|x|},\quad x\in \rr d.
\end{equation}
Consequently, 
\begin{equation}
\label{Eq:SubmultFuncEst}
\omega \cdot e^{-r|\cdo |}
\in L^\infty (\rr d)
\quad
\text{for some $r>0$, when
$\omega \in \mascP _E(\rr d)$.}
\end{equation}

\par

We also observe that \eqref{Eq:weight0} gives
\begin{equation}
\begin{aligned}
\mascP (\rr d)
&\subsetneq
\mascP _{0,s_2}(\rr d)
\subsetneq
\mascP _{s_2}(\rr d)
\subsetneq
\mascP _{0,1}(\rr d)
\subsetneq
\mascP _{1}(\rr d)
\\[1ex]
&=
\mascP _E(\rr d)
=
\mascP _{0,s_1}(\rr d)
=
\mascP _{s_1}(\rr d),
\quad
s_1<1<s_2.
\end{aligned}
\end{equation}

\par

In the sequel, $v$ and $v_0$, always stand for
submultiplicative weights if nothing else is stated.

\par

We say that the two weights $\omega _1$ and $\omega _2$
are \emph{equivalent} if $\omega _1\asymp \omega _2$.

\par

\begin{rem}
\label{Rem:EquivSmoothWeights}
We notice that for any $s>0$ and $\omega \in \mascP _s(\rr d)$
or $\omega \in \mascP _{0,s}(\rr d)$, there is an equivalent
smooth weight $\omega _0$ in the same class.
(See e.{\,}g. \cite[Lemma 2.8]{Toft10}.)
\end{rem}

\par 

\subsection{Gelfand-Shilov spaces}
\label{subsec1.2}

\par

We recall that the Gelfand-Shilov space
$\Sigma _1(\rr d)$ consists of all
$f\in C^\infty (\rr d)$ such that
$$
\nm f{[h]}
\equiv
\sup _{\alpha ,\beta \in \nn d}
\left (
\frac {\nm {x^\alpha D^\beta f}{L^\infty}}
{h^{|\alpha +\beta |}\alpha !\beta !}
\right )
$$
is finite for every $h>0$. It follows that
$\Sigma _1(\rr d)$ is a Fr{\'e}chet space
with projective topology defined by the
semi-norms $\nm \cdo{[h]}$.

\par

One has that $\Sigma _1(\rr d)$ is
continuously embedded and dense
in the Schwartz space
of smooth rapidly decreasing functions
$\mascS (\rr d)$, is invariant under
Fourier transforms, and is an algebra
under multiplications and under
convolutions.

\par


It can be shown that $\Sigma _1(\rr d)$ satisfies
the hypothesis of \cite[Theorem 4.1]{Ans},
wherefrom we get the following.


\par

\begin{lemma}
Suppose $f\in \Sigma _1(\rr d)$. Then
for some $g_j,h_j\in \Sigma _1(\rr d)$, $j=1,2$,
it holds
$$
f=g_1\cdot g_2 = h_1*h_2.
$$
\end{lemma}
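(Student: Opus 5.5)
The plan is to obtain both factorizations from \cite[Theorem 4.1]{Ans}, applied in its multiplicative form, and to get the convolution statement from the multiplicative one by Fourier transformation. The Fourier step is the easy part. Suppose every $F\in \Sigma _1(\rr d)$ can be written as $F=G_1\cdot G_2$ with $G_j\in \Sigma _1(\rr d)$. Given $f\in \Sigma _1(\rr d)$, put $F=\mascF f$, which again lies in $\Sigma _1(\rr d)$ because the space is Fourier invariant. Factor $F=G_1G_2$ and set $h_j=\mascF ^{-1}G_j\in \Sigma _1(\rr d)$. Since $\mascF (h_1*h_2)=(2\pi )^{d/2}\mascF h_1\cdot \mascF h_2$, a constant rescaling of $h_1$ then gives $f=h_1*h_2$. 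So it suffices to prove the product factorization.

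For the product factorization, I would check the hypotheses of \cite[Theorem 4.1]{Ans}. That result says, roughly, that a Fréchet algebra of functions defined by suitable weighted sup-seminorms has the strong factorization property $A=A\cdot A$. The hypotheses are of two kinds.

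\emph{Algebra and topology.} $\Sigma _1(\rr d)$ is a Fréchet algebra under pointwise multiplication with respect to the seminorms $\nm \cdo{[h]}$. This follows from Leibniz' rule and the estimate $\binom{\beta}{\gamma}\gamma !(\beta -\gamma )!=\beta !$: one gets $\nm {fg}{[2h]}\lesssim \nm f{[h]}\nm g{[h]}$, after adjusting $h$ and using that the $\alpha$-factorials split in the same way. It is cleaner to use the equivalent seminorms $\sup |f(x)|e^{|x|/h}$ together with $\sup |\widehat f(\xi )|e^{|\xi |/h}$, or $\sup _\beta \nm {e^{|\cdo |/h}D^\beta f}{L^\infty}/(h^{|\beta |}\beta !)$. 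The equivalence of these with $\nm \cdo{[h]}$ is standard for $\Sigma _1$.

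\emph{Approximate identity.} Anselmi-type factorization theorems need a bounded approximate identity, or a suitable "multiplier" structure, of the form $\{\phi _n\}\subset \Sigma _1$ with $\phi _n f\to f$ in $\Sigma _1$ uniformly on bounded sets. My candidate is dilations of a Gaussian, $\phi _n(x)=e^{-|x|^2/n^2}$. These do not lie in a bounded subset of $\Sigma _1$, so I would instead follow the strategy in \cite{Ans}. That strategy constructs, for a given $f$, a function $g_2\in \Sigma _1$ of the form $e^{-\psi (x)}$ with $\psi$ entire of exponential type and $\psi \asymp |x|$ in a sector. The weight $\psi$ grows slower than all the $|x|/h$ weights "seen" by $f$, yet $g_1=f/g_2$ still satisfies all the seminorm bounds. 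Concretely, the decay of $f$ gives $|D^\beta f(x)|\le C_h h^{|\beta |}\beta !e^{-|x|/h}$ for every $h$. I would choose a sublinear-in-$1/h$ function $\rho (x)\to \infty$ adapted to these constants, and let $g_2$ be an entire-type function with $|D^\beta (1/g_2)|\lesssim \ep ^{|\beta |}\beta !e^{\rho (x)|x|/2}$. Leibniz' rule then gives $g_1=f/g_2\in \Sigma _1$.

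The main obstacle is constructing $g_2$ so that $1/g_2$ has Gevrey-$1$ derivative control with arbitrarily small constants while $g_2$ itself decays faster than every $e^{-|x|/h}$. The loss from Gevrey-$1$ regularity makes this delicate, since one cannot simply take $g_2=e^{-\rho (x)|x|}$ for an arbitrary smooth $\rho$. I would first attempt to verify the abstract hypotheses of \cite[Theorem 4.1]{Ans}, rather than build $g_2$ by hand, and fall back on the explicit construction via Cauchy estimates on complex strips only if that check fails.
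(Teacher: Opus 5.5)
Your Fourier reduction of the convolution identity to the pointwise one is correct: with $F=\mascF f=G_1G_2$ and $h_j=\mascF ^{-1}G_j$, \eqref{Eq:FourTransfConv} gives $h_1*h_2=(2\pi )^{d/2}f$. Appealing to \cite[Theorem 4.1]{Ans} is also the paper's route; its whole argument is the assertion that $\Sigma _1(\rr d)$ satisfies the hypotheses of that theorem. The gap is that your proposal never carries out this step, and it is the entire content of the lemma. You do not state the hypotheses of \cite[Theorem 4.1]{Ans}. You guess that a bounded approximate identity is needed, discard the natural candidate, and leave the verification as something you would attempt. The dismissal is also premature. For $\phi _n(x)=e^{-|x|^2/n^2}$ one has $|D^\gamma \phi _n|\le C^{|\gamma |}(\gamma !)^{1/2}$ uniformly in $n$, so Leibniz' rule gives $\nm {\phi _na}{[2h]}\le C_h\nm a{[h]}$ for all $n$. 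Hence multiplication by $\phi _n$ is equicontinuous on $\Sigma _1$ and $\phi _na\to a$, even though $\{ \phi _n\}$ is unbounded. Whether this is what the cited theorem requires is exactly what has to be checked.

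Your fallback construction does not fill the gap, and as written it points the wrong way. If $\psi \asymp |x|$, even only on a sector, then $g_2=e^{-\psi}\ge e^{-C|x|}$ there. So $g_2\notin \Sigma _1(\rr d)$, since \eqref{Eq:GSFtransfChar} would force $|g_2(x)|\lesssim e^{-r|x|}$ for every $r>0$. For the same reason $\psi$ must grow faster, not slower, than every $|x|/h$. What you need is $\psi (x)/|x|\to \infty$ while $\log (1/|f|)-\psi$ still grows superlinearly. So $\psi$ has to be adapted to $f$, and $e^{\pm \psi}$ needs analytic control.

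A clean way to obtain this is the characterization: $F\in \Sigma _1(\rr d)$ if and only if $F$ extends to an entire function with $\sup _{|\mathrm{Im}\, z|\le T}|F(z)|e^{r|\mathrm{Re}\, z|}<\infty$ for all $T,r>0$. One direction follows by Taylor expansion of the $\nm \cdo{[h]}$ bounds, the other by a contour shift and \eqref{Eq:GSFtransfChar}. Pick $t_{k+1}\ge t_k+2$ such that $|f(z)|\le e^{-2dk|\mathrm{Re}\, z|}$ whenever $|\mathrm{Im}\, z|\le k$ and $|\mathrm{Re}\, z|\ge t_k$. Put $\Phi (z)=z\operatorname{erf}(z)\sum _k\frac 12\big (\operatorname{erfc}(t_k-z)+\operatorname{erfc}(t_k+z)\big )$. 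This $\Phi$ is entire and real on $\mathbf R$, and $\mathrm{Re}\, \Phi (x+iy)=|x|N(|x|)+O_T(1+|x|)$ for $|y|\le T$, where $N(t)=\# \{ k:t_k\le t\}$. Then $g_2(z)=\exp (-\sum _j\Phi (z_j))$ and $g_1=f/g_2$ are entire and satisfy the strip bounds, so both lie in $\Sigma _1(\rr d)$.
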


\par 

In what follows we let $\mathscr F$ be the
Fourier transform which takes the form
$$
(\mathscr Ff)(\xi )= \widehat f(\xi ) \equiv (2\pi )^{-\frac
d2}\int _{\rr
{d}} f(x)e^{-i\scal  x\xi }\, dx
$$
when $f\in L^1(\rr d)$. Here $\scal \cdo \cdo$ denotes the usual
scalar product
on $\rr d$. The map $\mathscr F$ extends
uniquely to a homeomorphism on the space of tempered distributions
$\mascS '(\rr d)$,
to a unitary operator on $L^2(\rr d)$ and restricts
to a homeomorphism on the Schwartz space 
$\mathscr S(\rr d)$.
We observe that with our choice of the Fourier
transform, the usual
convolution identity for the Fourier transform
takes the forms
\begin{equation}\label{Eq:FourTransfConv}
\mascF (f\cdot g)
=
(2\pi )^{-\frac d2}\widehat f *\widehat g
\quad \text{and}\quad
\mascF (f* g)
=
(2\pi )^{\frac d2}\widehat f \cdot \widehat g
\end{equation}
when $f,g\in \mascS (\rr d)$.

\par

Since we are interested in general weights $\omega \in \mascP _E(\rr d)$, instead of the framework of
tempered distributions $\mascS '(\rr d)$, which is natural when dealing with weights of polynomial growth,
we consider Gelfand-Shilov spaces $\Sigma _s(\rr d)$ and $\maclS _s (\rr d)$ and their dual spaces of (ultra-)distributions $\Sigma _s '(\rr d)$ and $\maclS _s ' (\rr d)$, $ s \geq 1$.

In order to avoid technical issues related to the usual definition of the spaces of  Gelfand-Shilov type and their distribution spaces, we introduce  $\Sigma _s(\rr d)$ and $\maclS _s (\rr d)$ in terms of
decay estimates of the functions and their Fourier transforms. More precisely, if $f\in \mascS '(\rr d)$ and $s \geq 1$, then $f\in \Sigma _s (\rr d)$) ($f\in \maclS _s (\rr d)$), if and only if
\begin{alignat}{6}
f&\in \Sigma _s(\rr d) &
\quad &\Leftrightarrow &\quad
|f(x)|&\lesssim e^{-r|x|^{\frac 1s}} , &
\ 
|\widehat f(\xi )| &\lesssim
e^{-r|\xi |^{\frac 1s}}, &
\ &\text{for all} & \ r &>0,
\label{Eq:GSFtransfChar}
\intertext{and}
f&\in \maclS _s(\rr d) &
\quad &\Leftrightarrow &\quad
|f(x)|&\lesssim e^{-r|x|^{\frac 1s}}, &
\ 
|\widehat f(\xi )| &\lesssim
e^{-r|\xi |^{\frac 1s}}, &
\ &\text{for some} & \ r&>0,
\label{Eq:GSRoumFtransfChar}
\end{alignat}
cf.
\cite{ChuChuKim, Eij}.

\par

The \emph{Gelfand-Shilov distribution space}
$\Sigma _s '(\rr d)$
(of \emph{Beurling type}) can be introduced as the (strong) dual to  $\Sigma _{s}(\rr d)$, and
the \emph{Gelfand-Shilov distribution space} $ \maclS _s  '(\rr d)$
(of \emph{Roumieu type}) is the (strong) dual to  $ \maclS _s (\rr d)$, $ s\geq 1$.

\par

When $1\leq s_1\leq s_2$ we have  continuous embeddings
\begin{multline}\label{eq:EmbGelfandShilov}
\Sigma _{s_1} (\rr d) \hookrightarrow \maclS _{s_1} (\rr d) \hookrightarrow
\Sigma _{s_2} (\rr d) 
\hookrightarrow \mascS (\rr d)
\hookrightarrow \mascS ' (\rr d)
\hookrightarrow \maclS _{s_2} (\rr d) \\
\hookrightarrow
\maclS _{s_2} ' (\rr d) \hookrightarrow \Sigma _{s_2} ' (\rr d)  \hookrightarrow 
\maclS _{s_1} ' (\rr d)  \hookrightarrow \Sigma _{s_1} ' (\rr d).    
\end{multline}
Moreover, the embeddings in \eqref{eq:EmbGelfandShilov}
are dense.

\par

We let $\maclD _s(\rr d)$ and
$\maclD _{0,s}(\rr d)$ be the set of compactly supported 
elements in $\maclS _s(\rr d)$ and $\Sigma _s(\rr d)$,
respectively. We notice that $\maclD _s(\rr d)$ and
$\maclD _{0,s}(\rr d)$ are non-trivial (i.{\,}e. they
contains more elements that the zero element), if and only
if $s>1$ (see e.{\,}g. \cite [Section 8.4]{Ho1}).
As in \cite{Ho1} we let $C_0^\infty (\rr d)$ be the set of
all smooth functions on $\rr d$ with compact support.

\par

For $s>1$ we let $\maclE _s'(\rr d)$ and
$\maclE _{0,s}'(\rr d)$ be the set of all
compactly supported elements in
$\maclS _s'(\rr d)$ and $\Sigma _s'(\rr d)$,
respectively.

\par

For simplicity, from now on, we consider  $\Sigma _{s}(\rr d)$ and  $\Sigma _s '(\rr d)$, 
$ s\geq 1$.
The reader should  keep in mind that all results
and comments given there hold true when
$\Sigma _{s}$ and  $\Sigma _s '$
are replaced by $ \maclS _s$ and
$ \maclS _s  '$ respectively.

\par

\subsection{The short-time Fourier
transform and Gelfand-Shilov spaces}
\label{subsec1.3}

\par

In several situations it is convenient to
use a localized version of the Fourier
transform, called the short-time Fourier
transform, STFT for short.
The \emph{short-time Fourier transform} of
$f\in \Sigma _{s} '(\rr d)$ with respect to the fixed \emph{window function}
$\phi \in \Sigma _{s}  (\rr d)$ is defined by
\begin{equation} \label{Eq:STFTDef}
(V_\phi f)(x,\xi ) \equiv (2\pi )^{-\frac d2}
(f,\phi (\cdo -x)e^{i\scal \cdo \xi})_{L^2}.
\end{equation}
Here and in what follows, $(\cdo ,\cdo )_{L^2}$ denotes the unique continuous
extension of the inner product on $L^2(\rr d)$ restricted to
$\Sigma _{s} (\rr d)$ into a continuous map from $\Sigma _{s} '(\rr d)
\times \Sigma _{s} (\rr d)$ to $\mathbf C$.

We observe that using certain properties for tensor products
of distributions,
\begin{equation}
(V_\phi f)(x,\xi ) = \mascF (f\cdot \overline {\phi (\cdo -x)})(\xi ).
\tag*{(\ref{Eq:STFTDef})$'$}
\end{equation}
(cf. \cite{Ho1,Toft22}). If in addition
$f\in L^p(\rr d)$ for some $p\in [1,\infty ]$, then
\begin{equation}
(V_\phi f)(x,\xi ) = (2\pi )^{-\frac d2}\int _{\rr d}f(y)
\overline {\phi (y-x)}e^{-i\scal y\xi}\, dy.
\tag*{(\ref{Eq:STFTDef})$''$}
\end{equation}

\par

\begin{prop}\label{Prop:ExtSTFTSchwartz}
The map
\begin{alignat}{4}
(f,\phi ) &\mapsto V_\phi f &  &:\,  &
&\Sigma _{s} (\rr d) \times \Sigma _{s} (\rr d) & &\to \Sigma _{s} (\rr {2d})
\label{Eq:STFTSchwartz}
\intertext{is continuous, which extends uniquely to a continuous map}
(f,\phi ) &\mapsto V_\phi f & &:\,  &
&\Sigma _{s} '(\rr d) \times \Sigma _{s} '(\rr d) & &\to \Sigma _{s} '(\rr {2d}),
\label{Eq:STFTTempDist}
\intertext{which in turn restricts to an isometric map}
(f,\phi ) &\mapsto V_\phi f & &:\,  &
&L^2(\rr d) \times L^2(\rr d) & &\to L^2(\rr {2d}).
\label{Eq:STFTL2}
\end{alignat}

\end{prop}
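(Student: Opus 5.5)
The natural route is to write $V_\phi f$ as a partial Fourier transform of a tensor product composed with a linear change of variables. For $f,\phi \in \Sigma _s(\rr d)$ I would set $F(x,y)=f(y)\overline{\phi (y-x)}$, i.e.\ $F=(f\otimes \overline\phi )\circ T$ with $T(x,y)=(y,y-x)$. Then $(V_\phi f)(x,\xi )=(\mascF _2 F)(x,\xi )$ by (\ref{Eq:STFTDef})$''$, where $\mascF _2$ is the partial Fourier transform in the second variable. This gives
$$
V_\phi f = \mascF _2\big ( (f\otimes \overline\phi )\circ T\big ).
$$
The proof then reduces to three standard facts about $\Sigma _s$:
\begin{itemize}
\item[(i)] the tensor map $(f,\phi )\mapsto f\otimes \overline \phi$ is continuous $\Sigma _s(\rr d)\times \Sigma _s(\rr d)\to \Sigma _s(\rr {2d})$;
\item[(ii)] composition with an invertible linear map is a homeomorphism on $\Sigma _s(\rr {2d})$;
\item[(iii)] partial Fourier transforms are homeomorphisms on $\Sigma _s(\rr {2d})$.
\end{itemize}
Fact (ii) follows from the characterization \eqref{Eq:GSFtransfChar}, since $|Tz|\asymp |z|$ and $\widehat{F\circ T}=|\det T|^{-1}\widehat F\circ (T^{-1})^{t}$. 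For (i) and (iii) I would use the seminorms $\nm \cdo{[h]}$, which behave well under products of the variables. Composing these three continuous maps gives \eqref{Eq:STFTSchwartz}.

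For the extension \eqref{Eq:STFTTempDist}, I would use the same formula at the distribution level. The tensor product $\Sigma _s'(\rr d)\times \Sigma _s'(\rr d)\to \Sigma _s'(\rr {2d})$ is defined and (separately, hence by the kernel theorem and Fréchet/barrelled properties, hypo-)continuous. Composition with $T$ and $\mascF _2$ are defined on $\Sigma _s'(\rr {2d})$ by duality, as transposes of the homeomorphisms in (ii) and (iii). This gives a map extending \eqref{Eq:STFTSchwartz}. Uniqueness follows from the density of $\Sigma _s$ in $\Sigma _s'$ stated after \eqref{eq:EmbGelfandShilov}. One also checks that the resulting map agrees with \eqref{Eq:STFTDef} when $\phi \in \Sigma _s$.

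For \eqref{Eq:STFTL2}, I would restrict to $f,\phi \in \Sigma _s$ first. The change of variables $T$ has $|\det T|=1$ and $\mascF _2$ is unitary on $L^2(\rr {2d})$ by Plancherel in the second variable. Hence
$$
\nm {V_\phi f}{L^2}=\nm {f\otimes \overline\phi}{L^2}=\nm f{L^2}\nm \phi{L^2}.
$$
The map is therefore bounded (indeed isometric in the sense of this norm identity) on a dense subset, so it extends continuously to $L^2\times L^2$. This extension coincides with the restriction of the distributional map, because both are continuous into $\Sigma _s'(\rr {2d})$ and $L^2\hookrightarrow \Sigma _s'$ continuously.

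The main obstacle I expect is the continuity of the tensor product and of the partial Fourier transform on $\Sigma _s$ and its dual, i.e.\ facts (i) and (iii), in particular the joint continuity on $\Sigma _s'\times \Sigma _s'$. I would handle it via the kernel theorem, $\Sigma _s(\rr {2d})\simeq \Sigma _s(\rr d)\widehat\otimes \Sigma _s(\rr d)$ (nuclearity), and the fact that separately continuous bilinear maps on these Fréchet/DF spaces are jointly continuous on bounded sets, citing the standard Gelfand–Shilov literature rather than reproving it.
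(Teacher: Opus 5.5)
Your proposal is correct, and it is the standard argument behind the references the paper cites in place of a proof (the paper itself omits the argument): you factor $V_\phi f=\mascF _2\big ((f\otimes \overline \phi )\circ T\big )$ and use that tensor products, invertible linear changes of variables and partial Fourier transforms are continuous on $\Sigma _s$ and, by transposition, on $\Sigma _s'$. The one point to state more carefully is joint continuity on $\Sigma _s'\times \Sigma _s'$, since ``continuity on bounded sets'' only gives hypocontinuity; full continuity follows either from Grothendieck's theorem that hypocontinuous bilinear maps on products of (DF)-spaces are continuous, or directly from the nuclearity identification $\Sigma _s'(\rr {2d})\simeq \Sigma _s'(\rr d)\widehat \otimes \Sigma _s'(\rr d)$.
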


\par

We omit the proof since it follows from results 
in \cite{Fol,GroZim2001,Teofanov2,Toft10,Toft17}.

\par

If $\phi \in \Sigma _{s} (\rr d)$ and $f\in \Sigma _{s} '(\rr d)$, then
\eqref{Eq:STFTTempDist} shows that $V_\phi f\in \Sigma _{s} '(\rr {2d})$.
On the other hand, it is easy to see that the right-hand side
of \eqref{Eq:STFTDef} defines a smooth function. Consequently
beside \eqref{Eq:STFTTempDist} and \eqref{Eq:STFTSchwartz},
we also have the continuous map
\begin{equation}\label{Eq:STFTTempDistSchwartz}
(f,\phi ) \mapsto V_\phi f   :\,
\Sigma _{s} '(\rr d) \times \Sigma _{s} (\rr d) \to \Sigma _{s} '(\rr {2d})
\cap C^\infty (\rr {2d}).
\end{equation}

\par

For the short-time Fourier transform, the Parseval identity
is replaced by the so-called Moyal identity, also known as the {\em orthogonality relation}
\begin{equation}\label{Eq:Moyal}
(V_{\phi}f,V_{\psi}g)_{L^2(\rr {2d})}
= (\psi ,\phi )_{L^2(\rr d)}(f,g)_{L^2(\rr d)},
\end{equation}
when $f,g,\phi ,\psi \in L^2(\rr d)$, cf. \cite{Gro2}.

\par
By Moyal's identity \eqref{Eq:Moyal} it follows that if
$\phi \in \Sigma _{s} (\rr d)\setminus 0$, then the identity operator
on $\Sigma _{s} '(\rr d)$ is given by
\begin{equation}\label{Eq:IdentSTFTAdj}
\operatorname{Id}
=
\left (\nm \phi{L^2}^{-2}\right ) \cdot V_\phi ^*\circ V_\phi ,
\end{equation}
provided suitable mapping properties
of the ($L^2$-)adjoint $V_\phi ^*$ of $V_\phi$ can be established.
Obviously, $V_\phi ^*$ fullfils
\begin{equation}\label{Eq:STFTAdjoint}
(V_\phi ^*F,g) _{L^2(\rr d)} = (F,V_\phi g) _{L^2(\rr {2d})}
\end{equation}
when $F\in \Sigma _{s} (\rr {2d})$ and $g\in \Sigma _{s}(\rr d)$.

\par

By expressing the scalar product and the short-time Fourier
transform in terms of integrals in \eqref{Eq:STFTAdjoint},
it follows by straight-forward manipulations that
the adjoint in \eqref{Eq:STFTAdjoint} can be written as
\begin{equation}\label{Eq:STFTAdjointFormula}
(V_\phi ^*F)(x) = (2\pi )^{-\frac d2}
\iint _{\rr {2d}} F(y,\eta )\phi (x-y)e^{i\scal x\eta}\, dy \, d\eta ,
\end{equation}
when $F\in \Sigma _{s} (\rr {2d})$ and $\Phi \in \Sigma _{s} (\rr {d})$. We may now use mapping properties
like \eqref{Eq:STFTTempDist}--\eqref{Eq:STFTL2} to extend
the definition of $V_\phi ^*F$ when $F$ and $\phi$ belong to
various classes of function and distribution spaces. For example,
by \eqref{Eq:STFTTempDist},
\eqref{Eq:STFTSchwartz} and \eqref{Eq:STFTL2},
it follows that the map
$$
(F,g)\mapsto (F,V_\phi g) _{L^2(\rr {2d})}
$$
defines a sesqui-linear form on
$\Sigma _{s} (\rr {2d})\times\Sigma _{s} '(\rr d)$,
$\Sigma _{s} '(\rr {2d})\times \Sigma _{s} (\rr d)$ and on
$L^2(\rr {2d})\times L^2(\rr d)$. This implies that
if $\phi \in \Sigma _{s} (\rr d)$, then
the mappings
\begin{equation}\label{Eq:STFTAdjCont}
\begin{alignedat}{3}
V_\phi ^* :  \Sigma _{s} (\rr {2d}) \to \Sigma _{s} (\rr d),
\qquad
&V_\phi ^* & &: &\Sigma _{s} '(\rr {2d}) &\to \Sigma _{s} '(\rr d)
\\[1ex]
\text{and}\qquad
&V_\phi ^* & &: & L^2(\rr {2d}) &\to L^2(\rr d)
\end{alignedat}
\end{equation}
are continuous.

\par

We will often use the estimates
\begin{align}
| (V _{\phi _1} \circ V_{\phi _2} ^*)F| &\leq
|F| * | V _{\phi _1}  \phi _2|
\label{Eq:adjoint-convolution}
\intertext{and}
|(\phi _2,\phi _3)_{L^2}|
\cdot
| V _{\phi _1}  f |
&\leq 
|V _{\phi _2} f| * | V _{\phi _1}
\phi _3|.
\label{Eq:ChangeWindow}
\end{align}

In some parts of our exposition it is more convenient to use the transform
\begin{align}
\label{def:TTransform}
    T _\phi f (x)
    =
    \int f(y+x) \overline{\phi(y)} e^{-i\scal y\xi} dy
    = e^{i\scal x \xi} V _\phi f(x,\xi), 
    \quad x,\xi \in \rr d
\end{align}
instead of the short-time Fourier transform. 

\par

\subsection{Modulation spaces}
\label{subsec1.4}

\par

For the definition of classical
modulation spaces, we need to discuss
mixed norm spaces of Lebesgue types.
Let $\Omega \subseteq \rr d$ be a
(Lebesgue) measurable set in $\rr d$,
and let $\maclM (\Omega )$ be the set of all 
complex-valued (Lebesgue)
measurable functions on $\Omega$.

\par

For any $p,q\in (0,\infty ]$, the mixed norm
space
$L^{p,q}_{(\omega )}(\rr {2d})$
of Lebesgue type, is given by
\begin{alignat}{3}
L^{p,q}_{(\omega )}(\rr {2d})
&=
\sets{F\in \maclM (\rr {2d})}
{\nm F{L^{p,q}_{(\omega )}}<\infty }
\label{Eq:MixLebSpace1}
\end{alignat}
for some $p,q\in (0,\infty ]$ and
$\omega \in \mascP _E(\rr {2d})$.
Here
\begin{alignat*}{1}
\nm F{L^{p,q}_{(\omega )}}
&\equiv
\nm {G_{F,\omega ,p}}{L^q(\rr {d})},
\intertext{where}
G_{F,\omega ,p}(\xi )
&\equiv
\nm {F(\cdo ,\xi )\omega (\cdo ,\xi )}
{L^p(\rr {d})},
\qquad
F\in \maclM (\rr {2d}).
\end{alignat*}
We put
$$
L^{p,q}= L^{p,q}_{(\omega )}
\quad \text{when}\quad
\omega \equiv 1.
$$
We also put $L^p_{(\omega )}=L^{p,p}_{(\omega )}$, 
and observe that $L^p=L^{p,p}$.

\par

In our investigations, (weighted versions of)
the subclass $ L^\sharp (\rr d)$ of
$L^\infty (\rr d)$ will play a fundamental role.
The space $ L^\sharp (\rr d)$ consists of
all $f \in  L^\infty (\rr d)$ such that
\begin{equation*}
\lim _{R\to \infty}
(\underset{|x|\geq R} \sup \, |f(x) | ) = 0,
\end{equation*}
and if
$\omega \in \mascP _E(\rr {d})$,
then
$ f \in  L^\sharp _{(\omega )}(\rr d)$
if $ f  \omega \in  L^\sharp (\rr d)$.
In addition, 
$L^{\sharp ,q} (\rr {2d})$, $q\in (0,\infty ]$, 
denotes the set of all
$ F\in L^{\infty ,q}(\rr {2d})$
which satisfy
\begin{equation*}
\lim _{R\to \infty}
\NM {\sup _{|x|\ge R}  \left| F (x, \cdot) \right |}{L^q}
= 0,
\end{equation*} 
and if $\omega \in \mascP _E(\rr {2d})$,
then 
$ \displaystyle
F \in  L^{\sharp ,q}_{(\omega )}(\rr {2d}) $ $\Leftrightarrow$
$  \displaystyle F \cdot \omega \in 
L^{\sharp ,q}(\rr {2d})$.


\par


\par






\par



\begin{defn}
\label{Def:SpecialModSpace}
Let $p,q\in (0,\infty ]$, $\omega \in
\mascP _E(\rr {2d})$ and
$\phi \in \Sigma _1(\rr d)\setminus 0$.
\begin{enumerate}
\item The classical modulation space
$M^{p,q}_{(\omega )}(\rr d)$ consists
of all $f\in \Sigma _1'(\rr d)$
such that
\begin{equation}
\label{Eq:ModNorm2}
\nm f{M^{p,q}_{(\omega )}}\equiv \nm {V_\phi f}
{L^{p,q}_{(\omega )}}
\end{equation}
is finite. The topology of $M^{p,q}_{(\omega )}
(\rr d)$ is defined by
the quasi-norm $\nm \cdo{M^{p,q}_{(\omega )}}$;

\begin{equation}
\nm {H_{f,\omega ,p}}{L^q}<\infty ,
\quad \text{where}\quad
H_{f,\omega ,p}(\xi )
\equiv
\nm {V_\phi f(\cdo ,\xi )
\cdot \omega (\cdo ,\xi )}{L^p}.
\end{equation}

\vrum

\item the modulation spaces
$M^{\sharp ,q}_{(\omega )}(\rr d)$ consists of
all
$f\in M^{\infty ,q}_{(\omega )}(\rr d)$
such that
\begin{equation}
\label{Eq:ModNormInfinityCond}
\lim _{R\to \infty}
\NM  { \sup _{|x|\ge R}\Big | V_\phi f(x,\cdo )
\omega (x,\cdo )\Big  |}{L^q}
= 0.
\end{equation}
The topology is defined through
the (quasi-)norm
\begin{equation}
\label{Eq:ModSharpNorm}
\nm f{M^{\sharp ,q}_{(\omega )}}
\equiv
\nm f{M^{\infty ,q}_{(\omega )}},
\qquad
f\in M^{\sharp ,q}_{(\omega )}(\rr d).
\end{equation}
\end{enumerate}
\end{defn}

\par

The (classical) modulation spaces, are
essentially
introduced in \cite{Fei1} by Feichtinger.
(See e.g. \cite{Fei6} for definition of more general 
modulation spaces.)
By \eqref{Eq:ModNormInfinityCond}
and \eqref{Eq:ModSharpNorm} it follows
that $M^{\sharp ,q}_{(\omega )}(\rr d)$
is strongly related to 
$M^{\infty ,q}_{(\omega )} (\rr d)$. 
We note that modulation spaces similar to
$M^{\sharp ,q}_{(\omega )} (\rr d)$ are studied
in \cite{CorGro2}.

\par

Evidently, by Definition
\ref{Def:SpecialModSpace} it follows
that
$f\in M^{\sharp ,q}_{(\omega )}(\rr d)$  
if and only if $V_\phi f \in 
L^{\sharp ,q}_{(\omega )}(\rr {2d})$.

\par

\begin{rem}
\label{Rem:VversusT}
Since
$|V_\phi f \cdot \omega |
=
|T_\phi f \cdot \omega |,
$
when $\phi \in \Sigma _1(\rr d)$,
$f\in \Sigma _1'(\rr d)$ and
$T_\phi$ is given by
\eqref{def:TTransform}, it follows
that we may replace $V_\phi f$
with $T_\phi f$ in Definition
\ref{Def:SpecialModSpace}.
\end{rem}

\par


\par

As in \cite{ToPfTe} we need the following 
estimate of 
the short-time Fourier transform with a 
Gaussian window. The result essentially 
follows from \cite[Lemma 2.3]{GaSa}.
For a proof of the  general situation we refer 
to \cite[Lemma 2]{Toft2022}. Here $B_r(x)$
denotes the open ball in $\rr d$ with center at
$x\in \rr d$ and radius $r>0$.

\par

\begin{lemma}\label{Lemma:EstSTFT}
Let $p \in (0,\infty]$, $R > 0$,
$X_0\in \rr {2d}$ be fixed, 
and let $\phi _0(x)
=
\pi ^{-\frac d4}e^{-\frac 12|x|^2}$, $x\in \rr d$. Then
$$
\left | V_{\phi_0} f (X_0) \right | 
\leq
C \nm{ V_{\phi_0} f }{L^p (B_R (X_0) )}, 
\qquad
f \in \Sigma_1' (\rr {d} ),
$$
where the constant $C$ is independent
of $X_0$ and $f$.
\end{lemma}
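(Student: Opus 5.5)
The plan is to exploit the reproducing structure of the STFT with the Gaussian window, and to reduce everything to a bound on a holomorphic function (the Bargmann transform), where subharmonicity gives sub-mean-value estimates for every exponent $p$.

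\textbf{Step 1 (reduction to the Bargmann transform).} Write $X_0=(x_0,\xi_0)$ and $z=x-i\xi$. A direct computation with $\phi_0$ shows that
$$
V_{\phi_0}f(x,\xi)=c\, e^{-\frac 14|x|^2-\frac 14|\xi|^2}\, e^{-\frac i2\scal x\xi}\, (\mathfrak V f)(z/\sqrt 2),
$$
where $\mathfrak V f$ is the Bargmann transform. The function $F=\mathfrak V f$ is entire on $\mathbf C^d$ for every $f\in\Sigma_1'(\rr d)$. Hence
$$
|V_{\phi_0}f(x,\xi)| = c\,|F(z/\sqrt2)|\,e^{-\frac 14|z|^2}.
$$

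\textbf{Step 2 (translation invariance).} Use the covariance $|V_{\phi_0}(e^{i\scal\cdo{\xi_0}}f(\cdo-x_0))(X)| = |V_{\phi_0}f(X+X_0)|$ and replace $f$ by the shifted and modulated distribution. This keeps us in $\Sigma_1'$, and the $L^p(B_R)$ norm is translated accordingly. It therefore suffices to treat $X_0=0$, with $C$ depending only on $R,p,d$. This is what makes the constant independent of $X_0$ and $f$.

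\textbf{Step 3 (sub-mean-value estimate).} For $X_0=0$, the weight $e^{-|z|^2/4}$ is bounded above and below on $B_R(0)$. It therefore suffices to show $|F(0)|\le C\nm{F}{L^p(B_{R'}(0))}$ for entire $F$ and $0<p\le\infty$.
\begin{itemize}
\item For $p=\infty$ this is trivial.
\item For $0<p<\infty$, the function $|F|^p$ is plurisubharmonic, since $\log|F|$ is plurisubharmonic. The sub-mean-value property over the ball then gives $|F(0)|^p\le |B|^{-1}\int_B|F|^p$.
\end{itemize}
Taking $p$-th roots and reinserting the weights yields the claim.

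\textbf{Main obstacle.} The key point is the case $p<1$: Hölder-type or convolution arguments (e.g.\ via \eqref{Eq:ChangeWindow}) fail there, and one genuinely needs the holomorphic structure. The plurisubharmonicity of $|F|^p$ handles it uniformly in $p$.
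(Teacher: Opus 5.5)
Your argument is correct and follows the same route as the results the paper cites for this lemma (Galperin--Samarah, Lemma 2.3, and Toft, Lemma 2). Those write $|V_{\phi_0}f|$ as the modulus of the entire Bargmann transform times a Gaussian factor, and apply the sub-mean-value property of the plurisubharmonic function $|F|^p$, which covers every $p\in(0,\infty)$. The only slip is a sign in Step 2: in fact $|V_{\phi_0}(e^{i\scal{\cdo}{\xi_0}}f(\cdo-x_0))(X)|=|V_{\phi_0}f(X-X_0)|$, so you should shift by $-X_0$ instead, and nothing else in the argument changes.
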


\par

In the following propositions we
list well-known properties for modulation
spaces. The first proposition deals with
invariant and topological properties
for modulation spaces.

\par

\begin{prop}
\label{Prop:InvModAndBanach}
Let $p,q\in (0,\infty ]$,
$\omega \in \mascP _E(\rr {2d})$ and
$\phi \in \Sigma _1(\rr d)\setm $.
Then the following is true:
\begin{enumerate}
\item the definition of
$M_{(\omega )}^{p,q}(\rr d)$
is independent of the choices of
$\phi \in \Sigma _1 (\rr d)\setm $,
and different choices give rise to
equivalent quasi-norm;

\vrum

\item the space $M^{p,q}_{(\omega )}(\rr d)$
is a quasi-Banach space
which increases with $p$ and $q$, and
decreases with $\omega$. If in addition
$p,q\ge 1$, then $M_{(\omega )}^{p,q}(\rr d)$
is a Banach spaces.
\end{enumerate}
\end{prop}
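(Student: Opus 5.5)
The plan is to prove (1) first by a change-of-window argument, and then to derive (2) from (1) together with the reproducing formula \eqref{Eq:IdentSTFTAdj}.

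\emph{Step 1: window independence.} Let $\phi ,\psi \in \Sigma _1(\rr d)\setm$. Since $(\psi ,\psi )_{L^2}\neq 0$, \eqref{Eq:ChangeWindow} with $\phi _2=\phi _3=\psi$ gives
$$
|V_\phi f|\lesssim |V_\psi f|*|V_\phi \psi | .
$$
Here $V_\phi \psi \in \Sigma _1(\rr {2d})$ by Proposition \ref{Prop:ExtSTFTSchwartz}, so it decays faster than $e^{-r|X|}$ for every $r>0$. Choose a submultiplicative $v$ that moderates $\omega$; by \eqref{Eq:weight0} we may take $v\lesssim e^{r_0|\cdo |}$. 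Then
$$
|V_\phi f\cdot \omega |\lesssim |V_\psi f\cdot \omega |*|V_\phi \psi \cdot v|,
$$
and $G\equiv V_\phi \psi \cdot v$ lies in every weighted $L^1$ and $L^r$ class.

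\emph{Step 2: the convolution estimate.} For $p,q\ge 1$ the mixed-norm Young inequality $\nm {F*G}{L^{p,q}}\le \nm F{L^{p,q}}\nm G{L^1}$ gives $\nm f{M^{p,q}_{(\omega ),\phi}}\lesssim \nm f{M^{p,q}_{(\omega ),\psi}}$, and symmetry gives equivalence. When $\min (p,q)<1$, Young fails for $L^{p,q}$. In that case I would discretize. Use Lemma \ref{Lemma:EstSTFT} with the Gaussian window $\phi _0$ to pass to the sequence of local suprema $a(j,\iota )=\sup _{Q_{j,\iota}}|V_{\phi _0}f\,\omega |$ over a lattice of unit cubes. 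Lemma \ref{Lemma:EstSTFT} with $p$ replaced by $\min (p,q)$ bounds each $a(j,\iota )$ by a local $L^{\min (p,q)}$ norm, which gives $\nm a{\ell ^{p,q}}\asymp \nm {V_{\phi _0}f}{L^{p,q}_{(\omega )}}$. The convolution is then dominated by a discrete convolution $a*b$ with $b(k)=\sup _{Q_k}|G|$, and the quasi-Young inequality $\nm {a*b}{\ell ^{p,q}}\le \nm a{\ell ^{p,q}}\nm b{\ell ^{r}}$, $r=\min (1,p,q)$, holds for sequences by the $r$-triangle inequality. It remains to compare an arbitrary $\phi$ with $\phi _0$ in both directions via Step 1: $\sup |V_\phi f|$ over cubes is controlled by $V_{\phi _0}f$ using the pointwise bound, and conversely. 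Ensuring this comparison works in both directions is the main obstacle. My approach is to route every comparison through $\phi _0$, so that only the direction ``arbitrary window $\le$ Gaussian window'' has to be checked, with the discretized bound handling the non-convex case.

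\emph{Step 3: completeness.} Next I prove that $M^{p,q}_{(\omega )}$ is a quasi-Banach space. Let $(f_n)$ be Cauchy. Then $V_{\phi}f_n$ is Cauchy in $L^{p,q}_{(\omega )}$; by Lemma \ref{Lemma:EstSTFT} it is also locally uniformly Cauchy, so it converges to some $F\in L^{p,q}_{(\omega )}$. Put $f=\nm \phi {L^2}^{-2}V_\phi ^*F\in \Sigma _1'(\rr d)$, which is well defined since $F$ has at most exponential growth against the decay of $\Sigma _1$ by \eqref{Eq:weight1}, and \eqref{Eq:STFTAdjCont} applies. Then \eqref{Eq:adjoint-convolution} together with the same convolution bound as in Step 2 yields $\nm {f_n-f}{M^{p,q}_{(\omega )}}\lesssim \nm {V_\phi f_n-F}{L^{p,q}_{(\omega )}}\to 0$.

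\emph{Step 4: monotonicity and the Banach case.} Monotonicity in $\omega$ is immediate from the definition. Monotonicity in $p$ and $q$ follows from the discretization: $\ell ^{p_1,q_1}\subseteq \ell ^{p_2,q_2}$ when $p_1\le p_2$ and $q_1\le q_2$. Finally, when $p,q\ge 1$ the quasi-norm is a norm because $L^{p,q}_{(\omega )}$ is normed, so $M^{p,q}_{(\omega )}$ is a Banach space.
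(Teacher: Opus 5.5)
There is a genuine gap in Step 2 when $\min (p,q)<1$, at exactly the point you identify as the main obstacle. Routing every comparison through $\phi _0$ and checking only ``arbitrary window $\le$ Gaussian window'' gives $\nm {V_\phi f\cdot \omega}{L^{p,q}}\lesssim \nm {V_{\phi _0}f\cdot \omega}{L^{p,q}}$ for every $\phi$. To compare two windows $\phi ,\psi$, however, you need the chain
\[
\nm {V_\phi f\cdot \omega}{L^{p,q}}\lesssim \nm {V_{\phi _0}f\cdot \omega}{L^{p,q}}\lesssim \nm {V_\psi f\cdot \omega}{L^{p,q}},
\]
and the second link is the opposite direction. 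Without it you cannot conclude that $V_\psi f\cdot \omega \in L^{p,q}$ implies $V_{\phi _0}f\cdot \omega \in L^{p,q}$. So (1) is not proved, neither the independence of the space nor the equivalence of quasi-norms. Steps 3--4 work with $\phi _0$, so they only concern the Gaussian quasi-norm.

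Your discretization cannot supply the reverse direction either. From $|V_{\phi _0}f|\lesssim |V_\psi f|*|V_{\phi _0}\psi |$ you would need $\{ \sup _{Q(k)}|V_\psi f\, \omega |\} _k\in \ell ^{p,q}$ with quasi-norm $\lesssim \nm {V_\psi f\cdot \omega}{L^{p,q}}$. That is Lemma \ref{Lemma:EstSTFT} for the window $\psi$, and the lemma is available only for the Gaussian, since it rests on the analytic (Bargmann) structure of $V_{\phi _0}f$. Moreover, for $p<1$ or $q<1$ there is no Young inequality $\nm {F*G}{L^{p,q}}\lesssim \nm F{L^{p,q}}$ for general $F$. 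On $\rr n$, the $L^p$-normalized spike $N^{n/p}\chi _{B_{1/N}}$ convolved with a fixed positive bump has $L^p$ quasi-norm $\gtrsim N^{n/p-n}\to \infty$.

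The paper gives no proof of this proposition; it quotes it as well known and relies on \cite[Theorem 3.1]{ToPfTe} (cf. Proposition \ref{Prop:EquivNorms2}). The missing idea is the absorption argument the paper uses for the analogous estimate \eqref{Eq:EquivNorms2B} in the proof of Proposition \ref{Prop:NormEstimates}. Set $s=\min (1,p,q)$, $F=|V_\psi f\, \omega |$ and $F_0=|V_{\phi _0}f\, \omega |$.

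1. Discretize $F_0\lesssim F*\Phi _1$ using local $L^s$ norms of $F_0$.
2. Bound the resulting local $L^1$ norms of $F$ by $\nm F{L^1(K)}\le \nm F{L^s(K)}^s\nm F{L^\infty (K)}^{1-s}$, then split with the weighted arithmetic--geometric inequality and a parameter $\theta >0$.
3. The local $L^s$ part is controlled by $\nm F{L^{p,q}}$.
4. The local sup part of $F$ is controlled by the easy direction $F\lesssim F_0*\Phi _2$, and then by Lemma \ref{Lemma:EstSTFT} through the local $L^s$ norms of $F_0$.

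This gives $X\le C\theta \nm F{L^{p,q}}^s+C'\theta ^{-s/(1-s)}X$, where $X$ is the $s$-th power of the $\ell ^{p,q}$ quasi-norm of the local $L^s$ norms of $F_0$. For large $\theta$ you absorb the last term, but only if $X<\infty$ a priori, which is what you are trying to prove.

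So you need one more device. Replace $\omega$ by $\omega \cdot e^{-N\max (|\cdo |-R,0)}$.
- This weight is $v e^{N|\cdo |}$-moderate uniformly in $R$, so all constants stay uniform because $V_{\phi _0}\psi \in \Sigma _1(\rr {2d})$.
- It makes $X$ finite for large $N$, because $\omega$ and $V_{\phi _0}f$, $f\in \Sigma _1'(\rr d)$, grow at most exponentially.
- Then let $R\to \infty$ by monotone convergence.

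Two smaller points. In Step 3 you must take $\phi =\phi _0$. For $\min (p,q)<1$, the discretized bound for $V_{\phi _0}V_{\phi _0}^*(V_{\phi _0}f_n-F)$ is legitimate only because $V_{\phi _0}f_n-F$ is a locally uniform limit of $V_{\phi _0}(f_n-f_m)$ and so inherits the estimate of Lemma \ref{Lemma:EstSTFT}; you should say this. The remaining ingredients are fine: Young for $p,q\ge 1$, the equivalence between the local-supremum sequence norm and $\nm {V_{\phi _0}f}{L^{p,q}_{(\omega )}}$, the discrete quasi-Young inequality, and Step 4.
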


\par

For convenience we set
$M^p_{(\omega )}=M^{p,p}_{(\omega )}$.
If in addition $\omega \equiv 1$, then we
set $M^{p,q}=M^{p,q}_{(\omega )}$
and $M^p=M^p_{(\omega )}$.

\par

\subsection{Invariant quasi-Banach 
function spaces}\label{subsec1.5}

\par


\par

The functional
$\nm \cdo{\mascB}$, defined on the
vector space $\mascB$ is called a
\emph{quasi-norm} of order
$r _0 \in (0,1]$, or an \emph{$r_0$-norm}, if the conditions
\begin{alignat}{2}
\nm {f+g}{\mascB} ^{r _0}
&\le
\nm {f}{\mascB} ^{r _0} + \nm {g}{\mascB} 
^{r _0},& 
\quad f,g &\in \mascB .
\label{Eq:WeakTriangle1}
\\[1ex]
\nm {\alpha \cdot f}{\mascB}
&=
|\alpha| \cdot \nm f{\mascB}\ge 0,
& \quad \alpha &\in \mathbf{C},
\quad  f \in \mascB ,
\notag
\intertext{and}
\nm f{\mascB} &= 0
\quad  \Leftrightarrow \quad
f=0, & &
\notag
\end{alignat}
hold true. A vector space which contains 
a quasi-norm (of order $r_0$)
is called a quasi-norm space (of order 
$r_0$), or an $r_0$-normed space.
A complete quasi-norm space
(of order $r_0$) is called a
\emph{quasi-Banach space} (of order $r_0$),
or an \emph{$r_0$-Banach space}.

\par

The quasi-Banach
space $\mascB$ is called a
\emph{quasi-Banach function
space} (or \emph{QBF space}) on
$\rr d$, whenever $\mascB$ is
contained in
$\maclM (\rr d)$.
It is here
understood that the zero element
in $\maclM (\rr d)$ (which is
an equivalent class), is the zero
element in $\mascB$.

\par

We always assume that the QBF spaces
are \emph{solid}, which is described
in the following definition.
(See e.g. (A.3) in \cite{Fe1992}.)

\par

\begin{defn}
\label{Def:SolidBF}
Let $\mascB$ be a
\emph{QBF space} on
$\rr d$.
Then $\mascB$ is
called \emph{solid}, if the
following conditions hold:
\begin{enumerate}
\item
for any
$f,g\in \maclM (\rr d)$
which satisfy
$g\in \mascB$ and $|f|
\le |g|$, then $f\in \mascB$ and
\begin{equation}\label{Eq:Solid}
\nm f{\mascB}\le C\nm g{\mascB},
\end{equation}
for some constant $C>0$ which is
independent of $f$ and $g$ above;

\vrum

\item
for any $x\in \rr d$,
there is a constant $c>0$ and a function
$f\in \mascB$ such that $|f|>c$ on a convex
set in $\rr d$ which is not a zero set,
and which contains $x$.
\end{enumerate}
\end{defn}

\par

In the following definition
we put needed restrictions
on QBF spaces.

\par

\begin{defn}\label{Def:BFSpaces}
Let $\mascB$ be a solid QBF space of
order $r_0\in (0,1]$ on $\rr d$,
$C>0$ be the same as in 
\eqref{Eq:Solid},
and let $v_0\in \mascP _E(\rr d)$
be submultiplicative.
\begin{enumerate}
\item The space $\mascB$
is called a
\emph{solid translation invariant
quasi-Banach function space on
$\rr d$} 
(with respect to $r_0$ and $v_0$), or 
\emph{invariant QBF space on $\rr d$},
if for any $x\in \rr d$ and
$f\in \mascB$, then $f(\cdo -x)\in
\mascB$, and 
\begin{equation}\label{translmultprop1}
\nm {f(\cdo -x)}{\mascB}
\le
Cv_0(x)\nm {f}{\mascB}\text ;
\end{equation}

\vrum

\item The space $\mascB$
is called a \emph{solid translation 
invariant Banach function space on
$\rr d$} 
(with respect to $v_0$), or 
\emph{invariant BF space on $\rr d$},
if $\mascB$ is a Banach space and
an invariant QBF space on $\rr d$
with respect to $v_0$.

\vrum

\item The space $\mascB$
is called a \emph{solid translation 
invariant normal quasi-Banach function 
space on $\rr d$} (with respect to $r_0$ and $v_0$), or 
\emph{a normal QBF space on $\rr d$},
if there is an invariant BF space
$\mascB _0$ on $\rr d$
with respect to $v_0^{r_0}$ such that
\begin{equation}
\label{Eq:NormQBFNorm}
\nm f{\mascB}
\equiv
\nm {\, |f|^{r_0}\, }{\mascB _0}^{1/r_0},
\quad
f\in \mascB .
\end{equation}
\end{enumerate}
\end{defn}

\par

Evidently, $\mascB$ in
Definition \ref{Def:BFSpaces} (3),
is an invariant QBF space with respect
to $r_0$ and $v_0$.

\par



Suppose that
$\omega \in \mascP _E(\rr d)$, and 
$\mascB$ is a quasi-Banach function
space on $\rr d$ with respect
to $r_0\in (0,1]$ and
$v_0\in \mascP _E(\rr d)$. Then we
let
\begin{equation}\label{Eq:BomegaDef}
\mascB _{(\omega )}=\mascB (\omega)
\equiv
\sets {f\in \maclM (\rr d)}
{f\cdot \omega \in \mascB},
\end{equation}
and equip the space in
\eqref{Eq:BomegaDef} with the
quasi-norm
$$
\nm f{\mascB _{(\omega )}}
\equiv
\nm {f\cdot \omega}{\mascB},
\qquad
f\in \mascB _{(\omega )}.
$$
For any $f\in \maclM (\rr d)
\setminus \mascB _{(\omega )}$, we put
$\nm f{\mascB}=\infty$.
%
%
%







\par

By the definitions it follows that the
families of invariant QBF spaces
in Definition \ref{Def:BFSpaces}
are not enlarged by including
weighted versions as in
\eqref{Eq:BomegaDef}. On the other hand,
in some situations, it might be
convenient to include weights in the
definition of such spaces, see e.g. 
\cite{PfTo19}.

\par


\par

Every invariant BF space $\mascB$ is 
continuously embedded in
$\Sigma _1'(\rr d)$
and the map $(f,\fy )\mapsto f*\fy$
is well-defined and continuous from 
$\mascB \times L^1_{(v_0)}(\rr d)$
to $\mascB$. 
For the invariant BF space
$\mascB \subseteq L^1_{loc}(\rr d)$ with respect to $v_0$ we have
Minkowski's inequality, i.e.,
\begin{equation}\label{Eq:MinkIneq}
\nm {f*\fy}{\mascB}\le C \nm {f}{\mascB}\nm \fy{L^1_{(v_0)}},
\qquad f\in \mascB ,\ \fy \in L^1_{(v_0)} (\rr d),
\end{equation}
for some $C>0$ which is independent of
$f\in \mascB$ and $\fy \in L^1_{(v_0)} (\rr d)$, 
see e.g. \cite{PfTo19}.

\par

For any $\mascB$ in Definition
\ref{Def:BFSpaces}, the corresponding
sequence space,
$\ell _{\mascB} = \ell _{\mascB}(\zz d)$
is the set of all sequences
$a = \{ a(j) \} _{j\in \zz d}\subseteq
\mathbf C$ such that
\begin{equation} \label{eq:seq-space-norm}
\nm a{\ell _{\mascB}}
\equiv
\NM
{\sum _{j\in \zz d}a(j)\chi _{j+Q}}
{\mascB},
\qquad Q=[0,1]^d.    
\end{equation}
Here $\chi _E$ is the characteristic
function of the set $E\subseteq \rr d$.

\par

\begin{example}
\label{Example:LebCase}
A common choice of $\mascB$
in Definition \ref{Def:BFSpaces}
is given by $L ^{p,q}_{(\omega)} (\rr {2d})$
in
\eqref{Eq:MixLebSpace1}.

\par

Let $v\in \mascP _E(\rr {d})$ be
chosen such that $\omega$ is 
$v$-moderate,
$$
r _0= \min(1,p,q),
\quad
p_0=\frac{p}{r_0}
\quad \text{and}\quad
q_0=\frac{q}{r_0}.
$$
Then
$L ^{p,q}_{(\omega)} (\rr {d})$
and 
$L^{p,q}_{*,(\omega )} (\rr {d})$
are
normal QBF spaces on $\rr {d}$ with 
respect to 
$v$, $r _0$ and 
$\mascB _0
=
L^{p_0, q_0}_{(\omega _{0})}
(\rr {d})$ 
respectively
$\mascB _0
=
L^{p_0,q_0}_{\ast ,(\omega _{0})}
(\rr {d})$, $\omega _{0}=\omega ^{r_0}$.

\par

Let $\Omega \subseteq \rr d$ be Lebesgue measurable.
As usual we let $L^{p,q}_{(\omega )}(\Omega )$
($L^{p,q}_{*,(\omega )}(\Omega )$)
be the set of all $f\in \maclM (\Omega )$ such that
$$
f_\Omega (x)
=
\begin{cases}
f(x), & x\in \Omega ,
\\[1ex]
0, & x\in \rr d \setminus \Omega ,
\end{cases}
$$
belongs to $L^{p,q}_{(\omega )}(\rr d)$
($L^{p,q}_{*,(\omega )}(\rr d)$). The corresponding
norm is given by
$$
\nm f{L^{p,q}_{(\omega )}(\Omega )}
\equiv 
\nm {f_\Omega}{L^{p,q}_{(\omega )}(\rr d )}
\qquad
\big (
\nm f{L^{p,q}_{*,(\omega )}(\Omega )}
\equiv 
\nm {f_\Omega}{L^{p,q}_{*,(\omega )}(\rr d )}
\big ).
$$

\par

In similar ways we let
$\ell ^{p,q}_{(\omega )}(\zz {d})$
and
$\ell ^{p,q}_{*,(\omega )}(\zz {d})$, 
$d=d_1 + d_2$ with $d_1, d_2 \in \mathbb N$,
be the set of all sequences $c$ on $\zz d$
such that
$$
\nm {c}{\ell _{(\omega )}^{p,q}}<\infty 
\quad \text{and}\quad
\nm {c}{\ell _{*,(\omega )}^{p,q}}
<\infty,
$$
respectively. Here
\begin{alignat*}{3}
\nm {c}
{\ell _{(\omega )}^{p,q}(\zz {d})}
&\equiv
\nm {G_{c,\omega ,p}}{\ell ^q(\zz {d_2})}, &
\quad 
G_{c,\omega, p}(\iota)
&=
\nm {c(\cdo ,\iota )\omega (\cdo ,\iota)}
{\ell ^p(\zz {d_1})}, & \quad
\iota &\in \zz {d_2}, 
\intertext{and}
\nm {c}
{\ell ^{p,q}_{*,(\omega )}(\zz {d})}
&\equiv
\nm {H_{c,\omega ,q}}{\ell ^p(\zz {d_1})}, &
\quad 
H_{c,\omega, q}(j)
&=
\nm {c(j,\cdo )\omega (j,\cdo )}
{\ell ^q(\zz {d_2})}, & \quad
j&\in \zz {d_1},
\end{alignat*}
and $\ell _0'(\zz d)$
denotes the set of all 
formal (complex-valued) sequences 
$c=\{ c(j)\} _{j\in \zz d}$ on $\zz d$.
We abbreviate
$$
\ell ^{p,q}= \ell ^{p,q}_{(\omega )}
\quad \text{and}\quad
\ell ^{p,q}_*= \ell ^{p,q}_{*,(\omega )}
\quad \text{when}\quad
\omega \equiv 1,
$$
$\ell ^p_{(\omega )}=\ell ^{p,p}_{(\omega )}$, and
$\ell ^p=\ell ^{p,p}$.

\par

By letting $\mascB$ be equal to
$L^{p}(\rr {d})$, $L^{p,q} (\rr {d})$, 
or $L ^{p,q} _\ast (\rr {d})$
in \eqref{eq:seq-space-norm}, we obtain
$$
\ell _ {L ^p_{(\omega)} (\rr {d})}
=
\ell ^p_{(\omega)}(\zz d),
\quad
\ell _ {L ^{p,q} _{(\omega)} (\rr {d})} 
=
\ell ^{p,q} _{(\omega)}(\zz {d})
\quad
\text{and}
\quad
\ell _ {L ^{p,q} _{\ast, (\omega)}
(\rr {d})}
=
\ell ^{p,q} _{\ast, (\omega)}(\zz {d}),
$$
with equality in norms.
%
%
\end{example}

\par

Let $\mascB$ be an invariant
QBF space on $\rr d$ with
respect to $r_0\in (0,1]$
and $v_0\in \mascP _E(\rr d)$.
Then the (discrete) 
convolution $*$, formally given by
$$ 
a*b = \sum_{j \in \zz d}
a(\cdot -j) b(j), 
\qquad 
a,b \in \ell _0' (\zz d),
$$
is well-defined on
$\ell _{(v_0)}^{r_0}(\zz d)
\times
\ell _{\mascB}(\zz d)
$
and satisfies
\begin{equation}\label{Eq:DiscConvEst}
\nm {a*b}{\ell _{\mascB}}
\le
C
\nm a{\ell _{(v_0)}^{r_0}}
\nm b{\ell _{\mascB}},
\qquad
a\in \ell _{(v_0)}^{r_0}(\zz d),
\,
b\in \ell _{\mascB}(\zz d),
\end{equation}
where the constant $C$ in
\eqref{Eq:DiscConvEst} only depends
on the constant $C$ in
\eqref{translmultprop1} and the
dimension $d$.

\par

\subsection{Wiener amalgam spaces and a
broad class of modulation spaces}
\label{subsec1.6}

\par

We first define Wiener amalgam spaces.

\par

\begin{defn}
\label{Def:WienAm}
Let $r\in (0,\infty ]$,
$\mascB$ be a normal QBF space on $\rr d$
and
$\omega \in \mascP _E(\rr {d})$.
Then the quasi-norm
$\nm f{\sfW ^{r}(\omega ,\mascB )}$
of $f\in \maclM (\rr {2d})$ is given
by
\begin{equation}
\label{Eq:WienerQuasiNormsSimpleExt}
\begin{aligned}
\nm f{\sfW ^{r}(\omega ,\mascB )}
&=
\nm f{\sfW ^{r}(\omega ,\ell _{\mascB})}
\equiv
\nm {a}{\ell _{\mascB}},\quad
\text{where}
\\[1ex]
a(j)
&\equiv
\nm {f \, \omega }{L^{r} (j+Q)},
\qquad
Q =[0,1] ^{2d},\ 
j\in \zz {2d}.
\end{aligned}
\end{equation}
The Wiener amalgam space
\begin{equation}
\label{Wiener-notation-1}
\sfW ^{r}(\omega ,\mascB )
=
\sfW ^{r}(\omega ,\ell _{\mascB} )
=
\sfW ^{r}(\omega ,\ell _{\mascB}(\zz {2d}))
\end{equation}
consists of all $f\in \maclM (\rr {2d})$
such that
$\nm f{\sfW ^{r}(\omega ,\ell _{\mascB})}$ is 
finite.
\end{defn}

\par

We have
\begin{equation}
\label{Eq:EmbWienAmLeb}
\sfW ^{\infty}(\omega ,\mascB )
\hookrightarrow
\mascB _{(\omega )}
\hookrightarrow
\sfW ^{r}(\omega ,\mascB )
\end{equation}
(see e.{\,}g. \cite[Proposition 2.1]{ToPfTe}).

\par

\begin{rem}
\label{Rem:EquivWienerNorms}
The cube $Q$ in Definition \ref{Def:WienAm}
can be replaced by any suitable convex set. 
In fact, let $r\in (0,\infty ]$,
$\omega \in \mascP _E(\rr {2d})$,
$p,q\in (0,\infty ]$
$\Lambda \subseteq \rr {2d}$
be a lattice, and let $\Omega
\subseteq \rr {2d}$ be a bounded
convex set such that
$$
\bigcup _{j\in \Lambda}(j+\Omega)
=\rr {2d}.
$$
Also let $a_{0,\Omega}
=
\{ a_{0,\Omega}(j) \} _{j\in \Omega}$,
$$
a_{0,\Omega}(f,j)\equiv
\nm {f\, \omega }{L^r(j+\Omega )},
\quad
j\in \Lambda ,
$$
and consider the quasi-norm
\begin{equation}\label{Eq:NormAltern}
\nmm f = \NM {\sum _{j\in \Lambda}
a_{0,\Omega }(f,j)\chi _{j+\Omega }}
{\mascB },
\end{equation}
when $f \in \maclM (\rr d)$.
By straight-forward computations, 
and using the fact that $\omega $ is moderate
it follows that 
$\nmm f \asymp \nm {f}
{\sfW ^r(\omega ,\mascB )}$,
(see \cite[Remark 1.19]{ToPfTe} or
e.{\,}g. \cite{Fei1981,Fei1,Hei}).
\end{rem}

\par

Our most general type of modulation spaces
are given in the following.

\par

\begin{defn}\label{Def:GenModSpace}
Let $\mascB$ be a normal QBF space on $\rr {2d}$, 
$\omega \in\mascP _E(\rr {2d})$,
and 
$\phi \in \Sigma _1(\rr d)\setm $. 
Then the \emph{modulation space} $M(\omega ,\mascB )$ consists
of all $f\in \Sigma _1'(\rr d)$ such that
\begin{equation}
\label{Eq:GenModNorm}
\nm f{M(\omega ,\mascB )}
\equiv \nm {V_\phi f\cdo \omega }{\mascB} <\infty .
\end{equation}
\end{defn}

\par

\begin{example}
Let $\mascB =L^{p,q}(\rr {2d})$, 
$p,q \in (0,\infty]$, and 
$\omega \in \mascP _E(\rr {2d})$.
Then 
$
M(\omega ,\mascB )
$
in Definition \ref{Def:GenModSpace}
becomes the classical modulation
space $M^{p,q}_{(\omega )}(\rr d)$,
see Definition \ref{Def:SpecialModSpace}.
\end{example}

\par

The following proposition explains
some recent norm equivalences for modulation spaces
given in Definition \ref{Def:GenModSpace}. We refer
to \cite[Theorem 3.1]{ToPfTe} for the proof.

\par

\begin{prop}
\label{Prop:EquivNorms2}
Let $\omega ,v,v_0\in \mascP _E(\rr 
{2d})$ be such that
$\omega$ is $v$-moderate, 
$\mascB$ be a normal QBF space
on $\rr {2d}$ with respect to
$r_0\in (0,1]$ and $v_0$,
$r_1,r_2\in [r_0,\infty ]$,
and let
$\phi _1,\phi _2\in M^{r_0}_{(v_0v)}
(\rr d)\setm $.
Then $M(\omega ,\mascB )$ is a quasi-Banach
space of order $r_0$,
\begin{equation}\label{Eq:EquivNorms21}
f\in M(\omega ,\mascB )
\quad \Leftrightarrow \quad
V_{\phi _1}f\cdo \omega \in \mascB
\quad \Leftrightarrow \quad
V_{\phi _2}f\in \sfW ^{r_1,r_2}
(\omega , \mascB ),
\end{equation}
and
\begin{equation}
\label{Eq:EquivNorms22}
\nm f{M(\omega ,\mascB )}
\asymp
\nm {V_{\phi _1}f\cdo \omega }{\mascB}
\asymp
\nm {V_{\phi _2}f}
{\sfW ^{r_1,r_2} (\omega , \mascB)},
\quad f\in \Sigma _1'(\rr d).
\end{equation}
\end{prop}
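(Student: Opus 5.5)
The plan is the standard change-of-window argument, carried out at the level of the discrete Wiener amalgam norm. I prove the equivalences in two stages: first that windows may be exchanged within $M^{r_0}_{(v_0v)}\setminus 0$, and then that $\nm {V_{\phi}f\cdot\omega}{\mascB}$ and $\nm{V_\phi f}{\sfW^{r_1,r_2}(\omega ,\mascB)}$ are comparable for a fixed window. Completeness and the order-$r_0$ quasi-norm property then follow from the corresponding properties of $\mascB$, since $f\mapsto V_\phi f\cdot\omega$ is injective with closed range (by \eqref{Eq:IdentSTFTAdj}).

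\emph{Step 1 (easy embedding).} For the comparison between $\mascB$ and the Wiener norms I use \eqref{Eq:EmbWienAmLeb}, which gives
$$
\sfW^{\infty}(\omega ,\mascB )\hookrightarrow \mascB_{(\omega)}\hookrightarrow \sfW^{r}(\omega ,\mascB ).
$$
I also use that the amalgam norms increase with the local exponents. It therefore suffices to prove the reverse estimate
$$
\nm {V_{\phi_2}f}{\sfW^{\infty}(\omega,\mascB)}\lesssim \nm{V_{\phi_1}f}{\sfW^{r_0}(\omega,\mascB)},
$$
where the smallest local exponent $r_0$ appears on the right.

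\emph{Step 2 (change of window).} I start from \eqref{Eq:ChangeWindow}, choosing a Gaussian $\phi_3=\phi_0$ with $(\phi_1,\phi_0)\neq 0$. This gives
$$
|V_{\phi_2}f|\lesssim |V_{\phi_1}f|*|V_{\phi_2}\phi_0|.
$$
To handle $r_0<1$, I first pass through the Gaussian window: Lemma \ref{Lemma:EstSTFT} bounds $|V_{\phi_0}f|$ pointwise by local $L^{r_0}$ norms on balls. Discretizing over the lattice $\zz{2d}$ then dominates the sequence $\sup_{j+Q}|V_{\phi_0}f\,\omega|$ by a discrete convolution $b*a$. Here $a(j)=\nm{V_{\phi_1}f\,\omega}{L^{r_0}(j+Q)}$, and $b(k)$ is a local supremum of $|V_{\phi_0}\phi_1|\,v$ over $k+Q$ (with the reproducing formula used to express $V_{\phi_0}f$ through $V_{\phi_1}f$). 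The discrete Young inequality \eqref{Eq:DiscConvEst} then gives the bound, provided $b\in\ell^{r_0}_{(v_0)}$. The same argument, run once more from $\phi_0$ to $\phi_2$, completes the change of window.

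\emph{Main obstacle.} The hard step is verifying that $b\in \ell^{r_0}_{(v_0v)}$, i.e. that $V_{\phi_0}\phi_j\in \sfW^{\infty}(v_0v,\ell^{r_0})$, given only that $\phi_j\in M^{r_0}_{(v_0v)}$. My first attempt is to apply Lemma \ref{Lemma:EstSTFT} to $V_{\phi_0}\phi_j$ together with the moderateness of $v_0v$. This should show that the local suprema over the cubes are controlled by the local $L^{r_0}$ norms over slightly enlarged cubes, and hence that the $\ell^{r_0}$ norm of the suprema is bounded by $\nm{\phi_j}{M^{r_0}_{(v_0v)}}$. The remaining point is that the quasi-Banach (non-Minkowski) nature of $\mascB$ forces every convolution to be carried out discretely through \eqref{Eq:DiscConvEst}, rather than via \eqref{Eq:MinkIneq}.
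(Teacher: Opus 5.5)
There is a genuine gap, and it is not the step you flag. Your Step~1 reduction is fine. The change of window from $\phi_0$ to $\phi_2$, with local suprema on both sides, is also fine. So is your "main obstacle": $V_{\phi_0}\phi_j\in\sfW^\infty(v_0v,\ell^{r_0})$ follows at once from Lemma \ref{Lemma:EstSTFT}, because the window there is Gaussian.

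The unjustified step is the one you pass over, the domination $\sup_{j+Q}|V_{\phi_0}f\,\omega|\lesssim(b*a)(j)$ with $a(j)=\nm{V_{\phi_1}f\,\omega}{L^{r_0}(j+Q)}$. Lemma \ref{Lemma:EstSTFT} bounds $V_{\phi_0}f$ by local $L^{r_0}$ norms of $V_{\phi_0}f$ itself, not of $V_{\phi_1}f$. It rests on the analytic structure of the Gaussian transform, and no analogous estimate is available for a general $\phi_1\in M^{r_0}_{(v_0v)}$.

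Once you insert $|V_{\phi_0}f|\lesssim|V_{\phi_1}f|*|V_{\phi_0}\phi_1|$, you must control the integral $\int|V_{\phi_1}f(Y)|\,|V_{\phi_0}\phi_1(X-Y)|\,dY$. For $r_0<1$ there is no Minkowski inequality in $L^{r_0}$. Discretizing the kernel, in whichever order you combine it with Lemma \ref{Lemma:EstSTFT}, only produces local $L^1$ norms $\nm{V_{\phi_1}f\,\omega}{L^1(j+Q_1)}$. That is, you get $\nm{V_{\phi_0}f}{\sfW^\infty(\omega,\mascB)}\lesssim\nm{V_{\phi_1}f}{\sfW^1(\omega,\mascB)}$. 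When $r_0<1$, $\sfW^1(\omega,\mascB)$ is not controlled by $\mascB_{(\omega)}$ (take $\mascB=L^{r_0}$). So your chain closes only when $r_0=1$, which excludes the quasi-Banach case the statement is about.

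The paper cites \cite[Theorem 3.1]{ToPfTe} for this, and the same structure is reproduced in the proof of Proposition \ref{Prop:NormEstimates}, see \eqref{Eq:gammarComputations}--\eqref{Eq:TurnArgument}. That argument closes the gap by absorption:
\begin{itemize}
\item interpolate $\nm F{L^1(Q)}\le\nm F{L^r(Q)}^{r}\nm F{L^\infty(Q)}^{1-r}$;
\item split with the weighted arithmetic--geometric inequality, using a free parameter $\theta$;
\item bound the local $L^\infty$ norms of the general-window transform, via the easy direction, by $\nm{V_{\phi_0}f}{\sfW^\infty(\omega,\mascB)}\asymp\nm{V_{\phi_0}f}{\sfW^{r_0}(\omega,\mascB)}$;
\item absorb that term into the left-hand side by choosing $\theta$ so its coefficient is small. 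This needs an a priori finiteness or approximation step.
\end{itemize}

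Your completeness argument also needs repair. For $r_0<1$, elements of $\mascB$ need not be locally integrable, so $V_\phi V_\phi^*$ does not act boundedly on $\mascB$. Closedness of the range therefore cannot be read off from \eqref{Eq:IdentSTFTAdj}; it must go through the amalgam-norm equivalence, which gives local uniform convergence of $V_\phi f_n$.
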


\par

The next proposition explains some
basic properties for modulation spaces given
in Definition \ref{Def:GenModSpace}.
For the proof of (1)--(3), see
Proposition 4.7, Theorem 4.8,
and their proofs in \cite{ToPfTe}. The assertion
(4) follows by using norms of the form
$f\mapsto \nm {V_{\phi _2}f}
{\sfW ^{r_1,r_2} (\omega , \mascB)}$
to the right
in \eqref{Eq:EquivNorms22} for involved
modulation spaces. Here
\begin{equation}
\label{Eq:InclusionMap}
\boldsymbol {\iota} :M(\omega _1,\mascB )
\to
M(\omega _2,\mascB )
\end{equation}
denotes the usual inclusion map from
space $M(\omega _1,\mascB )$ to
$M(\omega _2,\mascB )$, provided $M(\omega _1,\mascB )
\subseteq M(\omega _2,\mascB )$.

\par

\begin{prop}
\label{Prop:ContAndCompactModSp}
Let $\omega ,\omega _j\in \mascP _E(\rr 
{2d})$, and let $\mascB$, $\mascB _1$ and
$\mascB _2$ be normal QBF spaces
on $\rr {2d}$, $j=1,2$. Then the following is true:
\begin{enumerate}
\item $\Sigma _1(\rr d)\hookrightarrow
M(\omega ,\mascB )\hookrightarrow
\Sigma _1'(\rr d)$;

\vrum

\item if $\frac {\omega _2(X)}{\omega _1(X)}\le C$,
for some constant $C>0$, then
$M(\omega _1,\mascB )\subseteq M(\omega _1,\mascB )$,
and the map \eqref{Eq:InclusionMap} is continuous;

\vrum

\item if
$\lim _{|X|\to \infty}\frac {\omega _2(X)}
{\omega _1(X)}=0$, then
the map \eqref{Eq:InclusionMap} is compact;

\vrum

\item if $\ell _{\mascB_1}(\zz {2d})\hookrightarrow
\ell _{\mascB _2}(\zz {2d})$, then
$M(\omega ,\mascB _1)\hookrightarrow
M(\omega ,\mascB _2)$.
\end{enumerate}
\end{prop}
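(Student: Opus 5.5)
The plan is to prove the four assertions in the order (2), (4), (1), (3). The two simplest ones are essentially structural. For (2), we fix $\phi \in \Sigma _1(\rr d)\setm$. Since $|V_\phi f\cdot \omega _2|\le C|V_\phi f\cdot \omega _1|$, solidity of $\mascB$ (Definition \ref{Def:SolidBF}) gives $\nm f{M(\omega _2,\mascB )}\lesssim \nm f{M(\omega _1,\mascB )}$. For (4), we use the Wiener norm $f\mapsto \nm {V_{\phi }f}{\sfW ^{\infty}(\omega ,\mascB )}$ from Proposition \ref{Prop:EquivNorms2}, which is the $\ell _{\mascB}$-quasi-norm of the sequence $a(j)=\nm {V_\phi f\,\omega}{L^\infty (j+Q)}$. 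The embedding $\ell _{\mascB _1}\hookrightarrow \ell _{\mascB _2}$ then transfers directly to the modulation spaces.

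For (1), first take $f\in \Sigma _1(\rr d)$. By Proposition \ref{Prop:ExtSTFTSchwartz}, $V_\phi f\in \Sigma _1(\rr {2d})$, so $|V_\phi f(X)|\lesssim e^{-r|X|}$ for every $r>0$, with constants controlled by finitely many seminorms of $f$. Combining this with \eqref{Eq:weight1} for $\omega$ and $v_0$, the sequence $a(j)$ is bounded by a multiple of $e^{-r|j|}$, hence lies in $\ell ^{r_0}_{(v_0)}$. Writing $a=a*\delta _0$ and applying \eqref{Eq:DiscConvEst}, with $\delta _0\in \ell _{\mascB}$ by the nontriviality in Definition \ref{Def:SolidBF}(2), gives $a\in \ell _{\mascB}$ with continuous dependence on $f$. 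For the second embedding, we use Lemma \ref{Lemma:EstSTFT} (Gaussian window) together with the translation bound \eqref{translmultprop1}. For each $X$, the $L^{r_0}$-norm of $V_{\phi _0}f$ on a unit ball is bounded by $v_0(X)\,\omega (X)^{-1}\nm f{M(\omega ,\mascB )}$ up to constants, so $|V_{\phi _0}f(X)|\lesssim e^{r|X|}\nm f{M(\omega ,\mascB )}$ for some $r$. Pairing with $V_{\phi _0}g$, $g\in \Sigma _1$, which decays faster than any exponential, and using \eqref{Eq:IdentSTFTAdj}, yields $|(f,g)|\lesssim \nm f{M(\omega ,\mascB )}\nm g{[h]}$ for some seminorm $\nm \cdo{[h]}$, as needed.

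For (3), the main obstacle, let $\{f_n\}$ be bounded in $M(\omega _1,\mascB )$. By the pointwise bound just obtained and the smoothness of $V_{\phi _0}f_n$, whose derivatives are again short-time Fourier transforms with windows in $\Sigma _1$ and so obey similar bounds, the family $V_{\phi _0}f_n$ is locally equicontinuous. Arzelà--Ascoli and a diagonal argument therefore give a subsequence converging locally uniformly on $\rr {2d}$. We then show this subsequence is Cauchy in $M(\omega _2,\mascB )$ via the $\sfW ^\infty (\omega _2,\mascB )$ norm, splitting $\zz {2d}$ into $|j|\le R$ and $|j|>R$. On $|j|>R$, $\omega _2\le \ep (R)\omega _1$ with $\ep (R)\to 0$, so solidity bounds that part by $\ep (R)\sup _n\nm {f_n}{M(\omega _1,\mascB )}$. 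On the finitely many cubes with $|j|\le R$, local uniform convergence makes the differences small, and since only finitely many $\delta _j$ appear the $r_0$-triangle inequality suffices. The delicate point is that the tail estimate must be uniform in $n$, which is why we work with the solid, $\ell _{\mascB}$-based Wiener norm rather than $\mascB$ directly. Completeness of $M(\omega _2,\mascB )$ (Proposition \ref{Prop:EquivNorms2}) then gives the convergent subsequence, so the inclusion map \eqref{Eq:InclusionMap} is compact.
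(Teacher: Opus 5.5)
Your treatment of (2), which is solidity, and of (4), which transfers $\ell_{\mascB_1}\hookrightarrow\ell_{\mascB_2}$ through the $\sfW^\infty(\omega,\mascB)$ norm of Proposition \ref{Prop:EquivNorms2}, is what the paper does. For (1) and (3) the paper gives no argument and only refers to Proposition 4.7 and Theorem 4.8 in \cite{ToPfTe}, so your route there is genuinely different:
\begin{itemize}
\item $\Sigma_1\hookrightarrow M(\omega,\mascB)$ via super-exponential decay of $V_\phi f$ and \eqref{Eq:DiscConvEst} with $b=\delta_0$;
\item $M(\omega,\mascB)\hookrightarrow\Sigma_1'$ via an exponential pointwise bound and the Moyal pairing;
\item compactness via Arzel\`a--Ascoli and a split of the $\sfW^\infty(\omega_2,\mascB)$ norm into finitely many cubes plus a tail.
\end{itemize}
This gives a self-contained proof that uses only Proposition \ref{Prop:EquivNorms2}, solidity and \eqref{translmultprop1}. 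The compactness argument is sound: solidity makes the tail bound $\ep(R)\sup_n\nm{f_n}{M(\omega_1,\mascB)}$ uniform in $n$, and the head involves only finitely many $\chi_{j+Q}\in\mascB$.

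One step needs repair. The bound $\nm{V_{\phi_0}f}{L^{r_0}(B_1(X))}\lesssim v_0(X)\,\omega(X)^{-1}\nm f{M(\omega,\mascB)}$ does not follow from Lemma \ref{Lemma:EstSTFT} and \eqref{translmultprop1} alone. Translation invariance only transports a local estimate, and you never supply the base estimate $\nm F{L^{r_0}(B_1(0))}\lesssim\nm F{\mascB}$. That estimate is not an axiom in Definitions \ref{Def:SolidBF} and \ref{Def:BFSpaces}, though it can be extracted from the second embedding in \eqref{Eq:EmbWienAmLeb}.

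A cleaner fix bypasses Lemma \ref{Lemma:EstSTFT}. Put $a(j)=\nm{V_{\phi_0}f\,\omega}{L^\infty(j+Q)}$.
\begin{itemize}
\item Solidity and $a(j)\chi_{j+Q}\le\sum_k a(k)\chi_{k+Q}$ give $a(j)\nm{\chi_{j+Q}}{\mascB}\le C\nm a{\ell_{\mascB}}$.
\item \eqref{translmultprop1} and evenness of $v_0$ give $\nm{\chi_Q}{\mascB}\le Cv_0(j)\nm{\chi_{j+Q}}{\mascB}$.
\item $0<\nm{\chi_Q}{\mascB}<\infty$ by the same covering argument that underlies your claim $\delta_0\in\ell_{\mascB}$.
\end{itemize}
Hence $a(j)\lesssim v_0(j)\nm f{M(\omega,\mascB)}$ by Proposition \ref{Prop:EquivNorms2}. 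This is exactly the pointwise bound you need, both for $M(\omega,\mascB)\hookrightarrow\Sigma_1'$ and for the equicontinuity in (3).

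A smaller point on that equicontinuity: $\partial_{\xi_k}V_{\phi_0}f=-i\,x_kV_{\phi_0}f-iV_{\psi_k}f$, with $\psi_k(y)=y_k\phi_0(y)$, is not itself a short-time Fourier transform. The extra factor $x_k$ is harmless on compact sets, and bounds for the windows $\psi_k\neq\phi_0$ follow from \eqref{Eq:ChangeWindow}.
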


\par

\begin{rem}
If $p_j,q_j\in (0,\infty ]$ satisfy $p_1\le p_2$
and $q_1\le q_2$, then $\ell ^{p_1,q_1}
(\zz {2d})\hookrightarrow
\ell ^{p_2,q_2}(\zz {2d})$. Hence (4) in
Proposition \ref{Prop:ContAndCompactModSp}
gives the well-ordering relationship $M^{p_1,q_1}_{(\omega )}
(\rr {d})\hookrightarrow
M^{p_2,q_2}_{(\omega )}(\rr {d})$
between classical modulation spaces.
(See \cite{Gro2} in the Banach space case and
e.{\,}g. \cite{GaSa,Toft10} in the general case.)
\end{rem}

\par


\par

\section{Invariance properties for
$M^{\sharp ,q}_{(\omega )}$}\label{sec2}

\par

In this section we show that the space 
$M^{\sharp ,q}_{(\omega )}(\rr d)$ 
possess various invariance properties
concerning the choice of the window 
$\phi \in \Sigma _1(\rr d)\setminus 0$
and some norms in
\eqref{Eq:ModNormInfinityCond}. 
Additionally, in similar ways as in
Proposition \ref{Prop:EquivNorms2},
it follows that the window
in Definition \ref{Def:SpecialModSpace}
can be chosen in larger classes of 
function spaces. 

\par

We first introduce some notation.
By $\chi _\Omega$ we denote the 
characteristic function of the set 
$\Omega \subseteq \rr d$, i.{\,}e.,
$$
\chi _\Omega (x) =
\begin{cases}
1, & x\in \Omega ,
\\[1ex]
0, & x\notin \Omega.
\end{cases}
$$
For every $R,R_1>0$, let
\begin{equation}
\begin{aligned}
\label{Eq:SetOmegaR}
B_R
=
\sets {x\in \rr d}{|x|<R},
\qquad
\Omega _R
&=
\sets {(x,\xi )\in \rr {2d}}{x\notin B_R},
\\[1ex]
\maclI _R &= \Omega _R\cap \zz {2d},
\end{aligned}
\end{equation}
and (by a slight abuse of notation) let 
$\chi _{{}_{R}}$ be the
characteristic function of
$\Omega _R$, i.{\,}e.
\begin{equation}
\label{Eq:chiRDef}
\chi _{{}_{R}}(x,\xi )
=\chi _{\Omega _R}(x,\xi )
=
\begin{cases}
1, & |x| \ge R ,\ \xi \in \rr d,
\\[1ex]
0, & |x|<R ,\ \xi \in \rr d,
\end{cases}
\end{equation}

\par

\par

\begin{thm}
\label{Thm:IndepWind}
Let $q,r\in (0,\infty ]$ be such that
$r \geq \min (1,q)$,
$\omega ,v\in
\mascP _E(\rr {2d})$ be such that
$\omega$ is $v$-moderate,
$f\in M^{\sharp ,q}_{(\omega )}(\rr d)$, and
$\phi _1,\phi _2\in M^r_{(v)}(\rr d) \setminus 0$.
Then
\begin{equation}
\label{Eq:IndepWind}
\lim _{R\to \infty}
\nm {\chi _{{}_R}V_{\phi _1}f}
{L^{\infty ,q}_{(\omega )}}=0
\quad \Leftrightarrow \quad
\lim _{R\to \infty}
\nm {\chi _{{}_R}V_{\phi _2}f}
{\sfW ^r(\omega ,\ell ^{\infty ,q})}=0.
\end{equation}
\end{thm}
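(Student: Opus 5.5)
The plan is to prove both implications through the change-of-window estimate \eqref{Eq:ChangeWindow}, combined with the localization in Lemma \ref{Lemma:EstSTFT} and the discrete convolution bound \eqref{Eq:DiscConvEst}. We pass through the Gaussian window $\phi _0$. We first observe that
\[
\nm {\chi _{{}_R}F}{L^{\infty ,q}_{(\omega )}}\lesssim \nm {\chi _{{}_R}F}{\sfW ^r(\omega ,\ell ^{\infty ,q})}
\]
holds trivially, since $\sup _{|x|\ge R}$ of a function is dominated by the cube suprema. Conversely, for $F=V_{\phi _0}f$, Lemma \ref{Lemma:EstSTFT} gives
\[
\nm {\chi _{{}_{R}}V_{\phi _0}f}{\sfW ^r(\omega ,\ell ^{\infty ,q})}\lesssim \nm {\chi _{{}_{R-c}}V_{\phi _0}f}{L^{\infty ,q}_{(\omega )}}
\]
for some fixed $c>0$ (the diameter of $Q$ plus one). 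This uses the moderateness of $\omega$ to move the weight across unit balls. Hence, for the Gaussian window, all these tail quantities vanish simultaneously as $R\to\infty$.

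It therefore suffices to show that for any $\phi \in M^r_{(v)}\setminus 0$ we have
\[
\lim _R \nm {\chi _{{}_R}V_{\phi _0}f}{L^{\infty ,q}_{(\omega )}}=0 \quad\Longleftrightarrow\quad \lim _R \nm {\chi _{{}_R}V_{\phi}f}{\sfW ^r(\omega ,\ell ^{\infty ,q})}=0 .
\]
For the direction from $\phi$ to $\phi _0$, we use \eqref{Eq:ChangeWindow} in the form
\[
|(\phi ,\phi _0)|\,|V_{\phi _0}f|\le |V_{\phi }f|*|V_{\phi _0}\phi _0| .
\]
If $(\phi ,\phi _0)=0$, we insert an intermediate window $\phi _0(\cdo -x_0)e^{i\scal \cdo {\xi _0}}$, whose inner product with $\phi$ is nonzero, and this costs only a harmless weight factor. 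For the converse, we use
\[
\nm{\phi _0}{L^2}^2|V_{\phi}f|\le |V_{\phi _0}f|*|V_{\phi}\phi _0| ,
\]
and here $V_\phi \phi _0\in \sfW ^r(v,\ell ^{r})$ since $\phi \in M^r_{(v)}$, by Proposition \ref{Prop:EquivNorms2}.

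The core step is a tail convolution estimate. We discretize: with $a(j)=\nm{F\omega}{L^\infty (j+Q)}$ and $b(k)=\nm{G v}{L^r(k+Q)}$, the convolution satisfies, roughly, $\nm{(F*G)\omega}{L^r(j+Q)}\lesssim \sum _k a(j-k)b(k)$. This holds with a quasi-triangle inequality when $r<1$; this is where $r\ge \min(1,q)$ enters, ensuring that $\ell ^{\min(1,q)}_{(v)}$ convolution acts on $\ell ^{\infty ,q}$ by \eqref{Eq:DiscConvEst}. We then split $a=a\chi _{\maclI _{R/2}}+a(1-\chi _{\maclI _{R/2}})$ and $b=b\chi _{\{|k|\le R/2\}}+b\chi _{\{|k|>R/2\}}$. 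For indices $j\in\maclI _R$, the only surviving cross term pairing $a$ near the origin with $b$ near the origin is empty, because $|j-k|\ge R/2$ whenever $|k|\le R/2$. Consequently
\[
\nm{\chi _{\maclI _R}(a*b)}{\ell ^{\infty ,q}}\lesssim \nm{a\chi _{\maclI _{R/2}}}{\ell ^{\infty ,q}}\nm b{\ell ^{r}}+\nm a{\ell ^{\infty ,q}}\nm{b\chi _{|k|>R/2}}{\ell ^{r}},
\]
and both terms tend to $0$: the first by hypothesis, the second since $b\in \ell ^r$ with $r<\infty$. The case $r=\infty$ needs care; there $q=\infty$ may force relying on $\min(1,q)\le r$, and we treat it with an $\ell ^1$-type tail of $b$ instead.

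The main obstacle is this tail splitting. One must check that the quasi-norm constants and the $L^r$-on-cubes bookkeeping for $r<1$ (which uses a local $L^r$ convolution inequality, valid since $V_\phi \phi _0$ is an STFT and hence satisfies Lemma \ref{Lemma:EstSTFT}-type sub-mean-value properties when one factor is the Gaussian) are uniform in $R$. We expect to handle it by always keeping one Gaussian factor, so that pointwise suprema on cubes can be controlled via Lemma \ref{Lemma:EstSTFT}.
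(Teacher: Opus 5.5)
There is a genuine gap. It sits in the implication where the hypothesis is that the $\sfW ^r(\omega ,\ell ^{\infty ,q})$-tail of $V_\phi f$ vanishes, for a non-Gaussian window $\phi$ and $r<1$, and the conclusion is about $V_{\phi _0}f$ (and then $V_{\phi _1}f$) in $L^{\infty ,q}_{(\omega )}$.

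To use $|(\phi ,\phi _0)|\,|V_{\phi _0}f|\le |V_\phi f|*|V_{\phi _0}\phi _0|$, you need the local $L^1$ (or $L^\infty$) norms of $F=|V_\phi f\cdot \omega |$ on cubes far out. The hypothesis only controls $\nm F{L^r(j+Q)}$. Your discretized estimate $\nm {F*G}{L^r(j+Q)}\lesssim \sum _k a(j-k)b(k)$ measures one factor by $L^r$-norms on cubes. That is fine when this factor is an STFT with Gaussian window, such as $V_{\phi _0}f$ or $V_\phi \phi _0$, via Lemma \ref{Lemma:EstSTFT}. For general functions with $r<1$ it fails: take $F$ a Gaussian and $G=|B_\delta |^{-1}\chi _{B_\delta}$; then $\nm G{L^r}=|B_\delta |^{\frac 1r-1}\to 0$ while $F*G\to F$. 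No quasi-triangle inequality repairs this.

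Keeping a Gaussian factor does not help either. In $|V_\phi f|*|V_{\phi _0}\phi _0|$ the Gaussian is the kernel, while the factor carrying the hypothesis, $V_\phi f$, has no sub-mean-value property. So in your splitting, the claim that $\nm {a\chi _{\maclI _{R/2}}}{\ell ^{\infty ,q}}\to 0$ ``by hypothesis'' is unjustified when $a$ consists of local suprema of $V_\phi f$.

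Your first paragraph masks this difficulty. The bound $\nm {\chi _{{}_R}F}{L^{\infty ,q}_{(\omega )}}\lesssim \nm {\chi _{{}_R}F}{\sfW ^r(\omega ,\ell ^{\infty ,q})}$ is false for $r<\infty$. The trivial comparisons are $L^{\infty ,q}_{(\omega )}\lesssim \sfW ^\infty$ and, for $r\le q$, $\sfW ^r\lesssim L^{\infty ,q}_{(\omega )}$. Lemma \ref{Lemma:EstSTFT} supplies the reverse bound $\sfW ^\infty \lesssim \sfW ^r$, i.e. \eqref{Eq:RevIneq1}, and only for the Gaussian window. Your conclusion for $\phi _0$ and your reduction to comparisons with $\phi _0$ survive, but the hard implication does not.

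The missing ingredient is the interpolation $\nm F{L^1(j+Q)}\le \nm F{L^r(j+Q)}^r\nm F{L^\infty (j+Q)}^{1-r}$. Its $L^\infty$-factor is controlled through the easy direction $F\lesssim |V_{\phi _0}f\cdot \omega |*|V_\phi \phi _0\cdot v|$. The paper does this in \eqref{Eq:gammarComputations}--\eqref{Eq:TurnArgument}. It uses the arithmetic-geometric inequality with a free parameter $\theta$ and the tail bound \eqref{Eq:CircleEst} for the local suprema of $F$. It then absorbs the term $\frac 12\nm {\chi _{{}_{R_0}}V_{\phi _0}f}{\sfW ^\infty (\omega ,\ell ^{\infty ,q})}$ in \eqref{Eq:EquivNorms2B}. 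This is why the proof of the theorem passes through $A(r)\le \frac 12 A(r)$ with $A(r)<\infty$.

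A shorter repair fits your framework. Since $\nm F{\sfW ^\infty (\ell ^{\infty ,q})}\lesssim \nm f{M^{\infty ,q}_{(\omega )}}<\infty$, Hölder's inequality in $\ell ^{\infty ,q}$ applied to the interpolation gives $\nm {\chi _{{}_R}F}{\sfW ^1(\ell ^{\infty ,q})}\le \nm {\chi _{{}_R}F}{\sfW ^r(\ell ^{\infty ,q})}^r\nm F{\sfW ^\infty (\ell ^{\infty ,q})}^{1-r}\to 0$. After that, your Gaussian-kernel convolution and tail splitting go through.

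Finally, \eqref{Eq:DiscConvEst} requires the window sequence to lie in $\ell ^{r_0}_{(v)}$ with $r_0=\min (1,q)$. You should therefore take $\phi \in M^{r_0}_{(v)}$, as the paper's proof does; $b\in \ell ^r$ with $r>r_0$ is not enough.
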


\par

\begin{cor}
\label{Cor:IndepWind}
Let $q\in (0,\infty ]$, $r=\min (1,q)$,
$\omega ,v\in
\mascP _E(\rr {2d})$ be such that
$\omega$ is $v$-moderate, and let
$f\in M^{\sharp ,q}_{(\omega )}(\rr d)$.
Then \eqref{Eq:ModNormInfinityCond} holds
true for any $\phi \in M^r_{(v)}(\rr d)$.
\end{cor}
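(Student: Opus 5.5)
The corollary should follow from Theorem \ref{Thm:IndepWind} together with the embedding \eqref{Eq:EmbWienAmLeb} between Wiener amalgam norms and Lebesgue norms. Put $r=\min(1,q)$, so the condition $r\ge \min(1,q)$ of the theorem holds. Fix $\phi\in M^r_{(v)}(\rr d)$. If $\phi=0$ the claim is trivial, so we assume $\phi\ne 0$.

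Since $f\in M^{\sharp ,q}_{(\omega )}(\rr d)$, Definition \ref{Def:SpecialModSpace} provides some window $\phi _1\in \Sigma _1(\rr d)\setminus 0$ for which \eqref{Eq:ModNormInfinityCond} holds. In the notation \eqref{Eq:chiRDef} this reads
\begin{equation*}
\lim _{R\to \infty}\nm {\chi _{{}_R}V_{\phi _1}f}{L^{\infty ,q}_{(\omega )}}=0 .
\end{equation*}
Here we use that $\sup _{|x|\ge R}|V_{\phi_1} f(x,\xi)\omega(x,\xi)|$ is exactly the inner supremum in the $L^{\infty,q}_{(\omega)}$ norm of $\chi_{{}_R}V_{\phi_1}f$.

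We also need $\phi _1\in M^r_{(v)}(\rr d)$. This holds because $\Sigma _1\hookrightarrow M(\omega ,\mascB )$ by Proposition \ref{Prop:ContAndCompactModSp} (1), applied with $\mascB=L^r(\rr{2d})$ and weight $v$. Theorem \ref{Thm:IndepWind} with $\phi_2=\phi$ therefore gives
\begin{equation*}
\lim _{R\to \infty}\nm {\chi _{{}_R}V_{\phi}f}{\sfW ^r(\omega ,\ell ^{\infty ,q})}=0 .
\end{equation*}

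It remains to go back from the Wiener norm to the $L^{\infty,q}_{(\omega)}$ norm. Using only the first embedding in \eqref{Eq:EmbWienAmLeb}, $\sfW^\infty\hookrightarrow \mascB_{(\omega)}$, would be the wrong direction. Instead we use the pointwise estimate of Lemma \ref{Lemma:EstSTFT}: it bounds $|V_{\phi}f(X)|$ by a local $L^r$ norm over a ball around $X$. With this one controls $\sup$ over a cube by $L^r$ norms over the neighbouring cubes. The weight is handled by $v$-moderateness, and the finitely many neighbours by \eqref{Eq:DiscConvEst} with $\ell^{\infty,q}$.

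Lemma \ref{Lemma:EstSTFT} is stated for the Gaussian window $\phi_0$, so we avoid it for general $\phi$ as follows. Apply Theorem \ref{Thm:IndepWind} a second time, now with $\phi_2=\phi_0$ (which lies in $\Sigma_1$), to obtain the Wiener-norm decay for $V_{\phi_0}f$. Lemma \ref{Lemma:EstSTFT} then upgrades this to $L^{\infty,q}_{(\omega)}$ decay for $\phi_0$. Finally apply the theorem once more with $\phi_1=\phi_0$ and $\phi_2=\phi$.

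This reduces the question to showing that, for any admissible window, $L^{\infty,q}_{(\omega)}$ decay of $\chi_R V_\phi f$ follows from Wiener decay. That is the main obstacle. I would handle it through the reproducing formula: by \eqref{Eq:IdentSTFTAdj} and \eqref{Eq:adjoint-convolution},
\begin{equation*}
|V_\phi f|\lesssim |V_{\phi_0}f|*|V_\phi\phi_0| .
\end{equation*}
Split the convolution into its near part and its far part. The near part is controlled by the Gaussian-window $L^{\infty,q}_{(\omega)}$ decay established above. For the far part, the tail of $V_\phi\phi_0$ lies in the weighted $\sfW^r$-type space with $v$-weight because $\phi\in M^r_{(v)}$. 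The discrete convolution estimate \eqref{Eq:DiscConvEst} with exponent $r=\min(1,q)$ then makes the far part small uniformly, which gives \eqref{Eq:ModNormInfinityCond} for $\phi$.
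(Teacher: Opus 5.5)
Your argument works, but it takes a long detour around the observation that makes the corollary immediate. Theorem \ref{Thm:IndepWind} is an equivalence valid for \emph{arbitrary} $\phi _1,\phi _2\in M^r_{(v)}(\rr d)\setminus 0$, so the new window $\phi$ may also be placed in the $\phi _1$-slot, i.e.\ on the $L^{\infty ,q}_{(\omega )}$ side. Your first application (defining window as $\phi _1$, $\phi _2=\phi$) already gives $\lim _{R\to \infty}\nm {\chi _{{}_R}V_\phi f}{\sfW ^r(\omega ,\ell ^{\infty ,q})}=0$. The implication ``$\Leftarrow$'' in \eqref{Eq:IndepWind} with $\phi _1=\phi _2=\phi$ then gives $\lim _{R\to \infty}\nm {\chi _{{}_R}V_\phi f}{L^{\infty ,q}_{(\omega )}}=0$, which is \eqref{Eq:ModNormInfinityCond}; nothing more is needed.

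What you call the main obstacle is exactly that reverse implication. Your Lemma \ref{Lemma:EstSTFT} upgrade plus the near/far splitting of $|V_{\phi _0}f|*|V_\phi \phi _0|$ essentially re-derive Proposition \ref{Prop:GaussWind} and \eqref{Eq:EquivNorms2A} with $r=\infty$, i.e.\ the proof of the theorem itself. The only gain is self-containedness. Your step ``apply the theorem once more with $\phi _1=\phi _0$ and $\phi _2=\phi$'' only returns Wiener decay for $V_\phi f$, which you already had, so it does no work.

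If you keep the hand-made argument, state the near-part input correctly for $q<1$. You need decay of $\nm {\chi _{{}_R}V_{\phi _0}f}{\sfW ^\infty (\omega ,\ell ^{\infty ,q})}$, not merely of the $L^{\infty ,q}_{(\omega )}$ norm. The reason is that $L^{\infty ,q}$ is not stable under convolution with $L^1$ functions when $q<1$, and \eqref{Eq:DiscConvEst} acts on amalgam sequences. Lemma \ref{Lemma:EstSTFT} does provide this $\sfW ^\infty$ decay, as in the proof of Proposition \ref{Prop:GaussWind}.
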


\par

As in the proof of \cite[Theorem 3.1]{ToPfTe},
the case when $\phi _1$ and $\phi _2$ are both the (standard) Gaussian
\begin{equation}
\label{Eq:StandardGaussian}
\phi _0(x)=\pi ^{-\frac d4}e^{-\frac 12|x|^2},
\qquad x\in \rr d,
\end{equation}
plays a fundamental rule. This is given in
the following proposition.

\par

\begin{prop}
\label{Prop:GaussWind}
Let $q$, $r$, $\omega$, $v$, $\chi _{{}_R}$
be as in Theorem \ref{Thm:IndepWind},
and let $\phi _0$ be as in
\eqref{Eq:IndepWind}.
Then \eqref{Eq:IndepWind} holds for
$\phi _1=\phi _2=\phi _0$.
\end{prop}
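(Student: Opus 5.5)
Both directions reduce to two pointwise/lattice comparisons for $F=V_{\phi _0}f$, localized to $\Omega _R$. Set $a_R(j)=\nm {\chi _{{}_R}F\,\omega}{L^r(j+Q)}$ for $j\in \zz {2d}$.

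The direction ``$\Leftarrow$'' is the easy one. By \eqref{Eq:EmbWienAmLeb} applied to $\mascB =L^{\infty ,q}$ in the form of Example \ref{Example:LebCase}, we have $\nm {\chi _{{}_R}F}{\sfW ^r(\omega ,\ell ^{\infty ,q})}\lesssim \nm {\chi _{{}_R}F}{L^{\infty ,q}_{(\omega )}}$. The constant comes only from the embedding $\mascB _{(\omega )}\hookrightarrow \sfW ^r(\omega ,\mascB )$, so it is independent of $R$. Hence the right-hand limit in \eqref{Eq:IndepWind} vanishes whenever the left one does. If one prefers not to rely on this embedding for an arbitrary function $G=\chi _{{}_R}F$, it can be checked directly. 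Since $Q$ has unit volume, $\nm {G\omega}{L^r(j+Q)}\le \sup _{j+Q}|G\omega|$. Taking $\sup$ over the $x$-part of $j$ and the $\ell ^q$ norm over the $\xi$-part gives $\nm {\chi _{{}_R}F}{\sfW ^r(\omega ,\ell ^{\infty ,q})}\lesssim \nm {\chi _{{}_R}F}{\sfW ^\infty (\omega ,\ell ^{\infty ,q})}$. Moderateness of $\omega$ on unit cubes then compares the $\ell ^q$ sum over $\xi$-cubes with the $L^q$ integral of $\sup _x|F\omega|$, at the cost of a slight enlargement in $\xi$, which Lemma \ref{Lemma:EstSTFT} absorbs as in the next paragraph.

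The direction ``$\Rightarrow$'' is the main step. For $X_0=(x_0,\xi _0)$ with $|x_0|\ge R+2\sqrt{2d}$, Lemma \ref{Lemma:EstSTFT} with $p=r$ and radius $1$ gives $|F(X_0)|\le C\nm F{L^r(B_1(X_0))}$. The ball $B_1(X_0)$ meets only boundedly many cubes $j+Q$, and all of them lie inside $\Omega _R$. Because $\omega$ is $v$-moderate, $\omega (X_0)\asymp \omega$ on $B_1(X_0)$ with constants independent of $X_0$. Hence
\begin{equation*}
|F(X_0)\omega (X_0)|\lesssim \sum _{|k|_\infty \le 2} a_R(j_0+k),\qquad X_0\in j_0+Q ,
\end{equation*}
that is, $|\chi _{{}_{R'}}F\omega|\lesssim \sum _{j}(b*a_R)(j)\chi _{j+Q}$ with $R'=R+2\sqrt{2d}$, where $b$ is the finitely supported indicator of $\{ |k|_\infty \le 2\}$. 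Taking the $L^{\infty ,q}$ norm, the supremum in $x$ of the right-hand side is bounded by the supremum over lattice points. The $L^q$ norm in $\xi$ of a function that is constant on unit intervals equals the corresponding $\ell ^q$ norm. Finally, \eqref{Eq:DiscConvEst} with $\mascB =L^{\infty ,q}$ (so $\ell _{\mascB}=\ell ^{\infty ,q}$), together with $b\in \ell ^{r_0}_{(v_0)}$ for any $r_0$ since $b$ is finitely supported, yields
\begin{equation*}
\nm {\chi _{{}_{R'}}F}{L^{\infty ,q}_{(\omega )}}\lesssim \nm {a_R}{\ell ^{\infty ,q}}=\nm {\chi _{{}_R}F}{\sfW ^r(\omega ,\ell ^{\infty ,q})},
\end{equation*}
uniformly in $R$. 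Letting $R\to\infty$ finishes this direction.

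The main obstacle is the bookkeeping in this second step. One has to make sure the cutoff $\chi _{{}_R}$ commutes with the local estimate, which requires shrinking $R$ to $R'$ by a fixed margin. One also has to keep every constant independent of $R$, by using only moderateness of $\omega$ and the uniform constant of Lemma \ref{Lemma:EstSTFT}. The restriction $r\ge \min (1,q)$ does not appear to be needed for the Gaussian window. It is likely relevant only later, when changing windows via \eqref{Eq:ChangeWindow}.
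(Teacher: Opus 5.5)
Your argument is correct and is essentially the paper's: the substantive direction is the same Lemma~\ref{Lemma:EstSTFT}-plus-moderateness estimate, with a fixed shift of $R$, that gives \eqref{Eq:RevIneq1}. Your labels ``$\Leftarrow$''/``$\Rightarrow$'' are swapped, which is harmless.

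One caveat concerns the easy direction. For an arbitrary measurable $G$, the bound $\nm G{\sfW ^r(\omega ,\ell ^{\infty ,q})}\lesssim \nm G{L^{\infty ,q}_{(\omega )}}$ holds only when $r\le q$. So for $r>q$ your ``optional'' direct check is actually needed: apply Lemma~\ref{Lemma:EstSTFT} with $p=q$ and shift $R$ by $1$.

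The paper avoids this issue differently. It uses the trivial sandwich $\sfW ^{r}\le L^{\infty ,q}\le \sfW ^\infty$ only at $r=\min (1,q)$. It then combines monotonicity in $r$ with the single reverse inequality \eqref{Eq:RevIneq1}, which lands in $\sfW ^\infty$.
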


\par

Proposition \ref{Prop:GaussWind} follows
by similar arguments as in the proof
of \cite[Proposition 3.2]{ToPfTe}. 
An essential step is to prove that
for every $R_1>0$, there is an
$R_2>R_1$ such that
\begin{equation}\label{Eq:RevIneq1}
\nm {\chi _{{}_{R_2}}V_{\phi _0}f}
{\sfW ^{\infty}
(\omega , \ell ^{\infty ,q})}
\lesssim
\nm {\chi _{{}_{R_1}}V_{\phi _0}f}
{\sfW ^{r} (\omega , \ell ^{\infty ,q})},
\quad
r\in (0,\infty ].
\end{equation}
\par

In order to be self-contained we have included a proof
of Proposition \ref{Prop:GaussWind}
in the Subsection \ref{subsec2.2}.

\par

For the proof of Theorem \ref{Thm:IndepWind} we also
need some estimates given in the following proposition.

\begin{prop} \label{Prop:NormEstimates}
Let $q,r\in (0,\infty ]$ be such that
$r \geq \min (1,q)$,
$\omega ,v\in
\mascP _E(\rr {2d})$ be such that
$\omega$ is $v$-moderate,
$f\in M^{\infty ,q}_{(\omega )}(\rr d)$, 
$\phi \in M^r_{(v)}(\rr d) \setminus 0$
and $\phi _0$ be given by \eqref{Eq:StandardGaussian}.
Then for any $R_0>0$ there is an $R>R_0$ such that
\begin{align}
\nm {\chi _{{}_{R}}V_{\phi }f}
{\sfW ^r (\omega , \ell ^{\infty ,q} )}
&\le
C \big (\nm {\chi _{{}_{R_0}}V_{\phi _0}\phi}
{\sfW ^\infty (v,\ell ^{r_0})}
\nm {f}
{M^{\infty ,q}_{(\omega )}}
\notag
\\[1ex]
&+
\nm {\phi}
{M^{r_0}_{(v)}}
\nm {\chi _{{}_{R_0}}V_{\phi _0}f}
{\sfW ^r (\omega , \ell ^{\infty ,q} )}\big )
\label{Eq:EquivNorms2A}
\end{align}
and
\begin{align}
\nm {\chi _{{}_{R}} &V_{\phi _0}f}
{\sfW ^r (\omega , \ell ^{\infty ,q} )}
\notag
\\[1ex]
&\le
C\Big (
\nm {\chi _{{}_{R_0}}V_{\phi _0}\phi}
{\sfW ^\infty (v,\ell ^{r_0})}
\nm {f}
{M^{\infty ,q}_{(\omega )}}
+
\nm {\phi}
{M^{r_0}_{(v)}}
\nm {\chi _{{}_{R_0}}V_\phi f}
{\sfW ^r(\omega ,\ell ^{\infty ,q})}
\Big )
\notag
\\[1ex]
&+
\frac 12
\nm {\chi _{{}_{R_0}}V_{\phi _0}f}
{\sfW ^\infty (\omega ,\ell ^{\infty ,q})}
\label{Eq:EquivNorms2B}
\end{align}
for $r=r_0$ and $r=\infty$.
\end{prop}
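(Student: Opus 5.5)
We reduce both estimates to the change-of-window inequality \eqref{Eq:ChangeWindow}, which yields pointwise convolution bounds, and then split each convolution into a near part and a far part in the $x$-variable.

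\textbf{Estimate \eqref{Eq:EquivNorms2A}.} Since $\nm {\phi _0}{L^2}=1$, \eqref{Eq:ChangeWindow} with $\phi _2=\phi _3=\phi _0$ and $\phi _1=\phi$ gives
$$
|V_\phi f|\le |V_{\phi _0}f|*|V_\phi \phi _0| ,
\qquad
|V_\phi \phi _0(X)|=|V_{\phi _0}\phi (-X)|.
$$
We write $G=|V_{\phi _0}f|$ and $H=|V_{\phi _0}\phi |$, and decompose
$$
G=\chi _{{}_{R_0}}G+(1-\chi _{{}_{R_0}})G,
\qquad
H=\chi _{{}_{R_0}}H+(1-\chi _{{}_{R_0}})H .
$$
For $|x|\ge R$ with $R\ge 2R_0$, the term $(1-\chi _{{}_{R_0}})G*(1-\chi _{{}_{R_0}})H$ vanishes, because its $x$-support lies in $B_{2R_0}$. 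The remaining pieces are handled as follows.
\begin{enumerate}
\item $G*(\chi _{{}_{R_0}}H)$ is bounded in the Wiener norm using the discrete convolution estimate \eqref{Eq:DiscConvEst} with $\mascB =L^{\infty ,q}$ and $v$-moderateness, together with a Wiener--Lebesgue comparison of \eqref{Eq:EquivNorms22} type. This gives $\lesssim \nm f{M^{\infty ,q}_{(\omega )}}\nm {\chi _{{}_{R_0}}V_{\phi _0}\phi}{\sfW ^\infty (v,\ell ^{r_0})}$, where $r_0=\min (1,q)$.
\item $(\chi _{{}_{R_0}}G)*H$ is bounded in the same way by $\nm \phi {M^{r_0}_{(v)}}\nm {\chi _{{}_{R_0}}V_{\phi _0}f}{\sfW ^r(\omega ,\ell ^{\infty ,q})}$.
\end{enumerate}
The discretization is done as in the proof of \cite[Theorem 3.1]{ToPfTe}. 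We majorize $|V_\phi f|$ on $j+Q$ by sums $\sum _k a(k)b(j-k)$, where $a(k)=\nm {G\,\omega }{L^r(k+Q_1)}$ on enlarged cubes and $b=\sup H\, v$ on cubes. Lemma \ref{Lemma:EstSTFT} lets us pass between $L^\infty$ and $L^r$ local norms of $V_{\phi _0}$ functions, which is needed for $r<\infty$. The cutoff in $x$ is preserved along the way, since $\chi _{{}_{R}}$ only depends on $x$.

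\textbf{Estimate \eqref{Eq:EquivNorms2B}.} Apply \eqref{Eq:ChangeWindow} the other way:
$$
|(\phi ,\phi )|\,|V_{\phi _0}f|\le |V_\phi f|*|V_{\phi _0}\phi | .
$$
This only works up to normalization, so we instead use a reproducing formula with \eqref{Eq:adjoint-convolution}. The same splitting yields the two product terms. The new feature is that the near part of $V_\phi f$, localized to $|x|<R_0$, is not controlled directly by quantities with $R_0$ cutoff. We therefore reinsert $|V_\phi f|\lesssim |V_{\phi _0}f|*|V_\phi \phi _0|$ and push the error onto $\chi _{{}_{R_0}}V_{\phi _0}f$ convolved with a far tail of the kernel. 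Choosing $R$ large, the kernel tail mass beyond distance $R-R_0$ is small, so this contribution is at most $\tfrac12\nm {\chi _{{}_{R_0}}V_{\phi _0}f}{\sfW ^\infty (\omega ,\ell ^{\infty ,q})}$. This term is absorbable in later use.

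The main obstacle is exactly this smallness. We must quantify that
$$
\sum _{|k_x|\ge R-R_0}\big (v(k)\sup _{k+Q}|V_{\phi _0}\phi |\big )^{r_0}\to 0 ,
$$
which holds because $\phi \in M^{r_0}_{(v)}$ and $V_{\phi _0}\phi \in \sfW ^\infty (v,\ell ^{r_0})$ by Proposition \ref{Prop:EquivNorms2}. This tail bound then has to be combined uniformly with the $r_0$-triangle inequality, and checking that only the cases $r=r_0$ and $r=\infty$ are needed keeps the bookkeeping finite.
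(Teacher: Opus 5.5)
Your argument for \eqref{Eq:EquivNorms2A} is essentially the paper's. It uses near/far splitting of the kernel, the discrete convolution estimate \eqref{Eq:DiscConvEst}, and Lemma \ref{Lemma:EstSTFT} (which the paper uses in the packaged form \eqref{Eq:RevIneq1}) to reach the $\sfW ^r$ norm of $\chi _{{}_{R_0}}V_{\phi _0}f$.

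The gap is in \eqref{Eq:EquivNorms2B} for $r=r_0=\min (1,q)<1$. You start from $F_0\lesssim F*\Phi _1$ with $F=|V_\phi f\cdot \omega |$. The term $(\chi _{{}_{R_0}}F)*\Phi _1$ can then only be discretized through $L^1$ (or $L^\infty$) norms of $F$ on unit cubes. Since $\phi$ is not a Gaussian, Lemma \ref{Lemma:EstSTFT} gives no control of these by $L^{r_0}$ norms of $F$ on nearby cubes. So ``the same splitting'' only produces $\nm {\chi _{{}_{R_0}}V_\phi f}{\sfW ^\infty (\omega ,\ell ^{\infty ,q})}$ on the right-hand side. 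That is too weak: implication (3) in the proof of Theorem \ref{Thm:IndepWind} needs precisely the $\sfW ^{r_0}$ norm of $\chi _{{}_{R_0}}V_\phi f$.

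The $\frac 12$-term is the price of overcoming this, and it does not arise the way you describe. The near part of $V_\phi f$ is harmless. It meets either the far part of the kernel, giving the first term, or the near part, which vanishes for $|x|\ge 2R_0$.

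What the paper does instead:
\begin{enumerate}
\item It uses $\int _Q F\le \nm F{L^\infty (Q)}^{1-r}\int _QF^r$ on unit cubes $Q$.
\item It splits the resulting product by a weighted arithmetic--geometric inequality with a free parameter $\theta$.
\item It estimates the $L^\infty$ factor by \eqref{Eq:EquivNorms2A} with $r=\infty$; this is your ``reinsertion''.
\end{enumerate}
That brings back $\nm {\chi _{{}_{R_0}}V_{\phi _0}f}{\sfW ^\infty (\omega ,\ell ^{\infty ,q})}$, multiplied by $\theta ^{-1/(1-r)}$ and by the full norm of $\phi$. The full norm appears because this contribution comes from the near part of the kernel, not from a tail. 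Choosing $\theta$ large makes the coefficient at most $\frac 12$, at the cost of a large constant in front of $\nm {\chi _{{}_{R_0}}V_\phi f}{\sfW ^r(\omega ,\ell ^{\infty ,q})}$.

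Your mechanism, smallness of the kernel tail beyond $R-R_0$, cannot do this job. After reinsertion, the contribution of $\chi _{{}_{R_0}}V_{\phi _0}f$ that is not small comes from the near part of the kernel, so no choice of $R$ makes its coefficient small. For $r=\infty$, or when $r_0=1$, your argument does go through, and the $\frac 12$-term is not even needed.
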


\par

The proof of Proposition \ref{Prop:NormEstimates} is 
given in Subsection \ref{subsec2.2}.

\par

\begin{proof}[Proof of Theorem 
\ref{Thm:IndepWind}]
We shall use several ideas as in the proof of
\cite[Theorem 3.1]{ToPfTe}.
Let 
\begin{equation}
\label{Eq:F0FPhiDef}
F_0 = |V_{\phi _0} f \cdot \omega |,
\quad F = |V_\phi f \cdot \omega |,
\quad
\Phi _1 = |V_{\phi _0} \phi \cdot v|,
\quad
\Phi _2=|V_\phi \phi _0\cdot v|,
\end{equation}
where 
$\phi \in M^r _{(v)}(\rr d)\setminus 0$
and $\phi _0$ is given by
\eqref{Eq:StandardGaussian},
and let $f\in \Sigma _1'(\rr d)$ be fixed. 

Using \eqref{Eq:EmbWienAmLeb}, the result
follows if we prove the implications
(1)--(4) in
\begin{equation}
\label{Eq:ProofDiagram}
\begin{matrix}
\underset{R\to \infty}\lim
\nm {\chi _{{}_{R}}V_{\phi _0}f} {\sfW ^{\infty}(\omega ,\ell ^{\infty ,q} )} =0&
\!\! \overset{(1)}{\phantom i\Leftrightarrow \phantom i}\!\! &
\underset{R\to \infty}\lim
\nm {\chi _{{}_{R}}V_{\phi _0}f}{\sfW ^{r_0}(\omega ,\ell ^{\infty ,q} )}=0
\\[1ex]
{\text{\scriptsize{$(2)$}}}
\Updownarrow & &
\Updownarrow
{\text{\scriptsize{$(3)$}}}
\\[1ex]
\underset{R\to \infty}\lim
\nm {\chi _{{}_{R}}V_{\phi}f} {\sfW ^{\infty}(\omega ,\ell ^{\infty ,q} )} =0&
\!\! \underset{(4)}{\phantom i\Rightarrow \phantom i}\!\! &
\underset{R\to \infty}\lim
\nm {\chi _{{}_{R}}V_{\phi}f}{\sfW ^{r_0}(\omega ,\ell ^{\infty ,q} )}=0.
\end{matrix}
\end{equation}
Evidently, (4)
hold true, and by
Proposition \ref{Prop:GaussWind} it also
follows that (1) is true. We need to prove
(2) and (3).
Let
\begin{alignat*}{2}
Q&=[0,1]^{2d},&\quad Q (j)&=j+Q,
\quad
\\[1ex]
Q_1&=
[-1,1]^{2d},&\quad
Q_1 (j )&=j+Q_1,
\\[1ex]
\alpha _{n,s}(j)& = \nm {\Phi _n}{L^s(Q 
(j))}, & &
\quad n = 1,2,
\\[1ex]
\beta _{0,s}(j )
&=
\nm {F_0}{L^s(Q (j ))}, &
\quad
\beta _{s}(j) &= \nm F{L^s(Q (j))},
\\[1ex]
\tilde \beta _{0,s}(j ) &=
\nm {F_0}{L^s(Q_1 (j))},&
\quad \text{and}\quad
\tilde \beta _{s}(j)
&=
\nm F{L^s(Q_1 (j ))},
\quad
j \in \zz {2d}, \ 
s \in (0,\infty].
\end{alignat*}

By $|V_\phi \phi _0 (X)|
=
|V_{\phi _0} \phi (-X)|$ and the fact that $v$ is even, 
we have
\begin{equation}\label{Eq:TildeAlfaIdent}
\alpha _{1,s}(j)
=
\alpha _{2,s}(-j).
\end{equation}

Since $\phi \in M^{r_0}_{(v)}(\rr d)$,
it follows that $V_{\phi _0}\phi
\in \sfW ^\infty (v,\ell ^{r_0})$,
which implies
$$
\lim _{R\to \infty}
\nm {\chi _{{}_{R}}V_{\phi _0}\phi}
{\sfW ^\infty (v,\ell ^{r_0})}
=0.
$$
By Proposition \ref{Prop:NormEstimates}, i.e. \eqref{Eq:EquivNorms2A} and 
\eqref{Eq:EquivNorms2B}, it follows that
$$
\nm {\chi _{{}_{R}} V_{\phi _0}f}
{\sfW ^r (\omega , \ell ^{\infty ,q} )}
\lesssim
\nm f{M^{\infty ,q}_{(\omega )}}
\quad \text{and}\quad
\nm {\chi _{{}_{R}} V_{\phi}f}
{\sfW ^r (\omega , \ell ^{\infty ,q} )}
\lesssim
\nm f{M^{\infty ,q}_{(\omega )}}
$$
are decreasing with $R$,
and bounded from below by $0$. Hence
the limits
$$
A(r)=\lim _{R\to \infty}
\nm {\chi _{{}_{R}} V_{\phi _0}f}
{\sfW ^r (\omega , \ell ^{\infty ,q} )}
\quad \text{and}\quad
B(r)=\lim _{R\to \infty}
\nm {\chi _{{}_{R}} V_{\phi}f}
{\sfW ^r (\omega , \ell ^{\infty ,q} )}
$$
exist.  If we show that
$$
A(r)=0
\quad \Leftrightarrow \quad
B(r)=0,
\qquad r\in [r_0,\infty ],
$$
then the relations (2) and (3) in 
\eqref{Eq:ProofDiagram} will follow, and
thereby the result.

By (1) in \eqref{Eq:ProofDiagram}
we have
$$
A(r_1)=0
\quad \Leftrightarrow \quad
A(r_2)=0,\qquad r_1,r_2\in [r_0,\infty ].
$$

\par

Suppose $A(r)=0$ for some $r\in [r_0,\infty ]$,
and thereby for every $r\in [r_0,\infty ]$. Then
\eqref{Eq:EquivNorms2A} gives
$$
0\le B(r)
\lesssim
\left (
\left (
\lim _{R\to \infty}
\nm {\chi _{{}_{R}}V_{\phi _0}\phi}
{\sfW ^\infty (v,\ell ^{r_0})}
\right )\nm f{M^{\infty ,q}_{(\omega )}}
+
\nm \phi{M^{r_0}_{(v)}}A(r)
\right )
=0
$$
This shows that $B(r)=0$ when $A(r)=0$.

\par

Suppose instead that $B(r)=0$.
Then \eqref{Eq:EquivNorms2B} gives
\begin{align*}
0&\le A(r)
\\[1ex]
&\le
C
\left (
\lim _{R\to \infty}
\nm {\chi _{{}_{R}}V_{\phi _0}\phi}
{\sfW ^\infty (v,\ell ^{r_0})}
\right )\nm f{M^{\infty ,q}_{(\omega )}}
+
C\nm \phi{M^{r_0}_{(v)}}B(r)
+
\frac 12 A(r)
\\[1ex]
&=
\frac 12A(r).
\end{align*}
This implies that $A(r)=0$,
and the result follows.
\end{proof}

\par

\subsection{Proofs of Propositions
\ref{Prop:GaussWind}
and \ref{Prop:NormEstimates}}
\label{subsec2.2}

\par

\begin{proof}[Proof of Proposition 
\ref{Prop:GaussWind}]
We have $\nm {\chi _{{}_R} V_{\phi _0}f}
{\sfW ^r(\omega ,\ell ^{\infty ,q})}$
is increasing with respect to $r$ and
$$
\nm {\chi _{{}_R} V_{\phi _0}f}
{\sfW ^r(\omega ,\ell ^{\infty,q})}
\le
\nm {\chi _{{}_R} V_{\phi _0}f}
{L^{\infty ,q}_{(\omega)}}
\le
\nm {\chi _{{}_R} V_{\phi _0}f}
{\sfW ^\infty (\omega ,\ell ^{\infty ,q})},
\quad
r=\min (1,q).
$$
It therefore suffices to prove
that for every $R_1>0$, there is an
$R_2>R_1$ such that \eqref{Eq:RevIneq1}
holds.

\par

Let 
$$
R_1>0,\quad
F_0=|V_{\phi _0} f\cdot \omega |,
\quad
Q=[0,1]^{2d}
\quad \text{and}\quad
Q_r=[-r,r]^{2d},
\quad r>0.
$$
Also let $R_{12}=2^{2d}R_1=4^dR_1$,
$R_2=8^{2d}R_{12}=64^dR_{12}$
and
$$
\Omega _R=\sets {(j,\iota )\in \zz {2d}}
{|j|\ge R}.
$$
Then
$$
B_{R_1} (0)+[-2,2]^{2d}\subseteq
B_{R_2} (0).
$$
By Lemma \ref{Lemma:EstSTFT}
and the fact that $\omega$ is
moderate it follows that there is an
$X_j\in j+Q$ with
$j\in \Omega _{R_2}$ such that
\begin{align*}
\nm {F_0}{L^\infty (j+Q)}
&\asymp
\nm {V_{\phi _0}f}{L^\infty (j+Q)}
\omega (j)
\\[1ex]
&=
|{V_{\phi _0}f}(X_j)|\omega (j)
\lesssim
\nm {V_{\phi _0}f}{L^r (B_1(X_j))}
\omega (j)
\\[1ex]
&\le
\nm {V_{\phi _0}f}{L^r (j+Q_2)}
\omega (j)
\asymp
\nm {F_0}{L^r (j+Q_2)}.
\end{align*}
From this estimate and Remark
\ref{Rem:EquivWienerNorms} we get
\begin{align*}
\nm {\chi _{{}_{R_2}}V_{\phi _0}f}
{\sfW ^\infty (\omega ,\ell ^{\infty ,q})}
&=
\nm { \{ \nm {F_0}
{L^\infty (j+Q)}\}
_{j\in \Omega _{R_2}}}{\ell ^{\infty ,q}}
\\[1ex]
&\lesssim
\nm { \{ \nm {F_0}
{L^r (j+Q_2)}\}
_{j\in \Omega _{R_2}}}{\ell ^{\infty ,q}}
\\[1ex]
&\lesssim
\nm { \{ \nm {F_0}
{L^r (j+Q)}\}
_{j\in \Omega _{R_{12}} }}
{\ell ^{\infty ,q}}
\lesssim
\nm {\chi _{{}_{R_1}}V _{\phi _0} f}
{\sfW ^r(\omega ,\ell ^{\infty ,q})},
\end{align*}
and \eqref{Eq:RevIneq1}, and thereby the
result follows.
\end{proof}

\par


\par

\begin{proof}
[Proof of Proposition \ref{Prop:NormEstimates}]
We first prove \eqref{Eq:EquivNorms2A}.

\par

By e.{\,}g. \cite[Theorem 3.1]{ToPfTe} one has
\begin{alignat}{2}
\nm {\alpha _{n,s}}
{\ell ^{r_0}}
&=
\nm {\Phi _n}
{\sfW ^s (\ell ^{r_0}) }
\asymp
\nm \phi{M^{r_0}_{(v)}}, & \quad n&=1,2,
\label{Eq:NormWindow}
\\[1ex]
\nm {\tilde{\beta} _{0,s}}{\ell ^{\infty ,q}}
&\asymp
\nm {\beta _{0,s}}{\ell ^{\infty ,q}}
=
\nm {F_0}{\sfW ^s (\ell ^{\infty ,q} )}
\asymp \nm f{M^{\infty,q}_{(\omega )}}, &&
\notag
\intertext{and}
\nm {\tilde{\beta} _{s}}{\ell ^{\infty ,q}}
&\asymp
\nm {\beta _{s}}{\ell ^{\infty ,q}}
=
\nm F{\sfW ^s (\ell ^{\infty ,q})},
\asymp \nm f{M^{\infty,q}_{(\omega )}}, &
\qquad
s &\in (0,\infty] .
\notag
\end{alignat}
It follows from
\eqref{Eq:ChangeWindow} that
$F \lesssim   F _0 \ast \Phi_2$.
A combination of these
relations, \eqref{Eq:DiscConvEst} and 
\eqref{Eq:NormWindow} gives that for any $R_0>0$, 
there are suitable $R_1>R_0+2^d$
$R_2>2R_1$ and $R_3>R_2+2^d$ such that 
\begin{align}
\nm {\chi _{{}_{R_3}}F}
{\sfW ^{\infty} (\ell ^{\infty ,q} )}^{r_0}
&\le 
\nm {\beta _{\infty}}
{\ell ^{\infty ,q} (\maclI _{R_2})}^{r_0}
\notag
\\[1ex]
&\lesssim
\nm { \alpha_{2,\infty}*\tilde{\beta}_{0,\infty} }
{\ell ^{\infty ,q} (\maclI _{R_2})}^{r_0} 
\le
J_{1,R_1}+J_{2,R_1},
\intertext{where}
J_{1,R_1}
&=
\nm { (\chi _{{}_{R_1}}\alpha_{2,\infty})
*
\tilde{\beta}_{0,\infty} }
{\ell ^{\infty ,q} (\maclI _{R_2})}^{r_0}
\intertext{and}
J_{2,R_1}
&=
\nm { ((1-\chi _{{}_{R_1}})\alpha_{2,\infty})
*
\tilde{\beta}_{0,\infty} }
{\ell ^{\infty ,q} (\maclI _{R_2})}^{r_0}.
\label{Eq:J2R1Def}
\end{align}
Here $\maclI _R$ is the same as in
\eqref{Eq:SetOmegaR}.

\par

For $J_{1,R_1}$ we have
\begin{align}
J_{1,R_1}
&\lesssim
\nm { \chi _{{}_{R_1}}\alpha_{2,\infty} }
{\ell ^{r_0}}^{r_0}
\nm { \tilde{\beta}_{0,\infty}  }
{\ell ^{\infty ,q}}^{r_0}
\notag
\\[1ex]
&\lesssim
\nm {\chi _{{}_{R_0}}\Phi_2}
{\sfW ^{\infty}
(\ell ^{r_0})}^{r_0} 
\nm {f} {M^{\infty ,q}_{(\omega )}}^{r_0}.
\label{Eq:EstJ1R1}
\end{align}

\par

To estimate $J_{2,R_1}$, we observe that
\begin{align*}
\gamma (j,\iota )
&\equiv
\sum _{(k,\kappa )\in \zz {2d}}
(1-\chi _{R_1}(j-k,\iota -\kappa ))
\alpha_{2,\infty}(j-k,\iota -\kappa )
\tilde{\beta}_{0,r}(k,\kappa )
\\[1ex]
&=
\sum _{k\in j+\zz {d}\cap B_{R_1}}
\left (
\sum _{\kappa \in \zz {d}}
\alpha_{2,\infty}(j-k,\iota -\kappa )
\tilde{\beta}_{0,r}(k,\kappa )
\right ),
\qquad (j,\iota ) \in \maclI _{R_2}.
\end{align*}
In the previous sums we have
$k\in j+\zz {d}\cap B_{R_1}$ and
$(j,\iota )\in \maclI _{R_2}$,
giving that
$$
|k-j|<R_1
\quad \text{and}\quad
|j|\ge R_2,
$$
which implies that
\begin{equation}
\label{Eq:BackwardTriangle1}
|k|\ge |j|-|k-j|>R_2-R_1>R_1,
\end{equation}
due to the assumptions. Hence
$$
\gamma (j,\iota )
\le
\sum _{k\in \maclI _{R_1}}
\left (
\sum _{\kappa \in \zz {d}}
\alpha_{2,\infty}(j-k,\iota -\kappa )
\tilde{\beta}_{0,r}(k,\kappa )
\right )
=
\alpha_{2,\infty}
*
(\chi _{R_1}\tilde{\beta}_{0,r}).
$$
A combination of the latter estimate
with \eqref{Eq:NormWindow} and
\eqref{Eq:J2R1Def} gives
\begin{equation}
\label{Eq:J2R1Est}
\begin{aligned}
J_{2,R_1}
&\le
\nm {\alpha_{2,\infty} }
{\ell ^{r_0}}^{r_0}
\nm {\chi _{{}_{R_1}}\tilde{\beta}_{0,r}}
{\ell ^{\infty ,q}}^{r_0}
\\[1ex]
&\lesssim
\nm \phi{M^{r_0}_{(v)}}
\nm {\chi _{{}_{R_0}}F_0}
{\sfW ^\infty (\ell ^{\infty ,q})}.
\end{aligned}
\end{equation}
The estimate \eqref{Eq:EquivNorms2A} in the case
$r=\infty$ now follows
by combining \eqref{Eq:EstJ1R1} with
\eqref{Eq:J2R1Est}. For general $r$, we may
now use \eqref{Eq:RevIneq1} to get that for
any $R_0>0$ there are $R_2,R_1>0$ such that
$R_2>R_1>R_0$ and
\begin{align}
\nm {\chi _{{}_{R_2}}&F} {\sfW ^{r} (\ell ^{\infty ,q} )}
\le
\nm {\chi _{{}_{R_2}}F}
{\sfW ^{\infty} (\ell ^{\infty ,q} )}
\notag
\\[1ex]
&\lesssim
\nm {\chi _{{}_{R_1}}\! V_{\phi _0}\phi}
{\sfW ^\infty (v,\ell ^{r_0})}
\nm { F _0} {\sfW ^{\infty}
(\ell ^{\infty ,q} )}
+\nm {V_{\phi _0}\phi}
{\sfW ^\infty (v,\ell ^{r_0})}
\nm {\chi _{{}_{R_1}}F_0} {\sfW ^{\infty}
(\ell ^{\infty ,q} )}
\notag
\\[1ex]
&=
\nm {\chi _{{}_{R_1}}\! V_{\phi _0}\phi}
{\sfW ^\infty (v,\ell ^{r_0})}
\nm {f} {M^{\infty ,q}_{(\omega )} }
+\nm {\phi}
{M^{r_0}_{(v)}}
\nm {\chi _{{}_{R_1}}F_0} {\sfW ^{\infty}
(\ell ^{\infty ,q} )}
\notag
\\[1ex]
&=
\nm {\chi _{{}_{R_0}}\! V_{\phi _0}\phi}
{\sfW ^\infty (v,\ell ^{r_0})}
\nm {f} {M^{\infty ,q}_{(\omega )} }
+\nm {\phi}
{M^{r_0}_{(v)}}
\nm {\chi _{{}_{R_0}}F_0} {\sfW ^r
(\ell ^{\infty ,q} )}.
\label{eq:estimates-for-(2.10)}
\end{align}
and \eqref{Eq:EquivNorms2A} follows.

\par

It remains to prove \eqref{Eq:EquivNorms2B}.
First we consider the case when
$r\in [r_0,1]$.
The inequality $F_0 \lesssim  F 
\ast \Phi_1$ (cf. \eqref{Eq:ChangeWindow}) gives
\begin{align*}
\beta_{0,r} (j)^r
&\lesssim
\int_{j+Q}
\left ( 
\int _{\rr {2d}} F(X-Y) \, 
\Phi _1 (Y) \, dY
\right )^r
\, dX
\nonumber
\\[1ex]
&=
\int _{j+Q}
\left ( 
\sum_{k\in \zz {2d}}
\int_{k+Q} F(X-Y) \, 
\Phi_1 (Y) \, dY
\right ) ^r
\, dX
\nonumber
\\[1ex]
&\lesssim
\sum_{k\in \zz {2d}} 
\alpha_{1,\infty} (k)^r
\gamma _r(j-k)^r ,
\end{align*}
where
$$
(\gamma _r(j))^{r}
=
\left (
\int _{j+Q} \left( 
\int _{Q} F(X-Y)  
\, dY
\right )^r \, dX
\right ).
$$
Let
$$
R_1>R_0+4^d,
\quad
R_2>2R_1+4^d,
\quad
R_3>2R_2
\quad \text{and}\quad
R_4>R_3+4^d,
$$
and suppose that $|j|\ge R_3$.
Since $r_0\le r$, we obtain
$$
\beta_{0,r} (j)^{r_0}
\lesssim
\left (
(\alpha _{1,\infty}^r*\gamma _r^r)(j)
\right )^{\frac {r_0}r}
\le
(\alpha _{1,\infty}^{r_0}
*\gamma _r^{r_0})(j)
=
\mu _{1,R_2}(j)+\mu _{2,R_2}(j),
$$
where
\begin{align*}
\mu _{1,R_2}(j)
&=
((\chi _{{}_{R_2}}\alpha _{1,\infty})^{r_0}
*\gamma _r^{r_0})(j)
\intertext{and}
\mu _{2,R_2}(j)
&=
(((1-\chi _{{}_{R_2}})\alpha _{1,\infty})^{r_0}
*\gamma _r^{r_0})(j).
\end{align*}
By \eqref{Eq:BackwardTriangle1} we obtain
$$
\mu _{2,R_2}(j)
\le
(\alpha _{1,\infty}^{r_0}
*(\chi _{{}_{R_2}}\gamma _r)^{r_0})(j).
$$

\par

Let $q_0=\frac q{r_0}\ge 1$.
By applying the $\ell ^{\infty ,q}$ norm
and using
\eqref{Eq:DiscConvEst}, we obtain
\begin{align}
\nm {\chi _{{}_{R_4}}
F_0}{\sfW ^r (\ell ^{\infty ,q})}^{r_0}
&\lesssim
\nm {\chi _{{}_{R_3}}
\beta _{0,r}}{\ell ^{\infty ,q}}^{r_0}
\notag
\\[1ex]
&=
\nm {\chi _{{}_{R_3}}\beta _{0,r}^{r_0}}
{\ell ^{\infty ,q_0}}
\lesssim
\nm {\chi _{{}_{R_3}}(\alpha _{1,\infty}^{r_0}
*\gamma _r^{r_0})}
{\ell ^{\infty ,q_0}}
\notag
\\[1ex]
&\le
\nm {
((\chi _{{}_{R_2}}\alpha _{1,\infty})^{r_0}
*\gamma _r^{r_0})}
{\ell ^{\infty ,q_0}}
+
\nm {\alpha _{1,\infty}^{r_0}
*(\chi _{{}_{R_2}}\gamma _r)^{r_0}}
{\ell ^{\infty ,q_0}}
\notag
\\[1ex]
&\lesssim
J_{3,R_2}+J_{4,R_2},
\label{Eq:beta0rEst}
\end{align}
where
\begin{align}
J_{3,R_2}
&=
\nm {\chi _{{}_{R_2}}\alpha _{1,\infty}^{r_0}}
{\ell ^1}\nm {\gamma _r^{r_0}}
{\ell ^{\infty ,q_0}}
\notag
\\[1ex]
&=
\nm {\chi _{{}_{R_2}}\alpha _{1,\infty}}
{\ell ^{r_0}}^{r_0}\nm {\gamma _r}
{\ell ^{\infty ,q}}^{r_0}
\lesssim
\nm {\chi _{{}_{R_1}}V_{\phi _0}\phi \cdot v}
{L^{r_0}}^{r_0}
\nm f{M^{\infty ,q}_{(\omega )}}^{r_0}
\label{Eq:J3R1}
\intertext{and}
J_{4,R_2}
&=
\nm {\alpha _{1,\infty}^{r_0}}{\ell ^1}
\nm {\chi _{{}_{R_2}}\gamma _r^{r_0}}
{\ell ^{\infty ,q_0}}
\notag
\\[1ex]
&=
\nm {\alpha _{1,\infty}}{\ell ^{r_0}}^{r_0}
\nm {\chi _{{}_{R_2}}\gamma _r}
{\ell ^{\infty ,q}}^{r_0}
\lesssim
\nm {\phi}{M^{r_0}_{(v)}}^{r_0}
\nm {\chi _{{}_{R_2}}\gamma _r}
{\ell ^{\infty ,q}}^{r_0}
\label{Eq:J4R1}
\end{align}
We need to estimate
$\nm {\chi _{{}_{R_2}}\gamma _r}
{\ell ^{\infty ,q}}^{r_0}
$ in \eqref{Eq:J4R1}.
Let $\theta >0$ be arbitrary.
By 
$
F=F^r \, F^{1-r}
$
we have
\begin{equation}
\label{Eq:gammarComputations}
\begin{aligned}
\gamma _r (j)^{r_0}
&=
\left (
\int_{j+Q}
\left( 
\int_{Q} F(X-Y)  \, dY
\right )^r
\, dX
\right )^{\frac {r_0}r}
\\[1ex]
&\le 
\tilde{\beta}_{\infty}(j) ^{r_0(1-r)}
\left (
\int_{j+Q}
\left( 
\int_{Q} F(X-Y)^r
\, dY 
\right)^r
\, dX 
\right )^{\frac {r_0}r}
\\[1ex]
&\qquad 
\leq
\tilde{\beta}_{r}(j) ^{r_0r}
\tilde{\beta}_{\infty}(j) ^{r_0(1-r)}
\\[1ex]
&\qquad 
\leq
r\theta ^{\frac 1r}\tilde{\beta} _{r}(j)^{r_0}
+
(1-r)\theta ^{-\frac 1{1-r}}
\tilde{\beta}_{\infty}(j)^{r_0},
\end{aligned}
\end{equation}
where the last inequality follows
from the arithmetic-geometric
inequality.

\par

By using \eqref{Eq:gammarComputations} in
\eqref{Eq:beta0rEst} gives
\begin{equation}
\label{Eq:ChiR1gamma}
\begin{aligned}
\nm {\chi _{{}_{R_2}}\gamma _r^{r_0}}
{\ell ^{\infty ,q_0}}
&\lesssim
r\theta ^{\frac 1r}
\nm {\chi _{{}_{R_2}}\tilde{\beta} _{r}^{r_0}}
{\ell ^{\infty ,q_0}}
+
(1-r)\theta ^{-\frac 1{1-r}}
\nm {\chi _{{}_{R_2}}\tilde{\beta}  _{\infty}^{r_0}}
{\ell ^{\infty ,q_0}}
\\[1ex]
&=
r\theta ^{\frac 1r}
\nm {\chi _{{}_{R_2}}\tilde{\beta} _{r}}
{\ell ^{\infty ,q}}^{r_0}
+
(1-r)\theta ^{-\frac 1{1-r}}
\nm {\chi _{{}_{R_2}}\tilde{\beta}  _{\infty}}
{\ell ^{\infty ,q}}^{r_0}.
\end{aligned}
\end{equation}

\par

For the expression
$\nm {\chi _{{}_{R_2}}\tilde{\beta} _{r}}
{\ell ^{\infty ,q}}^{r_0}$ in
\eqref{Eq:ChiR1gamma} we have
\begin{equation}
\label{Eq:ChiR1beta}
\nm {\chi _{{}_{R_2}}\tilde{\beta} _{r}}
{\ell ^{\infty ,q}}
\lesssim
\nm {\chi _{{}_{R_1}}F}
{\sfW ^r(\ell ^{\infty ,q})}.
\end{equation}
We need to find suitable estimate of
$\nm {\chi _{{}_{R_2}}\tilde{\beta} _{\infty}}
{\ell ^{\infty ,q}}^{r_0}$ in
\eqref{Eq:ChiR1gamma}. If $R_{12}=R_2-4^d$,
then $R_{12}>2R_1$, and
$$
\nm {\chi _{{}_{R_2}}\tilde{\beta} _{\infty}}
{\ell ^{\infty ,q}}^{r_0}
\le
\nm {\chi _{{}_{R_{12}}}\beta _{\infty}}
{\ell ^{\infty ,q}}^{r_0}.
$$
A combination of \eqref{Eq:NormWindow}
and the estimates
\eqref{Eq:J2R1Def}--\eqref{Eq:J2R1Est}
gives
\begin{multline}
\label{Eq:CircleEst}
\nm {\chi _{{}_{R_{12}}}\beta _{\infty}}
{\ell ^{\infty ,q}}^{r_0}
\\[1ex]
\lesssim
\nm {\chi _{{}_{R_0}}\Phi _2}
{\sfW ^{\infty}
(\ell ^{r_0})}^{r_0} 
\nm {f} {M^{\infty ,q}_{(\omega )}}^{r_0}
+
\nm \phi{M^{r_0}_{(v)}}^{r_0}
\nm {\chi _{{}_{R_0}}F_0}
{\sfW ^\infty (\ell ^{\infty ,q})}^{r_0}.
\end{multline}

\par

Let $\theta >0$ be arbitrary. By
\eqref{Eq:beta0rEst}--\eqref{Eq:CircleEst}
it follows that
\begin{align}
\nm {\chi _{{}_{R_4}}
&F_0}{\sfW ^r (\ell ^{\infty ,q})}
\lesssim
(J_{3,R_2}+J_{4,R_2})^{\frac 1{r_0}}
\notag
\\[1ex]
&\lesssim
\left (
\nm {\chi _{{}_{R_4}}\Phi _2}{L^{r_0}}
\nm {f} {M^{\infty ,q}_{(\omega )}}
+
\nm \phi{M^{r_0}_{(v)}}
\nm {\chi _{{}_{R_2}}\gamma _r}
{\ell ^{\infty ,q}}
\right )
\notag
\\[1ex]
&\lesssim
\nm {\chi _{{}_{R_4}}\Phi _2}{L^{r_0}}
\nm {f} {M^{\infty ,q}_{(\omega )}}
+
\theta ^{\frac 1r}
\nm \phi{M^{r_0}_{(v)}}
\nm {\chi _{{}_{R_1}}F}
{\sfW ^r (\ell ^{\infty ,q})}
\notag
\\[1ex]
&+
\theta ^{-\frac 1{1-r}}
\nm \phi{M^{r_0}_{(v)}}
\nm {\chi _{{}_{R_0}}
F_0}{\sfW ^\infty (\ell ^{\infty ,q})},
\label{Eq:TurnArgument}
\end{align}
where the hidden  constants do not 
depend on $f$, $\phi$ and $\theta$.
The estimate \eqref{Eq:EquivNorms2B}
now follows by choosing $\theta$ large
enough, and the result follows in
the case $r\in [r_0,1)$.

\par

If $r=1$, then \eqref{Eq:EquivNorms2B}
is obtained by choosing $\theta =1$
in the computations which led to
\eqref{Eq:TurnArgument}, and the result
follows in this case as well.

\par

\par

For $r\in (1,\infty]$,
\cite[Theorem 3.1]{ToPfTe}, 
\eqref{Eq:EquivNorms2B}
and the fact that the Wiener amalgam spaces 
$\sfW ^r (\ell ^{\infty ,q})$
decrease with $r$ gives
$$
\nm {\chi _{{}_{R}}F_0}
{\sfW ^r (\ell ^{\infty ,q})}
\asymp
\nm {\chi _{{}_{R}}F_0}
{\sfW ^1 (\ell ^{\infty ,q})}
\lesssim
\nm {\chi _{{}_{R}}F}
{\sfW ^1 (\ell ^{\infty ,q})}
\le
\nm {\chi _{{}_{R}}F}
{\sfW ^r (\ell ^{\infty ,q})},
$$
giving the result.
\end{proof}

\par

\section{Basic properties of
$M^{\sharp ,q}_{(\omega )}$}\label{sec3}

\par

In this section we deduce different basic
properties for $M^{\sharp ,q}_{(\omega )}$.
First we show that $M^{\sharp ,q}_{(\omega )}$
is a quasi-Banach space of order $\min (1,q)$.
Then we show that $M^{p,q}_{(\omega )}$
is contained in $M^{\sharp ,q}_{(\omega )}$
when $p<\infty$. Thereafter we proceed with
various embedding
properties for such spaces.
In particular, we derive certain
identifications of compactly
supported elements
in $M^{\sharp ,q}_{(\omega )}$. Finally
we find convolution properties for
$M^{\sharp ,q}_{(\omega )}$.


\par

\subsection{Topological and basic embedding
properties for $M^{\sharp ,q}_{(\omega )}$}

\begin{prop}
\label{Prop:ModComplete}
Let $q\in (0,\infty ]$ and
$\omega \in \mascP _E(\rr {2d})$.
Then $M^{\sharp ,q}_{(\omega )}(\rr d)$
is a quasi-Banach space of order $r = \min (1,q)$.
\end{prop}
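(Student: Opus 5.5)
The plan is to show that $M^{\sharp ,q}_{(\omega )}(\rr d)$ is a closed subspace of $M^{\infty ,q}_{(\omega )}(\rr d)$. The latter is a quasi-Banach space of order $r=\min (1,q)$ by Proposition \ref{Prop:EquivNorms2} with $\mascB =L^{\infty ,q}$, which is normal with $r_0=\min (1,q)$ by Example \ref{Example:LebCase}. Since the quasi-norm on $M^{\sharp ,q}_{(\omega )}$ is by definition \eqref{Eq:ModSharpNorm} the restriction of $\nm \cdo{M^{\infty ,q}_{(\omega )}}$, completeness and the $r$-triangle inequality \eqref{Eq:WeakTriangle1} are inherited once we know that the subspace is closed. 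So two things remain: that $M^{\sharp ,q}_{(\omega )}$ is a linear subspace, and that it is closed.

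\emph{Linear subspace.} Fix $\phi$ and write $N_R(f)=\nm {\chi _{{}_R}V_\phi f}{L^{\infty ,q}_{(\omega )}}$, which is exactly the quantity in \eqref{Eq:ModNormInfinityCond}. Since $V_\phi$ is linear and $L^{\infty ,q}_{(\omega )}$ is an $r$-normed space, we have
\begin{equation*}
N_R(f+g)^r\le N_R(f)^r+N_R(g)^r
\quad\text{and}\quad
N_R(\alpha f)=|\alpha |N_R(f).
\end{equation*}
Hence $f,g\in M^{\sharp ,q}_{(\omega )}$ implies $\alpha f+g\in M^{\sharp ,q}_{(\omega )}$.

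\emph{Closedness.} Let $f_n\in M^{\sharp ,q}_{(\omega )}$ with $f_n\to f$ in $M^{\infty ,q}_{(\omega )}$. Multiplication by $\chi _{{}_R}$ does not increase the $L^{\infty ,q}_{(\omega )}$ quasi-norm, by solidity. Using the same fixed window $\phi$ as in the definition of the norm, this gives
\begin{equation*}
N_R(f)^r\le N_R(f-f_n)^r+N_R(f_n)^r
\le \nm {f-f_n}{M^{\infty ,q}_{(\omega )}}^r+N_R(f_n)^r .
\end{equation*}
Given $\ep >0$, first choose $n$ with $\nm {f-f_n}{M^{\infty ,q}_{(\omega )}}^r<\ep /2$. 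Then choose $R_0$ such that $N_R(f_n)^r<\ep /2$ for all $R\ge R_0$. This yields $\limsup _{R\to \infty}N_R(f)=0$, so $f\in M^{\sharp ,q}_{(\omega )}$.

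\emph{Main obstacle.} The only subtle point is consistency of windows. Membership in $M^{\sharp ,q}_{(\omega )}$ and the norm must refer to the same $\phi$, or else window independence has to be invoked. The window independence of $M^{\infty ,q}_{(\omega )}$ comes from Proposition \ref{Prop:InvModAndBanach}, and that of the vanishing condition from Corollary \ref{Cor:IndepWind}. With these, the argument above can be run with any admissible fixed $\phi \in \Sigma _1(\rr d)\setminus 0$, and the resulting space and topology do not depend on that choice.
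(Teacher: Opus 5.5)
Your proposal is correct and follows essentially the same route as the paper: completeness of $M^{\infty ,q}_{(\omega )}(\rr d)$ supplies the limit $f$, and the vanishing condition at infinity passes to $f$ via the $r$-triangle inequality and an $\ep /2$-split computed with a single fixed window. Your version is slightly more complete. You also check that $M^{\sharp ,q}_{(\omega )}(\rr d)$ is a linear subspace, and your argument covers $q=\infty$ directly, whereas the paper leaves both points to the reader.
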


\par

\begin{rem}
Evidently, if $q \geq 1$ 
in Proposition
\ref{Prop:ModComplete} it follows that
$M^{\sharp ,q}_{(\omega )}(\rr d)$ is a Banach
space.
\end{rem}

\par

\begin{proof}
We only prove the result for $q<\infty$.
The case when $q=\infty$ follows by similar
arguments and is left for the reader.
Let  
$\{f_j\} _{j=1} ^{\infty}$ 
be a Cauchy sequence in 
$
M^{\sharp, q}_{(\omega)}(\rr d) 
\subseteq
M^{\infty, q}_{(\omega)}(\rr d) 
$.
Since $M^{\infty, q}_{(\omega)}(\rr d)$ is
complete 
there is an 
$f \in M^{\infty, q}_{(\omega)}(\rr d)$ 
such that 
\begin{equation}
\NM{f_j-f}{ M^{\infty, q}_{(\omega)} }
\to 0
\quad \text{as}\quad j\to \infty .
\end{equation}
We need to show that 
$f \in M^{\sharp, q}_{(\omega)}(\rr d)$.

\par

Let $r=\min (1,q)$,
$\ep >0$, $v\in \mascP _E(\rr {2d})$
be submultiplicative such that $\omega$ is $v$ moderate,
$
\phi
\in M^q_{(v)}(\rr d)
\setm
$
and choose $j \geq 1$ such that
\begin{align}\label{eq:ConvCS}
    \NM{ V_{\phi}(f_j-f) }{ L^{\infty, q} _{(\omega)} }
    <
    \frac \ep 2.
\end{align}
Since $f_j \in M^{\sharp, q}_{(\omega)} (\rr d)$ 
we can choose $R_0>0$ such that
\begin{align}
J_{1, R}
 &=
 \left(
 \int _{\rr d}
 \sup _{|x| \geq R} |
 V_{\phi} f_j(x,\xi )\cdot
 \omega
 (x,\xi)|^q
 \, d\xi
 \right)^{\frac rq}
 < 
 \frac \ep 2,
\label{Eq:J2RDef2}
\intertext{when $R \geq R_0$. 
The estimate \eqref{eq:ConvCS} implies}
J_{2,R}
&=
\left(
 \int _{\rr d}
 \sup _{|x| \geq R} |
 V_{\phi} (f-f_j)(x,\xi )\cdot
 \omega (x,\xi)|^q
 \, d\xi
 \right)^{\frac rq}
    <
    \frac \ep 2,
\label{Eq:J1RDef2}
\end{align}
when $R \geq R_0$.
For $R > R_0$ we get by \eqref{Eq:J2RDef2} 
and \eqref{Eq:J1RDef2}
\begin{align*}
\left(
 \int _{\rr d}
 \sup _{|x| \geq R} |
 V_{\phi} f(x,\xi )\cdot \omega
 (x,\xi)|^q
 \, d\xi
 \right)^{\frac rq}
 \leq 
J_{1,R} + J_{2,R}
< \ep. 
\end{align*}
This shows that 
\begin{align*}
    \left(
 \int _{\rr d}
 \sup _{|x| \geq R} 
 |V_{\phi} f(x,\xi )\cdot
 \omega (x,\xi)|^q
 \, d\xi
 \right)^{\frac rq}
 \to
 0
 \quad \text{as }
 R \rightarrow \infty
\end{align*}
which implies 
$f \in M^{\sharp,q}_{(\omega)}(\rr d)$,
and the result follows. 
\end{proof}

\par

Next we show that $M^{p,q}_{(\omega )}$
is contained in $M^{\sharp ,q}_{(\omega )}$
when $p<\infty$.

\par

\begin{prop}
\label{Prop:ModEmb}
Let $p,q,q_1,q_2\in (0,\infty ]$
and
$\omega \in \mascP _E(\rr {2d})$.
Then the following is true:
\begin{enumerate}
\item if $p<\infty$, then
$M^{p,q}_{(\omega )}(\rr d)
\hookrightarrow M^{\sharp ,q}_{(\omega )}(\rr d)$;

\vrum

\item if $q_1\le q_2$, then
$M^{\sharp ,q _1}_{(\omega )}(\rr d)
\hookrightarrow M^{\sharp ,q_2}_{(\omega )}(\rr d)$.
\end{enumerate}
\end{prop}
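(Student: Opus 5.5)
For (1), I will reduce to Wiener amalgam norms with a Gaussian window and use the local sup-estimate Lemma \ref{Lemma:EstSTFT}. Let $\phi _0$ be the standard Gaussian \eqref{Eq:StandardGaussian} and put $F_0=|V_{\phi _0}f\cdot \omega |$. Since $M^{p,q}_{(\omega )}\hookrightarrow M^{\infty ,q}_{(\omega )}$ by Proposition \ref{Prop:ContAndCompactModSp}(4), it only remains to verify the vanishing condition \eqref{Eq:ModNormInfinityCond}. By Corollary \ref{Cor:IndepWind} this condition does not depend on the window, so I may use $\phi _0$.

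The first step is to pass to sequences. Set $a(j,\iota )=\nm {F_0}{L^\infty ((j,\iota )+Q)}$ for $(j,\iota )\in \zz {2d}$. By Proposition \ref{Prop:EquivNorms2} with $\mascB =L^{p,q}$ and $r_1=r_2=\infty$, we have $a\in \ell ^{p,q}(\zz {2d})$ and $\nm a{\ell ^{p,q}}\asymp \nm f{M^{p,q}_{(\omega )}}$. Moreover, since $\sup _{|x|\ge R}F_0(x,\xi )\le \sup _{|j|\ge R-\sqrt d}a(j,\iota )$ for $\xi \in \iota +[0,1]^d$, it suffices to show
$$
\lim _{R\to \infty}\NM {\sup _{|j|\ge R}a(j,\cdo )}{\ell ^q(\zz d)}=0 .
$$

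The main step is a dominated convergence argument. For each fixed $\iota$, $a(\cdo ,\iota )\in \ell ^p(\zz d)$ with $p<\infty$. Hence $b_R(\iota )\equiv \sup _{|j|\ge R}a(j,\iota )\to 0$ as $R\to \infty$, because the tail of an $\ell ^p$ sequence tends to zero. In addition, $b_R(\iota )\le \nm {a(\cdo ,\iota )}{\ell ^\infty}\le \nm {a(\cdo ,\iota )}{\ell ^p}$, and the right-hand side lies in $\ell ^q(\zz d)$ as a function of $\iota$. For $q<\infty$, dominated convergence on $\zz d$ then gives $\nm {b_R}{\ell ^q}\to 0$.

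I expect the case $q=\infty$ to be the main obstacle, since pointwise convergence no longer implies uniform convergence in $\iota$. My first attempt would be to require uniform smallness of the tails. I suspect, however, that one should show an example of $f\in M^{p,\infty}$ where the tails do not go to zero uniformly in $\iota$. A candidate is $f=\sum _k e^{i\scal \cdo {\xi _k}}\phi _0(\cdo -x_k)$ with $|x_k|\to \infty$ and $\xi _k$ separated. In that case the statement for $q=\infty$ would need restricting, or $M^{\sharp ,\infty}$ interpreted accordingly. I would check this case carefully, and if the paper intends $q<\infty$ implicitly, I would say so.

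For (2), since $q_1\le q_2$ we have $\ell ^{q_1}\hookrightarrow \ell ^{q_2}$, so Proposition \ref{Prop:ContAndCompactModSp}(4) gives $M^{\infty ,q_1}_{(\omega )}\hookrightarrow M^{\infty ,q_2}_{(\omega )}$, which also controls the norms. For the vanishing condition, I use Theorem \ref{Thm:IndepWind} with a Gaussian window to replace the $L^{\infty ,q}$ tail norm by the $\sfW ^\infty (\omega ,\ell ^{\infty ,q})$ tail norm. With $b_R$ as above, $\nm {b_R}{\ell ^{q_2}}\le \nm {b_R}{\ell ^{q_1}}\to 0$. Converting back via the same theorem shows $f\in M^{\sharp ,q_2}_{(\omega )}$. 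For this step I need $r=\infty \ge \min (1,q_j)$ and $\phi _0\in M^r_{(v)}$ for all $r$, and both hold.
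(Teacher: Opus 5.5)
\paragraph{Part (1), $q<\infty$.}
Your argument is correct, and it takes a genuinely different route from the paper's. The paper fixes a window $\phi$ and uses $F\lesssim F_0*\Phi$ with $\Phi=|V_\phi \phi_0\cdot v|$. It then splits the tail of $F$ into two parts:
\begin{itemize}
\item a part controlled by the tail of $\Phi$, which vanishes since $\phi\in M^r_{(v)}$;
\item a part controlled by $\nm{\chi_{{}_{R_1}}F_0}{\sfW^\infty(\ell^{\infty,q})}$.
\end{itemize}
For the second part it writes ``since $f\in M^{\sharp,q}_{(\omega)}(\rr d)$, Theorem \ref{Thm:IndepWind} gives'' its vanishing. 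That is exactly the Gaussian-window statement to be proved, and the hypothesis $p<\infty$ is never used. So the paper's argument amounts to re-deriving window independence and leaves the key step open.

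Your discretization $a(j,\iota)$ plus dominated convergence on $\zz d$ supplies precisely that missing step. You get $b_R(\iota)\to 0$ pointwise because $a(\cdo,\iota)\in\ell^p$ with $p<\infty$. You get domination by $\iota\mapsto \nm{a(\cdo,\iota)}{\ell^p}\in\ell^q$ with $q<\infty$. So your route actually closes the proof.

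Three small corrections:
\begin{itemize}
\item Corollary \ref{Cor:IndepWind} is stated under the hypothesis $f\in M^{\sharp,q}_{(\omega)}$, so citing it is circular as written. Invoke Theorem \ref{Thm:IndepWind} instead, noting that its proof (via Proposition \ref{Prop:NormEstimates}) only uses $f\in M^{\infty,q}_{(\omega)}$.
\item Alternatively, skip the change of window altogether. Proposition \ref{Prop:EquivNorms2} applies directly to the defining window $\phi\in\Sigma_1(\rr d)\setminus 0$, and your dominated convergence argument then runs verbatim for $|V_\phi f\cdot\omega|$.
\item Lemma \ref{Lemma:EstSTFT} is not needed. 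The pointwise bound $F_0(x,\xi)\le a(j,\iota)$ should be taken after replacing $\omega$ by an equivalent continuous weight.
\end{itemize}

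\paragraph{Part (1), $q=\infty$.}
Your suspicion is correct: (1) is false here, and the paper's remark that this case ``follows by similar arguments'' is wrong. Already at the sequence level, $a(j,\iota)=\delta_{j,\iota}$ lies in $\ell^{p,\infty}$, while $\sup_\iota\sup_{|j|\ge R}a(j,\iota)=1$ for every $R$.

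Your candidate realizes this once the separation is quantified. Take $\omega=1$, $N\ge 2$ and
$$
f=\sum_{k\ge 1}e^{iNk\scal{\cdo}{e_1}}\phi_0(\cdo-Nke_1).
$$
Then
$$
|V_{\phi_0}f(x,\xi)|\le (2\pi)^{-\frac d2}\sum_{k\ge1}e^{-\frac14(|x-Nke_1|^2+|\xi-Nke_1|^2)}.
$$
By the (quasi-)triangle inequality, $\sup_\xi\nm{V_{\phi_0}f(\cdo,\xi)}{L^p}<\infty$, so $f\in M^{p,\infty}(\rr d)$ for every $p\in(0,\infty]$. On the other hand,
$$
|V_{\phi_0}f(Nke_1,Nke_1)|\ge (2\pi)^{-\frac d2}\Big(1-2\sum_{m\ge1}e^{-\frac{N^2m^2}{2}}\Big)>0
$$
uniformly in $k$. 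By continuity in $\xi$, the function $\xi\mapsto\sup_{|x|\ge R}|V_{\phi_0}f(x,\xi)|$ stays above half this constant on a neighbourhood of $\xi=Nke_1$ whenever $Nk\ge R$. Hence \eqref{Eq:ModNormInfinityCond} fails for $q=\infty$, even as an essential supremum.

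So (1) must be restricted to $q<\infty$, and your proof covers exactly the range where it is true. The same restriction affects the use of (1) in Proposition \ref{Proposition:inclusions} when $q=\infty$.

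\paragraph{Part (2).}
Your argument is the same as the paper's. You embed $\ell^{\infty,q_1}$ into $\ell^{\infty,q_2}$ in the Wiener amalgam tails and use Theorem \ref{Thm:IndepWind} to pass between these tails and \eqref{Eq:ModNormInfinityCond}. This is valid for all $q_1\le q_2$, including $q_2=\infty$.
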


\par

\begin{proof}
First we prove (1).
We only prove the result for $q<\infty$. 
The case $q=\infty$ follows by similar
arguments and is left for the reader. 
Since the modulation spaces increase with the
Lebesgue parameter, it is enough to show the
embeddings for   $p\geq 1$.

\par

Let $\Omega _R$ be as in \eqref{Eq:SetOmegaR},
$f \in M^{p,q}_{(\omega)}(\rr d)$ and let
$ r = \min (1,q)$. Also let $v$, $\phi$, $\phi _0$,
$F$ and $F_0$ be the same as in the proof of Theorem
\ref{Thm:IndepWind}, and set 
$$
\Phi =|V_\phi \phi _0\cdot v|.
$$
Then 
\begin{align*}
    \Phi \in 
    L^{r}(\rr d)\bigcap L^{\infty}(\rr d),
\end{align*}
and by \cite[Section 12.1]{Gro2} it follows that
$$
|V_\phi f| \le |V_{\phi _0}f|*|V_\phi \phi _0|,
$$
which implies
\begin{equation}
\label{Eq:STFTConv}
0\le F\lesssim F_0*\Phi . 
\end{equation}
For $R_1>0$ and $R>2R_1$, let 
\begin{align}\label{Eq:JRDef1}
J_R
&=
\left(
\int_{\rr d}
\left (\sup_{|x|\geq R} F(x,\xi )
\right )^q\, d\xi
\right )^{\frac{r}{q}}.
\intertext{Then \eqref{Eq:STFTConv}
implies that $J_R \lesssim J_{1,R,R_1}+J_{2,R,R_1}$,
where}
J_{1,R,R_1}
&=
\left ( \int_{\rr d}
\left( \sup_{|x|\geq R} 
\iint _{\Omega _{R_1}} 
F_0(x-y, \xi-\eta) 
\Phi(y, \eta) \, dy d\eta \right)^q
\, d\xi\right)^{\frac{r}{q}}
\label{Eq:J1RDef1}
\intertext{and}
J_{2,R,R_1}
&=
\left ( \int_{\rr d}
\left( \sup_{|x|\geq R}
\iint _{\complement \Omega _{R_1}}
F_0(x-y, \xi-\eta)
\Phi (y, \eta) \, dy d\eta \right)^q
\, d\xi \right)^{\frac{r}{q}}.
\label{Eq:J2RDef1}
\end{align}

\par

We intend to prove that $J_R\to 0$ as $R\to \infty$,
which follows if we prove 
\begin{equation}
\label{Eq:J12RR1ep}
J_{1,R,R_1},J_{2,R,R_1} \to 0,
\quad \text{as}\quad
R_1\to \infty \ \text{and}\ R>2R_1.
\end{equation}
Then $f\in M^{\sharp,q}_{(\omega)}(\rr d)$,
and the result follows.


\par

Let $\chi _{{}_R}$
be as in \eqref{Eq:chiRDef}. Then
a slight reformulation of $J_{1,R,R_1}$
gives
$$
J_{1,R,R_1} = \left ( \int_{\rr d}
\sup_{|x|\geq R} \left (
F_0*(\chi _{{}_{R_1}}\Phi ) (x,\xi )
\right  )^q 
\, d\xi\right)^{\frac{r}{q}}
\le
\left (
\nm {F_0*(\chi _{{}_{R_1}}\Phi )}{L^{\infty ,q}}
\right )^{\frac rq}.
$$
Hence \cite[Corollary 5.2]{ToPfTe} gives
\begin{align}
J_{1,R,R_1}
&\le
\left (
\nm {F_0*(\chi _{{}_{R_1}}\Phi )}{L^{\infty ,q}}
\right )^{\frac rq}
\le
\left (
\nm {F_0*(\chi _{{}_{R_1}}\Phi )}
{\sfW ^\infty (\ell ^{\infty ,q})}
\right )^{\frac rq}
\notag
\\[1ex]
&\le
\left (
\nm {F_0}{\sfW ^\infty (\ell ^{\infty ,q})}
\nm {\chi _{{}_{R_1}}\Phi}
{\sfW ^1 (\ell ^{1,r})}
\right )^{\frac rq}
\notag
\\[1ex]
&\le
\left (
\nm {F_0}{\sfW ^\infty (\ell ^{\infty ,q})}
\nm {\chi _{{}_{R_1}}\Phi}
{\sfW ^\infty  (\ell ^{r})}
\right )^{\frac rq}.
\label{Eq:J1RR1Est}
\end{align}

\par

Since $\phi \in M^r_{(v)}(\rr d)$, we have
$$
V_\phi \phi _0 \cdot v \in L^r(\rr {2d})
$$
which is equivalent to
$$
V_\phi \phi _0 \cdot v \in \sfW ^\infty (\ell ^r(\zz {2d})),
$$
in view of \cite[Theorem 3.1]{ToPfTe}. Using the
fact that $r<\infty$,
we get
$$
\lim _{R_1\to \infty }
\nm {\chi _{{}_{R_1}}\Phi}
{\sfW ^\infty  (\ell ^{r})}
=
\lim _{R_1\to \infty }
\nm {\chi _{{}_{R_1}}V_\phi \phi _0 \cdot v}
{\sfW ^\infty  (\ell ^{r})}
=0.
$$
A combination of this limit and \eqref{Eq:J1RR1Est}
shows that
$$
J_{1,R,R_1} \to 0,
\quad \text{as}\quad
R_1\to \infty \ \text{and}\ R>2R_1.
$$

\par

Since $|x-y|\ge R_1$ when $|x|\ge R$ and $|y|\le R_1$,
due to the assumption $R\ge 2 R_1>0$, we get
\begin{align*}
J_{2,R,R_1}
&=
\left ( \int_{\rr d}
\left( \sup_{|x|\geq R}
\iint _{\complement \Omega _{R_1}}
F_0(x-y, \xi-\eta)
\Phi (y, \eta) \, dy d\eta \right)^q
\, d\xi \right)^{\frac{r}{q}}
\\[1ex]
&=
\left ( \int_{\rr d}
\left( \sup_{|x|\geq R}
\iint _{\complement \Omega _{R_1}}
\chi _{{}_{R_1}}(x-y,\xi -\eta )F_0(x-y, \xi-\eta)
\Phi (y, \eta) \, dy d\eta \right)^q
\, d\xi \right)^{\frac{r}{q}}
\\[1ex]
&\le
\left ( \int_{\rr d}
\sup_{|x|\geq R}
\left ( (\chi _{{}_{R_1}}F_0)*\Phi (x,\xi ) \right )^q
\, d\xi \right )^{\frac{r}{q}}
\le
\left (
\nm {(\chi _{{}_{R_1}}F_0)*\Phi}{L^{\infty ,q}}
\right )^{\frac rq}.
\end{align*}
Hence \cite[Corollary 5.2]{ToPfTe} gives
\begin{align}
J_{2,R,R_1}
&\le
\left (
\nm {(\chi _{{}_{R_1}} F_0)*\Phi}{L^{\infty ,q}}
\right )^{\frac rq}
\le
\left (
\nm {(\chi _{{}_{R_1}} F_0)*\Phi )}
{\sfW ^\infty (\ell ^{\infty ,q})}
\right )^{\frac rq}
\notag
\\[1ex]
&\le
\left (
\nm {\chi _{{}_{R_1}}F_0}{\sfW ^\infty (\ell ^{\infty ,q})}
\nm {\Phi}
{\sfW ^1 (\ell ^{1,r})}
\right )^{\frac rq}
\notag
\\[1ex]
&\le
\left (
\nm {\chi _{{}_{R_1}}F_0}{\sfW ^\infty (\ell ^{\infty ,q})}
\nm {\Phi}
{\sfW ^\infty  (\ell ^{r})}
\right )^{\frac rq}.
\label{Eq:J2RR1Est}
\end{align}

\par

Since $f\in M^{\sharp ,q}_{(\omega )}(\rr d)$,
Theorem \ref{Thm:IndepWind} gives
$$
\lim _{R_1\to \infty}
\nm {\chi _{{}_{R_1}}F_0}{\sfW ^\infty (\ell ^{\infty ,q})}
=0.
$$
By combining the latter limit with \eqref{Eq:J2RR1Est},
one obtains 
$$
J_{2,R,R_1} \to 0,
\quad \text{as}\quad
R_1\to \infty \ \text{and}\ R>2R_1.
$$
This gives
\eqref{Eq:J12RR1ep}, and the assertion (1) follows.

\par

It remains to prove (2). Suppose that
$q_1\le q_2$ and
$f\in M^{\sharp ,q_1}_{(\omega )}(\rr d)$.
Then $V_\phi f\in
\sfW ^\infty (\omega ,\ell ^{\infty,q_1})$
Since $\sfW ^\infty (\omega ,\ell ^{p_1,q_1})$
is continuously embedded in 
$\sfW ^\infty (\omega ,\ell ^{p_2,q_2})$,
when $p_1,p_2,q_1,q_2\in (0,\infty ]$ satisfy
$p_1\le p_2$ and $q_1\le q_2$, 
\eqref{Eq:IndepWind} gives
$$
0\le \nm {\chi _{{}_R}V_\phi f}
{\sfW ^\infty (\omega ,\ell ^{p_2,q_2})}
\lesssim
\nm {\chi _{{}_R}V_\phi f}
{\sfW ^\infty (\omega ,\ell ^{p_1,q_1})}
\to 0,
$$
as $R\to \infty$. This shows that
$M^{\sharp ,q_1}_{(\omega )}(\rr d)$
is continuously embedded in
$M^{\sharp ,q_2}_{(\omega )}(\rr d)$,
and the result follows.
\end{proof}

\par

The next result, complementary to
the previous one, shows that
$L^\sharp _{(\omega _0 )}(\rr d)$
is contained in $M^{\sharp ,\infty}
_{(\omega )}(\rr d)$.

\par

\begin{prop}
\label{Prop:LebModEmb}
Let $\vartheta \in \mascP _E(\rr d)$ and
$\omega (x,\xi )= \omega _0 (x)$. Then
\begin{alignat}{2}
M^{\infty ,1}
_{(\omega )}(\rr d)
&\hookrightarrow
L^\infty _{(\omega _0 )}(\rr d) &
&\hookrightarrow
M^{\infty ,\infty}
_{(\omega )}(\rr d)
\label{Eq:LebModEmbOld}
\intertext{and}
M^{\sharp ,1}
_{(\omega )}(\rr d)
&\hookrightarrow
L^\sharp _{(\omega _0 )}(\rr d) &
&\hookrightarrow M^{\sharp ,\infty}
_{(\omega )}(\rr d).
\label{Eq:LebModEmb}
\end{alignat}
\end{prop}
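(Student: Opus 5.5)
We will prove the embeddings in \eqref{Eq:LebModEmbOld} first and then add the vanishing conditions at infinity to get \eqref{Eq:LebModEmb}. Throughout we take the Gaussian window $\phi _0$ from \eqref{Eq:StandardGaussian}. This is allowed by Proposition \ref{Prop:InvModAndBanach} and Corollary \ref{Cor:IndepWind}, since $\phi _0\in M^r_{(v)}$ for every $r$ and every $v\in \mascP _E$. We let $v_0$ be a submultiplicative weight which moderates $\omega _0$, and take $v(x,\xi )=v_0(x)$.

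\emph{Right embeddings.} For $f\in L^\infty _{(\omega _0)}$ we use the integral formula for $V_{\phi _0}f$ together with $\omega _0(x)\lesssim \omega _0(y)v_0(x-y)$. This gives
$$
|V_{\phi _0}f(x,\xi )|\omega _0(x)
\lesssim
\int |f(y)\omega _0(y)|\, \phi _0(y-x)v_0(y-x)\, dy .
$$
Since $\phi _0v_0\in L^1$, because $v_0$ grows at most exponentially by \eqref{Eq:weight0}, the right-hand side is bounded by $\nm f{L^\infty _{(\omega _0)}}$, uniformly in $(x,\xi )$. For the $\sharp$ version we split the integral into $|y|\ge R/2$ and $|y|<R/2$, and take $|x|\ge R$. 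The first part is at most $\sup _{|y|\ge R/2}|f\omega _0|\cdot \nm {\phi _0v_0}{L^1}$, which tends to $0$. The second part is at most $\nm f{L^\infty _{(\omega _0)}}\int _{|z|\ge R/2}\phi _0v_0\, dz$, which also tends to $0$. Both bounds are uniform in $\xi$, so with $q=\infty$ we get \eqref{Eq:ModNormInfinityCond}.

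\emph{Left embeddings.} We use the inversion formula \eqref{Eq:IdentSTFTAdj} and \eqref{Eq:STFTAdjointFormula}:
$$
|f(x)|\lesssim \iint |V_{\phi _0}f(y,\eta )|\,\phi _0(x-y)\, dy\, d\eta .
$$
This holds a priori in the distributional sense. It is justified because the right-hand side is finite, so $V_{\phi _0}^*V_{\phi _0}f$ is a locally integrable, indeed bounded, function; this uses $M^{\infty ,1}_{(\omega )}\subseteq M^{\infty ,1}_{(1/v)}$ and \eqref{Eq:STFTAdjCont}. Multiplying by $\omega _0(x)\lesssim \omega _0(y)v_0(x-y)$ gives
$$
|f(x)\omega _0(x)|\lesssim \int \phi _0(x-y)v_0(x-y)\, G(y)\, dy,
\qquad
G(y)=\int |V_{\phi _0}f(y,\eta )|\omega _0(y)\, d\eta .
$$
We then bound $\sup _y G(y)\le \int \sup _y |V_{\phi _0}f(y,\eta )\omega (y,\eta )|\, d\eta =\nm f{M^{\infty ,1}_{(\omega )}}$. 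This gives the first embedding in \eqref{Eq:LebModEmbOld}. For the $\sharp$ version, the same splitting at $|y|\ge R/2$ yields
$$
\sup _{|x|\ge R}|f\omega _0|
\lesssim
\NM {\sup _{|y|\ge R/2}|V_{\phi _0}f(y,\cdo )\omega (y,\cdo )|}{L^1}
+\nm f{M^{\infty ,1}_{(\omega )}}\int _{|z|\ge R/2}\phi _0v_0 .
$$
The first term tends to $0$ by the assumption $f\in M^{\sharp ,1}_{(\omega )}$, and the second tends to $0$ trivially.

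The main technical point is justifying the pointwise reconstruction formula for $f\in M^{\infty ,1}_{(\omega )}$, that is, showing that $f$ is a function and equals $\nm {\phi _0}{L^2}^{-2}V_{\phi _0}^*V_{\phi _0}f$ almost everywhere. We plan to handle this by testing against $g\in \Sigma _1$ and using \eqref{Eq:STFTAdjoint} together with Fubini. Fubini is legitimate by the integrable majorant obtained above, and it identifies $f$ with the bounded continuous function defined by the integral. All remaining steps are elementary splitting estimates.
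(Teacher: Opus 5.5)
Your proof is correct. For the right-hand embeddings in \eqref{Eq:LebModEmbOld} and \eqref{Eq:LebModEmb} it is the paper's argument: you move $\omega_0$ onto $f$ by moderateness, then cut the $y$-integral into a part near the origin and a part far out.

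For the left-hand embeddings you take a different route. The paper uses the pointwise inversion formula $f(x)=\int V_\phi f(x,\xi)e^{i\langle x,\xi\rangle}\,d\xi$ for a window with $\phi(0)=(2\pi)^{-d/2}$. This is Fourier inversion of $\xi\mapsto V_\phi f(x,\xi)=\mascF(f\cdot\overline{\phi(\cdot-x)})(\xi)$, evaluated on the diagonal. It gives $|f(x)\omega_0(x)|\le\int|V_\phi f(x,\xi)\omega(x,\xi)|\,d\xi$ at the \emph{same} point $x$. Both the $L^\infty_{(\omega_0)}$ bound and the decay at infinity then follow at once by moving $\sup_{|x|\ge R}$ inside the $\xi$-integral, with no moderateness and no splitting.

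Your reconstruction $f=\|\phi_0\|_{L^2}^{-2}V_{\phi_0}^*V_{\phi_0}f$ instead averages over $y$ with the kernel $\phi_0(x-y)$. That is why you need $\omega_0(x)\lesssim\omega_0(y)v_0(x-y)$ and a second near/far splitting. In exchange, your route rests only on the standard reconstruction identity. Your testing-plus-Fubini step also shows explicitly that an element of $M^{\infty,1}_{(\omega)}(\rr d)$ is a function. The paper does not address this: it writes the inversion formula for $f\in L^\infty_{(\omega_0)}(\rr d)$, which is what has to be proved.

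One small correction: $V_{\phi_0}^*V_{\phi_0}f$ is in general not bounded, since $\omega_0$ may decay exponentially. It is only $\omega_0$-weighted bounded, hence locally bounded. That is all your Fubini argument uses, because $1/\omega_0\lesssim e^{r|\cdot|}$ is integrable against $g\in\Sigma_1(\rr d)$.
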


\par

\begin{proof}
The inclusions in \eqref{Eq:LebModEmbOld}
follows from analysis given in already
\cite{Fei1}. In order to be self-contained
we here give the arguments.

\par

Let $v_0\in \mascP _E(\rr d)$ be such that
$\omega _0$ is $v_0$-moderate, and let
and $\phi \in \Sigma _1(\rr d)$ such that
$\phi (0)=(2\pi )^{-\frac d2}$.
For any $f\in L^\infty _{(\omega _0)}(\rr d)$
\begin{equation}
f(x)=\int _{\rr d}V_\phi f(x,\xi )
e^{i\scal x\xi}\, d\xi .
\end{equation}
This gives
\begin{align*}
\nm f{L^\infty _{(\omega _0)}}
&\le
\sup _{x\in \rr d}
\int _{\rr d}
|V_\phi f(x,\xi )\omega (x,\xi )|\, d\xi 
\\[1ex]
&\le
\int _{\rr d}
\sup _{x\in \rr d}
|V_\phi f(x,\xi )\omega (x,\xi )|\, d\xi
\asymp
\nm f{M^{\infty ,1}_{(\omega )}}.
\end{align*}
This gives the first inclusion in
\eqref{Eq:LebModEmbOld}.

\par

In the same way, suppose that
$f\in M^{\sharp ,1}_{(\omega )}(\rr d)$.
Then
\begin{align*}
0\le
\sup _{x\ge R}|f(x)|
&\le
\sup _{x\ge R}
\int _{\rr d}
|V_\phi f(x,\xi )\omega (x,\xi )|\, d\xi 
\\[1ex]
&\le
\int _{\rr d}
\sup _{|x|\ge R}
|V_\phi f(x,\xi )\omega (x,\xi )|\, d\xi 
\to 0,
\end{align*}
as $R\to \infty$. This gives the first
inclusion in \eqref{Eq:LebModEmb}.

\par

Next we prove the second inclusion
in \eqref{Eq:LebModEmbOld}. We have
\begin{align*}
\nm f{M^{\infty ,\infty}_{(\omega )}}
&\asymp 
\sup _{x,\xi \rr d}|(2\pi )^{\frac d2}
V_\phi f(x,\xi )\omega _0(x)|
\\[1ex]
&=
\sup _{x,\xi \rr d}
\left |
\int _{\rr d}f(y)
\overline{\phi (y-x)}
e^{-i\scal y\xi}\omega _0(x)\, dy
\right |
\\[1ex]
&\lesssim
\sup _{x,\xi \rr d}
\int _{\rr d}|f(y)\omega _0(y)|\, 
|\phi (y-x)v_0(y-x)|\, dy
\\[1ex]
&\le
\nm f{L^\infty _{(\omega _0)}}
\int _{\rr d}\, 
|\phi (y-x)v_0(y-x)|\, dy
=
\nm f{L^\infty _{(\omega _0)}}
\nm \phi {L^1_{(v_0)}},
\end{align*}
and the second inclusion
in \eqref{Eq:LebModEmbOld} follows.

\par

It remains to prove the second inclusion in
\eqref{Eq:LebModEmb}.
Let $\ep, c>0$ and 
$f\in L^\sharp _{(\omega _0)}(\rr d)$. 
Then,
\begin{align}
\label{eq:EstEsssup}
\underset{|y|\geq R} \esssup \, 
|f(y) \omega _0(y)|
\to 0
\quad \text{as}\quad
R\to \infty.
\end{align}
Since 
$
L^\sharp _{(\omega _0)}(\rr d)
\hookrightarrow 
L^\infty _{(\omega _0)}(\rr d)
\hookrightarrow 
M^{\infty} _{(\omega )}(\rr d)
$, the result follows if we show that
\begin{align*}
 J_{R_1}
 \equiv
 \sup_{\xi\in \rr d} 
 \left(
 \sup_{|x|\geq R_1} 
 F_0(x, \xi) 
 \right)
 \to 0
 \quad \text{as}\quad R_1 \to \infty ,
\end{align*}
where
$f \in  M^{\sharp ,\infty} _{(\omega )}(\rr d)$,
$\phi \in \Sigma_{1} (\rr d)\setm$,
$F_0  = |V_\phi f \cdot \omega | $,
and $R_1>0$. 
Since $v_0$ is submultiplicative we have
\begin{align*}
J_{R_1}
&\lesssim
 \sup_{|x|\geq R_1}
 \left (
 \int _{\rr d} |f(y)\omega _0(y) 
 \overline{\phi(y-x)}
v_0(y-x)|\, dy
 \right )
 \\[1ex]
 &\lesssim 
 J_{1,R,R_1}+J_{2,R,R_1},
\end{align*}
where 
\begin{align*}
   J_{1,R,R_1}
   &=
   \sup_{|x|\geq R_1}
   \left ( 
   \int_{|y|\leq R}|f(y)\omega _0(y)
   \overline{\phi(y-x)}v_0(y-x)| \, dy
   \right )
\intertext{and} 
   J_{2,R,R_1}
   &=
   \sup_{|x|\geq R_1}
   \left( 
   \int_{|y|\geq R}|f(y)\omega _0(y) 
   \overline{\phi(y-x)}v_0(y-x)|\, dy
   \right).
\end{align*}

\par

Suppose $R_1>2R$. 
First we estimate $J_{1,R,R_1}$. 
Then $|y|\leq R$ and $|x|\geq R_1$ 
implies $|x-y|\geq R_1-R>R$.
This gives 
\begin{equation*} 
    J_{1,R,R_1}
    \leq
    \nm f {L^\infty_{(\omega _0)}}  
    \nm \phi {L^1_{(v)}(\complement B_{R})}.
\end{equation*}
Since
$
\nm \phi {L^1_{(v)}(\complement B_{R})}
\to 0
$
as $R \to \infty$
we get
\begin{equation}
\label{eq:EsJ1R}
J_{1,R,R_1} \to 0
\quad \text{as}\quad
R\to \infty ,\ R_1>2R. 
\end{equation}
Next we estimate $J_{2,R,R_1}$. 
By \eqref{eq:EstEsssup} we get
\begin{align}
\label{eq:EsJ2R}
   J_{2,R,R_1}
   \leq 
   \underset{|y|\geq R} \esssup
   \big (|f(y)\omega _0(y)|\big )
   \nm \phi {L^1 _{(v_0)}}
   \to 0
 \quad \text{as}\quad
 R \to \infty .
\end{align}
Combining \eqref{eq:EsJ1R} and 
\eqref{eq:EsJ2R} provides
\begin{align*}
0 
\leq 
J_{R_1}
    \leq 
    J_{1,R,R_1}+J_{2,R,R_1}
    \to 0 ,
 \quad \text{as}\quad R \to \infty ,\ R_1>2R,
\end{align*}
which finishes the proof.
\end{proof}

\par


\par

The next result give, among others, indications
on how $M^{0,q}_{(\omega )}(\rr d)$ and
$M^{\flat ,q}_{(\omega )}(\rr d)$ behaves 
under multiplications with elements in
$\Sigma _1(\rr d)$. Here let
\begin{align}
M^{0,q}_{(\omega )}(\rr d)
&\equiv
\bigcap _{\rho >0} M^{p,q}_{(\omega _\rho )}(\rr d),
\quad
\omega _\rho (x,\xi )=\omega (x,\xi )e^{\rho |x|},
\label{Eq:ApprSet0}
\intertext{and}
M^{\flat ,q}_{(\omega )}(\rr d)
&\equiv
\bigcap _{p>0} M^{p,q}_{(\omega )}(\rr d),
\label{Eq:ApprSet}
\end{align}
where the topologies are defined by
project limit topologies of
$M^{p,q}_{(\omega _\rho )}(\rr d)$,
$\rho >0$, and $M^{p,q}_{(\omega )}(\rr d)$, $p>0$,
respectively.
We observe that $M^{0,q}_{(\omega )}(\rr d)$
in \eqref{Eq:ApprSet0} is independent of $p$,
because
$$
M^{p_2,q}_{(\omega _{\rho +\ep})}(\rr d)
\hookrightarrow
M^{p_1,q}_{(\omega _\rho )}(\rr d)
\hookrightarrow
M^{p_2,q}_{(\omega _\rho )}(\rr d),
\quad \text{when}\quad
p_1\le p_2,\ \ep >0.
$$
From these inclusions it also follows that
$$
M^{0,q}_{(\omega )}(\rr d)
\hookrightarrow
M^{\flat ,q}_{(\omega )}(\rr d).
$$
Furthermore, for some $\rho _0>0$ we have
$$
\omega _0(\xi )e^{-\rho _0|x|}
\lesssim
\omega (x,\xi )
\lesssim
\omega _0(\xi )e^{\rho _0|x|},
\quad
\omega _0(\xi ) = \omega (0,\xi ).
$$
This leads to that we may replace
$\omega$ with $\omega _0$ in
\eqref{Eq:ApprSet0}.

\par

\begin{prop}
\label{Proposition:inclusions}
Let $q\in (0,\infty ]$,
$\omega \in \mascP _E(\rr {2d})$
and let $M^{\flat ,q}_{(\omega )}(\rr d)$
be as in \eqref{Eq:ApprSet}. Then
\begin{align}
\Sigma _1(\rr d)
&\subseteq
\sets {f\cdot g}
{f\in M^{\infty ,q}_{(\omega )}(\rr d),
\ g\in \Sigma _1(\rr d)}
\subseteq
M^{0,q}_{(\omega )}(\rr d)
\notag
\\[1ex]
&\subseteq
M^{\flat ,q}_{(\omega )}(\rr d)
=
\bigcap _{p\in (0,\infty ]}
M^{p,q}_{(\omega )}(\rr d)
\hookrightarrow
\bigcup _{p\in (0,\infty ]}
M^{p,q}_{(\omega )}(\rr d)
\notag
\\[1ex]
&\hookrightarrow
M^{\sharp ,q}_{(\omega )}(\rr d)
\hookrightarrow
M^{\infty ,q}_{(\omega )}(\rr d)
\hookrightarrow
\Sigma _1'(\rr d).
\label{Eq:RelSmallerLargerExpModSp}
\end{align}
\end{prop}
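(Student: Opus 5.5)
The chain \eqref{Eq:RelSmallerLargerExpModSp} will be verified link by link, from left to right. Most links are either trivial or already available from earlier results.

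\emph{First inclusion.} Fix $f_0\in \Sigma _1(\rr d)$. The Lemma in Subsection \ref{subsec1.2} (based on \cite{Ans}) gives $f_0=g_1\cdot g_2$ with $g_1,g_2\in \Sigma _1(\rr d)$. By Proposition \ref{Prop:ContAndCompactModSp}(1) we have $g_1\in M^{\infty ,q}_{(\omega )}(\rr d)$, and hence $f_0$ lies in the product set.

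\emph{Second inclusion.} This is the main step: we must show $f\cdot g\in M^{p,q}_{(\omega _\rho )}$ for every $\rho >0$. I use the standard window-splitting identity. Choose $\phi =\phi _1\phi _2$ with $\phi _j\in \Sigma _1$ (Gaussians, say). Then
$$
V_\phi (fg)(x,\xi )=(2\pi )^{-\frac d2}\big (V_{\phi _1}f(x,\cdo )*V_{\phi _2}g(x,\cdo )\big )(\xi ),
$$
which follows from \eqref{Eq:FourTransfConv} and $(fg)\overline{\phi (\cdo -x)}=(f\overline{\phi _1(\cdo -x)})(g\overline{\phi _2(\cdo -x)})$. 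Since $g\in \Sigma _1$, we have $|V_{\phi _2}g(x,\eta )|\lesssim e^{-N(|x|+|\eta |)}$ for every $N$ (Proposition \ref{Prop:ExtSTFTSchwartz}). Using moderateness of $\omega$, i.e. \eqref{Eq:weight0}, to write $\omega (x,\xi )\lesssim \omega (x,\xi -\eta )e^{r|\eta |}$, I get
$$
|V_\phi (fg)\,\omega _\rho |(x,\xi )\lesssim e^{(\rho -N)|x|}\int \sup _y|V_{\phi _1}f(y,\xi -\eta )\omega (y,\xi -\eta )|\,e^{(r-N)|\eta |}\,d\eta .
$$
Taking the $L^p$ norm in $x$ (finite, since $N>\rho$) and the $L^q$ norm in $\xi$ gives a convolution in $\xi$ of an $L^q$ function with $e^{-c|\cdo |}$. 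For $q\ge 1$ this is handled by Young's inequality. For $q<1$ I would instead discretize via Wiener amalgam norms: use Proposition \ref{Prop:EquivNorms2} with $\sfW ^\infty$ and the discrete estimate \eqref{Eq:DiscConvEst} with $\ell ^{q}$ weights, where the exponentially decaying kernel lies in $\ell ^{q}_{(v)}$. \textbf{This $q<1$ quasi-norm convolution is the step I expect to be the main obstacle.}

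\emph{Remaining links.}
\begin{itemize}
\item[$\bullet$] $M^{0,q}\subseteq M^{\flat ,q}$ was noted just before the statement.
\item[$\bullet$] The equality $M^{\flat ,q}=\bigcap _{p\in (0,\infty ]}M^{p,q}$ holds because the spaces increase with $p$ (Proposition \ref{Prop:InvModAndBanach}(2)), so adjoining $p=\infty$ changes nothing.
\item[$\bullet$] The intersection embeds in the union trivially.
\item[$\bullet$] For the union into $M^{\sharp ,q}$, note $M^{p,q}\hookrightarrow M^{\sharp ,q}$ for $p<\infty$ by Proposition \ref{Prop:ModEmb}(1). The case $p=\infty$ should be read in the union as a topological union of the finite-$p$ spaces; otherwise I would interpret the union over $p<\infty$, since $M^{\infty ,q}\not\subseteq M^{\sharp ,q}$. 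This point needs care, and I would state it explicitly.
\item[$\bullet$] $M^{\sharp ,q}\hookrightarrow M^{\infty ,q}$ holds by definition \eqref{Eq:ModSharpNorm}.
\item[$\bullet$] $M^{\infty ,q}\hookrightarrow \Sigma _1'$ is Proposition \ref{Prop:ContAndCompactModSp}(1).
\end{itemize}

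Continuity of the embeddings follows because each step above comes with a norm estimate.
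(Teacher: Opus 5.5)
Your proposal is correct. For every link except the second it matches the paper's proof:
\begin{itemize}
\item the first inclusion uses the factorization lemma together with $\Sigma _1\subseteq M^{\infty ,q}_{(\omega )}$;
\item the union embeds in $M^{\sharp ,q}_{(\omega )}$ by Proposition \ref{Prop:ModEmb};
\item the remaining links follow from the definitions.
\end{itemize}

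The genuine difference is the inclusion of the product set in $M^{0,q}_{(\omega )}(\rr d)$. The paper simply invokes the multiplication results of \cite{Toft26}. You instead prove it directly. You split the window as $\phi =\phi _1\phi _2$, write $V_\phi (fg)(x,\cdot )$ as a $\xi$-convolution of $V_{\phi _1}f(x,\cdot )$ and $V_{\phi _2}g(x,\cdot )$, and use that $V_{\phi _2}g$ decays like $e^{-N(|x|+|\eta |)}$ for every $N$. The $x$-decay absorbs $e^{\rho |x|}$ for any $\rho$ and makes the $L^p$-norm in $x$ finite for any $p$.

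The step you flag for $q<1$ closes exactly as you suggest. Put $G(\xi )=\sup _y|V_{\phi _1}f(y,\xi )\omega (y,\xi )|$ and $a(\iota )=\sup _{\xi \in \iota +[0,1]^d}G(\xi )$. Proposition \ref{Prop:EquivNorms2} with $r_1=r_2=\infty$ gives $\nm a{\ell ^q}\asymp \nm f{M^{\infty ,q}_{(\omega )}}$. Next let $b(m)=\sup _{m+[-1,1]^d}e^{-c|\cdot |}$, which lies in $\ell ^q$ for every $q>0$. Then $\sup _{\iota +[0,1]^d}(G*e^{-c|\cdot |})\le (a*b)(\iota )$, and $\nm {a*b}{\ell ^q}^q\le \nm a{\ell ^q}^q\nm b{\ell ^q}^q$ for $q\le 1$; this is \eqref{Eq:DiscConvEst} for $\mascB =L^q$.

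Your route is self-contained and shows transparently why every $\rho$ and every $p$ are allowed. The paper's citation is shorter and relies on a general H\"older--Young type multiplication theorem, which it reuses in Proposition \ref{Prop:CompSuppModSp}.

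Your caveat about $p=\infty$ is justified and worth keeping explicit. Since $M^{p,q}_{(\omega )}$ increases with $p$, the union over $p\in (0,\infty ]$ equals $M^{\infty ,q}_{(\omega )}(\rr d)$. That space is not contained in $M^{\sharp ,q}_{(\omega )}(\rr d)$: for $\omega \equiv 1$ and $f\equiv 1$, $|V_\phi f(x,\xi )|$ does not depend on $x$. The paper's appeal to Proposition \ref{Prop:ModEmb} (1) only covers $p<\infty$. So the union in the statement must be taken over $p\in (0,\infty )$.
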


\par

%

\begin{proof}
The first inclusion
\eqref{Eq:RelSmallerLargerExpModSp}
follows from the facts that
$\Sigma _1(\rr d)
\subseteq
M^{\infty ,q}_{(\omega )}(\rr d)$,
and that $\Sigma _1(\rr d)$ is a
factorization algebra (see e.{\,}g.
\cite[Theorem 2.2]{ToKhNiNo}
and its proof). The second inclusion
follows from
multiplication properties for modulation spaces
explained in \cite{Toft26}.
The second last inclusion in
\eqref{Eq:RelSmallerLargerExpModSp}
follows from Proposition
\ref{Prop:ModEmb}. The other
inclusions in
\eqref{Eq:RelSmallerLargerExpModSp}
follows from the definitions.
This gives the result.
\end{proof}

\par

Next we will apply Proposition \ref{Prop:ModEmb}
to deduce properties for compactly supported
elements in modulation spaces.

\par

\begin{prop}
\label{Prop:CompSuppModSp}
Let $p\in (0,\infty ]$ and $s>1$,
$\omega _0\in \mascP _s(\rr {d})$ and
$\omega (x,\xi )=\omega _0(\xi )$,
$x,\xi \in \rr d$.
Then
\begin{alignat}{2}
M^{0,q}_{(\omega )}(\rr d)
\cap \maclE _s'(\rr d)
&=
M^{\flat ,q}_{(\omega )}(\rr d)
\cap \maclE _s'(\rr d) & &
\notag
\\[1ex]
&=
M^{p,q}_{(\omega )}(\rr d)
\cap \maclE _s'(\rr d)
\notag
\\[1ex]
&=
M^{\sharp,q}_{(\omega )}(\rr d)
\cap \maclE _s'(\rr d), &
\quad q&\in (0,\infty ],
\label{Eq:CompSuppModSp}
\\[1ex]
\mascF \sfW ^1(\ell ^q_{(\omega _0)}(\zz d) )
\cap \maclE _s'(\rr d)
&\subseteq 
M^{\flat ,q}_{(\omega )}(\rr d)
\cap \maclE _s'(\rr d) & &
\notag
\\[1ex]
&\subseteq
\mascF L^q_{(\omega _0)}(\rr d)
\cap \maclE _s'(\rr d),&
\quad q&\in (0,1 ]
\label{Eq:CompSuppFLqSpSmallq}
\intertext{and}
M^{\flat ,q}_{(\omega )}(\rr d)
\cap \maclE _s'(\rr d)
&=
\mascF L^q_{(\omega _0)}(\rr d)
\cap \maclE _s'(\rr d),& \quad q&\in [1,\infty ].
\label{Eq:CompSuppFLqSpBigq}
\end{alignat}

\par

If $\omega \in \mascP _{0,s}(\rr {2d})$,
then the same holds true with $\maclE '_{0,s}$
in place of $\maclE '_{s}$ at each occurrence.
If instead $\omega \in \mascP (\rr {2d})$,
then the same holds true with $\mascE '$
in place of $\maclE '_{s}$ at each occurrence.
\end{prop}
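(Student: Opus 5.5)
The core idea is that for compactly supported $f$, the short-time Fourier transform decays in $x$ away from the support faster than any admissible weight can grow. Then all $x$-integrability indices collapse, and only the $\xi$-behaviour, governed by $\omega _0(\xi )$, remains.

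By Proposition \ref{Proposition:inclusions} we already have
$$
M^{0,q}_{(\omega )}\subseteq M^{\flat ,q}_{(\omega )}\subseteq M^{p,q}_{(\omega )}\subseteq M^{\sharp ,q}_{(\omega )}.
$$
So for \eqref{Eq:CompSuppModSp} it suffices to prove one reverse inclusion,
$$
M^{\sharp ,q}_{(\omega )}\cap \maclE _s'\subseteq M^{0,q}_{(\omega )},
$$
and even $M^{\infty ,q}_{(\omega )}\cap \maclE _s'\subseteq M^{0,q}_{(\omega )}$ will do.

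\emph{Step 1: factor $f$.} Let $f\in M^{\infty ,q}_{(\omega )}\cap\maclE _s'$ with $\supp f\subseteq B_{R_0}$. Choose $\chi \in \maclD _s(\rr d)$ with $\chi =1$ near $\supp f$, so $f=f\cdot \chi$. I would like $\chi\in\Sigma _1$ in order to quote the second inclusion of Proposition \ref{Proposition:inclusions} directly, but compactly supported functions are never in $\Sigma _1$. I therefore redo that multiplication argument with a window $\phi\in \maclD _s$ (or $\maclD _{0,s}$ in the Beurling case). Writing $V_\phi (f\chi)$ via \eqref{Eq:STFTDef}$'$ gives
$$
|V_\phi (f\chi )(x,\xi )|\le \bigl(|V_{\phi _1}f(x,\cdo )|*|\widehat{\chi \,\overline{\phi (\cdo -x)}/\phi _1(\cdo -x)}|\bigr)(\xi ),
$$
or, more simply, one uses the compact support of $V_\phi f$ in $x$. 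The point is this: with $\phi\in\maclD _s$ supported in $B_1$, the function $V_\phi f(x,\xi )$ vanishes identically for $|x|>R_0+1$. Then, since $\omega$ depends only on $\xi$,
$$
\nm {V_\phi f\,\omega _\rho }{L^{p,q}}\le e^{\rho (R_0+1)}|B_{R_0+1}|^{1/p}\nm {V_\phi f\,\omega}{L^{\infty ,q}}
$$
for every $p$ and $\rho$. This gives $f\in M^{0,q}_{(\omega )}$ at once.

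\emph{Step 2: justify the compactly supported window.} This is the main obstacle: Definition \ref{Def:SpecialModSpace} uses windows in $\Sigma _1$, and $\maclD _s\not\subseteq\Sigma _1$. I would invoke Proposition \ref{Prop:EquivNorms2}, which allows any window in $M^{r_0}_{(v_0v)}$. Since $\omega\in\mascP _s$, the moderating weight $v$ can be taken as $e^{c|\xi |^{1/s}}$ in $\xi$, and $\maclD _s\subseteq M^{r}_{(v)}$ for such $v$. This holds because $\widehat\phi$ decays like $e^{-r|\xi|^{1/s}}$ for some $r$ (all $r$ in the $\maclD _{0,s}$ case, which is exactly why the $(0,s)$ variant requires $\maclE '_{0,s}$). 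For $q<\min$ issues I would use $r_0=\min(1,p,q)$ and check that $\maclD _s$ lies in all $M^{r}_{(v)}$. In the polynomial case $\phi\in C_0^\infty$ works for the same reason.

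\emph{Step 3: the Fourier–Lebesgue statements.} For $f$ and $\phi$ compactly supported, $V_\phi f(x,\cdot )=\widehat{f\overline{\phi(\cdot-x)}}$ is supported in $|x|\le R_0+1$. Hence $\nm f{M^{p,q}_{(\omega )}}$ is comparable to the integral over $x\in B_{R_0+1}$ of $\nm {\widehat f*\widehat{\phi _x}\,\omega _0}{L^q}$. For $q\ge1$, Young's inequality, $\omega _0$-moderateness and $\widehat\phi\in L^1_{(v)}$ give the bound $\lesssim \nm{\widehat f\omega_0}{L^q}$. Conversely, pick finitely many translates $\phi(\cdot -x_k)$ whose squares, suitably normalised, sum to $1$ on $\supp f$; writing $f=\sum_k f\phi(\cdot-x_k)^2$ recovers $\nm{\widehat f\omega _0}{L^q}$ from $\sup_x\nm{V_\phi f(x,\cdot)\omega_0}{L^q}$. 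This proves \eqref{Eq:CompSuppFLqSpBigq}. For $q<1$, Young's inequality fails in $L^q$, so only the inclusions \eqref{Eq:CompSuppFLqSpSmallq} survive. The left one uses the convolution estimate \eqref{Eq:DiscConvEst} with the Wiener norm $\sfW ^1(\ell ^q_{(\omega _0)})$ on $\widehat f$. The right one uses the same partition-of-unity recovery, applying the $q$-triangle inequality to the finite sum.
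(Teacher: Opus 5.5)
Your proof of \eqref{Eq:CompSuppModSp} is correct, and it takes a different route from the paper. The paper writes $f=f\cdot\phi$ with a cutoff $\phi\in\maclD_s$ equal to $1$ on $\supp f$. It then invokes the H\"older--Young type product estimate \cite[Theorem 3.2]{Toft26} to pass from $M^{p_2,q}_{(\omega)}$ to $M^{p_1,q}_{(\omega_\rho)}$, absorbing $e^{\rho|x|}$ into the weight of the cutoff. You instead use a compactly supported window, so that $V_\phi f(x,\cdo)=0$ for $|x|>R_0+1$. All $x$-exponents and the factor $e^{\rho|x|}$ are then handled by a trivial estimate on a ball, and the only input is window independence from Proposition \ref{Prop:EquivNorms2}. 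This is more elementary and self-contained. It also costs nothing extra, since the paper itself uses a compactly supported window in its Fourier--Lebesgue step.

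Two small repairs are needed. First, the blanket claim that $\maclD_s\subseteq M^{r}_{(v)}$ is not justified: elements of $\maclD_s$ only have Fourier decay $e^{-c|\xi|^{1/s}}$ for \emph{some} $c$, which need not beat the fixed exponent in $v$. Take the window in $\maclD_{0,s}\setminus 0$ instead, which is nontrivial since $s>1$. Second, the weight $v$ in Proposition \ref{Prop:EquivNorms2} must moderate $\omega_\rho$, so it must also contain $e^{\rho|x|}$. This is harmless, because $V_{\phi_0}\phi$ has Gaussian decay in $x$ for compactly supported $\phi$.

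The genuine gap is the second inclusion in \eqref{Eq:CompSuppFLqSpSmallq} when $q<1$. Your recovery formula $f=\sum_k f\,\phi(\cdo-x_k)^2$ expresses $\widehat f$, up to constants, as a finite sum of convolutions $V_\phi f(x_k,\cdo)*\mascF(\phi(\cdo-x_k))$. For $q\ge 1$ each term is controlled by Young's inequality. For $q<1$ the $q$-triangle inequality handles the finite sum, but not the convolution inside each term, since $L^q*L^1\not\subseteq L^q$ for $q<1$ --- exactly the failure you point out one sentence earlier. So $\nm{\widehat f\,\omega_0}{L^q}\lesssim\nm f{M^{\infty,q}_{(\omega)}}$ is not established.

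The missing idea is to avoid the convolution entirely. Take the window $\phi\in\maclD_{0,s}$ with $\phi=1$ near $\supp f$. Then $f\overline{\phi}=f$, so $V_\phi f(0,\xi)=\mascF(f\cdot\overline{\phi})(\xi)=\widehat f(\xi)$, and
\[
\nm{\widehat f\,\omega_0}{L^q}=\nm{V_\phi f(0,\cdo)\,\omega(0,\cdo)}{L^q}\le\NM{\sup_{x\in\rr d}|V_\phi f(x,\cdo)\,\omega(x,\cdo)|}{L^q}
\]
for every $q\in(0,\infty]$. This is the paper's argument, and it also makes your partition of unity unnecessary when $q\ge1$. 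Alternatively, you could bound each convolution in $\sfW^\infty(\ell^q)$ via the discrete convolution estimate, using $\nm{V_\phi f(x_k,\cdo)\,\omega_0}{\sfW^1(\ell^q)}\lesssim\nm f{M^{\infty,q}_{(\omega)}}$. That route would also work, but it has to be stated and checked.
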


\par

The case when $\omega \in \mascP (\rr {2d})$
and $q\ge 1$ in Proposition \ref{Prop:CompSuppModSp}
follows from from e.{\,}g.
\cite[Corollary 6.2]{PTT} or
\cite[Theorem 2.1]{RSTT}.
The other cases follow by similar arguments. In
order to be self-contained, we here present a
proof.

\par

\begin{proof}
By Proposition \ref{Prop:ModEmb} it follows
that
\begin{alignat*}{2}
M^{\flat ,q}_{(\omega )}(\rr d)
\cap \maclE _s'(\rr d)
&\subseteq
M^{p,q}_{(\omega )}(\rr d)
\cap \maclE _s'(\rr d) &&
\\
&\subseteq
M^{\sharp,q}_{(\omega )}(\rr d)
\cap \maclE _s'(\rr d),& &
\\
&\subseteq
M^{\infty ,q}_{(\omega )}(\rr d)
\cap \maclE _s'(\rr d),&
\quad
0&<p<\infty ,
\intertext{and}
M^{p_1,q}_{(\omega )}(\rr d)
\cap \maclE _s'(\rr d)
&\subseteq
M^{p_2,q}_{(\omega )}(\rr d)
\cap \maclE _s'(\rr d),&
\quad
0&<p_1\le p_2\le \infty .
\intertext{Hence the result follows if we prove}
M^{p_2,q}_{(\omega )}(\rr d)
\cap \maclE _s'(\rr d)
&\subseteq
M^{p_1,q}_{(\omega )}(\rr d)
\cap \maclE _s'(\rr d),&
\quad
0&<p_1\le p_2\le \infty .
\end{alignat*}

\par

Therefore suppose that $p_1\le p_2$ and
$f\in M^{p_2,q}_{(\omega )}(\rr d)
\cap \maclE _s'(\rr d)$. Then there is
$\phi \in \maclD _s(\rr d)$
such that $\phi =1$ on $\supp f$,
giving that $f=f\cdot \phi$. Then
$\phi \in M^{p_0,q_0}_{(\omega _0)}$
for every $p_0,q_0\in (0,\infty ]$
and $\omega _0\in \mascP _s(\rr {2d})$.
By choosing $p_0,q_0 \in (0,\infty ]$
such that
$$
\frac 1{p_0}=\frac 1{p_1}+\frac 1{p_2}\ge 0
\quad \text{and}\quad
\frac 1{q_0} \ge 1,\frac 1q,\frac 1{p_1},
$$
it follows that (3.13) in \cite{Toft26} is fulfilled when
$p_0,p_1,p_2,q_0,q_1,q_2$
in \cite[(3.13)]{Toft26}
is substituted by $p_1,p_2,p_0,q,q,q_0$.
Furthermore, by choosing $\rho _0>0$
such that $\omega$ is $e^{\rho _0|\cdo |}$ moderate,
we obtain
$$
\omega _\rho (x,\xi _1+\xi _2)
\lesssim
\omega (x,\xi )\vartheta _\rho (x,\xi _2),
\quad
\vartheta _\rho (x,\xi ) = e^{\rho (|x|+|\xi |)}
$$
when $\rho >\rho _0$. By \cite[Theorem 3.2]{Toft26}
we get
$$
\nm f{M^{p,q}_{(\omega _\rho )}}
=
\nm {f\cdot \phi}{M^{p,q}_{(\omega _r)}}
\lesssim
\nm f{M^{p,q}_{(\omega )}}
\nm \phi{M^{\infty ,r}_{(\vartheta _\rho )}}
\asymp
\nm f{M^{p,q}_{(\omega )}}.
$$
This shows that $f\in M^{0,q}_{(\omega )}(\rr d)
\cap \maclE _s'(\rr d)$ when
$f\in M^{p,q}_{(\omega )}(\rr d)
\cap \maclE _s'(\rr d)$,
and \eqref{Eq:CompSuppModSp} follows.

\par

Next we prove \eqref{Eq:CompSuppFLqSpSmallq}
and \eqref{Eq:CompSuppFLqSpBigq}. Suppose
$f\in \maclE _s'(\rr d)$ and choose
$\phi$ as above. We have
\begin{align*}
\nm f{\mascF L^q_{(\omega _0)}}
&=
\nm {\widehat f\cdot \omega _0}{L^q}
=
\nm {\mascF (f\cdot \overline \phi )\cdot \omega _0}{L^q}
\\[1ex]
&=
\nm {V_\phi f(0,\cdo )\cdot \omega (0,\cdo )}{L^q}
\\[1ex]
&\le
\NM {\sup _{x\in \rr d}
\left | V_\phi f(x,\cdo )\cdot \omega (x,\cdo )\right |}{L^q}
=
\nm f{M^{\infty ,q}_{(\omega )}},
\end{align*}
which shows 
\begin{equation}
\label{Eq:CompSuppFLqSpAllqSecIncl}
M^{\infty ,q}_{(\omega )}(\rr d)
\cap \maclE _s'(\rr d)
\subseteq
\mascF L^q_{(\omega _0)}(\rr d)
\cap \maclE _s'(\rr d),
\quad q\in (0,\infty ]
\end{equation}
In particular, 
the second inclusion in
\eqref{Eq:CompSuppFLqSpSmallq} follows.

\par

Next we prove
\begin{equation}
\label{Eq:CompSuppFLqSpAllqFirstIncl}
\mascF \sfW ^1(\ell ^q_{(\omega _0)}(\zz d) )
\cap \maclE _s'(\rr d)
\subseteq 
M^{\infty ,q}_{(\omega )}(\rr d)
\cap \maclE _s'(\rr d),
\quad
q\in (0,\infty ],
\end{equation}
which, together with the previous estimates, will give 
\eqref{Eq:CompSuppFLqSpSmallq}.

\par

Let $r=\min (1,q)$. Since
\begin{align*}
|V_\phi f(x,\xi )\omega (x,\xi )|
&=
(2\pi )^{-\frac d2}|\widehat f
*
(\mascF \phi (\cdo -x))(\xi )\omega _0(\xi )|
\\[1ex]
&\lesssim
(|\widehat f \omega _0|
*
|\mascF (\phi (\cdo -x))v_0|)(\xi )
=
(|\widehat f \omega _0|
*
|\widehat \phi v_0|)(\xi ),
\end{align*}
we get
\begin{align*}
\nm f{M^{\infty ,q}_{(\omega )}}
&=
\NM {\sup _{x\in \rr d}| V_\phi f(x,\cdo )\cdot \omega _0|}
{L^q}
\\[1ex]
&\lesssim
\nm {|\widehat f \omega _0|
*
|\widehat \phi v_0|}{L^q}
\le
\nm {|\widehat f \omega _0|
*
|\widehat \phi v_0|}{\sfW ^\infty (\ell ^q)}
\\[1ex]
&\le
\nm {\widehat f \omega _0}{\sfW ^1 (\ell ^q)}
\nm {\widehat \phi v_0}{\sfW ^\infty  (\ell ^r)},
\end{align*}
which gives \eqref{Eq:CompSuppFLqSpAllqFirstIncl},
and \eqref{Eq:CompSuppFLqSpSmallq} follows.

\par

Next suppose that $q\ge 1$. Then 
\eqref{Eq:CompSuppFLqSpAllqFirstIncl} gives
\begin{align}
\mascF L^q_{(\omega _0)}(\rr d)\cap \maclE _s'(\rr d)
&\subseteq
\mascF \sfW ^1(\ell ^q_{(\omega _0)}(\zz d) )
\cap \maclE _s'(\rr d)
\notag
\\[1ex]
&\subseteq 
M^{\infty ,q}_{(\omega )}(\rr d)
\cap \maclE _s'(\rr d).
\label{Eq:CompSuppFLqSpAllqSecIncl2}
\end{align}
The relation \eqref{Eq:CompSuppFLqSpBigq}
now follows by combining
\eqref{Eq:CompSuppModSp},
\eqref{Eq:CompSuppFLqSpAllqSecIncl}
and
\eqref{Eq:CompSuppFLqSpAllqSecIncl2}.

\par

The other cases follow
by similar arguments and are left for the reader.
\end{proof}

\par

\par

\par

\subsection{Convolution properties for
$M^{\sharp ,q}_{(\omega )}$ spaces}

The following convolution result
is important in some investigations later on.

\par

{{\color{blue}{Mention in Section 4 or here
that we cannot use density arguments for proving
next result because if $q$ or $q_j$ equals
$\infty$, the density results in Section 4
are not applicable.}}}
\\
{{\color{red}{The text after the proposition should
explain this./JT}}}

\par

\begin{prop}
\label{Prop:ModConv}
Let $q_k\in (0,\infty ]$ and
$\omega _k\in \mascP _E(\rr {2d})$,
$k=0,1,2$, be such that
$$
\frac 1{q_1}+\frac 1{q_2}=\frac 1{q_0}
\quad \text{and}\quad
\omega _0(x_1+x_2,\xi )
\lesssim
\omega _1(x_1,\xi )\omega _2(x_2,\xi ),
\quad x_1,x_2,\xi \in \rr d.
$$
Then the map $(f_1,f_2)\mapsto f_1*f_2$
from $\Sigma _1(\rr d)\times \Sigma _1(\rr d)$
to $\Sigma _1(\rr d)$ extends uniquely
to a continuous map from
$M^{\sharp ,q_1}_{(\omega _1)}(\rr d)
\times M^{1,q_2}_{(\omega _2)}(\rr d)$
to $M^{\sharp ,q_0}_{(\omega _0)}(\rr d)$.
\end{prop}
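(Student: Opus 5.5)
The plan is to reduce the statement to a pointwise convolution estimate for short-time Fourier transforms, and then to handle the tail condition \eqref{Eq:ModNormInfinityCond} by splitting the convolution, much as in the proof of Proposition \ref{Prop:ModEmb}. Take a window of the form $\phi =\phi _1*\phi _2$ with $\phi _1,\phi _2\in \Sigma _1(\rr d)$; for instance Gaussians, or the factorization lemma for $\Sigma _1$. For $f_1,f_2\in \Sigma _1(\rr d)$ one has the standard identity
$$
V_\phi (f_1*f_2)(x,\xi )
=
c\, \big ( V_{\phi _1}f_1(\cdo ,\xi )*V_{\phi _2}f_2(\cdo ,\xi )\big )(x),
$$
so the convolution acts only in $x$, for fixed $\xi$. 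Put $F_k=|V_{\phi _k}f_k\cdot \omega _k|$ for $k=1,2$, and $F_0=|V_\phi (f_1*f_2)\cdot \omega _0|$. The weight condition then gives
$$
F_0(x,\xi )\lesssim \int F_1(x-y,\xi )F_2(y,\xi )\,dy .
$$
Taking $\sup _x$ yields $\sup _xF_0(\cdo ,\xi )\le G_1(\xi )H_2(\xi )$, where $G_1=\sup _xF_1(x,\cdo )$ and $H_2=\nm {F_2(\cdo ,\xi )}{L^1}$. Hölder's inequality with $1/q_1+1/q_2=1/q_0$ then gives
$$
\nm {f_1*f_2}{M^{\infty ,q_0}_{(\omega _0)}}\lesssim \nm {f_1}{M^{\infty ,q_1}_{(\omega _1)}}\nm {f_2}{M^{1,q_2}_{(\omega _2)}}.
$$
By Theorem \ref{Thm:IndepWind} and Proposition \ref{Prop:InvModAndBanach}, we may use these windows to compute the norms.

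The extension cannot be obtained by density, since $\Sigma _1$ need not be dense in the spaces when $q_k=\infty$. Instead, define $f_1*f_2$ for $f_1\in M^{\infty ,q_1}_{(\omega _1)}$ and $f_2\in M^{1,q_2}_{(\omega _2)}$ directly, by
$$
f_1*f_2=\nm \phi {L^2}^{-2}V_\phi ^*\big (c\,V_{\phi _1}f_1*_xV_{\phi _2}f_2\big ).
$$
The partial convolution converges absolutely by the bound above, and this formula agrees with the usual convolution on $\Sigma _1$. Continuity follows from \eqref{Eq:adjoint-convolution} together with the $M^{\infty ,q_0}$ estimate. Uniqueness, with respect to the natural (narrow or weak$^*$) topology, follows because $V_{\phi _1}f_1$ is weak$^*$ approximated by truncations. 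This identification step needs some care, but it is routine.

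The main step, and the one I expect to be the obstacle, is showing that $f_1*f_2$ satisfies \eqref{Eq:ModNormInfinityCond} when $f_1\in M^{\sharp ,q_1}_{(\omega _1)}$. Fix $R_1$ and take $|x|\ge R=2R_1$. Split the $y$-integral into the regions $|y|\ge R_1$ and $|y|<R_1$; in the second region $|x-y|\ge R_1$. This gives
$$
\sup _{|x|\ge R}F_0(x,\xi )\lesssim G_1(\xi )\,\nm {\chi _{\{|y|\ge R_1\}}F_2(\cdo ,\xi )}{L^1}+\sup _{|z|\ge R_1}F_1(z,\xi )\,H_2(\xi ).
$$
Applying Hölder's inequality to each term gives the bound
$$
\nm {f_1}{M^{\infty ,q_1}_{(\omega _1)}}\,\big \|\nm {\chi _{R_1}F_2(\cdo ,\xi )}{L^1}\big \|_{L^{q_2}}
+\big \|\sup _{|z|\ge R_1}F_1(z,\cdo )\big \|_{L^{q_1}}\nm {f_2}{M^{1,q_2}_{(\omega _2)}}.
$$
The second term tends to $0$ by the $\sharp$ assumption, using Corollary \ref{Cor:IndepWind} for the window $\phi _1$. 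When $q_2<\infty$, the first term tends to $0$ by dominated convergence, since $\nm {\chi _{R_1}F_2(\cdo ,\xi )}{L^1}\downarrow 0$ for every $\xi$ and is dominated by $H_2\in L^{q_2}$.

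The delicate case is $q_2=\infty$, where $q_0=q_1$ and dominated convergence fails in $L^\infty _\xi$. Here I would not rely on $f_2\in M^{1,\infty}$ having uniformly vanishing tails. Instead, I would put the $L^{q_0}$-norm onto the $f_1$ factor, working in the discrete Wiener form $\sfW ^\infty (\ell ^{\infty ,q})$ with \eqref{Eq:DiscConvEst}. If this does not suffice, I would reorganize the split as in Proposition \ref{Prop:ModEmb}, with $\chi _{R_1}$ placed on $F_1$ rather than on $F_2$. This requires some uniformity in $\xi$ of the $x$-tails of $F_2$, and that uniformity is exactly the point I expect to need extra care. A possible fallback is to first establish the result for $q_2<\infty$, and then handle $q_2=\infty$ by a localization in $x$ of $V_{\phi _2}f_2$.
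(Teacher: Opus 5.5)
For $q_2<\infty$ your argument is the paper's own. You use the same pointwise bound $F_0(x,\xi )\lesssim \int F_1(x-y,\xi )F_2(y,\xi )\,dy$; the paper takes the window $\phi *\phi$. You also use the same split of the $y$-integral: where the $f_1$-variable is far out you use the $\sharp$-condition and H\"older, and where the $f_2$-variable is far out you use H\"older and dominated convergence in $\xi$. The paper writes this out only for $q_1,q_2<\infty$. It cites \cite{Toft26} and \cite[Theorem 3.4]{ToPfTe} for the extension rather than building it from $V_\phi ^*$, and leaves infinite exponents to the reader.

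The case you leave open, $q_2=\infty$, splits in two. If $q_1<\infty$, then $q_0=q_1<\infty$ and your first suggestion works at once. Do not apply H\"older before letting $R_1\to \infty$. Instead apply dominated convergence directly to
\[
\xi \mapsto \Big ( G_1(\xi )\nm {\chi _{\{ |y|\ge R_1\} }F_2(\cdo ,\xi )}{L^1}\Big )^{q_0}.
\]
This function tends to $0$ for a.e. $\xi$ and is dominated by $(G_1H_2)^{q_0}\in L^1$, by H\"older. No uniformity in $\xi$ of the $x$-tails of $F_2$ is needed, and this covers every case with $q_0<\infty$.

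If $q_1=q_2=\infty$, no argument can close the gap, because the statement is false there. The uniformity you were worried about is exactly what elements of $M^{1,\infty}$ lack. Take $\omega _k\equiv 1$, $q_0=q_1=q_2=\infty$, $f_1=\delta _0$, and
\[
f_2=\sum _{k\ge 1}e^{i\scal {\cdo}{\xi _k}}\phi _0(\cdo -x_k),
\qquad x_k=\xi _k=2k e_1 .
\]
Here $\phi _0$ is the standard Gaussian and $e_1$ the first unit vector.
\begin{itemize}
\item Since $|V_{\phi _0}\delta _0(x,\xi )|=(2\pi )^{-\frac d2}\phi _0(x)$, we have $\delta _0\in M^{\sharp ,\infty}(\rr d)$.
\item From
\[
|V_{\phi _0}f_2(x,\xi )|\le (2\pi )^{-\frac d2}\sum _{k\ge 1}e^{-\frac 14(|x-x_k|^2+|\xi -\xi _k|^2)}
\]
we get $\sup _\xi \nm {V_{\phi _0}f_2(\cdo ,\xi )}{L^1}<\infty$, so $f_2\in M^{1,\infty}(\rr d)$.
\item On the other hand, $|V_{\phi _0}f_2(x_m,\xi _m)|\ge (2\pi )^{-\frac d2}\big (1-2\sum _{n\ge 1}e^{-2n^2}\big )>0$ for every $m$, while $|x_m|\to \infty$.
\end{itemize}
Hence $\delta _0*f_2=f_2\notin M^{\sharp ,\infty}(\rr d)$; this identity follows, for instance, from your own STFT convolution formula. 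The same $f_2$ also shows that Proposition \ref{Prop:ModEmb}~(1) fails for $q=\infty$.

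So the result must be restricted to $q_0<\infty$, that is, $\min (q_1,q_2)<\infty$. The paper's remark that the infinite cases follow ``by similar arguments'' is wrong for $q_1=q_2=\infty$.
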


\par

By \cite[Theorem 3.4]{ToPfTe} it follows
that the convolution map
in Proposition \ref{Prop:ModConv} is uniquely
extendable to a continuous map from
$M^{\infty ,q_1}_{(\omega _1)}(\rr d)
\times M^{1,q_2}_{(\omega _2)}(\rr d)$
to $M^{\infty ,q_0}_{(\omega _0)}(\rr d)$, which
shows that $f_1*f_2$ is uniquely defined as
an element in $M^{\infty ,q_0}_{(\omega _0)}(\rr d)$
when $f_1\in M^{\sharp ,q_1}_{(\omega _1)}(\rr d)$
and $f_2\in M^{1,q_2}_{(\omega _2)}(\rr d)$. We need
to guarantee that $f_1*f_2$ belongs to the
subspace $M^{\sharp ,q_0}_{(\omega _0)}(\rr d)$
of $M^{\infty ,q_0}_{(\omega _0)}(\rr d)$.

\par

Since we allow $q_j$ to attain $\infty$, we may
not use density arguments for the proof for such
choices of $q_j$, because
$\Sigma _1 (\rr d)$ is not dense in
$M^{p,\infty} _{(\omega _j)}(\rr d)$ or
$M^{\sharp ,\infty} _{(\omega _j)}(\rr d)$.
If $q_j<\infty$ for $j=1,2$, then
$\Sigma _1(\rr d)$ is dense in
$M^{p,q_j}_{(\omega _j)}(\rr d)$ but not in
$M^{\infty ,q_j}_{(\omega _j)}(\rr d)$. For
such $q_j$, we will show in Section \ref{sec4}
that $\Sigma _1 (\rr d)$
is dense in $M^{\sharp ,q_j}_{(\omega _j)}(\rr d)$.
On the other hand, at this stage, the latter property
is unproven, and can therefore not be used.

\par

\begin{proof}[Proof of Proposition \ref{Prop:ModConv}]
We only prove the result for 
$q_k < \infty$, $k=1,2$. 
The case when at least one of 
$q_k=\infty$, $k=1,2$, 
follows by similar arguments and is left for
the reader.

\par

Due to \cite{Toft26}, 
the map  $(f_1,f_2)\mapsto f_1*f_2$
from $\Sigma _1(\rr d)\times \Sigma _1(\rr d)$
to $\Sigma _1(\rr d)$ extends uniquely
to a continuous map from
$M^{\infty ,q_1}_{(\omega _1)}(\rr d)
\times M^{1,q_2}_{(\omega _2)}(\rr d)$
to $M^{\infty ,q_0}_{(\omega _0)}(\rr d)$
(see also \cite[Theorem 3.7]{TeoToft22}).
Hence it remains to show 
$$
f_1*f_2  \in  M^{\sharp ,q_0}_{(\omega _0)}(\rr d)
\quad \text{when} \quad
f_1 \in M^{\sharp ,q_1}_{(\omega _1)}(\rr d),\ 
f_2 \in M^{1,q_2}_{(\omega _2)}(\rr d).
$$

\par

Let $\phi \in \Sigma _1 (\rr d)$, 
$
f_1 \in M^{\sharp ,q_1}_{(\omega _1)}(\rr d)
$ 
and 
$
f_2 \in M^{1,q_2}_{(\omega _2)}(\rr d)
$.
 Additionally let
\begin{align}
F_j=|V_{\phi} f_j \cdot \omega_j|, \quad j=1,2.
\end{align}
For any $R, R_1>0$ we define
\begin{align}
J_{R_1}
&=
\left(
\int_{\rr d}
\sup_{|x|\geq R_1} 
\left(
\int_{\rr d} 
F_1(y,\xi )
F_2(x-y, \xi)
\, dy
\right )^{q_0}\, d\xi
\right )^{\frac{1}{q_0}},
\label{Eq:JRDef5}
\\[1ex]
J_{1,R,R_1}
&=
\left ( \int_{\rr d}
\left( \sup_{|x|\geq R_1} 
\int _{|y| \leq R} 
F_1(y,\xi )
F_2(x-y, \xi)
\, dy \right)^{q_0}
\, d\xi\right)^{\frac{1}{q_0}},
\label{Eq:J1RDef5}
\intertext{and}
J_{2,R,R_1}
&=
\left ( \int _{\rr d}
\left( \sup _{|x|\geq R_1}
\int _{|y| \geq R} 
F_1(y,\xi )
F_2(x-y, \xi)
\, dy \right)^{q_0}
\, d\xi \right)^{\frac{1}{q_0}}.
\label{Eq:J2RDef5}
\end{align}
%
%
We need to show 
\begin{align}\label{Eq:EsConvq0}
I_{R_1}=\left(\int _{\rr d}
\sup _{|x| \geq R_1}
\left | 
 V_{\phi \ast \phi} (f_1 \ast f_2) (x,\xi)
 \, \omega _0 (x,\xi)
\right|^{q_0}
\, d \xi
\right)^{\frac{1}{q_0}}
\to 0
\quad \text{as}\quad
R \to \infty,
\end{align}
which gives 
$f_1 \ast f_2  \in 
M^{\sharp, q_0}_{(\omega _0)} (\rr d)$,
and thereby completing the proof. By using 
$$
\omega _0(x_1+x_2,\xi )
\lesssim
\omega _1(x_1,\xi )\omega _2(x_2,\xi ),
\quad x_1,x_2,\xi \in \rr d,
$$
and
\begin{align*}
 \left|
 V_{\phi \ast \phi} (f_1 \ast f_2) (x,\xi)
 \right|
 \leq 
 \int_{\rr d} 
 \left | V_{\phi} f_1 (y, \xi) \right |
 \left | V_{\phi} f_2 (x-y, \xi)
 \right |\,
 dy, 
 \qquad x,\xi \in \rr d,
\end{align*}
we obtain $I_{R_1} \leq J_{R_1}$. Hence 
\eqref{Eq:EsConvq0} is true when 
\begin{align}
0 \leq J_{R_1} \lesssim J_{1,R,R_1}+J_{2,R,R_2}
\to 0
\quad \text{as}\quad
R \to \infty.
\label{Eq:J1Est}
\end{align}

\par

First, we estimate $J_{1,R,R_1}$ as
\begin{align*}
J_{1,R,R_1} 
&\leq
\left ( \int_{\rr d}
\left( 
\NM{F_1(\cdo,\xi)}{L ^\infty}
\sup_{|x|\geq R_1}  \int _{|y| \leq R} 
F_2(x-y, \xi)
\, dy \right)^{q_0}
\, d\xi\right)^{\frac{1}{q_0}}
\\[1ex]
&\le
\left ( \int_{\rr d}
\left(
\NM{F_1(\cdo,\xi)}{L ^\infty}
\sup_{|x|\geq R_1} 
\int _{|x-y| \leq R} 
F_2(y, \xi)
\, dy \right)^{q_0}
\, d\xi\right)^{\frac{1}{q_0}}.
\end{align*}

\par

Suppose that $R$ is large and $R_1>2 R$. 
Then $|x|\geq R_1$ and $|x-y| \le R$ implies 
$|y|\ge R_1-R>R$. 
By H{\"o}lder's inequality and the fact that
$f_2 \in M^{1,q_2}_{(\omega _2)} (\rr d)$, we get 
\begin{align}
J_{1,R,R_1}
&\le
\left ( \int _{\rr d}
\left(
\NM{F_1(\cdo,\xi)}{L ^\infty}
\int _{|y| > R} 
F_2(y, \xi)
\, dy \right)^{q_0}
\, d\xi\right)^{\frac{1}{q_0}}
\notag
\\[1ex]
&\le
\NM{F_1}{ L ^{\infty, q_1} }
\left ( \int_{\rr d}
\left(
\int _{|y| > R} 
F_2(y, \xi)
\, dy \right)^{q_2}
\, d\xi\right)^{\frac{1}{q_2}}
\to 0 
\label{Eq:J1RR1EstB}
\end{align}
as $R \to \infty$.

\par 

Next we estimate $J_{2,R,R_1}$.
Since
$f_1 \in  
M^{\sharp ,q_1}_{(\omega _1)}(\rr d)$
one has
\begin{align*}
\left ( \int_{\rr d}
\sup_{|y|\geq R}
\left(
F_1(y,\xi)
\right)^{q_1}
\, d\xi \right)^{\frac{1}{q_1}}
\to 0 
\quad \text{as}\quad 
R \to \infty.
\end{align*}
This gives
\begin{align}
J_{2,R,R_1} 
&\leq 
\left (
\int_{\rr d}
 \sup_{|x|\geq R_1}
 \left(
 \sup_{|y|\geq R}
 \left( F_1(y,\xi) \right)
 \int_{|y|\geq R}
 F_2(x-y,\xi)
 \, dy
 \right)^{q_0}
 \, d \xi
\right )^{\frac{1}{q_0}}
\notag
\\[1ex]
&\leq 
\left(
\int_{\rr d}
 \left(
 \sup_{|y|\geq R}
 \left( 
 F_1(y,\xi)
 \right)
\NM{F_2(\cdo,\xi)}{L^1}
 \right)^{q_0}
 \, d \xi
\right)^{\frac{1}{q_0}}
\notag
\\[1ex]
&\leq 
\left (
\int_{\rr d}
 \sup_{|y|\geq R}
 \left( 
 F_1(y,\xi)
 \right)^{q_1} 
 \, d \xi
\right)^{\frac{1}{q_1}}
\NM{F_2}{ L^{1, q_2} }
\to 0
\label{eq:EsF1}
\end{align}
as $R \to \infty $.
By \eqref{Eq:J1Est}, \eqref{Eq:J1RR1EstB} and 
\eqref{eq:EsF1}, we get
\eqref{Eq:EsConvq0}, and the result follows.
\end{proof}

\par

\section{Density properties for
$M^{\sharp ,q}_{(\omega )}$}
\label{sec4}

\par

In this section we prove the main result 
which will be used in the second part of the paper 
when considering Toeplitz operators on
$M^{\sharp ,q}_{(\omega )}(\rr d)$.
It can be formulated as follows.

\par

\begin{thm}
\label{Thm:ModDensity}
Let $q\in (0,\infty )$ and
$\omega \in \mascP _E(\rr {2d})$.
Then the following is true:
\begin{enumerate}
\item $M^{\flat ,q}_{(\omega )}(\rr d)$
and $\Sigma _1(\rr d)$
are dense in
$M^{\sharp ,q}_{(\omega )}(\rr d)$;

\vrum

\item if in addition
$\omega \in \mascP (\rr {2d})$, then
$M^{\flat ,q}_{(\omega )}(\rr d)
\cap \mascE '(\rr d)$ and
$C_0^\infty (\rr d)$ are dense in
$M^{\sharp ,q}_{(\omega )}(\rr d)$.
\end{enumerate}
\end{thm}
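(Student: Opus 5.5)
The plan is to approximate $f\in M^{\sharp ,q}_{(\omega )}(\rr d)$ in two stages: first cut off in $x$ (phase space position), then cut off in $\xi$. I would first show that $M^{0,q}_{(\omega )}(\rr d)$, and hence by Proposition \ref{Proposition:inclusions} also $M^{\flat ,q}_{(\omega )}(\rr d)$, is dense. Fix a window $\phi \in \Sigma _1(\rr d)$ and choose $\psi \in \Sigma _1(\rr d)$ with $\psi (0)=1$. Put $\psi _\ep (x)=\psi (\ep x)$ and $f_\ep =\psi _\ep f$. By Proposition \ref{Proposition:inclusions}, $f_\ep \in M^{0,q}_{(\omega )}(\rr d)$. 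It then remains to prove that $\nm {f-f_\ep}{M^{\infty ,q}_{(\omega )}}\to 0$ as $\ep \to 0$.

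For this I would use the standard estimate
$$
|V_\phi ((1-\psi _\ep )f)(x,\xi )|\lesssim \iint |V_{\phi}f(x,\xi -\eta )|\,|V_{\phi _1}(1-\psi _\ep )(x,\eta )|\,d\eta ,
$$
obtained from \eqref{Eq:STFTDef}$'$ and \eqref{Eq:FourTransfConv} with a suitable auxiliary window $\phi _1$. I would then split the $x$-region into $|x|\ge R$ and $|x|<R$.
\begin{itemize}
\item On $|x|\ge R$, the multiplier $1-\psi _\ep$ is bounded in $M^{\infty ,r}_{(v)}$ uniformly in $\ep$. Arguing as in Proposition \ref{Prop:ModConv} (Hölder plus a weighted Young inequality in $\xi$), the contribution is bounded by $C\NM {\sup _{|x|\ge R'}|V_\phi f\cdot \omega |}{L^q}$, up to a tail term. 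This is small by \eqref{Eq:ModNormInfinityCond} and Theorem \ref{Thm:IndepWind}.
\item On $|x|<R$, the function $1-\psi _\ep$ tends to $0$ together with its STFT, locally uniformly. Dominated convergence in the $\xi$-integral then gives smallness as $\ep \to 0$. Here one uses that $\sup _x|V_\phi f\,\omega |\in L^q$ serves as a dominating function.
\end{itemize}
I would carry out the split with the quasi-norm of order $r=\min (1,q)$, working in the discretised Wiener norms $\sfW ^\infty (\omega ,\ell ^{\infty ,q})$ as in Section \ref{sec2}.

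Next I would approximate $g\in M^{0,q}_{(\omega )}\subseteq M^{p,q}_{(\omega )}$, with $p,q<\infty$, by elements of $\Sigma _1$. This step is the classical density of $\Sigma _1$ in $M^{p,q}_{(\omega )}$ for finite exponents. Concretely, I would take $V_\phi ^*(\chi _K V_\phi g)$ with compact $K\uparrow \rr {2d}$ and then approximate further by $\Sigma _1$ functions. By Proposition \ref{Prop:ModEmb}(1), convergence in $M^{p,q}_{(\omega )}$ implies convergence in $M^{\sharp ,q}_{(\omega )}$. This gives (1).

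For (2), with $\omega \in \mascP (\rr {2d})$, I would repeat the first stage with $\psi \in C_0^\infty$ instead of $\Sigma _1$. Polynomial moderateness makes $\psi _\ep$ uniformly bounded in the relevant $M^{\infty ,r}_{(v)}$, so $M^{\flat ,q}_{(\omega )}\cap \mascE '$ is dense; Proposition \ref{Prop:CompSuppModSp} identifies these elements. I would then mollify: for $g$ compactly supported, $g*\rho _\delta \in C_0^\infty$, and $g*\rho _\delta \to g$ in $M^{p,q}_{(\omega )}$ for $p,q<\infty$ by dominated convergence on the STFT side. Embedding into $M^{\sharp ,q}_{(\omega )}$ finishes the argument.

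I expect the main obstacle to be the uniform-in-$\ep$ control on $|x|\ge R$ in the first stage. A direct argument there is needed because no density is available yet, and the argument must handle the nonconvex case $q<1$ through $r$-th powers.
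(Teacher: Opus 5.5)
Your proposal is correct, but its second half takes a different route from the paper.

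\textbf{Stage 1.} This is essentially the paper's Proposition~\ref{Prop:ModDensityProd}. The paper also multiplies by a dilated $\Sigma_1$ cutoff. It controls large $|x|$ by the vanishing condition via Theorem~\ref{Thm:IndepWind}, and treats the remaining compact region by continuity. The only difference is cosmetic: it substitutes $\xi-\ep_1\eta$ against a fixed $\widehat\phi$, instead of letting $\widehat{\psi_\ep}$ concentrate.

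\textbf{Stage 2.} The paper proves a second, independent approximation result, Proposition~\ref{Prop:ModDensityConv}: $g*\phi_{\ep_2}\to g$ in $M^{\infty,q}_{(\omega)}$ for every $g\in M^{\sharp,q}_{(\omega)}$. It then uses the explicit approximants $(f\,\phi_1(\ep_1\cdot))*\phi_{2,\ep_2}\in\Sigma_1(\rr d)$ with a diagonal argument.

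You instead note that after the cutoff the function already lies in $M^{p,q}_{(\omega)}(\rr d)$ with $p<\infty$. You then use the known density of $\Sigma_1$ there and pass to $M^{\sharp,q}_{(\omega)}$ through the continuous embedding of Proposition~\ref{Prop:ModEmb}(1). This is shorter and avoids a second $M^{\infty,q}$-estimate. The price is that you import the density theorem for quasi-Banach modulation spaces with exponential weights; the paper also takes this for granted in the remark after Proposition~\ref{Prop:ModConv}. The paper's route is self-contained and yields mollifier convergence in $M^{\sharp,q}_{(\omega)}$ as a by-product.

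\textbf{Part (2).} Re-running Stage 1 with $C_0^\infty$ cutoffs and then mollifying works. It does, however, need the $\mascS$-analogue of Proposition~\ref{Proposition:inclusions} for polynomial weights. It is also unnecessary. For polynomial $\omega$ one has $\Sigma_1\subseteq\mascS\hookrightarrow M^{\sharp,q}_{(\omega)}$. So (1) together with density of $C_0^\infty$ in $\mascS$ gives density of $C_0^\infty$, and hence of $M^{\flat,q}_{(\omega)}\cap\mascE'$, at once.

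\textbf{Two precision points.}
\begin{enumerate}
\item The uniform bound on the multiplier $1-\psi_\ep$ should use only the frequency weight $\eta\mapsto v(0,\eta)$, which is all the product estimate needs. The constant function $1$ is not in $M^{\infty,r}_{(v)}$ when $v$ grows in $x$.
\item The convolution here is in $\xi$ for fixed $x$. So the relevant tool is Young's inequality for $q\ge1$ and its $\sfW^\infty(\ell^{q})$-amalgam version for $q<1$, not the argument of Proposition~\ref{Prop:ModConv}.
\end{enumerate}
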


\par

\begin{rem}
As a consequence of Theorem
\ref{Thm:ModDensity}, one has that
$M^{\sharp ,q}_{(\omega )}(\rr d)$
agrees with
$M^{0 ,q}_{\omega}(\rr d)$ in \cite{FeGaPr},
when $1\le q<\infty$.
\end{rem}

\par

In order to prove Theorem \ref{Thm:ModDensity} we 
use approximation properties of convolutions and products in $M^{\sharp ,q}_{(\omega )}(\rr d)$
which are given in following propositions.

\par

\begin{prop}
\label{Prop:ModDensityConv}
Let $q\in (0,\infty )$ and
$\omega \in \mascP _E(\rr {2d})$,
$\phi \in
\Sigma _1 (\rr d)$ be such that 
\begin{align}\label{Eq:DefPhiConv}
\int _{\rr d} \phi (x) \, dx = 1,
\end{align}
$f\in M^{\sharp ,q}_{(\omega )}(\rr d)$, and
$\phi _{\ep } = \ep  ^{-d}
\phi (\ep ^{-1}\cdo )$, $\ep >0$.
Then
\begin{equation}
\label{Eq:DensityModSharpCaseConv}
\NM {f \ast \phi _{\ep _2} -f}
{M^{\infty ,q}_{(\omega )}}
\to 0 
\quad \text{as} \quad
\ep _2\to 0^+.
\end{equation}
\end{prop}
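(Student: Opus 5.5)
We will work on the short-time Fourier transform side, and split $\rr {2d}$ into a bounded region and a region far out in $x$. The smallness on the far region comes from the $\sharp$-condition; on the bounded region we use pointwise convergence.

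\textbf{Reduction to STFT estimates.} Fix a window $\psi \in \Sigma _1(\rr d)\setminus 0$. With our normalization, \eqref{Eq:FourTransfConv} gives
$$
V_\psi (f*\phi _\ep )(x,\xi )=(2\pi )^{\frac d2}\int _{\rr d}V_\psi f(x-y,\xi )\phi _\ep (y)\, dy .
$$
Formally this holds because translations commute with the modulated window structure up to the phase $e^{-i\scal y\xi}$, i.e. $V_\psi (f(\cdo -y))(x,\xi )=e^{-i\scal y\xi}V_\psi f(x-y,\xi )$. The cleaner route is
$$
V_\psi (f*\phi _\ep )(x,\xi ) = \int V_\psi f(x-y,\xi )\, e^{-i\scal y\xi}\phi _\ep (y)\, dy .
$$
Since $\int \phi _\ep =1$, we get
$$
V_\psi (f*\phi _\ep -f)(x,\xi )=\int \big (V_\psi f(x-y,\xi )e^{-i\scal y\xi}-V_\psi f(x,\xi )\big )\phi _\ep (y)\,dy .
$$
The phase is awkward, so we split this as
$$
\int \big (V_\psi f(x-y,\xi )-V_\psi f(x,\xi )\big )e^{-i\scal y\xi}\phi _\ep (y)\,dy
\;+\;
V_\psi f(x,\xi )\big ((2\pi )^{\frac d2}\widehat{\phi}(\ep \xi )-1\big ).
$$
Call these the translation term $T_1$ and the multiplier term $T_2$.

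\textbf{Multiplier term $T_2$.} We have $|T_2\,\omega |\le G(\xi )\,|1-(2\pi )^{\frac d2}\widehat \phi (\ep \xi )|$, where $G(\xi )=\sup _x|V_\psi f(x,\xi )\omega (x,\xi )|\in L^q$. The factor $|1-(2\pi )^{\frac d2}\widehat \phi (\ep \xi )|$ is bounded uniformly in $\ep$ and tends to $0$ pointwise as $\ep \to 0$. Hence $\nm {T_2\,\omega }{L^{\infty ,q}}\to 0$ by dominated convergence in $\xi$; this is where $q<\infty$ is used.

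\textbf{Translation term $T_1$: far region.} Given $\delta >0$, use $f\in M^{\sharp ,q}_{(\omega )}$ (Corollary~\ref{Cor:IndepWind}) to choose $R$ such that $\nm {\chi _{{}_R}V_\psi f}{L^{\infty ,q}_{(\omega )}}<\delta$. Split $\phi _\ep =\phi _\ep \chi _{B_\rho}+\phi _\ep (1-\chi _{B_\rho})$.
\begin{itemize}
\item The tail part is controlled via $\omega (x,\xi )\lesssim \omega (x-y,\xi )v(y,0)$ and $v(y,0)\lesssim e^{r|y|}$. Its contribution is at most $C\nm f{M^{\infty ,q}_{(\omega )}}\int _{|y|\ge \rho}|\phi _\ep (y)|e^{r|y|}dy$, and this tends to $0$ as $\ep \to 0$ for fixed $\rho$, because $\phi \in \Sigma _1$ decays faster than any exponential.
\item For $|y|<\rho$ and $|x|\ge R+\rho$, both $x$ and $x-y$ lie outside $B_R$. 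Their contribution is then bounded by $C\delta$, uniformly in $\ep \le 1$.
\end{itemize}

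\textbf{Translation term $T_1$: bounded region (main obstacle).} It remains to handle $|x|\le R+\rho$ with $|y|<\rho$. Here we need
$$
\NM {\sup _{|x|\le R'}\,\Big |\int _{|y|<\rho}\big (V_\psi f(x-y,\cdo )-V_\psi f(x,\cdo )\big )\phi _\ep (y)\,dy\Big |\,\omega (x,\cdo )}{L^q}\to 0 .
$$
For fixed $\xi$, $V_\psi f(\cdo ,\xi )$ is smooth, so it is uniformly continuous on the compact set $\overline{B_{R'+\rho}}$. Since $\phi _\ep$ concentrates at $0$ with $\nm {\phi _\ep}{L^1}$ bounded, the inner supremum tends to $0$ for each $\xi$. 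As a dominating function we take $C\,G_1(\xi )$, where $G_1(\xi )=\sup _{x}|V_\psi f(x,\xi )|\sup _{|x|\le R'+\rho}\omega (x,\xi )$. By moderateness, $G_1\lesssim G$ with a constant depending only on $R'+\rho$, so $G_1\in L^q$. Dominated convergence in $\xi$ then finishes the argument. Letting $\delta \to 0$ gives \eqref{Eq:DensityModSharpCaseConv}.

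The delicate point is making the pointwise-in-$\xi$ equicontinuity precise without extra regularity in $\xi$. If this causes trouble, a fallback is to replace $V_\psi f$ by its Gaussian-window version and use Lemma~\ref{Lemma:EstSTFT}, which gives local $L^r$ control of the $x$-gradient.
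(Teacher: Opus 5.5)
Your proof is correct up to one slip, and it has the same skeleton as the paper's: pass to the STFT side, cut off the $y$-tail using the super-exponential decay of $\phi$, treat large $|x|$ with the $\sharp$-condition, and treat bounded $|x|$ by continuity. Two steps are organized differently.

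\emph{Handling the phase.} The paper uses $T_{\phi_0}f=e^{i\langle x,\xi\rangle}V_{\phi_0}f$, which intertwines convolution exactly: $T_{\phi_0}(f*\phi_{\varepsilon})(\cdot,\xi)=T_{\phi_0}f(\cdot,\xi)*\phi_{\varepsilon}$. So no multiplier term arises there. You keep $V_\psi$ and pay with the extra term $T_2$, which you handle correctly by dominated convergence. That term does not even need the $\sharp$-condition.

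\emph{The bounded-$x$ region.} The paper splits once more in $\xi$. For bounded $\xi$ it uses joint uniform continuity of $(x,y,\xi,\varepsilon_2)\mapsto|T_{\phi_0}f(x-\varepsilon_2y,\xi)-T_{\phi_0}f(x,\xi)|\,|\phi(y)|\,\omega(x,\xi)$ on a compact set. For large $\xi$ it uses the $L^q$-tail of $\sup_x|T_{\phi_0}f\cdot\omega|$, which is where $q<\infty$ enters. Your pointwise-in-$\xi$ convergence plus dominated convergence covers both cases at once and is a bit cleaner.

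The ``delicate point'' you flag is not an issue. Dominated convergence only needs, for each fixed $\xi$, that the inner supremum tends to $0$. This follows from uniform continuity of the single function $V_\psi f(\cdot,\xi)$ on a compact ball, together with $\int_{|y|\ge\delta_1}|\phi_\varepsilon(y)|\,dy\to0$. No uniformity in $\xi$ is required, so the fallback via Lemma~\ref{Lemma:EstSTFT} is unnecessary.

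The slip is in the dominating function. With $G_1(\xi)=\sup_x|V_\psi f(x,\xi)|\,\sup_{|x|\le R'+\rho}\omega(x,\xi)$, the claim $G_1\lesssim G$ is false in general. The quantity $\sup_x|V_\psi f(x,\xi)|$ may be infinite when $\omega$ decays in $x$; for example, take $\omega(x,\xi)=e^{-|x|}$ and $f(x)=\cosh(x/2)$ with $d=1$.

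Only points $x$ and $x-y$ with $|x|\le R'$ and $|y|<\rho$ enter, so use instead
$G_1(\xi)=\sup_{|z|\le R'+\rho}|V_\psi f(z,\xi)|\,\sup_{|x|\le R'}\omega(x,\xi)$.
Then $\omega(x,\xi)\lesssim\omega(z,\xi)v(x-z,0)$ with $|x-z|\le 2R'+\rho$ gives $G_1\le C_{R',\rho}\,G\in L^q$.

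Also drop your first displayed formula for $V_\psi(f*\phi_\varepsilon)$: the factor $(2\pi)^{d/2}$ and the missing phase are wrong. Your second formula is the correct identity.
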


\par

\begin{prop}
\label{Prop:ModDensityProd}
Let $q\in (0,\infty )$ and
$\omega \in \mascP _E(\rr {2d})$,
$\phi \in \Sigma _1 (\rr d)$ be such that 
\begin{align}
\label{Eq:DefPhiProd}
\phi (0) = 1 ,
\end{align}
and let $f\in M^{\sharp ,q}_{(\omega )}(\rr d)$.
Then
\begin{equation}
\label{Eq:DensityModSharpCaseProd}
\NM {f \cdo \phi (\ep _1\cdo) -f}
{M^{\infty ,q}_{(\omega )}}
\to 0 
\quad \text{as} \quad
\ep _1\to 0^+.
\end{equation}
\end{prop}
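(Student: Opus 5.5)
The idea is to compute the short-time Fourier transform of $f\cdot \phi (\ep _1\cdo )$ directly. The product is then governed by a convolution in $\xi$, whose kernel concentrates near $\xi =0$ as $\ep _1\to 0^+$. The vanishing condition \eqref{Eq:ModNormInfinityCond} controls the region $|x|\ge R$. We work with a Gaussian window $\phi _0$, which is allowed by Corollary \ref{Cor:IndepWind} and Proposition \ref{Prop:InvModAndBanach}. Write $\psi _\ep =\phi (\ep \cdo )$, $\ep =\ep _1$.

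\textbf{Step 1 (STFT representation).} By \eqref{Eq:STFTDef}$'$ and \eqref{Eq:FourTransfConv},
$$
V_{\phi _0}(f\psi _\ep )(x,\xi )
=(2\pi )^{-\frac d2}\int _{\rr d} V_{\phi _0}f(x,\xi -\eta )\,\widehat {\psi _\ep }(\eta )\, d\eta ,
\qquad \widehat {\psi _\ep }(\eta )=\ep ^{-d}\widehat \phi (\eta /\ep ).
$$
Since $(2\pi )^{-\frac d2}\int \widehat {\psi _\ep }=\psi _\ep (0)=1$, we get
$$
V_{\phi _0}(f\psi _\ep -f)(x,\xi )=(2\pi )^{-\frac d2}\int \big (V_{\phi _0}f(x,\xi -\eta )-V_{\phi _0}f(x,\xi )\big )\widehat {\psi _\ep }(\eta )\, d\eta .
$$

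\textbf{Step 2 (splitting off large $|x|$).} Split the difference into $\chi _{{}_R}$ and $1-\chi _{{}_R}$ parts. On $|x|\ge R$ we bound the two terms separately, using $\omega (x,\xi )\lesssim \omega (x,\xi -\eta )v(0,\eta )$ and the fact that $|\widehat {\psi _\ep }|v(0,\cdo )$ has $L^1$ norm (for $q\ge 1$) uniformly bounded for $\ep \le 1$, since $v$ grows at most exponentially and $\widehat \phi \in \Sigma _1$. This gives a bound by $C\nm {\chi _{{}_R}V_{\phi _0}f}{L^{\infty ,q}_{(\omega )}}$, uniformly in $\ep$.

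For $q<1$ we instead pass to Wiener amalgam norms. We use the discrete convolution estimate \eqref{Eq:DiscConvEst} with $\ell ^{r}$, $r=q$, and Lemma \ref{Lemma:EstSTFT} to replace sup-norms on cubes by local $L^r$ norms, as in the proof of Proposition \ref{Prop:NormEstimates}. We need $\nm {\widehat {\psi _\ep }v(0,\cdo )}{\sfW ^\infty (\ell ^r)}$ bounded. This holds because $\widehat {\psi _\ep }$ is concentrated on a set of size $\ep ^d$ with height $\ep ^{-d}$, whose $\sfW ^\infty (\ell ^r)$ norm stays bounded as $\ep \to 0$. By Theorem \ref{Thm:IndepWind} this part is small uniformly in $\ep$ once $R$ is large.

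\textbf{Step 3 (bounded $x$).} For fixed $R$, the function $G(x,\xi )=(1-\chi _{{}_R})V_{\phi _0}f\,\omega$ is smooth. Moreover $V_{\phi _0}(f\psi _\ep )-V_{\phi _0}f$ on $|x|<R$ equals $V_{\phi _0}((1-\chi _{B_{2R}})\text{-cutoff})$ plus a compactly localized piece. More directly, $g=f\cdot \theta$ with $\theta \in \Sigma _1$ concentrated near $B_{2R}$ satisfies $g\in M^{1,q}_{(\omega )}$ by Proposition \ref{Proposition:inclusions}. In that space, translation in $\xi$ is continuous in norm because $q<\infty$, and $\Sigma _1$ is dense there.

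Hence $\sup _{|x|<R}|V_{\phi _0}(g)(x,\xi -\eta )\omega -V_{\phi _0}g(x,\xi )\omega |$ tends to $0$ in $L^q_\xi$ as $\eta \to 0$, with a dominating bound. Averaging against $\widehat {\psi _\ep }$ and splitting $|\eta |<\delta$ versus $|\eta |\ge \delta$ (using the decay of $\widehat \phi$) gives convergence to $0$. The remainder $f-g$ contributes only through the tail of $\phi _0$, and is absorbed into the estimate of Step 2.

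\textbf{Main obstacle.} I expect the main obstacle to be the quasi-Banach case $q<1$ in Steps 2 and 3. There Minkowski's integral inequality fails, so every continuous convolution has to be discretized on cubes via Lemma \ref{Lemma:EstSTFT} and \eqref{Eq:DiscConvEst}. The uniformity in $\ep$ of the $\ell ^q$ bound for the dilated kernel $\widehat {\psi _\ep }$ must then be checked carefully. If it fails, I would first try replacing $\widehat {\psi _\ep }$ by its cube-wise maxima and using that all its mass sits in at most $2^{2d}$ cubes for $\ep \le 1$.
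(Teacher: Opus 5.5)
\textbf{Gap in the case $q<1$: the kernel bound in Step 2 is false.} Your Step 1 and your Step 2 for $q\ge 1$ are correct and follow the paper: the same convolution identity in $\xi$, and the same use of the $M^\sharp$ condition for $|x|\ge R$. The argument fails where you expected, at $q<1$. You assert that $\nm {\widehat {\psi _\ep}\, v(0,\cdo )}{\sfW ^\infty (\ell ^r)}$ stays bounded as $\ep \to 0^+$, and this is false. The $\sfW ^\infty$ quasi-norm measures local suprema, and $[-1,1]^d$ is covered by $2^d$ unit cubes $Q(\kappa )$. On that set, $\sup _{[-1,1]^d}|\widehat {\psi _\ep}| =\ep ^{-d}\sup _{[-1/\ep ,1/\ep ]^d}|\widehat \phi |\ge c\, \ep ^{-d}$, so the norm blows up like $\ep ^{-d}$. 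Your fallback of taking cube-wise maxima uses the same quantity. Moreover, the mass of $\widehat {\psi _\ep}$ does not sit in finitely many cubes, since $\widehat \phi \in \Sigma _1$ has no compact support. What is uniformly bounded is the local \emph{mass}. With $k_\ep =|\widehat {\psi _\ep}|\, v(0,\cdo )$ one has
\[
\sup _{\xi \in Q(\iota )}\int F_0(x,\xi -\eta )k_\ep (\eta )\, d\eta
\le
\sum _{\kappa}\nm {F_0(x,\cdo )}{L^\infty (\iota -\kappa +[-1,1]^d)}\nm {k_\ep}{L^1(Q(\kappa ))}.
\]
By the super-exponential decay of $\widehat \phi$, $\sup _{0<\ep \le 1}\sum _\kappa \nm {k_\ep}{L^1(Q(\kappa ))}^r<\infty$. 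So the bound you need is in $\sfW ^1(\ell ^r)$, not $\sfW ^\infty (\ell ^r)$. The paper gets the same effect by substituting $\eta \mapsto \ep _1\eta$:
\begin{itemize}
\item the kernel becomes the fixed $\widehat \phi$;
\item for $\xi \in Q(\iota )$ and $\eta \in Q(\kappa )$, the shifted point $\xi -\ep _1\eta$ stays in $Q_2(\iota -\lfloor \ep _1\kappa \rfloor )$ (Lemma \ref{Lemma:SupEstimates});
\item the only kernel bound needed is $\psi (\eta )=\sup _{\ep _1\le 1}|\widehat \phi (\eta )|(1+v(0,\ep _1\eta ))\in \sfW ^\infty (\ell ^r)$.
\end{itemize}

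\textbf{Step 3 has two further holes.} First, for $q<1$ you average $\sup _{|x|<R}|V_{\phi _0}g(x,\cdo -\eta )-V_{\phi _0}g(x,\cdo )|\omega$ against $\widehat {\psi _\ep}$. Pulling the $L^q$ quasi-norm inside the $\eta$-integral is Minkowski's integral inequality, which fails for $q<1$. You would instead need density of $\Sigma _1$ in $M^{1,q}_{(\omega )}$ together with uniform boundedness of multiplication by $\psi _\ep$, and that again rests on the corrected kernel bound. Second, the remainder $f-g=f(1-\theta )$ is asserted, not controlled. Since $\theta \in \Sigma _1$ is entire, $1-\theta$ cannot vanish on $B_{2R}$. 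This term lives on $|x|<R$, so Step 2 does not absorb it. You would need a quantitative estimate, for instance by moving $1-\overline \theta (\cdo +x)$ into the window and using \eqref{Eq:ChangeWindow} with a window that is small in $M^r_{(v)}$ uniformly for $|x|<R$.

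The paper avoids localizing $f$ at all. It splits the discretized sum into four pieces:
\begin{itemize}
\item large $|\kappa |$, controlled by the tail of the kernel $\widehat \phi$, uniformly in $\ep _1$;
\item large $|j|$, controlled by the $M^\sharp$ tail via Theorem \ref{Thm:IndepWind};
\item large $|\iota |$, controlled by the $\ell ^q$-tail of $V_{\phi _0}f\cdot \omega$ in $\xi$, which is where $q<\infty$ enters;
\item the remaining finitely many cubes, handled by uniform continuity of $V_{\phi _0}f$ on compact sets.
\end{itemize}
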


The proofs are rather long, and  we 
present them in the forthcoming Subsection \ref{subsec3.1}.

\par

\par

\begin{proof}[Proof of Theorem
\ref{Thm:ModDensity}]
For any $\omega ,v\in \mascP _E(\rr {2d})$,
there are smooth equivalent weights in
$\mascP _E(\rr {2d})$. Hence we may assume that
$\omega$ and $v$ are smooth, and that $v(0)\ge 1$.

\par

We shall approximate distributions
with test functions in similar ways
as in \cite[Chapter IV]{Ho1}. 
Let $\phi _1,\phi _2\in
\Sigma _1 (\rr d)$ be such that 
\begin{align}
\tag*{(\ref{Eq:DefPhiProd})$'$}
\label{Eq:DefPhiProdA}
\phi _1(0) = 1 
\quad \text{ and } \quad
\int _{\rr d} \phi _2(x) \, dx = 1.
\end{align}
For any $f\in \Sigma _1'(\rr d)$
and $\ep _1,\ep _2 \ge 0$, let
\begin{equation}
\begin{aligned}
f _{\ep _1,\ep _2} 
&=
\begin{cases}
\left(
f \cdo \phi _1(\ep _1\cdo)
\right )
\ast \phi _{2,\ep _2}, & \ep _1,\ep _2>0,
\\[1ex]
f \ast \phi _{2,\ep _2},
& \ep _1=0,\ \ep _2>0,
\\[1ex]
f \cdo \phi _1(\ep _1\cdo),
& \ep _1>0,\ \ep _2=0,
\\[1ex]
f, & \ep _1,\ep _2=0,
\end{cases}
\\[1ex]
\phi _{2,\ep }(x) &= \ep  ^{-d}
\phi _2 (\ep ^{-1}x),
\qquad x\in \rr d.
\end{aligned}
\end{equation}
We observe that $f_{\ep _1,\ep _2}\in 
\Sigma _1(\rr d)$ when $\ep _1,\ep _2>0$,
and that
$$
\lim _{\ep _1,\ep _2\to 0^+}
f_{\ep _1,\ep _2}
=
\lim _{\ep _1\to 0^+}
f_{\ep _1,0}=
\lim _{\ep _2\to 0^+}
f_{0,\ep _2} = f,
$$
with convergence in $\Sigma _1'(\rr d)$,
see e.{\,}g. \cite{Toft22}.


Suppose that in addition $f \in M^{\sharp ,q}_{(\omega )}(\rr d)$.
By Propositions \ref{Prop:ModDensityConv} and
\ref{Prop:ModDensityProd}, and Cantor's diagonalization
principle, we get
\begin{alignat}{3}
\NM {f _{\ep _1,\ep _2} -f}
{M^{\infty ,q}_{(\omega )}}
&\to 0 & 
\quad &\text{as} &\quad
\ep _1,\ep _2&\to 0^+,
\label{Eq:DensityModSharp}
\end{alignat}
and the asserted density in (1) follows.
%
%
%
%
%
%
%
%
%
%

\medspace

Next suppose that $\omega \in \mascP (\rr {2d})$. Then
$$
C_0^\infty (\rr d) \hookrightarrow
M^{\flat ,q}_{(\omega )}(\rr d)
\cap \mascE '(\rr d).
$$
Hence (2) will follow if we prove that $C_0^\infty (\rr d)$
is dense in $M^{\sharp ,q}_{(\omega )}(\rr d)$. Since
\begin{equation}
\label{Eq:SchwSpModIncl}
\mascS (\rr d) \hookrightarrow M^{\sharp ,q}_{(\omega )}(\rr d),
\end{equation}
and $C_0^\infty (\rr d)$ is dense in $\mascS (\rr d)$, the assertion
follows if we show that the embedding \eqref{Eq:SchwSpModIncl}
is dense. This is however a consequence of (1), \eqref{Eq:ModerateClassesComp},
and the fact that $\Sigma _1(\rr d)$ is dense in $\mascS (\rr d)$.
\end{proof}

\par

A combination of Proposition
\ref{Prop:ModComplete} and
Theorem \ref{Thm:ModDensity} now gives the
following. The details are left for the reader.

\par

\begin{cor}
\label{Cor:MsharpCompl}
Let $q\in (0,\infty )$ and
$\omega \in \mascP _E(\rr {2d})$. Then
$M^{\sharp ,q}_{(\omega )}(\rr d)$
is the completion of $\Sigma _1(\rr d)$
under the norm
$\nm \cdo{M^{\infty ,q}_{(\omega )}}$.
\end{cor}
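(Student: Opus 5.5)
We identify the completion of $\Sigma _1(\rr d)$ with respect to $\nm \cdo{M^{\infty ,q}_{(\omega )}}$ with the closure of $\Sigma _1(\rr d)$ inside a complete space that contains it. Concretely, the statement amounts to three facts: $\Sigma _1(\rr d)\subseteq M^{\sharp ,q}_{(\omega )}(\rr d)$; the space $M^{\sharp ,q}_{(\omega )}(\rr d)$ is complete, with quasi-norm equal to the restriction of $\nm \cdo{M^{\infty ,q}_{(\omega )}}$; and $\Sigma _1(\rr d)$ is dense in it. The universal property of completions, for quasi-normed spaces of order $r=\min (1,q)$, then gives a unique isometric isomorphism from the abstract completion onto $M^{\sharp ,q}_{(\omega )}(\rr d)$ that extends the identity on $\Sigma _1(\rr d)$.

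The first step is the inclusion. Proposition \ref{Proposition:inclusions} gives $\Sigma _1(\rr d)\subseteq M^{\flat ,q}_{(\omega )}(\rr d)\hookrightarrow M^{\sharp ,q}_{(\omega )}(\rr d)$. More directly, Proposition \ref{Prop:ContAndCompactModSp}(1) gives $\Sigma _1\subseteq M^{p,q}_{(\omega )}$ for any finite $p$, and Proposition \ref{Prop:ModEmb}(1) embeds this into $M^{\sharp ,q}_{(\omega )}$. By \eqref{Eq:ModSharpNorm}, the norm on $M^{\sharp ,q}_{(\omega )}$ is literally the $M^{\infty ,q}_{(\omega )}$ norm, so the induced quasi-norm on $\Sigma _1$ is the one in the statement.

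The second step is completeness, which is Proposition \ref{Prop:ModComplete}: $M^{\sharp ,q}_{(\omega )}(\rr d)$ is a quasi-Banach space of order $\min (1,q)$. The third step is density, which is Theorem \ref{Thm:ModDensity}(1), applicable since $q<\infty$ by hypothesis.

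To finish, take the abstract completion $X$ of $(\Sigma _1,\nm \cdo{M^{\infty ,q}_{(\omega )}})$. The inclusion $\Sigma _1\to M^{\sharp ,q}_{(\omega )}$ is isometric into a complete space. It therefore extends uniquely to an isometry $X\to M^{\sharp ,q}_{(\omega )}$, using the $r$-triangle inequality to control Cauchy sequences. The image of this isometry is complete, hence closed, and it contains the dense set $\Sigma _1$, so the map is onto. No step is a real obstacle: all substantive work is in Proposition \ref{Prop:ModComplete} and Theorem \ref{Thm:ModDensity}. The only care needed is that the extension argument is valid for $r$-norms as well as norms, which follows because $d(f,g)=\nm {f-g}{}^r$ is a translation-invariant metric.
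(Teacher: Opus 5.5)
Your proposal is correct and follows the paper's own argument. The paper obtains the corollary by combining the completeness of $M^{\sharp ,q}_{(\omega )}(\rr d)$ (Proposition \ref{Prop:ModComplete}) with the density of $\Sigma _1(\rr d)$ (Theorem \ref{Thm:ModDensity}(1)), and it leaves to the reader exactly the details you supply: the inclusion $\Sigma _1(\rr d)\subseteq M^{\sharp ,q}_{(\omega )}(\rr d)$ and the identification of the abstract completion via the isometric extension.
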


\par

\subsection{Proofs of Propositions \ref{Prop:ModDensityConv} and \ref{Prop:ModDensityProd}}
\label{subsec3.1}

\par

We need some preparations before the proofs.
For any $y\in \rr d$ and $r\ge 0$,
we let $Q(y)$ and $Q_r(y)$ be the cubes
\begin{align*}
Q(y)
&=
y+[0,1]^d
=
\sets {x\in \rr d}
{y_j\le x_j\le y_j+1,\ j=1,\dots ,d}
\intertext{and}
Q_r(y)
&=
y+[-r,r]^d
=
\sets {x\in \rr d}
{y_j-r\le x_j\le y_j+r,\ j=1,\dots ,d}
\end{align*}
We also let
$\lfloor t \rfloor$ be the integer
part of $t\in \mathbf R$, i.{\,}e.
$\lfloor t \rfloor$ is the largest
integer $n$ which fulfills $n\le t$,
and set
$$
\lfloor x \rfloor
\equiv
(\lfloor x_1 \rfloor ,\dots ,
\lfloor x_d \rfloor) \in \zz d
\quad \text{when}\quad
x=(x_1,\dots ,x_d)\in \rr d.
$$

\par

\begin{lemma}
\label{Lemma:SupEstimates}
Let $q\in (0,1]$, $\ep \in [0,1]$,
$F\in \maclM (\rr {2d})$,
$\psi \in \maclM (\rr d)$, $Q_d=[0,1]^d$,
$Q_{d,r}=[-r,r]^d$, and let
$\Omega _1,\Omega _2\subseteq \zz d$. Then
\begin{multline}
\sum _{\iota \in \Omega _1}
\sup _{j\in \zz d}
\left (
\sum _{\kappa \in \Omega _2}
\underset
{(x,\xi ,\eta )\in Q(j,\iota ,\kappa)}
\essup \, 
\, \big  (
|F(x,\xi -\ep \eta )\psi (\eta )|
\big  )
\right )^q
\\[1ex]
\le
\sum _{\kappa \in \Omega _2}
\left (
\sum _{\iota \in \Omega _1}
\nm F
{L^\infty (\rr d \times Q_2
(\iota -\lfloor \ep \kappa \rfloor ))}^q
\right )
\nm \psi {L^\infty (Q(\kappa ))}^q.
\end{multline}
\end{lemma}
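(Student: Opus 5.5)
The plan is to bound each term in three elementary steps, all resting on $q\le 1$ (the $q$-triangle inequality) and on a containment of cubes.

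\textbf{Step 1: move the $q$-th power inside the $\kappa$-sum.} Since $q\in(0,1]$, we have $(\sum a_\kappa)^q\le \sum a_\kappa^q$ for non-negative $a_\kappa$. Hence, for fixed $\iota$ and $j$,
$$
\Big(\sum_{\kappa\in\Omega_2} A(j,\iota,\kappa)\Big)^q \le \sum_{\kappa\in\Omega_2} A(j,\iota,\kappa)^q ,
$$
where $A(j,\iota,\kappa)$ denotes the essential supremum over $Q(j,\iota,\kappa)$.

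\textbf{Step 2: split the essential supremum.} On $Q(j,\iota,\kappa)$ we have $\eta\in Q(\kappa)$, so
$$
A(j,\iota,\kappa)\le \Big(\underset{(x,\xi,\eta)\in Q(j,\iota,\kappa)}{\essup}\,|F(x,\xi-\ep\eta)|\Big)\,\nm\psi{L^\infty(Q(\kappa))}.
$$

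\textbf{Step 3: locate $\xi-\ep\eta$.} For $\xi\in Q(\iota)$ and $\eta\in Q(\kappa)$, write $\ep\eta=\ep\kappa+\ep t$ with $t\in[0,1]^d$. Since $\ep\le 1$, each coordinate satisfies $\ep\kappa_m-\lfloor\ep\kappa_m\rfloor\in[0,1)$ and $\ep t_m\in[0,1]$. It follows that
$$
\xi_m-\ep\eta_m\in\big[\iota_m-\lfloor\ep\kappa_m\rfloor-2,\ \iota_m-\lfloor\ep\kappa_m\rfloor+1\big]\subseteq \iota_m-\lfloor\ep\kappa_m\rfloor+[-2,2].
$$
So $\xi-\ep\eta\in Q_2(\iota-\lfloor\ep\kappa\rfloor)$. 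Since $x$ ranges over $Q(j)\subseteq\rr d$, the first factor in Step 2 is at most $\nm F{L^\infty(\rr d\times Q_2(\iota-\lfloor\ep\kappa\rfloor))}$, and this bound is independent of $j$.

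\textbf{Step 4: combine.} The bound from Steps 1--3 no longer depends on $j$, so the supremum over $j\in\zz d$ is harmless. Summing over $\iota\in\Omega_1$ and interchanging the two sums, which is allowed by Tonelli since all terms are non-negative, yields the asserted inequality.

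\textbf{Main obstacle.} The only delicate point is the measure-theoretic justification in Step 3. The essential supremum is taken over a set in $(x,\xi,\eta)$, and we compose $F$ with the linear map $(x,\xi,\eta)\mapsto(x,\xi-\ep\eta)$. When $\ep>0$ this map sends null sets of $\rr{2d}$ to null sets after integrating in $\eta$ (a Fubini argument). When $\ep=0$ the claim reduces directly to $\xi\in Q(\iota)\subseteq Q_2(\iota)$. So the essential supremum of the composition is bounded by the essential supremum of $F$ over the image set, which gives the bound claimed in Step 3.
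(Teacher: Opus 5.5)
Your proof is correct and follows essentially the same route as the paper's: pull the $q$-th power inside the $\kappa$-sum using $q\le 1$, use the containment $\xi-\ep\eta\in Q_2(\iota-\lfloor \ep\kappa\rfloor)$ to get a bound independent of $j$, then sum over $\iota$ and interchange the sums. Your extra measure-theoretic remark is sound. Strictly, what is needed is that \emph{preimages} of null sets under $(x,\xi,\eta)\mapsto(x,\xi-\ep\eta)$ are null, not images. This holds for every $\ep\in[0,1]$, since the shear $(x,\xi,\eta)\mapsto(x,\xi-\ep\eta,\eta)$ has determinant $1$, so the case $\ep=0$ needs no separate treatment.
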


\par

\begin{proof}
Since $q\le 1$, it follows that
for any $\iota \in \Omega _1$, one
obtains
\begin{align*}
\sup _{j\in \zz d}
&
\left (
\sum _{\kappa \in \Omega _2}
\underset
{(x,\xi ,\eta )\in Q(j,\iota ,\kappa)}
\essup \, 
\big  (
|F(x,\xi -\ep \eta )\psi (\eta )|
\big  )
\right )^q
\\[1ex]
&\le
\sum _{\kappa \in \Omega _2}
\sup _{j\in \zz d}\,
\underset {x \in Q(j)}
\essup \, 
\underset
{(\xi ,\eta ) \in Q(\iota ,\kappa )}
\essup \, 
\, 
\big  (
|F(x,\xi -\ep \eta )\psi (\eta )|
\big  )^q
\\[1ex]
&\le
\sum _{\kappa \in \Omega _2}
\underset {\xi \in Q_2(\iota -\lfloor \ep \kappa \rfloor )}
\essup \, 
\, \big  (
\nm {F(\cdo ,\xi )}
{L^\infty (\rr d)}\nm \psi {L^\infty (Q(\kappa ))}|
\big  )^q.
\end{align*}
In the last step we have used the fact
that
$$
\xi -\ep \eta \in
Q_2(\iota -\lfloor \ep \kappa \rfloor )
\quad \text{when}\quad
\xi \in Q(\iota ),\ \eta \in Q(\kappa ).
$$
The result now follows by taking the
sum over all $\iota \in \Omega _1$.
\end{proof}

\par

We also have the following.

\par

\begin{lemma}
\label{Lemma:TailEstimates}
Suppose $q\in (0,\infty )$, 
$K\subseteq \rr d$ is a bounded convex set,
$F\in \sfW ^\infty (\ell ^{p,q}(\zz {2d}))$
and $\psi \in \sfW ^\infty (\ell ^{q}(\zz {d}))$. Then
$$
\lim _{R\to \infty}
\left (
\sum _{|\kappa |\ge R}
\nm F{L^\infty (\rr d\times (\kappa +K))}^q
\right )
=
\lim _{R\to \infty}
\left (
\sum _{|\kappa |\ge R}
\nm \psi {L^\infty (\kappa +K)}^q
\right )
=0.
$$
\end{lemma}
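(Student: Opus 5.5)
The plan is to reduce both limits to the elementary fact that the tail of a convergent series tends to zero. Throughout, $p\in(0,\infty]$; the statement leaves $p$ unspecified, and the argument does not depend on it beyond $\ell^{p,q}\hookrightarrow\ell^{\infty,q}$.

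The only real work is a covering step. Since $K$ is bounded, there is a finite number $N$, independent of $\kappa$, such that every translate $\kappa+K$ is covered by at most $N$ unit cubes $Q(\kappa')$, $\kappa'\in\zz d$, and all these $\kappa'$ satisfy $|\kappa'-\kappa|\le c$ for a fixed $c$. Conversely, a fixed $\kappa'$ appears for at most $N'$ values of $\kappa$.

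First consider $\psi$. For each $\kappa$ we have
$$
\nm \psi{L^\infty(\kappa+K)}^q\le \sum_{\kappa'} \nm\psi{L^\infty(Q(\kappa'))}^q ,
$$
where the sum runs over the at most $N$ covering cubes. Summing over $|\kappa|\ge R$ and using the bounded multiplicity gives
$$
\sum_{|\kappa|\ge R}\nm \psi{L^\infty(\kappa+K)}^q\le N'\sum_{|\kappa'|\ge R-c}\nm\psi{L^\infty(Q(\kappa'))}^q .
$$
The right-hand side is a tail of the convergent series $\nm\psi{\sfW^\infty(\ell^q)}^q<\infty$. It therefore tends to $0$ as $R\to\infty$, because $q<\infty$.

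Next consider $F$. Here $\nm F{L^\infty(\rr d\times(\kappa'+Q_d))}=\sup_{j\in\zz d}\nm F{L^\infty(Q(j,\kappa'))}$, which is exactly the inner $\ell^\infty$ part of the $\ell^{\infty,q}$ norm in the second variable. Since $\ell^{p,q}\hookrightarrow\ell^{\infty,q}$, the sequence
$$
b(\kappa')=\sup_j\nm F{L^\infty(Q(j,\kappa'))}
$$
lies in $\ell^q(\zz d)$. The same covering argument as for $\psi$ bounds $\sum_{|\kappa|\ge R}\nm F{L^\infty(\rr d\times(\kappa+K))}^q$ by $N'\sum_{|\kappa'|\ge R-c} b(\kappa')^q$, and this tends to $0$ as $R\to\infty$.

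The main obstacle is only bookkeeping: the uniform covering constants $N,N',c$ depend solely on $\operatorname{diam}K$, and $q<\infty$ is essential, since tails of $\ell^\infty$ sequences need not vanish.
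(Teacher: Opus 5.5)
Your proposal is correct and is essentially the paper's argument. The paper combines $q<\infty$ with the equivalence $\nm F{\sfW ^\infty (\ell ^{\infty ,q})}^q\asymp\sum _{\kappa}\nm F{L^\infty (\rr d\times (\kappa +K))}^q$ (and its analogue for $\psi$) and leaves the details to the reader; your bounded-multiplicity covering estimate supplies exactly those details. You only need the upper half of that equivalence, which is why boundedness of $K$ suffices and convexity is never used.
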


\par

\begin{proof}
The result follows from the facts that $q<\infty$, and that
$$
\nm F{\sfW ^\infty (\ell ^{\infty ,q})}^q
\asymp
\sum _{\kappa \in \zz d}
\nm F{L^\infty (\rr d\times (\kappa +K))}^q
\quad \text{and}\quad
\nm \psi {\sfW ^\infty (\ell ^{q})}^q
\asymp
\sum _{\kappa \in \zz d}
\nm \psi {L^\infty (\kappa +K)}^q.
$$
The details are left for the reader.
\end{proof}

\par

Next we proceed with the proofs of Propositions
\ref{Prop:ModDensityConv}  and \ref{Prop:ModDensityProd}.

\par

\begin{proof}
[Proof of Proposition \ref{Prop:ModDensityConv}]
Let
\begin{equation*}
f _{0,\ep _2} 
=
\begin{cases}
f \ast \phi _{\ep _2},
& \ep _2\in (0,1],
\\[1ex]
f, & \ep _2=0,
\end{cases}
\end{equation*}
We use
$T_{\phi _0}f$ in 
\eqref{def:TTransform}
instead of $V_{\phi _0}f$
in the definition of
$M^{\infty,q}_{(\omega)}(\rr d)$.
Here we choose the standard
Gaussian
$ 
\phi _0 (x)=\pi ^{-\frac d4}
e^{-\frac 12|x|^2}$,
$x\in \rr d
$,
as window function, which is possible
in view of Theorem \ref{Thm:IndepWind}.
An advantage by using
$T_{\phi _0}$ is that it commutes with
convolutions in the sense
\begin{align*}
    T _{\phi_0} (f _{0,\ep _2})(x,\xi) 
    &=
    T _{\phi_0} \left(  f \ast \phi _{\ep _2}
    \right )(x,\xi)
    =
    \big ((T _{\phi_0}f)(\cdo ,\xi )*
    \phi _{\ep _2} \big )(x)
    \nonumber 
    \\[1ex]
    &=\ep _2^{-d} \int _{\rr d} (T _{\phi_0} f)(x- y,\xi) 
    \phi (\ep _2^{-1}y) \, dy, 
    \quad x,\xi \in \rr d.
\end{align*}
By taking $\ep _2^{-1}y$ as a
new variable of integration
in combination with the assumption 
\eqref{Eq:DefPhiProd},
we have
\begin{align}
   T _{\phi_0} (f _{0,\ep _2} -f)(x,\xi) 
   &=  \int _{\rr d} 
   \big ( (T _{\phi_0} f)(x-\ep _2y,\xi) 
   - (T _{\phi_0} f) (x,\xi)
   \big )
    \phi (y) \,dy
    \notag
    \\[1ex]
    &= J_{1, \ep _2, R}(x,\xi) 
        + J_{2, \ep _2, R} (x,\xi),
    \label{eq:CalcTPhi} 
    \intertext{when $R>0$, where}
     J_{1, \ep_2, R}(x,\xi)
     &= \int _{|y| \leq R} 
   \big ( (T _{\phi_0} f)(x-\ep _2y,\xi) 
   - (T _{\phi_0} f) (x,\xi)
   \big )
    \phi (y) \, dy
    \notag
     \intertext{and}
      J_{2, \ep_2, R}(x,\xi)
     &= \int _{|y| \geq R} 
   \big (
   (T _{\phi_0} f)(x-\ep _2y,\xi) 
   - (T _{\phi_0} f) (x,\xi)
   \big )
    \phi (y) \, dy
        \notag 
\end{align}
for all $x,\xi \in \rr d$.

\par

Let $r=\min (1,q)$ and
$$
\psi (y) = \sup _{\ep _2 \in [0,1]}|\phi (y)(1+v(\ep _2y,0)) |.
$$
Then $\psi \in L^p(\rr d)$ for every $p\in (0,\infty ]$, because $\phi \in \Sigma _1(\rr d)$.
Moreover, $\nm {f _{0,\ep _2} -f}
{M^{\infty ,q}_{(\omega )}}^r$  obeys the estimate
\begin{align*}
    \nm {f _{0,\ep _2} -f}
    {M^{\infty ,q}_{(\omega )}}^r
    \leq 
    M_{1,\ep _2} + M_{2,\ep _2}
    + M_{3,\ep _2} + M_{4,\ep _2},
\end{align*}
where
\begin{equation}
\label{Eq:MepExpressions}
\begin{alignedat}{2} 
    M_{1,\ep _2} 
    &=
    \NM {\chi_{B_{R_1} \times B_{R_2}}
    J_{1, \ep _2, R}} 
    {L^{\infty ,q}_{(\omega )}}^r,
    &\quad 
    M_{2,\ep _2} &=
    \NM {\chi_{B_{R_1} \times \complement B_{R_2}} 
        J_{1, \ep _2, R}} 
        {L^{\infty ,q}_{(\omega )}}^r,
        \quad
    \\[1ex]
    M_{3,\ep _2} 
    &=
    \NM {\chi_{\complement B_{R_1} \times \rr d}\,  J_{1, \ep _2, R}} 
    {L^{\infty ,q}_{(\omega )}}^r, 
    &\quad
    M_{4,\ep _2} 
    &=
     \NM {J_{2, \ep _2, R}}
     {L^{\infty ,q}_{(\omega )}}^r,
\\[1ex]
R_2&>R_1>2R>0. &&
\end{alignedat}
\end{equation}
The limit \eqref{Eq:DensityModSharpCaseConv}
follows if, for every $\ep >0$, we prove that for some
choices of $R$, $R_1$ and $R_2$, one has
\begin{align}\label{Eq:Case1}
M_{n,\ep _2} < \ep ,
\qquad
n = 1,2,3,4,
\end{align}
for every $\ep _2>0$ which is small enough.

\par

Therefore, let $\ep >0$ be arbitrary.
First, we prove 
\eqref{Eq:Case1} for $n=4$. 
Let 
\begin{equation}
\label{Eq:F0GRDef}
\begin{aligned}
F_0(x,\xi )&= |T _{\phi_0} f(x,\xi )\cdot
\omega (x,\xi )|,
\quad G_R(\xi )
=
\sup _{x\in \complement B_R} \big (F_0(x,\xi )\big )
\\[1ex]
\text{and}\quad
G(\xi )&=G_0(\xi ),
\qquad \xi \in \rr d,\ R\ge 0.
\end{aligned}
\end{equation}
and get due to \eqref{eq:CalcTPhi} and
the $v$-moderateness of $\omega$
\begin{align}
|J_{2, \ep _2, R}(x,\xi) &\cdot \omega (x,\xi)|
\notag
\\[1ex]
&\leq 
\int _{|y| \geq R} 
    \big (F _0(x-\ep _2y, \xi) v(\ep _2y,0) 
    +  F_0(x,\xi)
    \big ) |\phi (y)|\, dy 
\notag
\\[1ex]
&\leq 
\int _{|y| \geq R} 
    \big (F _0(x-\ep _2y, \xi)
    +  F_0(x,\xi)
    \big ) \psi (y)\, dy ,
\label{Eq:EstimateJ2EpR}
\end{align}
for all $x,\xi \in \rr d$.
In view of \eqref{Eq:F0GRDef},
the previous estimate gives
\begin{align*}
|J_{2, \ep _2, R}(x,\xi) \cdot
\omega (x,\xi)|
\leq
G(\xi) 
\nm { \psi} {L^1(\complement B_R)} 
\end{align*}
Since $\psi \in L^1 (\rr d)$, we get
\begin{align*}
    \nm { \psi }{L^1(\complement B_R)}
    \to 0
    \quad \text{as} \quad R \to \infty ,
\end{align*}
which in turn gives
\begin{align*}
    M_{4, \ep_2} 
    &=\NM {J_{2, \ep _2, R}} {L^{\infty ,q}_{(\omega )}}^r
    \leq
    \NM {G}{L^q}^r \nm { \psi }{L^1(\complement B_R)}^r
    =   \NM {f}{M^{\infty, q}_{(\omega)} }^r
    \nm { \psi }{L^1(\complement B_R)}^r
    \leq \ep ,
\end{align*}
provided $R$ is chosen large enough.
This gives \eqref{Eq:Case1} for $n=4$.

\par

Next, we prove \eqref{Eq:Case1} for $n=3$.
By \eqref{Eq:MepExpressions} and \eqref{Eq:F0GRDef}
we have
\begin{align}
|J_{1, \ep _2, R}(x,\xi) &\cdot \omega (x,\xi)|
\le
\int _{|y| \leq R}  \big ( F_0(x-\ep _2 y, \xi)+F_0(x, \xi) \big )
\psi (y) \, dy .
\label{eq:EstimateJ1EpRA}
\end{align}
Since $0<\ep _2<1$ we get
$$
|x-\ep _2y|\ge |x|-|y|\ge R_1-R\ge R
\quad \text{when}\quad |x|\ge R_1,\ |y|\le R.
$$
By \eqref{eq:EstimateJ1EpRA}
we obtain
\begin{align*}
|J_{1, \ep _2, R}(x,\xi) \cdot \omega (x,\xi)|
&\le
\big (G_{R_1-R}(\xi ) + G_{R_1}(\xi ) \big )\nm {\psi}{L^1}
\\[1ex]
&\le
2G_{R}(\xi ) \big )\nm {\psi}{L^1}
\asymp
G_{R}(\xi )
\end{align*}
In the last inequality we have used
the last line in \eqref{Eq:MepExpressions}.
Hence, for some constant $C>0$ we have
\begin{align*}
M_{3,\ep _2}
&\le
C\nm {G_R}{L^q}^r = C \NM {\sup _{|x|\ge R}
\big ( F_0(x,\cdo ) \big )}{L^q}^r<\ep 
\end{align*}
provided $R$ is chosen large enough. This shows
\eqref{Eq:Case1} for $n=3$.

\par

Next we prove \eqref{Eq:Case1} for $n=2$.
Since $R_2>R$ and that $\omega$ is $v$
moderate, we get
\begin{align*}
M_{2,\ep _2}
&\le
\left (
\int _{|\xi |\ge R}\NM
{\int _{\rr d} (F_0(\cdo -\ep _2y,\xi )
+F_0(\cdo ,\xi ))
\psi (y)\, dy}{L^\infty}
^q d\xi 
\right ) ^{\frac rq}
\\[1ex]
&\le
2 \left (
\int _{|\xi |\ge R}G_0(\xi )^q\, d\xi
\right )^{\frac rq}
\nm \psi {L^1}^r,
\end{align*}
which tends to $0$ as $R$ turns to infinity.
This gives \eqref{Eq:Case1} for $n=2$.

\par

It remains to prove  
\eqref{Eq:Case1} for $n=1$. 
Without loss of generality, we may assume that 
$\omega$ is continuous, cf. Subsection 
\ref{subsec1.1}. 
Let
\begin{align*}
H &: 
\rr d \times \rr d \times \rr d \times [0,1]
\to \mathbf R
\intertext{be defined by}
    H(x,y,\xi, \ep_2)
    &=
    |T _{\phi_0} f (x-\ep _2y, \xi) - T _{\phi_0} f(x,\xi)| 
    |\phi (y) \omega(x,\xi)|,
    \\[1ex]
    x,y,\xi
    &\in \rr d,\ \ep _2\in [0,1],
\end{align*}
By
\eqref{Eq:STFTTempDistSchwartz},
\eqref{def:TTransform}
and the fact that 
$
\phi \in \Sigma _1 (\rr d) 
$
is continuous,
it follows that
is continuous.
Hence $H$, restricted to the 
compact set
$$
K=
B_{R_2} \times B_{R} \times B_{R_1} \times [0,1],
$$
is uniformly continuous.
Consequently we have for $\ep _0>0$  
\begin{align*}
| H(x,y,\xi, \ep _2)| 
= | H(x,y,\xi, \ep_2) -  H(x,y,\xi, 0)|
< \ep _0,
\quad
(x,y,\xi ,\ep _2)\in K,
\end{align*}
provided $\ep _2$ is small enough. From this 
estimate it follows
\begin{align*}
M_{1,\ep _2} 
= 
\NM {\chi_{B_{R_1} \times B_{R_2}}
J_{1, \ep _2, R}} 
{L^{\infty ,q}_{(\omega )}}^r
\leq 
C \ep _0^r 
< \ep 
\end{align*}
provided $\ep _2$ and therefore also $\ep _0$
is small enough. 
Hence \eqref{Eq:Case1},
and thereby \eqref{Eq:DensityModSharpCaseConv},
hold. 
\end{proof}

\par

\begin{proof}
[Proof of Proposition \ref{Prop:ModDensityProd}]
Let
\begin{equation*}
f _{\ep _1,0} 
=
\begin{cases}
f \cdot \phi (\ep _1\cdo ),
& \ep _1\in (0,1],
\\[1ex]
f, & \ep _1=0,
\end{cases}
\end{equation*}
and
\begin{multline*}
H(x,\xi ,\eta, \ep _1)
=
|(V_{\phi _0}f(x,\xi-\ep _1\eta)
-
V_{\phi _0}f(x,\xi))\omega (x,\xi )|
|\widehat \phi (\eta)| ,
\\[1ex]
x,\xi ,\eta \in \rr d,\ \ep _1\in [0,1].
\end{multline*}
Also let
\begin{align*}
\psi (\eta )
&=
\sup _{\ep _1\in [0,1]}|\widehat \phi (\eta)(1+v(0,\ep _1 \eta ))|,
\\[1ex]
H_1(j,\iota ,\kappa , \ep _1)
&=
\underset {(x,\xi ,\eta )\in Q(j,\iota ,\kappa )}
\esssup 
H(x,\xi ,\eta , \ep _1)
\intertext{and}
H_2(j,\iota ,\kappa , \ep _1)
&=
\underset {(x,\xi ,\eta )\in Q(j,\iota ,\kappa )}
\esssup 
|(V_{\phi _0}f(x,\xi-\ep _1\eta)
\omega (x,\xi -\ep _1\eta )
\psi (\eta )|.
\end{align*}
Since $\omega$ is $v$-moderate, we have
\begin{equation}
\label{Eq:H1H2Compare}
H_1(j,\iota ,\kappa , \ep _1)
\le
H_2(j,\iota ,\kappa , \ep _1)
+ 
H_2(j,\iota ,\kappa , 0).
\end{equation}
We notice that $H$ is continuous, leading to continuity of
$$
[0,1]\ni \ep _1 \mapsto H_1(j,\iota ,\kappa ,\ep _1)
$$
for every $j,\iota ,\kappa \in \zz d$. In particular,
\begin{alignat}{2}
\lim _{\ep _1\to 0^+}H(x,\xi ,\eta , \ep _1)
&=
H(x,\xi ,\eta , 0)=0,&
\quad
x,\xi ,\eta &\in \rr d
\notag
\intertext{and}
\lim _{\ep _1\to 0^+}H_1(j,\iota ,\kappa , \ep _1)
&=
H_1(j,\iota ,\kappa , 0)=0,&
\quad
j,\iota ,\kappa &\in \zz d.
\label{Eq:H1Limit}
\end{alignat}

\par

Let $\ep >0$ be fixed, $R,R_1,R_2>0$
be such that $R_2>R_1>2R>0$, and let $r=\min (1,q)$. Then
\eqref{Eq:H1H2Compare} and Minkowski's inequality give
\begin{align}
\nm { &f_{\ep _1,0} - f}
{M^{\infty ,q}_{(\omega )}}^r
\asymp
\nm {V_{\phi_0} (f_{\ep _1,0} - f)\cdot \omega}
{\sfW ^ \infty (\ell^{\infty,q})} ^r
\notag
\\[1ex]
&=
\left (
\sum _{\iota\in \zz d}
\sup_{j\in \zz d} 
\nm {V_{\phi_0} (f_{\ep _1,0} - f)\cdot \omega}
{L^\infty (Q(j,\iota ))} ^q
\right ) ^{\frac rq}
\notag
\\[1ex]
& \leq
\left (
\sum _{\iota\in \zz d}
\sup_{j\in \zz d} 
\underset {(x,\xi )\in Q(j,\iota )} \essup  
\left(
\int_{\rr d}
H(x,\xi ,\eta, \ep _1) \, d\eta
\right)^q
\right ) ^{\frac rq}
\notag
\\[1ex]
& \leq 
\left (\sum _{\iota\in \zz d}
\sup_{j\in \zz d} 
\underset {(x,\xi )\in Q(j,\iota )} \essup 
\left(\sum _{\kappa\in \zz d}
\int_{Q(\kappa )}
H(x,\xi ,\eta, \ep _1)\, d\eta
\right)^q
\right ) ^{\frac rq}
\notag
\\[1ex]
& \leq 
\sum _{\kappa \in \zz d}
\left (
\sum _{\iota \in \zz d}
\sup_{j\in \zz d} 
\underset {(x,\xi )\in Q(j,\iota )} \essup 
\left(
\int_{Q(\kappa )}
H(x,\xi ,\eta, \ep _1)
 \, d\eta
\right)^q
\right ) ^{\frac rq}
\notag
\\[1ex]
& \lesssim
J_{1,R,\ep _1} + J_{2,R,\ep _1}
+
J_{3,R,\ep _1} + J_{4,R,\ep _1}
+
J_{1,R,0} +J_{2,R,0} + J_{3,R,0},
\label{Eq:MultEstCutoff}
\end{align}
where
\begin{align*}
J_{1,R,\ep _1}
&=
\sum _{|\kappa |\ge R}
\left (
\sum _{\iota \in \zz d}
\sup_{j\in \zz d}
H_2(j,\iota ,\kappa , \ep _1)^q
\right ) ^{\frac rq},
\\[1ex]
J_{2,R,\ep _1}
&=
\sum _{|\kappa | \leq R}
\left (
\sum _{|\iota |\ge R_1}
\sup_{j\in \zz d}
H_2(j,\iota ,\kappa , \ep _1)^q
\right ) ^{\frac rq},
\\[1ex]
J_{3,R,\ep _1}
&=
\sum _{|\kappa | \leq R}
\left (
\sum _{\iota \in \zz d}
\sup_{|j| \ge R_2}
H_2(j,\iota ,\kappa , \ep _1)^q
\right ) ^{\frac rq}
\intertext{and}
J_{4,R,\ep _1}
&=
\sum _{|\kappa | \leq R}
\left (
\sum _{|\iota |\le R_1}
\sup _{|j|\le R_2}
H_1(j,\iota ,\kappa , \ep _1)^q
\right ) ^{\frac rq}.
\end{align*}
The limit \eqref{Eq:DensityModSharpCaseProd}
essentially follows if we prove
\begin{alignat}{2}
\lim _{R\to \infty} \left (\sup _{\ep _1\in [0,1]}
J_{n,R,\ep _1} \right )
&= 0,&
\qquad & n= 1,2,3
\label{Eq:FirstJnLimits}
\intertext{and}
\lim _{\ep _1\to 0^+} J_{4,R,\ep _1}
&=0,&
\qquad &\text{for $R,R_1,R_2$ fixed}.
\label{Eq:J4Limits}
\end{alignat}
We shall reach these properties by suitable
estimates, using Lemma \ref{Lemma:SupEstimates}
when $q\le 1$, and Minkowski's inequality otherwise.

\par

First we prove \eqref{Eq:FirstJnLimits}
for $n=1$.
By Lemma \ref{Lemma:SupEstimates} and Minkowski's inequality
we obtain
\begin{align*}
0
\le
J_{1,R,\ep _1}
&=
\sum _{|\kappa |\ge R}
\left (
\sum _{\iota \in \zz d}
\sup_{j\in \zz d}
H_2(j,\iota ,\kappa , \ep _1)^q
\right ) ^{\frac rq},
\\[1ex]
&\le
\sum _{|\kappa | \geq R}
\left (
\sum _{\iota \in \zz d}
\nm {V_{\phi _0}f \cdot \omega }
{L^\infty (\rr d \times Q_2
(\iota -\lfloor \ep _1\kappa \rfloor ))}^q
\right )^{\frac rq}
\nm {\psi}{L^\infty (Q(\kappa ))}^r
\\[1ex]
&=
\left (
\sum _{\iota \in \zz d}
\nm {V_{\phi _0}f \cdot \omega }
{L^\infty (\rr d \times Q_2
(\iota ))}^q
\right )^{\frac rq}
\left (
\sum _{|\kappa | \geq R}
\nm {\psi} {L^\infty (Q(\kappa ))}^r
\right )
\end{align*}
The limit \eqref{Eq:FirstJnLimits} for $n=1$ is now a consequence of Lemma
\ref{Lemma:TailEstimates} and the fact that $\phi \in \Sigma _1(\rr d)$,
which implies that
$\psi \in \sfW ^\infty (\ell ^r(\zz d))$.

\par

Next we prove \eqref{Eq:FirstJnLimits}
for $n=2$. Since $R_1>2R$ and $0\le \ep _1\le 1$, it follows
that $|\iota - \lfloor \ep _1\kappa \rfloor |\ge R$ when $|\iota |\ge R_1$
and $|\kappa |\le R$.
A combination of these estimates and Lemma \ref{Lemma:SupEstimates} gives
\begin{align*}
0
&\le
J_{2,R,\ep _1}
=
\sum _{|\kappa | \leq R}
\left (
\sum _{|\iota |\ge R_1}
\sup_{j\in \zz d}
H_2(j,\iota ,\kappa , \ep _1)^q
\right ) ^{\frac rq},
\\[1ex]
&\le
\sum _{|\kappa | \leq R}
\left (
\sum _{|\iota |\ge R_1}
\nm {V_{\phi _0}f \cdot \omega }
{L^\infty (\rr d \times Q_2
(\iota -\lfloor \ep \kappa \rfloor ))}^q
\right )^{\frac rq}
\nm {\psi} {L^\infty (Q(\kappa ))}^r
\\[1ex]
&\le
\sum _{\kappa \in \zz d}
\left (
\sum _{|\iota |\ge R}
\nm {V_{\phi _0}f \cdot \omega }
{L^\infty (\rr d \times Q_2
(\iota ))}^q
\right )^{\frac rq}
\nm {\psi} {L^\infty (Q(\kappa ))}^r
\\[1ex]
&=
\left (
\sum _{|\iota | \ge R}
\nm {V_{\phi _0}f \cdot \omega }
{L^\infty (\rr d \times Q_2
(\iota ))}^q
\right )^{\frac rq}
\nm {\psi} {\sfW ^\infty (\ell ^r)}^r
\end{align*}
The limit \eqref{Eq:FirstJnLimits} for $n=2$ is now a consequence of Lemma
\ref{Lemma:TailEstimates} and the facts that
$V_{\phi _0}f \cdot \omega \in \sfW ^\infty( \ell ^{\infty ,q}(\zz d))$
and $\psi \in \sfW ^\infty (\ell ^r(\zz d))$.
%

\par

Next we prove \eqref{Eq:FirstJnLimits}
for $n=3$. Since $R_2>R_1+2^d$ and $R_1>2R$ and $0\le \ep _1\le 1$, it follows
that 
$$
\sets {\xi -\ep _1\eta}{\xi \in Q(\iota),\ \eta \in Q(\kappa )}
\subseteq
Q_2(\iota - \lfloor \ep _1\kappa \rfloor )).
$$
%
Hence, for such choices of 
\begin{align*}
H_2(j,\iota ,\kappa ,\ep _1)
&\le
\sup _{x\in Q(j)}\sup _{(\xi ,\eta )\in Q(\iota ,\kappa )}
|F_0(x,\xi -\ep _1\eta )| \nm {\psi} {L^\infty (Q(\kappa ))}
\\[1ex]
&\le
\sup _{x\in Q(j)} 
\nm {F_0(x,\cdo  )}{L^\infty (Q_2(\iota - \lfloor \ep _1\kappa \rfloor ))}
 \nm {\psi} {L^\infty (Q(\kappa ))}
\\[1ex]
&\le
\nm {F_0(x,\cdo  )}{L^\infty (Q_2(j,\iota - \lfloor \ep _1\kappa \rfloor ))}
\nm {\psi} {L^\infty (Q(\kappa ))}
\end{align*}

\par

This gives
\begin{align*}
0
&\le
J_{3,R,\ep _1}
\\[1ex]
&\le
\sum _{\kappa \in \zz d}
\left (
\sum _{\iota \in \zz d}
\sup _{|j|\ge R_2}
\nm {F_0(x,\cdo  )}{L^\infty (Q_2(j,\iota - \lfloor \ep _1\kappa \rfloor ))}^q
\nm {\psi} {L^\infty (Q(\kappa ))}^q
\right )^{\frac rq}
\\[1ex]
&=
\sum _{\kappa \in \zz d}
\left (
\sum _{\iota \in \zz d}
\sup _{|j|\ge R_2}
\nm {F_0(x,\cdo  )}{L^\infty (Q_2(j,\iota))}^q
\right )^{\frac rq}
\nm {\psi} {L^\infty (Q(\kappa ))}^r
\\[1ex]
&=
\left (
\sum _{\iota \in \zz d}
\sup _{|j|\ge R_2}
\nm {F_0(x,\cdo  )}{L^\infty (Q_2(j,\iota ))}^q
\right )^{\frac rq}
\nm {\psi} {\sfW ^\infty (\ell ^r)}^r.
\end{align*}
%
For the first factor on the last line
we observe
\begin{align*}
\sum _{\iota \in \zz d}
\sup _{|j|\ge R_2}
\nm {F_0(x,\cdo  )}{L^\infty (Q_2(j,\iota ))}^q
&\le
\sum _{\iota \in \zz d}
\nm {\chi _{{}_R} F_0}{L^\infty (\rr d\times Q_2(\iota ))}^q
\\[1ex]
&=
\nm {\chi _{{}_R} F_0}{\sfW ^\infty (\omega ,\ell ^{\infty ,q})}^q.
\end{align*}
In the last inequality we have used the facts that $R_2>R_1+2^d$ and
$R_1>2R$. Here $\chi _{{}_R}$ is the same as in the proof of
Theorem \ref{Thm:IndepWind}.

\par

A combination of these estimates gives
$$
0\le J_{3,R,\ep _1}
\lesssim
\nm {\chi _{{}_R} F_0}{\sfW ^\infty (\omega ,\ell ^{\infty ,q})}^r.
$$
An application of Theorem \ref{Thm:IndepWind} and
using that $f\in M^{\sharp ,q}_{(\omega )}(\rr d)$ give
\eqref{Eq:FirstJnLimits} for $n=3$.

\par

It remains to prove \eqref{Eq:J4Limits}. Since the sums in
\eqref{Eq:J4Limits} are finite, and the supremum over
$|j|\le R_2$ is taken over a finite set, \eqref{Eq:H1Limit} gives
\begin{align*}
0 &\le
\lim _{\ep _1\to 0^+} J_{4,R,\ep _1}
\le
\lim _{\ep _1\to 0^+}
\sum _{|\kappa | \leq R}
\left (
\sum _{|\iota |\le R_1}
\sup _{|j|\le R_2}
H_1(j,\iota ,\kappa , \ep _1)^q
\right ) ^{\frac rq}.
\\[1ex]
&=
\sum _{|\kappa | \leq R}
\left (
\sum _{|\iota |\le R_1}
\sup _{|j|\le R_2}
\lim _{\ep _1\to 0^+}
H_1(j,\iota ,\kappa , \ep _1)^q
\right ) ^{\frac rq}
=0
\end{align*}
and \eqref{Eq:J4Limits} follows. 

\par

A combination of \eqref{Eq:MultEstCutoff},
\eqref{Eq:FirstJnLimits} and \eqref{Eq:J4Limits}
now gives
$$
\lim _{\ep _1\to 0^+} \nm { f_{\ep _1,0} - f}
{M^{\infty ,q}_{(\omega )}} =0,
$$
giving the result.
\end{proof}

\par




\par

\section{Compactness for
pseudo-differential on modulation spaces}
\label{sec5}

\par

In this section we make a review of some facts about
pseudo-differential operators and recall continuity
properties for such operators on broad classes
of modulation spaces, given in \cite[Section 6]{ToPfTe}. 
Thereafter we deduce compactness properties
for such pseudo-differential operators with symbols
in $M^{\sharp ,q}_{(\omega )}$ spaces, when acting
on these modulation spaces.

\medspace

First we recall some facts about distribution
kernels of linear continuous operators.
Suppose
$K\in \Sigma _1'(\rr {2d})$. Then $T_K$,
uniquely defined by the formula
\begin{equation}
\label{Eq:OpKernel}
(T_Kf,g)_{L^2(\rr d)}
=
(K,g\otimes \overline f)_{L^2(\rr {2d})},
\qquad f,g\in \Sigma _1(\rr d),
\end{equation}
is a linear continuous operator from
$\Sigma _1(\rr d)$ into $\Sigma _1'(\rr d)$.
On the other hand, by the kernel theorem of
Schwartz, it follows that for any
linear and continuous operator $T$ from
$\Sigma _1(\rr d)$ into $\Sigma _1'(\rr d)$,
there is a kernel $K$ such that
\eqref{Eq:OpKernel} holds for $T=T_K$ (see 
\cite{Tre}).
This shows that there is a one to one
correspondence between $\Sigma _1'(\rr {2d})$
and linear and continuous operators from
$\Sigma _1(\rr d)$ to $\Sigma _1'(\rr d)$.

\par

Next we recall the definition of pseudo-differential
operators. Let $\GL (d,\mathbf R)$ be the set of
$d\times d$-matrices with
entries in $\mathbf R$, $\fka \in \Sigma _1 
(\rr {2d})$, and let $A\in \GL (d,\mathbf R)$
be fixed. Then the
pseudo-differential operator $\op _A(\fka )$
is the linear and continuous operator on
$\Sigma _1 (\rr d)$, given by
\begin{equation}\label{Eq:PseudoDef}
(\op _A(\fka )f)(x)
=
(2\pi  ) ^{-d}\iint \fka (x-A(x-y),\xi )f(y)e^{i\scal {x-y}\xi }\,
dyd\xi , \quad  x\in \rr d
\end{equation}
(see e.g. Chapter XVIII in \cite{Ho1}).
For general $\fka \in \Sigma _1'(\rr {2d})$, the
pseudo-differential operator $\op _A(\fka )$ is defined as the continuous
operator from $\Sigma _1(\rr d)$ to $\Sigma _1'(\rr d)$ with
distribution kernel
\begin{equation}\label{Eq:KernelPsDO}
K_{\fka ,A}(x,y)=(2\pi )^{-d/2}(\mascF _2^{-1}\fka )(x-A(x-y),x-y), 
\quad x,y \in \rr d.
\end{equation}
Here $\mascF _2F$ is the partial Fourier transform of $F(x,y)\in
\Sigma _1'(\rr {2d})$ with respect to the $y$ variable. This
definition makes sense since the mappings
\begin{equation}\label{Eq:HomeoF2tmap}
\mascF _2\quad \text{and}\quad F(x,y)\mapsto F(x-A(x-y),x-y)
\end{equation}
are homeomorphisms on
$\Sigma _s(\rr {2d})$, $\maclS _s(\rr {2d})$,
$\mascS (\rr {2d})$, and their duals, $s\ge 1$.
In particular we have the following.

\par

\begin{lemma}
\label{Lemma:SymbKernelHom}
Let $s\ge 1$, $A\in \GL (d,\mathbf R )$,
and $K_{\fka ,A}$ be as in \eqref{Eq:KernelPsDO}, 
when $\fka \in \Sigma _1'(\rr {2d})$.
Then the map $\fka \mapsto K_{\fka ,A}$ is a homeomorphism on
$\Sigma _1'(\rr {2d})$, which restricts to homeomorphisms on
$$
\Sigma _s(\rr {2d}),
\quad
\maclS _s(\rr {2d}),
\quad
\mascS (\rr {2d}),
\quad
\mascS '(\rr {2d}),
\quad
\maclS _s'(\rr {2d})
\quad \text{and}\quad
\Sigma _s'(\rr {2d}).
$$
\end{lemma}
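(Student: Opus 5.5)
The plan is to write $\fka \mapsto K_{\fka ,A}$ as a composition of two maps and show each is a homeomorphism on all the listed spaces. The two maps are the partial inverse Fourier transform $\mascF _2^{-1}$ in the second variable, and the pullback
$$
T_A : F \mapsto F(x-A(x-y),x-y)
$$
by the linear map
$$
\kappa _A(x,y)=(x-A(x-y),\, x-y).
$$
Composing two homeomorphisms, and multiplying by the harmless constant $(2\pi )^{-d/2}$, then gives the result.

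First I check that $\kappa _A$ is invertible on $\rr {2d}$ for every $A\in \GL (d,\mathbf R)$, including singular $A$. If $\kappa _A(x,y)=(u,z)$, then $z=x-y$ and $u=x-Az$. Hence
$$
x=u+Az \quad \text{and} \quad y=u+Az-z,
$$
so $\kappa _A$ is linear with linear inverse, and $|\det \kappa _A|=1$ by a block computation. For the function spaces I use the characterisations \eqref{Eq:GSFtransfChar} and \eqref{Eq:GSRoumFtransfChar}. The pullback $F\circ \kappa _A$ satisfies
$$
|F(\kappa _A X)|\lesssim e^{-r|\kappa _A X|^{1/s}}\le e^{-r c|X|^{1/s}}, \qquad c>0,
$$
since $|\kappa _A X|\ge c_0|X|$. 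Its Fourier transform is $\widehat F\circ (\kappa _A^{-1})^{T}$, up to the unit determinant, so it obeys the same kind of estimate. In the Beurling case "for all $r$" and in the Roumieu case "for some $r$" are both preserved after rescaling $r$. For $\mascS$ the invariance is classical, since polynomial weights and derivatives are preserved by linear changes of variables. Continuity in the Fréchet or inductive-limit topologies follows from the same estimates applied seminorm by seminorm; alternatively, the closed graph theorem applies since these spaces are Fréchet or (DFS).

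Next I treat $\mascF _2$. The partial Fourier transform is the tensor product $\mathrm{Id}\otimes \mascF$. In the Gelfand–Shilov case I would use the symmetric description: $\Sigma _s$ is characterised by $\sup |x^\alpha D^\beta F|\lesssim h^{|\alpha +\beta |}(\alpha !\beta !)^s$ for all $h$. This description is invariant under the Fourier transform in any subset of variables, because $\mascF _2$ swaps $y^\alpha$ and $D_\eta ^\alpha$ while leaving the $x$-variables untouched. This is the step I expect to require the most care, because the paper only records the decay-of-$f$ and decay-of-$\widehat f$ characterisation, which is not directly suited to partial transforms. I would cite the standard equivalence of characterisations (Chung–Chung–Kim) to pass between the two.

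The dual cases follow by transposition. Both $\mascF _2$ and $T_A$ have transposes of the same form: $\mascF _2$ itself, and the pullback $T_{A}^{t}F=F\circ \kappa _A^{-1}$, which is again a pullback by a linear map of determinant $\pm 1$. Hence they extend to weak$^*$ and strong homeomorphisms of $\Sigma _s'$, $\maclS _s'$ and $\mascS '$. Uniqueness of these extensions comes from the density in \eqref{eq:EmbGelfandShilov}, and this also makes the extensions compatible with one another. Finally, taking $s=1$ and the Beurling dual gives the statement on $\Sigma _1'(\rr {2d})$, and the restrictions to the listed spaces follow from the invariance proved above for each space separately.
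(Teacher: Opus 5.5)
Your proposal is correct and follows the paper's route: the paper writes $\fka \mapsto K_{\fka ,A}$ as $\mascF _2^{-1}$ followed by the linear pullback $F(x,y)\mapsto F(x-A(x-y),x-y)$, and uses that both maps in \eqref{Eq:HomeoF2tmap} are homeomorphisms on $\Sigma _s$, $\maclS _s$, $\mascS$ and their duals. The paper simply asserts those two facts, while you justify them: invertibility of $\kappa _A$ also for singular $A$, the decay characterisations for the pullback, the symmetric seminorms for $\mascF _2$, and transposition plus density for the duals.
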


\par

For symbols and kernels in $\Sigma _1(\rr {2d})$
we stress that corresponding operators
easily become compact, as indicated
in the following result.

\par

\begin{prop}
\label{Prop:Sig1KerCompact}
Let $A\in \GL (\mathbf R,d)$ and
$\fka ,K\in \Sigma _1(\rr {2d})$. Then
$T_K$ and $\op _A(\fka )$ from $\Sigma _1(\rr d)$
to $\Sigma _1'(\rr d)$, extend uniquely to
compact operators
\begin{equation}
\label{Eq:SigmaKernelMaps}
T_K:\Sigma _1'(\rr d)\to \Sigma _1(\rr d)
\quad \text{and}\quad
\op _A(\fka ):\Sigma _1'(\rr d)\to \Sigma _1(\rr d).
\end{equation}
\end{prop}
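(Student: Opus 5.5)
The plan is to reduce everything to the kernel operator $T_K$. By Lemma \ref{Lemma:SymbKernelHom}, $\fka \mapsto K_{\fka ,A}$ is a homeomorphism on $\Sigma _1(\rr {2d})$. So $\fka \in \Sigma _1(\rr {2d})$ gives $K=K_{\fka ,A}\in \Sigma _1(\rr {2d})$ and $\op _A(\fka )=T_K$. It therefore suffices to treat $T_K$ with $K\in \Sigma _1(\rr {2d})$.

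\emph{Extension and continuity.} For $f\in \Sigma _1'(\rr d)$ we define
$$
(T_Kf)(x)=\scal {f}{K(x,\cdo )},
$$
where $\scal \cdo \cdo$ is the bilinear dual pairing, chosen to be consistent with \eqref{Eq:OpKernel}. This agrees with $T_K$ on $\Sigma _1(\rr d)$. Since $\Sigma _1(\rr d)$ is dense in $\Sigma _1'(\rr d)$ (weak$^*$, and also in the strong topology, since $\Sigma _1$ is reflexive), uniqueness is immediate once continuity holds. To see that $T_Kf\in \Sigma _1(\rr d)$ with continuous dependence on $f$, use that $\Sigma _1(\rr {2d})\simeq \Sigma _1(\rr d)\hat\otimes \Sigma _1(\rr d)$ (nuclearity). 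Concretely: for $f$ in a bounded set $B\subseteq \Sigma _1'(\rr d)$, the family $\{T_Kf\}$ is estimated as
$$
\nm {T_Kf}{[h]}
\le
\sup _{f\in B}\, \sup _{\alpha ,\beta}
\frac {|\scal f{x^\alpha D_x^\beta K(x,\cdo )}|}{h^{|\alpha +\beta |}\alpha !\beta !},
$$
which is finite. Here one uses that $\{ x^\alpha D^\beta _xK(x,\cdo )/(h^{|\alpha+\beta|}\alpha !\beta !)\}$ is bounded in $\Sigma _1(\rr d)$, which follows from the Gelfand–Shilov estimates for $K$ with parameter $h/2$ in both variables. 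An alternative route that avoids the semi-norm bookkeeping is to expand $K$ in Hermite functions, $K=\sum c_{\alpha \beta}h_\alpha \otimes h_\beta$ with $|c_{\alpha \beta}|\lesssim e^{-r(|\alpha |+|\beta |)^{1/2}}$ for every $r>0$. Then $T_Kf=\sum c_{\alpha \beta}\scal f{h_\beta}h_\alpha$, and the Hermite characterizations of $\Sigma _1$ and $\Sigma _1'$ (coefficients of $f$ growing at most like $e^{r_0|\beta |^{1/2}}$ for some $r_0$) give convergence in every semi-norm of $\Sigma _1$.

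\emph{Compactness.} I would aim to show that $T_K$ maps a neighbourhood of $0$ in $\Sigma _1'(\rr d)$ into a relatively compact subset of $\Sigma _1(\rr d)$. The Hermite expansion is the cleanest tool here. Take $U=\{f;\ \sup _\beta |\scal f{h_\beta}|e^{-r_0|\beta |^{1/2}}\le 1\}$ for a fixed $r_0$. This is a neighbourhood of $0$ in the strong dual topology, being the polar of a bounded set in $\Sigma _1$. Choose $r>r_0$ and use the decay of $c_{\alpha \beta}$ at rate $r+r_1$ for arbitrary $r_1$. Then $T_K(U)$ is contained in
$$
\sets {\sum _\alpha d_\alpha h_\alpha}{|d_\alpha |\le C_{r_1}e^{-r_1|\alpha |^{1/2}}\ \text{for all } r_1},
$$
which is bounded in the Fréchet–Montel space $\Sigma _1(\rr d)$ and hence relatively compact. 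Alternatively, approximate $T_K$ in operator norm by the finite-rank truncations $\sum _{|\alpha |+|\beta |\le N}$; compactness then follows as a limit of finite rank operators.

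\emph{Main obstacle.} The delicate point is the topological setting of compactness. $\Sigma _1'(\rr d)$ is not normable, so compactness must be understood as mapping some neighbourhood of zero into a relatively compact set. Moreover, a bounded subset of $\Sigma _1'$ is not a neighbourhood, so one really needs the exponential decay of $c_{\alpha \beta}$ \emph{for every} $r$ in order to absorb the growth allowed on a single neighbourhood $U$. I would first verify carefully that the Hermite characterization of $\Sigma _1$ (coefficient decay $e^{-r|\alpha |^{1/2}}$ for all $r$) and of the neighbourhoods in $\Sigma _1'$ holds. The rest is then routine summation.
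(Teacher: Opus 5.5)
Your reduction to $T_K$ and your continuity argument are fine. Those rely on the Hermite coefficients of $f\in \Sigma _1'$ growing like $e^{r_0|\beta |^{1/2}}$ for some $r_0$, and those of $K$ decaying like $e^{-r(|\alpha |+|\beta |)^{1/2}}$ for every $r$. The compactness step, however, fails as written.

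The set $U=\sets {f\in \Sigma _1'(\rr d)}{|\scal f{h_\beta}|\le e^{r_0|\beta |^{1/2}}\ \text{for all}\ \beta}$ is \emph{not} a neighbourhood of $0$. It is the polar of $\{ e^{-r_0|\beta |^{1/2}}h_\beta \} _{\beta}$, and that set is unbounded in $\Sigma _1(\rr d)$: the semi-norm $\sup _\alpha |c_\alpha |e^{r|\alpha |^{1/2}}$ with $r>r_0$ equals $e^{(r-r_0)|\beta |^{1/2}}$ on its elements.

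Concretely, every $0$-neighbourhood contains $B^\circ$ for some bounded $B\subseteq \Sigma _1(\rr d)$. Put $M(\beta )=\sup _{g\in B}|\scal g{h_\beta}|\lesssim e^{-(r_0+1)|\beta |^{1/2}}$. All $th_\beta$ with $|t|M(\beta )\le 1$ lie in $B^\circ$, and for large $|\beta |$ this allows $|t|>e^{r_0|\beta |^{1/2}}$, so $B^\circ \not\subseteq U$.

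In fact $U$ is a bounded subset of $\Sigma _1'$, and a bounded $0$-neighbourhood would make $\Sigma _1'$ normable. This is exactly the tension you yourself flag under ``main obstacle''. So your estimate only shows that $T_K$ maps bounded sets to bounded (hence, by the Montel property, relatively compact) subsets of $\Sigma _1$. That already follows from continuity and is weaker than the definition you fixed.

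The finite-rank alternative does not repair this. $\maclL (\Sigma _1',\Sigma _1)$ carries no operator norm, and uniform convergence on bounded sets does not in general preserve neighbourhood-type compactness.

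The neighbourhood has to be built from $K$. Put $C_r=\sup _{\alpha ,\beta}|c_{\alpha \beta}|e^{r(|\alpha |^{1/2}+|\beta |^{1/2})}$ and $G=\sets {C_r^{-1}e^{r|\alpha |^{1/2}}\sum _\beta c_{\alpha \beta}h_\beta}{\alpha \in \nn d,\ r>0}$. For $K\neq 0$, the $s$-semi-norm of each element of $G$ is at most $1$ when $s\le r$, and at most $C_s/C_r\le C_s/\sup|c_{\alpha \beta}|$ when $s>r$. Hence $G$ is bounded and $U=G^\circ$ is a genuine $0$-neighbourhood. For $f\in U$ you get $|(T_Kf)_\alpha |\le C_re^{-r|\alpha |^{1/2}}$ for all $\alpha$ and $r$, so $T_K(U)$ is bounded, hence relatively compact in $\Sigma _1(\rr d)$. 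Equivalently, the standard (DF)-space argument shows that any continuous linear map from $\Sigma _1'$ into a Fréchet space sends some $0$-neighbourhood to a bounded set.

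With this fix your Hermite route is a self-contained alternative to the paper's proof. The paper instead reduces to $\op _A(\fka )$ and writes $\fka =\fka _3\wpr _A\fka _2\wpr _A\fka _1$, using surjectivity of the twisted product on $\Sigma _1(\rr {2d})$. It then factors $\op _A(\fka )$ as $\Sigma _1'\to L^2\to L^2\to \Sigma _1$ with a Hilbert--Schmidt middle factor. Passing through the normed space $L^2$ gives the paper its $0$-neighbourhood for free: it is the preimage of the unit ball under $\op _A(\fka _1)$. Your route has to construct that neighbourhood by hand, and that construction is the step your proposal got wrong.
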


\par

The uniqueness in Proposition 
\ref{Prop:Sig1KerCompact} follows from \cite{Tre}
(or \eqref{Eq:OpKernel} when $K\in \Sigma _1
(\rr {2d})$ and $f,g\in \Sigma _1'(\rr d)$).
The remaining continuity part follows from
Section 3 and its analysis in \cite{ToKhNiNo}.
In order to be self-contained, we here present
a proof of the continuity part,
based on the fact that the map
\begin{equation}
\label{Eq:WeylProdSigma}
\Sigma _1(\rr {2d})\times \Sigma _1(\rr {2d})
\ni (\fka ,\fkb )\mapsto \fka \wpr _A\fkb
\in 
\Sigma _1(\rr {2d})
\end{equation}
is a continuous surjection.
(See \cite[Theorem 2.5]{ToKhNiNo}.)

\par

\begin{proof}
Since the map $\fka \mapsto K_{\fka ,A}$ is bijective on
$\Sigma _1(\rr {2d})$, it suffices to prove the
result for $\op _A(\fka )$.

\par

By the surjectivity of 
\eqref{Eq:WeylProdSigma}, we have
\begin{align*}
\fka &= \fka _3\wpr _A\fka _2\wpr _A\fka _1,
\quad \text{for some}\quad
\fka _1,\fka _2,\fka _3\in \Sigma _1(\rr {2d}).
\intertext{Then}
\op _A(\fka _1) &: \Sigma _1'(\rr d) \to L^2(\rr d),
\\[1ex]
\op _A(\fka _2) &: L^2(\rr d) \to L^2(\rr d)
\intertext{and}
\op _A(\fka _3) &: L^2(\rr d) \to \Sigma _1(\rr d)
\end{align*}
are continuous. Since
$\fka _2\in \Sigma _1(\rr {2d})
\subseteq L^2(\rr {2d})$, it follows that
$\op _A(\fka _2)$ is a Hilbert-Schmidt operator
and thereby compact. Hence the map $\op _A(\fka)$
in \eqref{Eq:SigmaKernelMaps} factorizes through
the mappings
$$
\op _A(\fka )
=
\op _A(\fka _3)\circ \op _A(\fka _2)
\circ \op _A(\fka _1),
$$
where the first and third operator are continuous,
and the middle operator is compact. This implies that
$\op _A(\fka )$ is compact, giving the result.
\end{proof}

\par

A combination of Propositions 
\ref{Prop:ContAndCompactModSp} (1)
and \ref{Prop:Sig1KerCompact} gives the following.
The details are left for the reader.

\par

\begin{cor}
\label{Cor:Sig1KerCompact}
Let $A\in \GL (\mathbf R,d)$,
$\fka ,K\in \Sigma _1(\rr {2d})$,
$\omega _j\in \mascP _E(\rr {2d})$,
and $\mascB$ be a normal QBF space on $\rr {2d}$.
Then
\begin{equation*}
T_K:M(\omega _1,\mascB )
\to M(\omega _2,\mascB )
\quad \text{and}\quad
\op _A(\fka ):M(\omega _1,\mascB )
\to M(\omega _2,\mascB )
\end{equation*}
are compact.
\end{cor}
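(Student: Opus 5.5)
The plan is to factor each operator through $\Sigma _1'(\rr d)$ and $\Sigma _1(\rr d)$ and use Proposition \ref{Prop:ContAndCompactModSp} (1) for the two outer maps. Concretely, for $T=T_K$ or $T=\op _A(\fka )$ we write
$$
T\big |_{M(\omega _1,\mascB )}
=
\boldsymbol {\iota}_2\circ \widetilde T\circ \boldsymbol {\iota}_1,
$$
where $\boldsymbol {\iota}_1 :M(\omega _1,\mascB )\to \Sigma _1'(\rr d)$ is the continuous inclusion, $\widetilde T:\Sigma _1'(\rr d)\to \Sigma _1(\rr d)$ is the compact extension from Proposition \ref{Prop:Sig1KerCompact}, and $\boldsymbol {\iota}_2:\Sigma _1(\rr d)\to M(\omega _2,\mascB )$ is the continuous inclusion. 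A composition of a compact operator with continuous linear maps on either side is compact, and this gives the claim.

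The first step is to check that the operator defined on $M(\omega _1,\mascB )$ coincides with $\widetilde T$ restricted there. By Proposition \ref{Prop:Sig1KerCompact}, the extension to $\Sigma _1'(\rr d)$ is unique. Since $\Sigma _1(\rr d)\subseteq M(\omega _1,\mascB )\hookrightarrow \Sigma _1'(\rr d)$, any continuous extension on the modulation space agrees with $\widetilde T\circ \boldsymbol {\iota}_1$. Concretely, it is given by $(Tf,g)=(K,g\otimes \overline f)$ for $f\in \Sigma _1'$ and $g\in \Sigma _1$.

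The second step is to make sense of compactness between topological vector spaces that are not normed. Here $\Sigma _1'(\rr d)$ carries the strong dual topology, and $\Sigma _1(\rr d)$ is Fréchet. The meaning used is that some neighbourhood of zero in $\Sigma _1'$ is mapped into a relatively compact set. The image of the unit ball of $M(\omega _1,\mascB )$ under $\boldsymbol{\iota}_1$ is bounded in $\Sigma _1'$. In the proof of Proposition \ref{Prop:Sig1KerCompact}, the factor $\op _A(\fka _1):\Sigma _1'\to L^2$ is continuous, so it maps bounded sets to bounded sets in $L^2$. The Hilbert–Schmidt factor $\op _A(\fka _2)$ then sends these to relatively compact sets in $L^2$, and $\op _A(\fka _3):L^2\to \Sigma _1$ keeps them relatively compact in $\Sigma _1$. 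Finally, $\boldsymbol {\iota}_2$ is continuous, so the image is relatively compact in $M(\omega _2,\mascB )$.

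The main obstacle is this bookkeeping about bounded versus neighbourhood-compactness on $\Sigma _1'$. Routing the argument through the bounded unit ball, as above, avoids needing a zero neighbourhood in $\Sigma _1'$. The argument is valid for a quasi-Banach $M(\omega _2,\mascB )$ as well, since relatively compact sets there are characterised sequentially (the space is a metric space).
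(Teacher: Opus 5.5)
Your factorization $\boldsymbol{\iota}_2\circ\widetilde T\circ\boldsymbol{\iota}_1$ is exactly the combination of Proposition \ref{Prop:ContAndCompactModSp}~(1) and Proposition \ref{Prop:Sig1KerCompact} that the paper intends. Your way of passing the unit ball of $M(\omega_1,\mascB)$, as a bounded subset of $\Sigma_1'(\rr d)$, through the factors in that proof is a sound way to handle compactness out of a non-normable space.

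One side remark is inaccurate, though not needed. $\Sigma_1(\rr d)$ is in general not dense in $M(\omega_1,\mascB)$ (for instance when $\mascB=L^\infty(\rr{2d})$), so the inclusions alone do not give uniqueness of continuous extensions. You should simply take the operator on $M(\omega_1,\mascB)$ to be the restriction of $\widetilde T$, as the paper implicitly does.
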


\par

The standard (Kohn-Nirenberg) representation, $\fka (x,D)=\op (\fka )$, and
the Weyl quantization $\op ^w(\fka )$ of $\fka$ are obtained by choosing
$A=0$ and $A=\frac 12 I$, respectively, in \eqref{Eq:PseudoDef}
and \eqref{Eq:KernelPsDO}, where $I$ is the identity matrix.

\par

\begin{rem}\label{Rem:BijKernelsOps}
Let $s\ge 1$. By Fourier's inversion formula, \eqref{Eq:KernelPsDO} and the kernel theorem
\cite[Theorem 2.2]{LozPerTask} for operators from
Gelfand-Shilov spaces to their duals,
it follows that the map $\fka \mapsto \op _A(\fka )$ 
is bijective from $\Sigma _s'(\rr {2d})$
to the set of all linear and continuous operators 
from $\Sigma _s(\rr d)$
to $\Sigma _s'(\rr {2d})$. The same holds true
with $\maclS _s$ or $\mascS$ in place of $\Sigma _s$
at each occurrence.
\end{rem}

\par

By Remark \ref{Rem:BijKernelsOps}, it follows that for every $\fka _1\in \Sigma _1'(\rr {2d})$
and $A_1,A_2\in \GL (d,\mathbf R)$, there is a unique $\fka _2\in \Sigma _1'(\rr {2d})$ such that
$\op _{A_1}(\fka _1) = \op _{A_2} (\fka _2)$.
Indeed, the relation between $\fka _1$
and $\fka _2$ is given by
\begin{equation}
\label{Eq:CalculiTransform}
\op _{A_1}(\fka _1) = \op _{A_2}(\fka _2)
\quad \Leftrightarrow \quad
e^{i\scal {A_2D_\xi}{D_x}}\fka _2(x,\xi )=e^{i\scal {A_1D_\xi}{D_x}}\fka _1(x,\xi )
\end{equation}
(see e.g. \cite[Section 18.5]{Ho1}
or \cite[Proposition 1.1]{Toft20}).
Here we note that the operator
$e^{i\scal {AD_\xi}{D_x}}$ is homeomorphic
on $\Sigma _s(\rr {2d})$, $\maclS _s(\rr {2d})$,
$\mascS (\rr {2d})$, and their duals
(cf. \cite{CaTo,Tr}).

\par

We also recall that $\op _A(\fka )$ is a rank-one operator, i.e.
\begin{equation}\label{Eq:RankOneSymb}
\op _A(\fka )f=(2\pi )^{-\frac d2}
(f,f_2)f_1, \qquad f\in \Sigma _1(\rr d),
\end{equation}
for some $f_1,f_2\in \Sigma _1'(\rr d)$,
if and only if $\fka$ is equal to the \emph{$A$-Wigner distribution}
\begin{equation}\label{Eq:WignerAdef}
W_{f_1,f_2}^{A}(x,\xi ) \equiv \mascF (f_1(x+A\cdo
)\overline{f_2(x-(I-A)\cdo )} )(\xi ),
\quad x,\xi \in \rr d, 
\end{equation}
of $f_1$ and $f_2$. If in addition $f_1,f_2\in L^2(\rr d)$, then $W_{f_1,f_2}^{A}$
takes the form
\begin{equation}\label{wignertdef2}
W_{f_1,f_2}^{A}(x,\xi ) =
(2\pi )^{-\frac d2}
\int _{\rr d} f_1(x+Ay)
\overline{f_2(x-(I-A)y)}e^{-i\scal y\xi} \, dy,
\quad x,\xi \in \rr d.
\end{equation}
(Cf. e.g. \cite{Toft20}.) Since
the Weyl case is peculiar interesting,
we also set $W_{f_1,f_2}=W_{f_1,f_2}^{A}$
when $A=\frac 12 I$. A straight-forward combination of
\eqref{Eq:CalculiTransform} and the fact that $\fka$ in \eqref{Eq:RankOneSymb}
equals $W_{f_1,f_2}^{A}$ gives
$$
e^{i\scal {A_2D_\xi}{D_x}}W_{f_1,f_2}^{A_2}
=
e^{i\scal {A_1D_\xi}{D_x}}W_{f_1,f_2}^{A_1}.
$$

\par


\par

The first part of the next result follows from
\cite[Theorem 6.4]{ToPfTe}.
Here the involved weight functions should be related
as
\begin{multline}
\label{Eq:WeightPseudoRel}
\frac {\omega _2(x,\xi )v_0(x-y,\xi -\eta )}
{\omega _1(y,\eta)}
\lesssim
\omega _0(x+A(y-x),\eta +A^*(\xi -\eta ),\xi -\eta ,y-x),
\\[1ex]
x,y,\xi ,\eta \in \rr d.
\end{multline}
For any operator $T$ from the quasi-Banach space
$\mascB _1$ to the quasi-Banach space $\mascB _2$,
we let $\nm T{\mascB _1\to \mascB _2}$ be the
quasi-norm of $T$. That is,
\begin{equation}
\label{Eq:OpNorm}
\nm T{\mascB _1\to \mascB _2}
\equiv
\sup \nm {Tf}{\mascB _2},
\end{equation}
where the supremum is taken over all
$f\in \mascB _1$ such that $\nm f{\mascB _1}\le 1$.
For convenience we set
\begin{equation}
\label{Eq:L2Norm}
\nmm \cdo
\equiv
\nm {\cdo}{L^2(\rr d)\to L^2(\rr d)}.
\end{equation}

\par

\begin{thm}
\label{Thm:PseudoCont2}
Suppose $A\in \GL (d,\mathbf R)$, $\mascB$ is a normal
QBF space on $\rr {2d}$ with respect to $r_0\in (0,1]$ and
$v_0\in \mascP _E(\rr {2d})$,
$\omega _0\in \mascP _E(\rr {4d})$,
and $\omega _1,\omega _2\in \mascP _E(\rr {2d})$, be such that
\eqref{Eq:WeightPseudoRel}$'$ holds. Also let
$\fka \in M^{\infty ,r_0}_{(\omega _0)}(\rr {2d})$.
Then the the map
\begin{align}
\op _A(\fka ) :
M(\omega _1,\mascB ) &\to M(\omega _2,\mascB )
\label{Eq:PseudoCont2A}
\intertext{is continuous, and}
\nm {\op _A(\fka )}
{M(\omega _1,\mascB ) \to M(\omega _2,\mascB )}
&\le
C\nm {\fka}{M^{\infty ,r_0}_{(\omega _0)}},
\label{Eq:PseudoCont2}
\end{align}
for some constant $C>0$
If, more restrictive,
$\fka \in M^{\sharp ,r_0}_{(\omega _0)}(\rr {2d})$,
then the map \eqref{Eq:PseudoCont2A} is compact.
\end{thm}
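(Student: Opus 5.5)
The continuity assertions \eqref{Eq:PseudoCont2A} and \eqref{Eq:PseudoCont2} are exactly \cite[Theorem 6.4]{ToPfTe}, so I take them as given. Only the compactness statement for $\fka \in M^{\sharp ,r_0}_{(\omega _0)}(\rr {2d})$ remains. The strategy is to approximate $\op _A(\fka )$ in operator quasi-norm by operators with symbols in $\Sigma _1(\rr {2d})$. Those operators are compact by Corollary \ref{Cor:Sig1KerCompact}, and compactness then passes to the limit.

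\emph{Step 1 (approximation of the symbol).} Since $r_0\in (0,1]$ is finite, Theorem \ref{Thm:ModDensity} (1) applies on $\rr {2d}$ with weight $\omega _0\in \mascP _E(\rr {4d})$. Equivalently, by Corollary \ref{Cor:MsharpCompl}, there are $\fka _n\in \Sigma _1(\rr {2d})$ such that
$$
\nm {\fka _n-\fka}{M^{\infty ,r_0}_{(\omega _0)}}\to 0
\quad \text{as}\quad n\to \infty .
$$

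\emph{Step 2 (operator-norm convergence).} The map $\fka \mapsto \op _A(\fka )$ is linear, so \eqref{Eq:PseudoCont2}, applied to $\fka -\fka _n\in M^{\infty ,r_0}_{(\omega _0)}(\rr {2d})$, gives
$$
\nm {\op _A(\fka )-\op _A(\fka _n)}{M(\omega _1,\mascB )\to M(\omega _2,\mascB )}
\le C\nm {\fka -\fka _n}{M^{\infty ,r_0}_{(\omega _0)}}\to 0 .
$$
Here one must make sure that $\op _A(\fka _n)$, viewed as the extension from Corollary \ref{Cor:Sig1KerCompact}, agrees with the extension provided by Theorem \ref{Thm:PseudoCont2}. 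Both extend the same operator on $\Sigma _1(\rr d)$, so they coincide on $\Sigma _1$. If $\Sigma _1$ is not dense in $M(\omega _1,\mascB )$, for instance when $\mascB$ has an $L^\infty$-component, I would instead argue through the kernel formula \eqref{Eq:OpKernel}: both operators are given by the same kernel paired with $f\in M(\omega _1,\mascB )\subseteq \Sigma _1'(\rr d)$, and for a kernel in $\Sigma _1(\rr {2d})$ this pairing is well defined and is the unique extension.

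\emph{Step 3 (compactness of the limit).} Each $\op _A(\fka _n)$ is compact from $M(\omega _1,\mascB )$ to $M(\omega _2,\mascB )$ by Corollary \ref{Cor:Sig1KerCompact}. The limit is then compact by the standard diagonal, or total boundedness, argument. Let $U$ be the unit ball. Given $\ep >0$, pick $n$ with operator distance less than $\ep$, and cover $\op _A(\fka _n)(U)$ by finitely many $\ep$-balls. The quasi-triangle inequality \eqref{Eq:WeakTriangle1}, in its $r_0$-power form, then shows that $\op _A(\fka )(U)$ is covered by finitely many balls of radius $2^{1/r_0}\ep$. Completeness of $M(\omega _2,\mascB )$ (Proposition \ref{Prop:EquivNorms2}) yields relative compactness. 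This step works verbatim in quasi-Banach spaces, because only the metric $d(f,g)=\nm {f-g}{}^{r_0}$ is used.

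The main obstacle I expect is the consistency issue in Step 2, namely that the approximating operators are the same extensions as those controlled by \eqref{Eq:PseudoCont2}. I would settle it via the kernel representation, as described there.
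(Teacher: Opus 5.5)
Your proposal is correct and follows the paper's proof. The paper likewise quotes continuity from \cite[Theorem 6.4]{ToPfTe}, then approximates $\fka$ in $M^{\infty ,r_0}_{(\omega _0)}$ by $\Sigma _1(\rr {2d})$ symbols via Corollary \ref{Cor:MsharpCompl}, uses \eqref{Eq:PseudoCont2} to get operator-norm convergence, and concludes with Corollary \ref{Cor:Sig1KerCompact}. Your treatment of the consistency of the extensions and of the total-boundedness argument in the quasi-Banach setting supplies details the paper leaves implicit.
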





\par

\begin{proof}
The continuity of the map \eqref{Eq:PseudoCont2A}
follows immediately from
\cite[Theorem 6.4]{ToPfTe}.

\par

Let $\ep >0$ be arbitrary and suppose that
$\fka \in M^{\sharp ,r_0}_{(\omega _0)}(\rr {2d})$.
Then Corollary \ref{Cor:MsharpCompl} and 
\eqref{Eq:PseudoCont2} show that there is
an $\fka _\ep \in \Sigma _1(\rr {2d})$
such that
$$
\nm {\op _A(\fka )-\op _A(\fka _\ep )}
{M(\omega _1,\mascB ) \to M(\omega _2,\mascB )}
<\ep .
$$
Hence $\op _A(\fka )$ may in norm be approximated
with arbitrary precision
by operators with symbols in $\Sigma _1(\rr {2d})$.
Since $\op _A(\fka _\ep )$ from
$M(\omega _1,\mascB )$ to $M(\omega _2,\mascB )$
is compact, in view of Corollary
\ref{Cor:Sig1KerCompact}, it also follows that
$\op _A(\fka )$ is compact, and the result follows.
\end{proof}


\par

\par

\section{Compactness properties for
Toeplitz operators}
\label{sec7}

\par

In this section we first review some facts about
Toeplitz operators. Thereafter we deduce
continuity for Toeplitz operators with symbols
in suitable modulation spaces, when acting 
between other modulation spaces. As in
\cite{CorGro,Toft04,GroTof1,Toft10,ToBo08}, 
the main idea is to
express the Toeplitz operators as
pseudo-differential operators
with symbols being a convolution
between the Toeplitz symbol
and a convenient function.

\par

%

Let $\fka \in \Sigma _1 (\rr {2d})$. 
Then the Toeplitz 
operator $\tp _{\phi _1,\phi _2}(\fka)$,
with symbol $\fka$, and window functions 
$\phi _1,\phi _2\in
\Sigma _1 (\rr d)$, is the continuous
operator on $\Sigma _1(\rr d)$, defined by
the formula
\begin{equation}\label{Eq:ToepDef}
(\tp _{\phi _1,\phi _2}(\fka )f_1,f_2)
_{L^2(\rr d)}
= 
(\fka \cdot V_{\phi _1}f_1,
V_{\phi _2}f_2)_{L^2(\rr {2d})}
\end{equation}
when $f_1,f_2\in \Sigma _1 (\rr d)$. That is,
$\tp _{\phi _1,\phi _2}(\fka )$ is the operator, given by
\begin{equation}\label{Eq:ToepDefAlt}
\tp _{\phi _1,\phi _2}(\fka )f
= 
V_{\phi _2}^*(\fka \cdot V_{\phi _1}f).
\end{equation}
The definition of $\tp _{\phi _1,\phi _2}(\fka )$
extends uniquely to a continuous operator from
$\Sigma _1'(\rr d)$ to $\Sigma _1(\rr d)$.
For convenience we write
$\tp _{\phi _1}(\fka )$ 
for
$\tp _{\phi _1,\phi _1}(\fka )$.

\par

We may interpret the operator in 
\eqref{Eq:ToepDef}
as a pseudo-differential operator, using
the fact that
\begin{equation}\label{Eq:ToeplWeyl}
\begin{aligned}
\operatorname{Tp}_{\phi _1,\phi _2}(\fka)
&=
\op ^w(\fka \ast u)
\quad \text{when}\quad 
u = (2\pi)^{-\frac d2}
\check W_{\phi _2,\phi _1},
\end{aligned}
\end{equation}
which follows by straight-forward application of
Fourier's inversion formula. Here
$\check F(x,\xi )=F(-x,-\xi )$,
when $F$ is a function or (ultra-)distribution
on $\rr {2d}$. By
\eqref{Eq:ToepDef} and \eqref{Eq:ToeplWeyl},
the mappings
\begin{equation}
\label{Eq:ToeplMapSymbWind}
(\fka ,\phi _1,\phi _2)\mapsto \tp
_{\phi _1,\phi _2}(\fka )
\quad \text{and}\quad
(\fka ,\phi _1,\phi _2)
\mapsto 
\fka *\check W_{\phi _2,\phi _1}
\end{equation}
from $\Sigma _1(\rr {2d})\times
\Sigma _1(\rr d)\times \Sigma _1(\rr d)$
to $\maclL (\Sigma _1(\rr d),\Sigma _1(\rr d))$
and $\Sigma _1(\rr {2d})$, respectively,
are equivalent.
These mappings 
extend in several ways, e.g. from
$\Sigma _1'(\rr {2d})\times
\Sigma _1(\rr d)\times \Sigma _1(\rr d)$
to $\maclL (\Sigma _1(\rr d),\Sigma _1(\rr d))$
and $\Sigma _1'(\rr {2d})$, respectively.
(Cf. e.g. 
\cite{CorGro,Fol,GroTof1,Toft04,Toft20,ToBo08}.)

\par

Next we discuss issues on continuity and compactness
for other extensions of \eqref{Eq:ToeplMapSymbWind},
where the involved
window functions $\phi _j$ and the symbol $\fka$
belong to suitable Lebesgue and modulation
spaces. For a relaxed start we first consider
the case when the symbols
belong to subclasses of $L^\infty (\rr {2d})$.
Here recall \eqref{Eq:OpNorm} and \eqref{Eq:L2Norm}
for notations.

\par

\begin{prop}
\label{Prop:ToeplContLebSpSymb}
Let $\phi _1,\phi _2\in L^2(\rr d)$. Then the
following is true:
\begin{enumerate}
\item
if $\fka \in L^\infty (\rr {2d})$, then
$\tp _{\phi _1,\phi _2}(\fka )$ is continuous
on $L^2(\rr d)$. Furthermore,
\begin{equation}
\label{Eq:ToeplEstLebSymb}
\begin{aligned}
\nmm {\tp _{\phi _1,\phi _2}(\fka )}
&\le
\nm {\fka}{L^\infty}\nm {\phi _1}{L^2}
\nm {\phi _2}{L^2},
\\[1ex]
\fka &\in L^\infty (\rr {2d}),\ 
\phi _1,\phi _2\in L^2(\rr d)
\text ;
\end{aligned}
\end{equation}

\item
if $\fka \in L^\sharp (\rr {2d})$, then
$\tp _{\phi _1,\phi _2}(\fka )$ is compact
on $L^2(\rr d)$.
\end{enumerate}
\end{prop}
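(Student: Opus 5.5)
For (1) I would work directly from the definition \eqref{Eq:ToepDef} and the isometry property \eqref{Eq:STFTL2}, which follows from Moyal's identity \eqref{Eq:Moyal}. For $f_1,f_2\in \Sigma _1(\rr d)$, the Cauchy--Schwarz inequality in $L^2(\rr {2d})$ gives
$$
|(\tp _{\phi _1,\phi _2}(\fka )f_1,f_2)_{L^2}|
\le
\nm {\fka}{L^\infty}\nm {V_{\phi _1}f_1}{L^2}\nm {V_{\phi _2}f_2}{L^2}
=
\nm {\fka}{L^\infty}\nm {\phi _1}{L^2}\nm {\phi _2}{L^2}
\nm {f_1}{L^2}\nm {f_2}{L^2}.
$$
The same identity defines the sesquilinear form for all $f_1,f_2\in L^2(\rr d)$ and all $\phi _1,\phi _2\in L^2(\rr d)$. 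Equivalently, $\tp _{\phi _1,\phi _2}(\fka ) = V_{\phi _2}^*\circ M_{\fka}\circ V_{\phi _1}$, where $M_{\fka}$ is multiplication by $\fka$ on $L^2(\rr {2d})$ with norm at most $\nm {\fka}{L^\infty}$, and $V_{\phi _j}$ and $V_{\phi _j}^*$ have norm $\nm {\phi _j}{L^2}$ by \eqref{Eq:STFTAdjCont}. Taking the supremum over unit vectors, and using density of $\Sigma _1$ in $L^2$ to extend uniquely, yields \eqref{Eq:ToeplEstLebSymb}.

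For (2) I would use an approximation argument, in the spirit of the proof of Theorem \ref{Thm:PseudoCont2}, together with the fact that norm limits of compact operators are compact. For $R>0$, let $\fka _R=\fka \cdot \chi _{B_R(0)}$, where $B_R(0)\subseteq \rr {2d}$. Since $\fka \in L^\sharp (\rr {2d})$, we have
$$
\nm {\fka -\fka _R}{L^\infty}=\esssup _{|X|\ge R}|\fka (X)|\to 0,
$$
so by (1), $\nmm{\tp _{\phi _1,\phi _2}(\fka )-\tp _{\phi _1,\phi _2}(\fka _R)}\to 0$ as $R\to \infty$. It therefore suffices to show that $\tp _{\phi _1,\phi _2}(\fka _R)$ is compact for each $R$.

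This is the main step. Since $\fka _R\in L^\infty \cap L^2(\rr {2d})$ has compact support, I would show that $\tp _{\phi _1,\phi _2}(\fka _R)$ is Hilbert--Schmidt, hence compact. Write it as the operator with kernel
$$
K(x,y)=(2\pi )^{-d}\iint \fka _R(z,\zeta )\,\phi _2(x-z)\,\overline{\phi _1(y-z)}\,e^{i\scal {x-y}\zeta}\,dz\,d\zeta .
$$
The goal is $K\in L^2(\rr {2d})$. Set $\Phi (x,y,z)=\phi _2(x-z)\overline{\phi _1(y-z)}$. For fixed $z$, the map $\zeta \mapsto \fka _R(z,\zeta )$ lies in $L^1\cap L^2$ with support in a bounded set. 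Bounding the $\zeta$-Fourier transform by $\nm {\fka _R}{L^\infty}|B_R|$ and applying Minkowski's inequality in $z$ gives
$$
\nm K{L^2}\lesssim \int _{|z|\le R}\nm {\phi _2(\cdo -z)\otimes \overline{\phi _1(\cdo -z)}}{L^2}\,dz \lesssim R^d\nm {\phi _1}{L^2}\nm {\phi _2}{L^2}<\infty .
$$
The $\zeta$-integral produces a bounded function $\widehat{\fka _R(z,\cdo )}(y-x)$, and the remaining $x,y$ factor lies in $L^2$.

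If this kernel bookkeeping becomes delicate, there is an alternative route. Write $\chi _{B_R}V_{\phi _1}$ as a limit of finite-rank maps, or note that $M_{\chi _{B_R}}V_{\phi _1}$ is Hilbert--Schmidt from $L^2(\rr d)$ to $L^2(\rr {2d})$. Its Hilbert--Schmidt norm squared is $\int _{B_R}\nm {\phi _1(\cdo -x)e^{i\scal \cdo \xi}}{L^2}^2\,dx\,d\xi = |B_R|\nm {\phi _1}{L^2}^2$. Then $\tp _{\phi _1,\phi _2}(\fka _R)=V_{\phi _2}^*M_{\fka}\,(M_{\chi _{B_R}}V_{\phi _1})$ is compact as a product of bounded operators with a Hilbert--Schmidt one. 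This second route is cleaner and is the one I would use.
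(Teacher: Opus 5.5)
Your proposal is correct and follows the paper's argument. For (1) you use the Moyal and Cauchy--Schwarz bound, which the paper cites from the literature; for (2) you truncate $\fka$ to a ball, use (1) to approximate $\tp _{\phi _1,\phi _2}(\fka )$ in operator norm, and conclude from compactness of the truncated operator. The only difference is in that last step: the paper cites that $\fka _1\in L^1(\rr {2d})$ gives a trace-class Toeplitz operator, whereas your factorization through the Hilbert--Schmidt map $M_{\chi _{B_R}}V_{\phi _1}$ proves compactness directly, so your argument is self-contained. Placing $\chi _{B_R}$ on both sides of $M_{\fka}$ in that factorization would even recover the trace-class property.
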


\par

Evidently, for the symbol classes in
Proposition \ref{Prop:ToeplContLebSpSymb}
we have
$L^\sharp (\rr {2d})
\subseteq
L^\infty (\rr {2d})$.

\par

\begin{proof}
The assertion (1) and its proof are given
in e.{\,}g. \cite{Fol,HeWong,Toft04}.

\par

The assertion (2) essentially also
follows from these facts. In fact,
suppose that $\fka \in L^\sharp (\rr {2d})$,
and let $\ep >0$ be arbitrary. Choose
$R>0$ such that
$$
|\fka (X)|
\le
\ep (1+\nm {\phi _1}{L^2})^{-1}
(1+\nm {\phi _2}{L^2})^{-1},
\qquad
|X|\ge R.
$$
If
$$
\fka _1(X)
=
\begin{cases}
\fka (X), & |X|<R,
\\[1ex]
0, & |X|\ge R,
\end{cases}
\quad \text{and}\quad
\fka _2(X)
=
\begin{cases}
0, & |X|<R,
\\[1ex]
\fka (X), & |X|\ge R.
\end{cases}
$$
Then $\fka -\fka _1=\fka _2$, which gives
$$
\tp _{\phi _1,\phi _2}(\fka )
-\tp _{\phi _1,\phi _2}(\fka _1)
=
\tp _{\phi _1,\phi _2}(\fka _2).
$$
By \eqref{Eq:ToeplEstLebSymb} we get
$$
\nmm {\tp _{\phi _1,\phi _2}(\fka )
-
\tp _{\phi _1,\phi _2}(\fka _1)}
=
\nmm {\tp _{\phi _1,\phi _2}(\fka _2)}
\le
\nm {\fka _2}{L^\infty}\nm {\phi _1}{L^2}
\nm {\phi _2}{L^2}
<\ep .
$$
Furthermore, since $\fka _1\in L^1(\rr {2d})$,
it follows that
$\tp _{\phi _1,\phi _2}(\fka _1)$ is
a trace-class operator, and thereby
a compact operator, in view of
e.{\,}g. \cite{Fol,HeWong,Toft04}.
Consequently, we may approximate
$\tp _{\phi _1,\phi _2}(\fka )$
in norm by compact operators, which
implies that also $\tp _{\phi _1,\phi _2}(\fka )$
is compact. This gives the result.
\end{proof}




\par

We shall combine \eqref{Eq:ToeplWeyl}
with Proposition \ref{Prop:ModConv}
and Theorem \ref{Thm:PseudoCont2} 
in various contexts to obtain compactness
for such operators. We observe that
basic continuity properties of such 
operators are guaranteed by
\cite[Theorem 6.3]{ToPfTe}.
Since the mappings in \eqref{Eq:ToeplMapSymbWind}
are strongly linked to \eqref{Eq:ToeplWeyl}, it is 
important to search for continuity properties of
these mappings. In the following theorem we find such 
continuity estimates in the background of suitable 
modulation spaces.
Here the involved Lebesgue exponents and
weight functions are related to each others
as
\begin{equation}
\label{Eq:LebExpToeplToPseudo}
1\le \frac 1r\le \frac 1{r_0}
=
\frac 1{q}+\frac 1{r},
\quad \text{or}\quad
\frac 12 \le \frac 1r= \frac 1{r_0}
=1-\frac 1{2q},
\end{equation}
and
\begin{equation}
\label{Eq:WeylToeplWeightCondClass}
\begin{aligned}
\omega_0(x_1-x_2, &\xi_1-\xi_2,\eta ,y)
\\[1ex]
&\lesssim 
\omega (x_1,\xi_1,\eta ,y)
\cdot \vartheta _1 (x_2 -{\textstyle{\frac y2}}
,\xi_2 + {\textstyle{\frac \eta 2}})
\cdot \vartheta _2 (x_2+{\textstyle{\frac y2}} ,
\xi_2 - {\textstyle{\frac \eta 2}}),
\\[1ex]
&x_1,x_2,y,\xi _1,\xi _2,\eta \in \rr d.
\end{aligned}
\end{equation}


\par

\begin{thm}
\label{Thm:ToeplPseudoSymbClassMod}
Let $q,r_0,r\in (0,\infty ]$,
$\omega _0,\omega
\in \mascP _E(\rr {4d})$
and $\vartheta _1,\vartheta _2
\in \mascP _E(\rr {2d})$, be such that
\eqref{Eq:LebExpToeplToPseudo}
and
\eqref{Eq:WeylToeplWeightCondClass} holds.
Then the following is true:
\begin{enumerate}
\item the first map in \eqref{Eq:ToeplMapSymbWind}
is continuous
from $M^{\infty ,q}_{(\omega )}(\rr {2d})
\times M^{r}_{(\vartheta _1)}(\rr d)
\times M^{r}_{(\vartheta _2)}(\rr d)$
to $\op ^w(M^{\infty ,r_0}_{(\omega _0)}
(\rr {2d}))$;

\vrum

\item the first map in \eqref{Eq:ToeplMapSymbWind}
is continuous 
from $M^{\sharp ,q}_{(\omega )}(\rr {2d})
\times M^{r}_{(\vartheta _1)}(\rr d)
\times M^{r}_{(\vartheta _2)}(\rr d)$
to $\op ^w(M^{\sharp ,r_0}_{(\omega _0)}
(\rr {2d}))$.
\end{enumerate}
\end{thm}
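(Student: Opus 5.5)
The plan is to reduce both assertions to a convolution estimate for the Weyl symbol $\fka *u$ in \eqref{Eq:ToeplWeyl}, with $u=(2\pi )^{-\frac d2}\check W_{\phi _2,\phi _1}$. Since $\tp _{\phi _1,\phi _2}(\fka )=\op ^w(\fka *u)$, (1) follows once we show
$$
\nm {\fka *\check W_{\phi _2,\phi _1}}{M^{\infty ,r_0}_{(\omega _0)}}
\lesssim
\nm \fka {M^{\infty ,q}_{(\omega )}}
\nm {\phi _1}{M^r_{(\vartheta _1)}}
\nm {\phi _2}{M^r_{(\vartheta _2)}}.
$$
For (2) we additionally need that $\fka *\check W_{\phi _2,\phi _1}\in M^{\sharp ,r_0}_{(\omega _0)}(\rr {2d})$ when $\fka \in M^{\sharp ,q}_{(\omega )}(\rr {2d})$. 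Continuity in the $M^{\sharp}$ setting then follows from (1), because the $M^{\sharp}$ quasi-norm is the $M^{\infty}$ quasi-norm.

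\emph{Step 1.} Estimate the Wigner distribution. The short-time Fourier transform of $W_{\phi _2,\phi _1}$, with Gaussian window $W_{\phi _0,\phi _0}$, factorizes up to a unimodular factor as
$$
|V_{\phi _0}\phi _2(x_2-\tfrac y2,\xi _2+\tfrac \eta 2)|\,
|V_{\phi _0}\phi _1(x_2+\tfrac y2,\xi _2-\tfrac \eta 2)|,
$$
in the variables $(x_2,\xi _2)$ and the dual variables $(\eta ,y)$. This is exactly the structure encoded in \eqref{Eq:WeylToeplWeightCondClass}. Combining it with Proposition \ref{Prop:EquivNorms2} shows that $\check W_{\phi _2,\phi _1}$ lies in a modulation space $M^{1,r}$-type class on $\rr {2d}$ with a weight of the form $\vartheta _1\otimes \vartheta _2$ composed with this change of variables. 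Its norm is bounded by $\nm {\phi _1}{M^r_{(\vartheta _1)}}\nm {\phi _2}{M^r_{(\vartheta _2)}}$, up to the usual $\sfW ^\infty (\ell ^{r})$ discretization to handle $r<1$.

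\emph{Step 2.} Apply a convolution estimate on $\rr {2d}$, splitting by the two alternatives in \eqref{Eq:LebExpToeplToPseudo}. For convolution only the first (phase-space position) variable convolves, while the dual variables $(\eta ,y)$ are multiplied. We use $V(\fka *u)(X,\Xi )\approx V\fka (\cdot ,\Xi )*Vu(\cdot ,\Xi )$ as in the proof of Proposition \ref{Prop:ModConv}.

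In the first alternative, $\frac 1{r_0}=\frac 1q+\frac 1r$, this is Proposition \ref{Prop:ModConv} with $q_1=q$ and $q_2=r$, applied in dimension $2d$ with $\omega _0,\omega $ and the weight from Step 1; the weight condition is \eqref{Eq:WeylToeplWeightCondClass}. That proposition gives both the continuity into $M^{\infty ,r_0}$ (via \cite{Toft26}) and the $\sharp$-property, hence (1) and (2) at once. If $r<1$, the $L^1$ in the $x$-direction is replaced by a discrete $\ell ^{r}$ bound via \eqref{Eq:DiscConvEst}; the tail splitting $J_{1,R,R_1}+J_{2,R,R_1}$ from Proposition \ref{Prop:ModConv} goes through verbatim with $\ell ^{r}$ sums in place of integrals.

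In the second alternative, $r=r_0\ge 1$ and $\frac 1r=1-\frac 1{2q}$. Here we instead use Hölder in the dual variable, pairing the $L^q$ norm of $\fka $ with the $L^{q'}$-type norm of $u$. The latter is bounded by a product of two $M^r$ norms through Young's inequality, since $\frac 1{q'}=\frac 2r-1$.

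\emph{Main obstacle.} The hard part is the vanishing-at-infinity argument for (2) when quasi-norms are involved ($r_0<1$, $r<1$), or when $q=\infty$, where density is unavailable. I would redo the splitting into $|y|\le R$ and $|y|\ge R$ at the level of Wiener amalgam sequences. There I would use Theorem \ref{Thm:IndepWind} to move freely between $L^{\infty ,q}$ and $\sfW ^{r}(\omega ,\ell ^{\infty ,q})$ tails, and the fact that the $\sfW ^\infty (\ell ^{r})$-tails of $V(\check W_{\phi _2,\phi _1})$ tend to zero because $r<\infty$ (as in Lemma \ref{Lemma:TailEstimates}).
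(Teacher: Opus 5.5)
Your proposal is correct, but for part (2) it takes a genuinely different route from the paper. The paper cites Theorem 7.4 of \cite{ToPfTe} for (1) and proves (2) by density. It approximates $\fka$ in $M^{\infty,q}_{(\omega)}$ by $\fka_k\in\Sigma_1(\rr{2d})$ via Corollary \ref{Cor:MsharpCompl}, and the windows in $M^r_{(\vartheta_j)}$ by $\phi_{j,k}\in\Sigma_1(\rr d)$. The estimate of part (1) then makes $\fka*\check W_{\phi_2,\phi_1}$ an $M^{\infty,r_0}_{(\omega_0)}$-limit of elements of $\Sigma_1(\rr{2d})$, hence an element of $M^{\sharp,r_0}_{(\omega_0)}$. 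You instead place $\check W_{\phi_2,\phi_1}$ in a weighted $M^{1,r}$ (resp.\ $M^{1,q'}$) space using the factorization of its short-time Fourier transform. You then get the estimate and the vanishing condition simultaneously from the tail splitting in Proposition \ref{Prop:ModConv}, applied in dimension $2d$.

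The paper's argument is shorter, but Corollary \ref{Cor:MsharpCompl} needs $q<\infty$, and the paper itself remarks after Proposition \ref{Prop:ModConv} that density fails for $q=\infty$. Yet \eqref{Eq:LebExpToeplToPseudo} allows $q=\infty$, which is exactly the case of Toeplitz symbols in $M^{\sharp,\infty}_{(\omega)}$. Your direct argument covers that case, provided you verify Proposition \ref{Prop:ModConv} for infinite exponents, since the paper writes it out only for finite ones. For $q_1=\infty$ the term $J_{2,R,R_1}$ is controlled by the $M^{\sharp,\infty}$ condition. The term $J_{1,R,R_1}$ tends to $0$ by dominated convergence in $\xi$, because $q_0<\infty$; the same observation handles $q_2=q'=\infty$ when $q=1$ in the second alternative.

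Two small corrections. First, the $\ell^r$ discretization for $r<1$ is needed only in Step 1. There it puts the convolution of $|V_{\phi_0}\phi_1\cdot\vartheta_1|$ with the reflection of $|V_{\phi_0}\phi_2\cdot\vartheta_2|$ into $L^r$, via $\sfW^\infty(\ell^r)*\sfW^\infty(\ell^r)\subseteq\sfW^\infty(\ell^r)$. In Step 2 the position variable of $\check W_{\phi_2,\phi_1}$ is measured in $L^1$ for every $r$, so Proposition \ref{Prop:ModConv} applies verbatim. Second, in the second alternative, pairing $L^q$ with $L^{q'}$ only places $\fka*\check W_{\phi_2,\phi_1}$ in $M^{\sharp,1}_{(\omega_0)}$. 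You must finish with the embedding $M^{\sharp,1}_{(\omega_0)}\hookrightarrow M^{\sharp,r}_{(\omega_0)}$ from Proposition \ref{Prop:ModEmb}(2), which uses $r\ge 1$.
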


\par

Because of the equivalences between the
mappings in \eqref{Eq:ToeplMapSymbWind},
it follows that the previous theorem
is equivalent with the following lemma.

\par

\begin{lemma}
\label{Lemma:ToeplPseudoSymbClassMod}
Let $q,r_0,r\in (0,\infty ]$,
$\omega _0,\omega
\in \mascP _E(\rr {4d})$
and $\vartheta _1,\vartheta _2
\in \mascP _E(\rr {2d})$, be such that
\eqref{Eq:LebExpToeplToPseudo}
and
\eqref{Eq:WeylToeplWeightCondClass} holds.
Then the following is true:
\begin{enumerate}
\item the second map in \eqref{Eq:ToeplMapSymbWind}
is continuous
from $M^{\infty ,q}_{(\omega )}(\rr {2d})
\times M^{r}_{(\vartheta _1)}(\rr d)
\times M^{r}_{(\vartheta _2)}(\rr d)$
to $M^{\infty ,r_0}_{(\omega _0)}
(\rr {2d})$. Furthermore,
\begin{equation}
\label{Eq:ToeplPseudoSymbClassMod}
\nm {\fka *\check W_{\phi _2,\phi _1}}
{M^{\infty ,r_0}_{(\omega _0)}}
\le
C \nm {\fka}
{M^{\infty ,r_0}_{(\omega )}}
\nm {\phi _1}{M^r_{(\vartheta _1)}}
\nm {\phi _2}{M^r_{(\vartheta _2)}},
\end{equation}
for some constant $C>0$ which is independent
of $\fka \in M^{\infty ,r_0}_{(\omega )}(\rr {2d})$,
$\phi _1\in M^r_{(\vartheta _1)}(\rr d)$ and
$\phi _2\in M^r_{(\vartheta _2)}(\rr d)$.

\vrum

\item the second map in \eqref{Eq:ToeplMapSymbWind}
is continuous 
from $M^{\sharp ,q}_{(\omega )}(\rr {2d})
\times M^{r}_{(\vartheta _1)}(\rr d)
\times M^{r}_{(\vartheta _2)}(\rr d)$
to $M^{\sharp ,r_0}_{(\omega _0)}
(\rr {2d})$.
\end{enumerate}
\end{lemma}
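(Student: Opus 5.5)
Part (1) I will prove in two stages. First I bound the cross-Wigner distribution, then I feed that bound into a weighted convolution estimate for $M^{\infty,q}$ spaces.

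For the first stage, I write $W_{\phi_2,\phi_1}$ through the STFT of $\phi_1,\phi_2$. With a Gaussian window $\Phi_0=W_{\phi_0,\phi_0}$ on $\rr{2d}$, the standard identity gives
$$|V_{\Phi_0}W_{\phi_2,\phi_1}(x,\xi,\eta,y)| = |V_{\phi_0}\phi_2(x+\tfrac y2,\xi-\tfrac\eta2)|\,|V_{\phi_0}\phi_1(x-\tfrac y2,\xi+\tfrac\eta2)|.$$
The same holds for $\check W$ after reflection. This explains the shape of the weight condition \eqref{Eq:WeylToeplWeightCondClass}. It also shows that $\check W_{\phi_2,\phi_1}$ lies in a mixed space $M^{1,r}$, or more precisely in a Wiener space $\sfW^\infty(\ell^{\infty\to 1,r})$ in the $(x,\xi)$ variables and $\ell^r$ in $(\eta,y)$. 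Its norm is bounded by $\nm{\phi_1}{M^r_{(\vartheta_1)}}\nm{\phi_2}{M^r_{(\vartheta_2)}}$. To get this I apply \eqref{Eq:DiscConvEst}, or H\"older's inequality, in the $(\eta,y)$ variables. The change of variables $(x\pm y/2,\xi\mp\eta/2)$ is linear and invertible, so the $\ell^r$ norm of the product factorises.

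Next I use the partial convolution structure. Convolution on $\rr{2d}$ acts on the STFT only in the first $2d$ phase variables:
$$|V_{\Phi_0*\Phi_0}(\fka*u)(X,Y)|\le \int |V_{\Phi_0}\fka(Z,Y)|\,|V_{\Phi_0}u(X-Z,Y)|\,dZ,$$
with $X=(x,\xi)$ and $Y=(\eta,y)$. This is exactly the setting of Proposition~\ref{Prop:ModConv}, with $d$ replaced by $2d$. There the first variable carries the $\sup$ and the second the $L^q$ norm, and the weights are combined via \eqref{Eq:WeylToeplWeightCondClass}. Proposition~\ref{Prop:ModConv} needs exponents with $1/q+1/q_2=1/r_0$, which is the first alternative in \eqref{Eq:LebExpToeplToPseudo} with $q_2=r$. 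The second alternative, the case $r\ge 1$, I treat by the same computation. In that case the $L^1$ bound in the inner variable comes from $M^r\subset$ an $L^2$-type product estimate on the Wigner side, using H\"older's inequality in $Y$ with $1/q+1/(2q')$ matching. Together with \cite[Theorem 3.4]{ToPfTe} for the $M^{\infty,q}\times M^{1,q_2}$ estimate, this gives \eqref{Eq:ToeplPseudoSymbClassMod} and hence (1). For $r_0<1$ I will need the quasi-Banach versions through Wiener amalgam norms rather than plain $L^1$ in $Z$; I expect to use \cite[Corollary 5.2]{ToPfTe} as in the proof of Proposition~\ref{Prop:ModEmb}.

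For part (2), continuity is inherited from (1), so it remains to show that $\fka*\check W_{\phi_2,\phi_1}$ satisfies the vanishing condition \eqref{Eq:ModNormInfinityCond} with respect to $(x,\xi)$. I will repeat the splitting from the proof of Proposition~\ref{Prop:ModConv}: $|Z|\le R$ versus $|Z|\ge R$, with $|X|\ge R_1>2R$. The piece where $F_1=|V\fka\,\omega|$ is restricted to $|Z|\ge R$ tends to zero because $\fka\in M^{\sharp,q}$; by Theorem~\ref{Thm:IndepWind} the window may be taken to be Gaussian. The other piece tends to zero because the Wigner factor $F_2$ has integrable tails in $X$, uniformly in $\ell^r$ over $Y$.

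The main obstacle is the step replacing the $L^{1,q_2}$ norm of $F_2$ by something controlled by $\nm{\phi_1}{M^r}\nm{\phi_2}{M^r}$ while keeping the tail $\int_{|X|>R}F_2\,dX$ going to zero in $\ell^{q_2}(Y)$. The convolution with the Gaussian kernel in the STFT formula makes $F_2$ only approximately a product. I would first attempt a pointwise domination of $F_2$ by a convolution of products of $V_{\phi_0}\phi_j$, and then use Lemma~\ref{Lemma:TailEstimates} to obtain the tail decay.
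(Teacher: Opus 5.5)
\paragraph{Comparison with the paper.} Your outline is sound, but it takes a different route from the paper's. The paper gets (1) simply by citing \cite[Theorem 7.4]{ToPfTe}. It proves (2) by density:
\begin{itemize}
\item it approximates $\fka$ by $\fka_k\in\Sigma_1(\rr{2d})$ via Corollary~\ref{Cor:MsharpCompl}, and $\phi_j$ by $\phi_{j,k}\in\Sigma_1(\rr d)$ (using $r<\infty$);
\item it notes that $\fka_k*\check W_{\phi_{2,k},\phi_{1,k}}\in\Sigma_1(\rr{2d})$ and passes to the limit with the estimate from (1);
\item it uses that $M^{\sharp,r_0}_{(\omega_0)}$ is the $M^{\infty,r_0}_{(\omega_0)}$-closure of $\Sigma_1$.
\end{itemize}
You instead re-derive (1) from the exact factorization of $|V_{\Phi_0}W_{\phi_2,\phi_1}|$ and the partial-convolution bound for the STFT, and you get (2) by a direct tail splitting. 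Given the density theorem, the paper's route is shorter. Yours is self-contained and avoids density entirely. It therefore also covers $q=\infty$, which the statement allows but where Corollary~\ref{Cor:MsharpCompl} is unavailable: $\Sigma_1$ is not dense in $M^{\sharp,\infty}$, as the paper itself remarks before the proof of Proposition~\ref{Prop:ModConv}.

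\paragraph{The ``main obstacle'' is not one.} Let $\theta$ be the weight on the Wigner side built from $\vartheta_1,\vartheta_2$ as in \eqref{Eq:WeylToeplWeightCondClass}, and take $\Phi_0=W_{\phi_0,\phi_0}$. Then $F_2=|V_{\Phi_0}\check W_{\phi_2,\phi_1}\cdot\theta|$ is exactly a product, as you wrote yourself. Membership in $M^\sharp$ also needs no quantitative control in terms of $\phi_j$. Once $F_2\in L^{1,q_2}$ with $q_2<\infty$, the tail $\int_{|X|>R}F_2(X,\cdot)\,dX$ tends to $0$ in $L^{q_2}$ by dominated convergence, exactly as in \eqref{Eq:J1RR1EstB}. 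So (2) is literally Proposition~\ref{Prop:ModConv} with $d$ replaced by $2d$, $f_1=\fka$ and $f_2=\check W_{\phi_2,\phi_1}$. Lemma~\ref{Lemma:TailEstimates} concerns tails in the frequency variable and is not the tool that gives this.

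\paragraph{The second alternative needs correcting.} Your exponent bookkeeping for the second alternative in \eqref{Eq:LebExpToeplToPseudo} is garbled. There $r\ge 1$ and $2/r=1+1/q'$, so Young's inequality (not H\"older) gives $\check W_{\phi_2,\phi_1}\in M^{1,q'}_{(\theta)}$. Hence $\fka*\check W_{\phi_2,\phi_1}\in M^{\sharp,1}_{(\omega_0)}\hookrightarrow M^{\sharp,r_0}_{(\omega_0)}$ by Proposition~\ref{Prop:ModEmb}\,(2), since $r_0=r\ge 1$. At the endpoint $q=1$, $r=2$ one has $q'=\infty$. There, apply dominated convergence instead to $\sup_Z F_1(Z,\cdot)\cdot\int_{|X|>R}F_2(X,\cdot)\,dX$, using $\sup_Z F_1(Z,\cdot)\in L^1$ and $F_2\in L^{1,\infty}$.
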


\par

\begin{proof}
The assertion (1) follows from
\cite[Theorem 7.4]{ToPfTe} and its proof.
The details are left for the reader.

\par

Suppose that
$\fka \in M^{\sharp ,q}_{(\omega )}(\rr {2d})$,
$\phi _1\in M^{r}_{(\vartheta _1)}(\rr d)$
and
$\phi _2\in M^{r}_{(\vartheta _2)}(\rr d)$.
Then $\fkb \equiv \fka *\check W_{\phi _2,\phi _1}$
is well-defined as an element in
$M^{\infty ,q}_{(\omega _0)}(\rr {2d})$,
in view of (1). We need to show that
$\fkb \in M^{\infty ,q}_{(\omega _0)}(\rr {2d})$.
We have that $\Sigma _1(\rr {2d})$ is dense
in $M^{\sharp ,q}_{(\omega )}(\rr {2d})$ in view
of Corollary \ref{Cor:MsharpCompl}. Furthermore,
since $r<\infty$, it also follows that
$\Sigma _1(\rr d)$ is dense in
$M^{r}_{(\vartheta _j)}(\rr d)$, $j=1,2$.

\par

Now choose $\fka _k\in \Sigma _1(\rr {2d})$
and $\phi _{j,k}\in \Sigma _1(\rr d)$, $j=1,2$,
$k\in \mathbf Z_+$ such that
$$
\lim _{k\to \infty} \nm {\fka -\fka _k}
{M^{\infty ,r_0}_{(\omega )}}=
\lim _{k\to \infty}
\nm {\phi _j-\phi _{j,k}}{M^r_{(\vartheta _j)}}
=0,
\quad j=1,2.
$$
If
$$
\fkb _k= \fka _k*\check W_{\phi _{2,k},\phi _{1,k}},
\quad k\in \mathbf Z_+,
$$
then it follows from
\eqref{Eq:ToeplPseudoSymbClassMod}, and the limits
above, that
$$
\lim _{k\to \infty} \nm {\fkb -\fkb _k}
{M^{\infty ,r_0}_{(\omega _0)}}=0.
$$
Since $\fkb _k\in \Sigma _1(\rr {2d})$, it follows
that $\fkb$ be approximated by elements in
$\Sigma _1(\rr {2d})$ with arbitrary precision.
By applying Corollary \ref{Cor:MsharpCompl}
again it follows that
$\fkb \in M^{\infty ,r_0}_{(\omega _0)}(\rr {2d})$,
and the result follows.
\end{proof}

\par

\begin{thm}
\label{Thm:GenContResForToepl}
Let $p,q,r_0,r\in (0,\infty ]$ be such that
\eqref{Eq:LebExpToeplToPseudo} holds, and suppose
$
\mascB
$
is an invariant QBF space on $\rr {2d}$
with respect to 
$r_0$
and 
$v_0 \in \mascP_E(\rr {2d} )$,
$\omega \in \mascP _E (\rr {4d})$,
and
$\omega _1, \omega _2, \vartheta_1, \vartheta _2
\in \mascP _E (\rr {2d})$, 
are such that
\begin{multline}
\label{Eq:ToepOpCond2}
    \frac {\omega _2 (x-z, \xi - \zeta)} 
        {\omega _1 (x-y, \xi-\eta)}
    \cdot
    v_0(y-z, \eta-\zeta)
\\[1ex]
    \lesssim
    \omega (x,\xi, \eta-\zeta, z-y)
    \vartheta _1 (y,\eta )
    \vartheta _2 (z, \zeta)
\end{multline}
for $x,y, z, \xi, \eta, \zeta \in \rr d$. 
Also suppose
$
\phi _j \in 
M ^{r} _{(\vartheta _j)} (\rr d)$, $j =1,2$.
Then the following is true:
\begin{enumerate}
\item if $\fka \in M ^{\infty ,q} _{(\omega)}( \rr {2d})$,
then 
\begin{align}
\tp _{\phi _1,\phi _2}(\fka) &:
M(\omega _1,\mascB )
\to
M(\omega _2,\mascB )
\label{Eq:ToeplitzContA}
\intertext{is continuous, and}
\nm {\tp _{\phi _1,\phi _2}(\fka)f}
{M(\omega _2,\mascB )}
&\le
C\nm {\fka}{M^{\infty ,q}_{(\omega)}}
\nm f{M(\omega _1,\mascB )}
\nm {\phi _1}{M^r_{(\vartheta _1)}}
\nm {\phi _2}{M^r_{(\vartheta _2)}},
\notag
\\[1ex]
\fka \in M^{\infty ,q}_{(\omega )}(\rr {2d}),\ 
f&\in M(\omega _1,\mascB ),\ 
\phi _j\in M^r_{(\vartheta _j)}(\rr d),\ j=1,2,
\label{Eq:ToeplitzCont}
\end{align}
for some constant $C>0$, which only depends on
$\mascB$, $\omega$, and $\omega _j$, $j=1,2$;

\vrum

\item
if $\fka \in M ^{\sharp ,q} _{(\omega)}( \rr {2d})$,
then the map \eqref{Eq:ToeplitzContA} is compact.
\end{enumerate}
\end{thm}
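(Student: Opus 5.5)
The plan is to reduce Theorem \ref{Thm:GenContResForToepl} to the pseudo-differential results via \eqref{Eq:ToeplWeyl}, namely $\tp _{\phi _1,\phi _2}(\fka ) = \op ^w(\fka *u)$ with $u=(2\pi )^{-\frac d2}\check W_{\phi _2,\phi _1}$.

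For (1) we first fix an auxiliary weight $\omega _0\in \mascP _E(\rr {4d})$. We want it to satisfy \eqref{Eq:WeylToeplWeightCondClass} together with $\omega$, $\vartheta _1$, $\vartheta _2$, and also the pseudo-differential condition \eqref{Eq:WeightPseudoRel} with $A=\frac 12 I$ together with $\omega _1$, $\omega _2$, $v_0$. The natural candidate is the infimum over the splitting variables,
\begin{equation*}
\omega _0(X,\eta ,y)=\inf _{x_2,\xi _2}\omega (x_1,\xi _1,\eta ,y)\vartheta _1(x_2-{\textstyle{\frac y2}},\xi _2+{\textstyle{\frac \eta 2}})\vartheta _2(x_2+{\textstyle{\frac y2}},\xi _2-{\textstyle{\frac \eta 2}}),
\end{equation*}
where $X=(x_1-x_2,\xi _1-\xi _2)$. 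If needed, we then replace it by an equivalent moderate weight.

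With this choice we substitute $x-y\mapsto y$ and $x-z\mapsto z$ in \eqref{Eq:ToepOpCond2}. The Weyl midpoint $\frac{y+z}2$ and the differences $\eta -\zeta$, $z-y$ then match the variables in \eqref{Eq:WeylToeplWeightCondClass}. Checking that \eqref{Eq:ToepOpCond2} implies \eqref{Eq:WeightPseudoRel} for this $\omega _0$ is a routine change of variables.

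Lemma \ref{Lemma:ToeplPseudoSymbClassMod} (1) then gives $\fka *u\in M^{\infty ,r_0}_{(\omega _0)}(\rr {2d})$, with norm bounded by $\nm \fka{M^{\infty ,q}_{(\omega )}}\nm {\phi _1}{M^r_{(\vartheta _1)}}\nm {\phi _2}{M^r_{(\vartheta _2)}}$. Next, Theorem \ref{Thm:PseudoCont2} with $A=\frac 12 I$ gives continuity of $\op ^w(\fka *u)$ from $M(\omega _1,\mascB )$ to $M(\omega _2,\mascB )$. Its operator norm is bounded by $C\nm {\fka *u}{M^{\infty ,r_0}_{(\omega _0)}}$, and together with the previous bound this yields \eqref{Eq:ToeplitzCont}.

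One point needs care here. Theorem \ref{Thm:PseudoCont2} requires $\mascB$ to be normal with respect to $r_0$, while only invariance is assumed. I expect the intended reading to be that the same hypothesis is used, as in \cite[Theorem 6.3]{ToPfTe}. Alternatively, one can appeal directly to that continuity result for Toeplitz operators.

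For (2), Lemma \ref{Lemma:ToeplPseudoSymbClassMod} (2) gives $\fka *u\in M^{\sharp ,r_0}_{(\omega _0)}(\rr {2d})$. The compactness part of Theorem \ref{Thm:PseudoCont2} then shows that $\op ^w(\fka *u)=\tp _{\phi _1,\phi _2}(\fka )$ is compact.

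If one prefers to avoid the second lemma, there is a direct route. By Corollary \ref{Cor:MsharpCompl} we approximate $\fka$ in $M^{\infty ,q}_{(\omega )}$ by $\fka _k\in \Sigma _1(\rr {2d})$, and the windows $\phi _j$ by $\phi _{j,k}\in \Sigma _1(\rr d)$, which is possible when $r<\infty$. Then $\tp _{\phi _{1,k},\phi _{2,k}}(\fka _k)$ has a $\Sigma _1$ Weyl symbol, so it is compact by Corollary \ref{Cor:Sig1KerCompact}. By the estimate in (1), applied trilinearly, these operators converge to $\tp _{\phi _1,\phi _2}(\fka )$ in operator quasi-norm. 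Norm limits of compact operators between quasi-Banach spaces are compact, which finishes the argument.

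The main obstacle is the weight bookkeeping: producing $\omega _0$ that links \eqref{Eq:ToepOpCond2} to both \eqref{Eq:WeylToeplWeightCondClass} and \eqref{Eq:WeightPseudoRel} simultaneously. A second issue is the case $q=\infty$, where Corollary \ref{Cor:MsharpCompl} is not available. There I would rely on the direct tail argument of Proposition \ref{Prop:ModConv} to place $\fka *u$ in $M^{\sharp ,r_0}$, since $r_0$ is then finite by \eqref{Eq:LebExpToeplToPseudo}.
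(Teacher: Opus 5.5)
Your proposal is correct. The ``direct route'' you sketch at the end of (2) is exactly the paper's proof: the paper takes (1) from \cite[Theorem 7.6]{ToPfTe}, and for (2) it approximates $\fka$ and $\phi _j$ by $\Sigma _1$ elements, uses \eqref{Eq:ToeplitzCont} to get convergence in operator quasi-norm, and applies Corollary \ref{Cor:Sig1KerCompact} to the approximants.

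Your main route instead factors everything through Lemma \ref{Lemma:ToeplPseudoSymbClassMod} and Theorem \ref{Thm:PseudoCont2}. The weight step you call the main obstacle is in fact exact. The left-hand side of \eqref{Eq:ToepOpCond2} depends only on $x-\frac{y+z}2$, $\xi -\frac{\eta +\zeta}2$, $\eta -\zeta$ and $z-y$. Your infimal $\omega _0$ takes the infimum over the two remaining variables $\frac{y+z}2$, $\frac{\eta +\zeta}2$, so \eqref{Eq:ToepOpCond2} is equivalent to \eqref{Eq:WeightPseudoRel} with $A=\frac 12 I$, and \eqref{Eq:WeylToeplWeightCondClass} holds by construction. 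Moreover, $\omega _0$ is bounded below by that left-hand side, and it is moderate as an infimum of uniformly moderate functions, so no replacement is needed.

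For $q<\infty$ both routes rest on the same density of $\Sigma _1$; in yours it is hidden inside the proofs of Lemma \ref{Lemma:ToeplPseudoSymbClassMod}~(2) and Theorem \ref{Thm:PseudoCont2}. The real gain of your route is the case $q=\infty$, i.e.\ the $M^{\sharp ,\infty}$ Toeplitz symbols advertised in the introduction. The paper's argument needs $\Sigma _1$ dense in $M^{\sharp ,q}_{(\omega )}(\rr {2d})$, which fails for $q=\infty$, as the paper itself notes in the discussion after Proposition \ref{Prop:ModConv}. Your route only needs density in $M^{\sharp ,r_0}_{(\omega _0)}$ with $r_0\le 1$, after the tail argument has placed $\fka *u$ there.

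To close that step you must supply two things the paper does not write out:
\begin{itemize}
\item Proposition \ref{Prop:ModConv} with $q_1=\infty$ and $q_2=q_0=r_0$. Its proof carries over: since $q_0<\infty$, the first tail term vanishes by dominated convergence, and the second by the $M^{\sharp ,\infty}$ condition.
\item A bound for $\check W_{\phi _2,\phi _1}$ in $M^{1,r_0}$ with weight $\vartheta _1(x_2-\frac y2,\xi _2+\frac \eta 2)\vartheta _2(x_2+\frac y2,\xi _2-\frac \eta 2)$, after the reflection. This follows from the factorization of the short-time Fourier transform of a Wigner distribution, together with the Wiener amalgam convolution estimates for $r_0\le 1$.
\end{itemize}

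Finally, the normal-versus-invariant mismatch for $\mascB$ that you flag affects the paper's proof equally, since Corollary \ref{Cor:Sig1KerCompact} also assumes $\mascB$ is normal.
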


\par

\begin{proof}
The assertion (1) follows from \cite[Theorem 7.6]{ToPfTe}.

\par

Suppose that
$\fka \in M ^{\sharp ,q} _{(\omega)}( \rr {2d})$,
and let $\fka _k$, $\fkb$, $\fkb _k$ and
$\phi _{j,k}$ as in the proof of Lemma
\ref{Lemma:ToeplPseudoSymbClassMod}. Then
\eqref{Eq:ToeplitzCont} shows that
\begin{align*}
\lim _{k\to \infty}&\nm {\tp _{\phi _1,\phi _2}(\fka )
-\tp _{\phi _{1,k},\phi _{2,k}}(\fka _k)}
{M(\omega _1,\mascB )\to M(\omega _2,\mascB )}
\\[1ex]
&=
\lim _{k\to \infty}\nm {\op ^w(\fkb )
-\op ^w(\fkb _k)}
{M(\omega _1,\mascB )\to M(\omega _2,\mascB )}
=0.
\end{align*}
Since $\fkb _k = \fka _k*\check W_{\phi _{2,k},\phi _{1,k}}
\in \Sigma _1(\rr {2d})$, it follows that
$$
\tp _{\phi _{1,k},\phi _{2,k}}(\fka _k)
=
(2\pi )^{-\frac d2}\op ^w(\fkb _k)
$$
from $M(\omega _1,\mascB )$ to $M(\omega _2,\mascB )$
is compact. Hence, due to the limits above it follows
that the map \eqref{Eq:ToeplitzContA} can be
approximated by arbitrary precision with compact operators.
This implies that $\tp _{\phi _1,\phi _2}(\fka)$
is compact, and the result follows.
\end{proof}

\par





\par

\par

\end{document}